\theoremstyle{plain}
\newtheorem{thm}{Theorem}[section]
\newtheorem{lem}[thm]{Lemma}
\newtheorem{cor}[thm]{Corollary}
\newtheorem{prop}[thm]{Proposition}
\newtheorem{conj}{Conjecture}
\newtheorem{mainthm}{Theorem}
\theoremstyle{definition}
\newtheorem{defn}[thm]{Definition}
\newtheorem{rem}[thm]{Remark}
\newtheorem{exmp}[thm]{Example}
\newtheorem{example}[thm]{Example}
\newcommand{\mathsc}[1]{{\normalfont\textsc{#1}}}
\newcommand{\N}{\mathbb{N}}
\newcommand{\Z}{\mathbb{Z}}
\newcommand{\R}{\mathbb{R}}
\newcommand{\Bb}{\mathbf{B}}
\newcommand{\Ib}{\mathbf{I}}\newcommand{\Jb}{\mathbf{J}}
\newcommand{\T}{\mathbb{T}}\newcommand{\Tb}{\mathbf{T}}
\newcommand{\pb}{\mathbf{p}}
\newcommand{\qb}{\mathbf{q}}
\newcommand{\bb}{\mathbf{b}}
\newcommand{\db}{\mathbf{d}}
\newcommand{\Hb}{\mathbf{H}}
\newcommand{\Rb}{\mathbf{R}}
\newcommand{\Lb}{\mathbf{L}}
\newcommand{\Xb}{\mathbf{X}}
\newcommand{\Yb}{\mathbf{Y}}
\newcommand{\Zb}{\mathbf{Z}}
\newcommand{\Wb}{\mathbf{W}}
\newcommand{\Pb}{\mathbf{P}}
\newcommand{\Qb}{\mathbf{Q}}
\newcommand{\Ub}{\mathbf{U}}
\newcommand{\Vb}{\mathbf{V}}
\newcommand{\rev}{\textrm{rev}}
\newcommand{\aff}{\mathsc{Aff}}
\newcommand{\F}{\mathbb{F}}
\newcommand{\C}{\mathbb{C}}
\newcommand{\Cp}{\C P}
\newcommand{\Ch}{\widehat \C}
\newcommand{\wh}{\widehat}
\newcommand{\D}{\mathbb{D}}
\newcommand{\inter}{\mathrm{int}}
\newcommand{\into}{\hookrightarrow}
\newcommand{\onto}{\twoheadrightarrow}
\newcommand{\cat}{\text{\mathsc{CAT}}}
\newcommand{\bary}{\mathsc{Bary}}
\newcommand{\rts}{\mathbf{rts}}
\newcommand{\cpt}{\mathbf{cpt}}
\newcommand{\cvl}{\mathbf{cvl}}
\newcommand{\width}{\mathbf{wt}}
\newcommand{\zer}{\mathbf{0}}
\newcommand{\cb}{\mathbf{c}}
\newcommand{\one}{\mathbf{1}}
\newcommand{\euler}{\strut \chi}
\newcommand{\linear}{\textrm{lin}}
\newcommand{\topolo}{\textrm{top}}
\newcommand{\set}{\mathsc{Set}}
\newcommand{\mult}{\mathsc{Mult}}
\newcommand{\spart}{\mathsc{SetPart}}
\newcommand{\ipart}{\mathsc{IntPart}}
\newcommand{\shape}{\mathsc{Shape}}
\newcommand{\sym}{\mathsc{Sym}}
\newcommand{\len}{\mathsc{len}}
\newcommand{\simp}{\Delta}
\newcommand{\geocomb}{\mathsc{GeoCom}}
\newcommand{\braid}{\mathsc{Braid}}
\newcommand{\krew}{\mathsc{Krew}}
\newcommand{\perm}{\mathsc{Perm}}
\newcommand{\conf}{\mathsc{Conf}}
\newcommand{\uconf}{\mathsc{UConf}}
\newcommand{\poly}{\ensuremath{\mathsc{Poly}}}
\newcommand{\LL}{\mathsc{LL}}
\newcommand{\ncpart}{\mathsc{NCPart}}
\newcommand{\ncmatch}{\mathsc{NCMat}}
\newcommand{\ncperm}{\mathsc{NCPerm}}
\newcommand{\br}{\mathsc{Br}}
\newcommand{\cmpt}{\mathsc{cmpt}}
\newcommand{\bt}{\begin{tabular}}
\newcommand{\et}{\end{tabular}}
\newcommand{\size}[1]{|#1|}
\newcommand{\comp}{\mathsc{Comp}}
\newcommand{\ba}{\mathbf{a}}
\newcommand{\bm}{\mathbf{m}}
\newcommand{\bx}{\mathbf{x}}
\newcommand{\bz}{\mathbf{z}}
\newcommand{\bw}{\mathbf{w}}
\newcommand{\bj}{\mathbf{J}}
\newcommand{\openint}{\protect
  \begin{tikzpicture}[baseline]
    \draw[thick] (0,.1)--(.25,.1);
    \filldraw[color=black,fill=white] (0,.1) circle (.3mm) (.25,.1) circle (.3mm);
  \end{tikzpicture}
}
\newcommand{\closedint}{\protect
  \begin{tikzpicture}[baseline]
    \draw[thick] (0,.1)--(.25,.1);
    \filldraw[color=black,fill=black!70!white] (0,.1) circle (.3mm) (.25,.1) circle (.3mm);
  \end{tikzpicture}
}
\newcommand{\circleint}{\protect
  \begin{tikzpicture}[baseline]
    \filldraw[thick,color=black,fill=white] (0,.1) circle (1.4mm);
  \end{tikzpicture}
}
\newcommand{\opencircleint}{\protect
  \begin{tikzpicture}[baseline]
    \coordinate (0) at (0,.1);
    \filldraw[thick,color=black,fill=white] (0) circle (1.4mm);
    \filldraw[color=black,fill=white] (-.14,.1) circle (.3mm);
  \end{tikzpicture}
}
\newcommand{\opensquare}{\protect
  \begin{tikzpicture}[baseline]
    \filldraw[thick,color=white!70!black,fill=white!80!black] (0,0) rectangle (.2,.2);
  \end{tikzpicture}
}
\newcommand{\closedsquare}{\protect
  \begin{tikzpicture}[baseline]
    \filldraw[thick,fill=white!80!black] (0,0) rectangle (.2,.2);
  \end{tikzpicture}
}
\newcommand{\opendisk}{\protect
  \begin{tikzpicture}[baseline]
    \filldraw[thick,color=white!70!black,fill=white!80!black] (0,.1) circle (.15);
  \end{tikzpicture}
}
\newcommand{\openannulus}{\protect
  \begin{tikzpicture}[baseline]
    \filldraw[thick,color=white!70!black,fill=white!80!black] (0,.1) circle (.15);
    \filldraw[thick,color=white!70!black,fill=white] (0,.1) circle (.05);
  \end{tikzpicture}
}
\newcommand{\closeddisk}{\protect
  \begin{tikzpicture}[baseline]
    \filldraw[thick,color=black,fill=white!80!black] (0,.1) circle (.15);
  \end{tikzpicture}
}
\newcommand{\closedannulus}{\protect
  \begin{tikzpicture}[baseline]
    \filldraw[thick,color=black,fill=white!80!black] (0,.1) circle (.15);
    \filldraw[thick,color=black,fill=white] (0,.1) circle (.05);
  \end{tikzpicture}
}
\newcounter{joncomments}
\newcounter{michaelcomments}
\begin{document}

\title[Geometric Combinatorics of Polynomials II]{Geometric Combinatorics of Polynomials II: \\
  Polynomials and Cell Structures}
\author{Michael Dougherty} 
\email{doughemj@lafayette.edu}
\address{Department of Mathematics, Lafayette College,
  Easton, PA 18042}
\author{Jon McCammond}
\email{jon.mccammond@math.ucsb.edu}
\address{Department of Mathematics, UC Santa Barbara, 
  Santa Barbara, CA 93106} 
\date{\today}

\begin{abstract}
  This article introduces a finite piecewise Euclidean cell complex
  homeomorphic to the space of monic centered complex polynomials of
  degree $d$ whose critical values lie in a fixed closed rectangular
  region.  We call this the \emph{branched rectangle complex} since
  its points are indexed by marked $d$-sheeted planar branched covers
  of the fixed rectangle.  The vertices of the cell structure are
  indexed by the combinatorial ``basketballs'' studied by Martin,
  Savitt and Singer. 
  Structurally, the branched rectangle complex is a full
  subcomplex of a direct product of two copies of the order complex of
  the noncrossing partition lattice.  Topologically, it is
  homeomorphic to the closed $2n$-dimensional ball where $n=d-1$.
  Metrically, the simplices in each factor are orthoschemes.  It can
  also be viewed as a compactification of the space of all monic
  centered complex polynomials of degree $d$.

  We also introduce a finite piecewise Euclidean cell complex
  homeomorphic to the space of monic centered complex polynomials of
  degree $d$ whose critical values lie in a fixed closed annular
  region.  We call this the \emph{branched annulus complex} since its
  points are indexed by marked $d$-sheeted planar branched covers of
  the fixed annulus.  It can be constructed from the branched
  rectangle complex as a cellular quotient by isometric face
  identifications.  And it can be viewed as a compactification of the
  space of all monic centered complex polynomials of degree $d$ with
  distinct roots.
  
  Finally, the branched annulus complex deformation retracts to the
  \emph{branched circle complex}, which we identify with the dual
  braid complex. The space of polynomials with distinct roots is one
  of the earliest classifying spaces for the $d$-strand braid group,
  and the dual braid complex is a more recent classifying space
  derived from the braid group's dual Garside structure.  Our explicit
  embedding of one classifying space as a spine of the other provides 
  a direct proof that the two classifying spaces are homotopy
  equivalent.
\end{abstract}

\maketitle

\section*{Introduction}

Let $\poly_d^{mc}$ be the space of monic centered complex polynomials
of degree $d$ and let $\poly_d^{mc}(\Ub)$ be the subspace of polynomials
whose critical values lie in $\Ub \subset \C$.  In this article we
describe a finite piecewise Euclidean cell structure on
$\poly_d^{mc}(\Ub)$ in four cases: when $\Ub$ is a closed interval
$\closedint$, a circle $\circleint$, a closed rectangle
$\closedsquare$, or a closed annulus $\closedannulus$.  Spaces
homeomorphic to the first two have already appeared in the literature
under different names.  The last two are being introduced here.

The overall flavor of our results is best illustrated by a concrete
example.  Consider the case where $d=3$ and $\Ub = \closedint$ is the
real interval $[-2,2] \subset \C$.  The space
$\poly_3^{mc}(\closedint)$, of monic centered cubic polynomials with
critical values in the interval $[-2,2]$, includes the polynomial
$p(z) = z^3 + b$ for any $b\in [-2,2]$, since it has a double critical
point at $0$ and a double critical value at $b$, as well as the
(rescaled) Chebyshev polynomial $p(z) = z^3-3z$ with critical points
$\pm 1$ and critical values $\pm 2$.  More generally, it is
straightforward to compute that the exact set of polynomials
satisfying these conditions are those of the form $p(z) = z^3 -3a^2
\omega^k z +b$ with $a \in [0,1]$, $b \in [2a^3-2,2-2a^3]$, $k \in
\{0,1,2\}$, and where $\omega = \frac{1 + i \sqrt{3}}{2}$ is a cube
root of unity.  Note that when $a=0$, the value of $k$ is
irrelevant. Topologically, the space is three triangles with a common
hypotenuse, depicted in the left hand side of Figure~\ref{fig:cpoly}.
The metric is provided by the space of possible critical values.  Two
distinguishable critical values in $[-2,2]$ are represented by a point
in the square $[-2,2]^2$, and removing our ability to distinguish them
corresponds to folding the square along the diagonal line where they
are equal to produce the space $\mult_2(\closedint)$ of $2$-element
multisets in $[-2,2]$.  The right-angled triangle
$\mult_2(\closedint)$, shown on the right hand side of
Figure~\ref{fig:cpoly}, is also known as a $2$-dimensional
orthoscheme.  The generically $3$-to-$1$ map from
$\poly_3^{mc}(\closedint)$ on the left to $\mult_2(\closedint)$ on the
right is a restriction of the Lyashko--Looijenga map that sends a
monic centered complex polynomial to (the monic polynomial determined
by) its multiset of critical values.

\begin{figure}
  \centering    
  \begin{tabular}{cc}
    \includegraphics{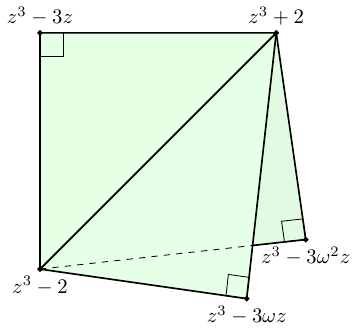} & 
    \includegraphics{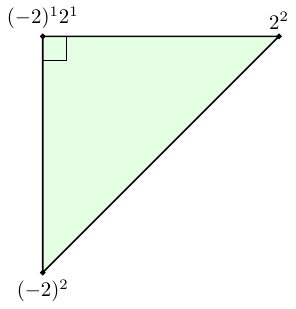}\\
    $\poly_3^{mc}(\closedint)$ & $\mult_2(\closedint)$\\
  \end{tabular}
  \caption{The right-angled triangle shown on the right is the space
    $\mult_2(\protect\closedint)$ of $2$ unlabeled points in the
    closed interval $[-2,2]$. The three right-angled triangles with a
    common hypotenuse shown on the left form the space
    $\poly_3^{mc}(\protect\closedint)$ of monic centered cubic
    polynomials with critical values in $[-2,2]$.  The extremal
    situations have been labeled in both cases.\label{fig:cpoly}}
\end{figure}

More generally, $\mult_d(\closedint)$ can be viewed
as a metric simplex known as a standard $d$-dimensional orthoscheme,
and we can pull back through the Lyashko--Looijenga map to obtain a
simplicial structure for $\poly_d^{mt}(\closedint)$, the space
of monic degree-$d$ polynomials up to precomposition with a
translation\footnote{Considering polynomials up to precomposition with
a translation is equivalent to centering the roots at the origin. The latter
is preferable for concrete examples, but we will use the former for
the statements of theorems. See Remark~\ref{rem:center-trans}.}  with
critical values in $\closedint$. When the metric from the orthoscheme is
pulled back through the Lyashko--Looijenga map, $\poly_d^{mt}(\closedint)$ is assigned 
a \emph{stratified Euclidean metric} (Definition~\ref{def:strata-euclid-metric}) 
distinct from the usual Euclidean metric inherited from $\C^{d-1}$ when the coefficients 
of the monic centered polynomial are used as coordinates.
For higher values of $d$, the combinatorial structure can be 
described using the lattice of noncrossing partitions $\ncpart_d$
and its order complex $|\ncpart_d|_\Delta$.

\begin{mainthm}[Intervals: Theorem~\ref{thm:main-intervals}]
    \label{mainthm:intervals}
    The space $\poly_d^{mt}(\closedint)$ of polynomials with critical values in a 
    closed interval (with the stratified Euclidean metric) is isometric to the 
    order complex $|\ncpart_d|_\Delta$ (with the orthoscheme metric). 
\end{mainthm}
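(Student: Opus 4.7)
The plan is to factor the isometry through the Lyashko--Looijenga map $\LL\colon \poly_d^{mt}(\closedint) \to \mult_{d-1}(\closedint)$ that sends a polynomial to the monic polynomial determined by its multiset of critical values. I would first identify the target as an orthoscheme: an unordered multiset of $d-1$ points in $[-2,2]$ is parametrized by its order statistics $-2 \le t_1 \le \cdots \le t_{d-1} \le 2$, which after an affine rescaling is exactly the standard $(d-1)$-dimensional orthoscheme. By construction the stratified Euclidean metric on $\poly_d^{mt}(\closedint)$ is the pullback of this orthoscheme metric along $\LL$, so the proof reduces to an entirely combinatorial analysis of $\LL$.

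Next I would analyze $\LL$ as a finite branched covering. Over the interior of the target (distinct real critical values) $\LL$ is an unbranched covering whose degree equals $d^{d-2}$, which is simultaneously the number of minimal transposition factorizations of a $d$-cycle and the number of maximal chains in $\ncpart_d$. Invoking the dictionary between polynomials with prescribed critical values and marked $d$-sheeted planar branched covers---the ``basketball'' framework of Martin, Savitt and Singer, made explicit in Part I---I would label each sheet of this covering by a minimal factorization of a $d$-cycle, or equivalently by a maximal chain in $\ncpart_d$. This produces a canonical bijection between top-dimensional open simplices of $\poly_d^{mt}(\closedint)$ and maximal chains of $\ncpart_d$.

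To match the face structure, I would then track what happens as one approaches a codimension-$k$ face of the target orthoscheme along which prescribed groups of critical values coalesce. On the polynomial side the branched covers degenerate by merging the sheets lying above the coalescing values; on the combinatorial side the corresponding transpositions in the labeling chain multiply together, so the length-$(d-1)$ chain collapses to one of length $d-1-k$. This collapse is exactly the face inclusion rule in $|\ncpart_d|_\Delta$, so the cellular attaching on the polynomial side reproduces the simplicial attaching on the order complex. Since each top-dimensional simplex is by construction an isometric copy of the orthoscheme, the resulting simplicial isomorphism is automatically an isometry.

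The hardest step is producing the sheet labeling compatibly with degenerations: the sheets of $\LL$ must be indexed by maximal chains of $\ncpart_d$ in such a way that the merging of sheets over a face of the target matches chain refinement in the lattice. Once this equivariant bijection between marked branched covers of $\closedint$ and maximal chains is in place, the metric and simplicial claims fall out of the definitions. The natural source of the dictionary is the combinatorial machinery of Part I together with the basketball classification, and I expect the bookkeeping---keeping track of which transposition corresponds to which pair of sheets, and how this indexing varies across strata of the orthoscheme---to be the main technical effort of the proof.
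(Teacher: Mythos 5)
Your architecture is essentially the paper's: identify $\mult_{d-1}(\closedint)$ with a standard orthoscheme, pull the cell structure back through the stratified covering $\LL$, label the resulting simplices by chains in $\ncpart_d$ read off from the branched cover over the interval, and observe that the metric claim then holds by construction. The paper packages the labeling as the $\geocomb$ map of Definition~\ref{def:geom-comb}: the chain attached to a polynomial $p$ is its \emph{left side chain}, the nested noncrossing partitions cut out by the preimages of vertical arcs separating consecutive critical values, and the barycentric coordinates are the relative widths of the gaps between those critical values.

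The one place your plan leaves a genuine hole is exactly the step you flag as ``the main technical effort'': showing the labeling is injective on every fiber, not just over the generic stratum. Over the interior your degree count ($d^{d-2}$ sheets versus $d^{d-2}$ maximal chains) plus Hurwitz transitivity does the job --- this is precisely Lemma~\ref{lem:monodromy-cvl} --- but when critical values coalesce you must show that two polynomials with the same degenerate critical value multiset and the same collapsed chain coincide, and a counting argument no longer suffices because the fiber size changes stratum by stratum. The paper supplies this with Proposition~\ref{prop:monodromy-cvl} (a monic centered polynomial is determined by its critical value multiset together with its monodromy, proved by perturbing to the generic case and applying \emph{local} Hurwitz transitivity around each coalesced cluster), combined with Remark~\ref{rem:side-monodromy}, which says the left side chain determines the monodromy. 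Without some such statement, ``merging of sheets matches chain refinement'' only shows the face maps are compatible, not that distinct polynomials over a degenerate multiset receive distinct labels. One small misattribution: basketballs are the vertex labels for the rectangle complex $\poly_d^{mt}(\closedsquare)$; the combinatorial objects governing the interval case are the banyans (branched lines) and their side chains.
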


The noncrossing partition lattice $\ncpart_d$, defined by Kreweras in
1972 \cite{kreweras72}, has  $C_d = \frac{1}{d+1} \binom{2d}{d}$ elements
and $d^{d-2}$ maximal chains, where the numbers $C_d$ are the ubiquitous 
Catalan numbers \cite{St15-catalan}.   So by Theorem~\ref{mainthm:intervals}, 
the simplicial complex $\poly_d^{mt}(\closedint) \cong \size{\ncpart_d}_\simp$ 
has $C_d$ vertices and $d^{d-2}$ top-dimensional simplices. 
Alternatively, every polynomial in $\poly_d^{mt}(\closedint)$ can be uniquely 
labeled by a marked planar metric graph called a \emph{banyan} or \emph{branched line} 
(Example~\ref{ex:banyans}).  The complex $\br_d^m(\closedint)$ of all marked 
planar $d$-branched lines (Definition~\ref{def:br-line-cplx}) is the same as the order complex
$\size{\ncpart_d}_\simp$, so one way to restate Theorem~\ref{mainthm:circles} 
is that $\poly_d^{mt}(\closedint) \cong \br_d^m(\closedint)$.
See \cite{baumeister19} and \cite{mccammond06} for surveys of the various ways in which 
noncrossing partitions arise.  

Noncrossing partitions started appearing in geometric
group theory around the year 2000, starting with the work of Birman--Ko--Lee
\cite{birman-ko-lee98} on a new ``dual presentation'' for the
symmetric group $\sym_d$ and the braid group $\braid_d$. Tom Brady \cite{brady01} and David Bessis \cite{bessis-03} soon turned this
presentation into a new classifying space for the braid group.  More specifically, 
the dual presentation utilizes Biane's correspondence between $\ncpart_d$ and $[1,\delta]$, the set of
permutations which appear in minimal length factorizations of the $d$-cycle $\delta =
(1\ 2\ \cdots\ d)$ into transpositions \cite{biane97}. By an
appropriate identification of cells in the order complex of
$\ncpart_d$, Brady defined a simplicial complex which is a classifying
space for $\braid_d$ \cite{brady01}, and this space was later endowed
with the ``orthoscheme metric'' in work of Brady and the second author
\cite{BrMc-10}.  With the orthoscheme metric, this classifying space
is called the \emph{dual braid complex} $K_d$, and it is an example of an
\emph{interval complex} when the same construction is applied in more general settings.  
It was conjectured in \cite{BrMc-10} that the dual braid complex $K_d$ is locally $\cat(0)$, which would
imply that $\braid_d$ is a $\cat(0)$ group. This has been proven for
$d \leq 7$ \cite{BrMc-10,haettel-kielak-schwer-16,jeong23} but remains
open in general. 
Our second main theorem provides a polynomial version of the dual braid
complex $K_d$.

\begin{mainthm}[Circles: Theorem~\ref{thm:main-circles}]
    \label{mainthm:circles}
    The space $\poly_d^{mt}(\circleint)$ of polynomials with critical
    values in a circle is homeomorphic to a quotient of the complex $\poly_d^{mt}(\closedint)$ 
    by face identifications. As a metric $\Delta$-complex, $\poly_d^{mt}(\circleint)$ 
    is the dual braid complex $K_d$ with the orthoscheme metric.
\end{mainthm}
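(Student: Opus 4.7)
The plan is to realize the circle as a quotient of an interval, lift this quotient to polynomial spaces, and then recognize the resulting complex as the dual braid complex $K_d$. Concretely, I would fix a basepoint $\ast\in\circleint$ and identify $\circleint\setminus\{\ast\}$ with an open interval, yielding a quotient map $\closedint\onto\circleint$ that identifies the two endpoints. This induces a quotient map $\mult_{d-1}(\closedint)\onto\mult_{d-1}(\circleint)$ on multisets of critical values, and pulling back through the Lyashko--Looijenga map (which is a branched covering away from the discriminant locus) gives a continuous surjection $\poly_d^{mt}(\closedint)\onto\poly_d^{mt}(\circleint)$. Two interval-polynomials are identified whenever they become the same circle-polynomial after viewing critical values modulo the endpoint identification, i.e.\ they are related by sliding a critical value past the seam of the circle.

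Next I would apply Theorem~\ref{mainthm:intervals} to identify $\poly_d^{mt}(\closedint)$ isometrically with $\size{\ncpart_d}_\simp$ equipped with the orthoscheme metric. Under this identification, the face identifications from the previous step can be described combinatorially: sliding a critical value past the seam corresponds to a cyclic rotation on noncrossing partitions, which on the level of maximal chains in $[1,\delta]$ is the natural bijection arising from the dual Garside structure on $\braid_d$. These identifications are precisely the ones that equate the top face of a maximal orthoscheme (simplices of chains that reach $\delta$) with the bottom face of the cyclically rotated orthoscheme, which matches Brady's construction of $K_d$ as the interval complex of $[1,\delta]\cong\ncpart_d$.

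Finally I would verify that the orthoscheme metric descends to the quotient, which follows because both metrics are pulled back from $\mult_{d-1}$ via the Lyashko--Looijenga map, and identified endpoints of the interval map to the same point on the circle. The main obstacle will be organizing the combinatorics carefully enough to confirm the quotient is exactly $K_d$ and nothing coarser or finer: one must check that simplices of $\size{\ncpart_d}_\simp$ are identified in the quotient precisely when the corresponding maximal chains in $[1,\delta]$ are related by the defining face relations of Brady's interval complex, and verify that no further identifications occur. The branched line/circle viewpoint from Example~\ref{ex:banyans} should provide the cleanest bookkeeping, since a marked branched circle is manifestly a branched line with its two endpoint markings identified.
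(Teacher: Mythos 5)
Your overall route is the same as the paper's: realize $\circleint$ as the endpoint identification of $\closedint$, transport this to a surjection $\poly_d^{mt}(\closedint)\onto\poly_d^{mt}(\circleint)$, invoke Theorem~\ref{mainthm:intervals} to see the domain as $\size{\ncpart_d}_\simp$ with the orthoscheme metric, and match the induced identifications with those defining $K_d$. One smaller issue first: ``pulling back through the Lyashko--Looijenga map'' does not by itself produce the surjection, since a quotient of the base of a stratified branched cover does not canonically induce a map on the total space. The paper obtains this map from the polynomial-homotopy machinery: a nonsplitting point homotopy wrapping the interval onto the circle lifts through the $\LL$ map by unique path lifting (Theorem~\ref{thm:ll-map-unique}), and surjectivity needs a separate density argument (Proposition~\ref{prop:quotients}). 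This is fixable by citation, but as written the step is not justified.

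The genuine gap is that the verification you defer as ``the main obstacle'' is the actual content of the proof. The identification is not well described as ``a cyclic rotation on noncrossing partitions''; it is governed by an explicit monodromy computation. A polynomial with critical values at both endpoints of $\closedint$ carries a horizontal side constellation $[\pi_1^L\ \sigma_1\ \cdots\ \sigma_k\ \pi_1^R]$ (Corollary~\ref{cor:side-constellations}); when the endpoints merge, the outer permutations combine into $\pi^L_{\textrm{new}}=\delta\cdot\pi_1^R\cdot\delta^{-1}\cdot\pi_1^L$ with $\pi^R_{\textrm{new}}$ trivial, the conjugation by $\delta$ recording the change of basepoint loop around the merged critical value. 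This formula is exactly what shows that two simplices of $\size{\ncpart_d}_\simp$ are identified precisely when they share the same edge-label sequence $\sigma_1,\ldots,\sigma_k$, which is the defining relation of $K_d$ (Definition~\ref{def:dual-braid-cplx}, via the standard representatives of Definition~\ref{def:std-circle-reps}); and since only the monodromy labels change while the barycentric (relative-width) coordinates are untouched, the identifications are isometric face identifications. Without this computation you have no control over whether the quotient is $K_d$, something coarser, or something finer --- the very point you flag as unresolved.
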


The quotient map transforming $\poly_d^{mt}(\closedint)$ into
$\poly_d^{mt}(\circleint)$ identifies all of the vertices and many of
the lower-dimensional faces. The cell structure on $\poly_d^{mt}(\circleint)$ 
remains $(d-1)$-dimensional with $d^{d-2}$ top-dimensional simplices, but now 
with only one vertex.  Although it is no longer a simplicial complex, 
it is a $\Delta$-complex in the sense of Hatcher \cite{hatcher02-algtop}.
As in the interval case, there is an alternative way to label the points in 
$\poly_d^{mt}(\circleint)$.  Every polynomial in $\poly_d^{mt}(\circleint)$ can 
be uniquely labeled by a marked planar metric graph called a \emph{cactus} or 
\emph{branched circle} (Example~\ref{ex:cacti}).  The complex $\br_d^m(\circleint)$ 
of all marked planar $d$-branched circles (Definition~\ref{def:br-circle-cplx}) 
 is the same as the dual braid complex $K_d$, so one way to restate 
 Theorem~\ref{mainthm:circles} is $\poly_d^{mt}(\circleint) \cong \br_d^m(\circleint)$.

Banyans and cacti were initially described in the first article of this series 
\cite{DoMc22}, although the approach we take here is both more general and more 
direct. Algebraically, banyans and cacti can be viewed as encoding \emph{linear} 
and \emph{circular} factorizations of $\delta$, respectively.  This perspective is 
discussed in \cite{dm-cnc}, where we 
introduce a continuous version of noncrossing partitions that is closely connected to the work of 
W.~Thurston and his collaborators on the space of complex polynomials \cite{thurston20}.

Also, it is worth noting that the universal cover of the dual braid complex $K_d$ can 
be expressed as the direct product of $\mathbb{R}$ with a $(d-2)$-dimensional piecewise Euclidean 
simplicial complex in which each top-dimensional cell is a Coxeter simplex of type
$\widetilde{A}_{d-2}$ \cite[Section~8]{BrMc-10}. In view of Theorem~\ref{mainthm:circles},
the circle action of $\mathbb{S}^1$ on $\poly_d^{mt}(\circleint)$ obtained by rotation of 
$\C$ about the origin corresponds to moving in the $\mathbb{R}$ factor in the universal cover.

The cell structure we put on the polynomial space $\poly_d^{mt}(\closedsquare)$ is more complicated.
A closed rectangle $\closedsquare \subset \C$ is a direct product $\closedint \times
\closedint$ of two intervals and, somewhat surprisingly, we show that $\poly_d^{mt}(\closedsquare)$ 
embeds as a subspace of the product $\poly_d^{mt}(\closedint) \times \poly_d^{mt}(\closedint)$. 
This is our third main theorem.

\begin{mainthm}[Rectangles: Theorem~\ref{thm:main-rectangles}]
    \label{mainthm:rectangles}
    The space $\poly_d^{mt}(\closedsquare)$ of polynomials with critical
    values in a closed rectangle (with the stratified Euclidean metric) 
    is isometric to a subcomplex of 
    $|\ncperm_d|_\Delta \times |\ncperm_d|_\Delta$
    (with the orthoscheme metric). 
\end{mainthm}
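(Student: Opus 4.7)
The plan is to define a projection map
$$\Phi: \poly_d^{mt}(\closedsquare) \to |\ncperm_d|_\Delta \times |\ncperm_d|_\Delta$$
arising from the product decomposition $\closedsquare = \closedint \times \closedint$ of the rectangle, to show that $\Phi$ is a homeomorphism onto a full subcomplex, and finally to verify that $\Phi$ is an isometry when the domain carries its stratified Euclidean metric and the codomain carries the product orthoscheme metric.

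To construct $\Phi$, take $p \in \poly_d^{mt}(\closedsquare)$. Its restriction $p^{-1}(\closedsquare) \to \closedsquare$ is a marked $d$-sheeted planar branched cover of the rectangle. The two coordinate projections $\closedsquare \to \closedint$ foliate the rectangle by vertical and horizontal segments, and these foliations pull back along $p$ to endow $p^{-1}(\closedsquare)$ with the structure of a marked planar $d$-branched line in each direction; equivalently, collapsing one interval factor produces a marked polynomial in $\poly_d^{mt}(\closedint)$. By Theorem~A, each of these marked branched lines is a point in $\br_d^m(\closedint) \cong |\ncpart_d|_\Delta$, and passing through Biane's correspondence identifies $|\ncpart_d|_\Delta$ with $|\ncperm_d|_\Delta$, giving the pair $\Phi(p)$.

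For injectivity and the image characterization, I would argue that the branched cover of $\closedsquare$ is reconstructible from its two branched-line quotients: the horizontal sweep records how sheets are fused above each vertical slice containing critical values, and similarly for the vertical sweep; together this data recovers the combinatorial branched cover, and hence the polynomial. The image of $\Phi$ is then the locus in the product where the two noncrossing permutations arise from a common branched cover of the rectangle. I would show this locus is a \emph{full} subcomplex by verifying that the compatibility condition is vertex-closed, i.e., a simplex of the product belongs to the image whenever all its vertices do. These vertex-compatible pairs are precisely the ``combinatorial basketballs'' of Martin--Savitt--Singer mentioned in the abstract, which serves as an external check.

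For isometry, recall that the stratified Euclidean metric on $\poly_d^{mt}(\closedsquare)$ is the pullback through the Lyashko--Looijenga map of the Euclidean metric on $\mult_{d-1}(\closedsquare)$. The product structure of the rectangle gives $\mult_{d-1}(\closedsquare)$ a natural decomposition along the horizontal and vertical coordinates, and each coordinate's induced metric on $\mult_{d-1}(\closedint)$ corresponds under Theorem~A to the orthoscheme metric on $|\ncpart_d|_\Delta$. Consequently, the product orthoscheme metric on $|\ncperm_d|_\Delta \times |\ncperm_d|_\Delta$ restricts on the image of $\Phi$ to the pullback stratified metric. The main obstacle I expect will be formulating and verifying the precise combinatorial compatibility condition that cuts out the image as a full subcomplex, in particular showing that every simplex of the product whose vertices arise from polynomials is itself the image of a cell of $\poly_d^{mt}(\closedsquare)$ rather than only lying in the closure of such cells.
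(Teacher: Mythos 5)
Your proposal is essentially the paper's own argument: your map $\Phi$ is exactly the $\geocomb$ map of Definition~\ref{def:geom-comb} (the paper identifies it with the map induced by deformation-retracting $\closedsquare$ onto its two sides, as in Figure~\ref{fig:six-rectangle-spaces}), injectivity is proved by reconstructing the branched cover from the two chains of noncrossing partitions (via the multiarcs $\widetilde\alpha_i$, $\widetilde\beta_j$ and then the monodromy-plus-critical-values characterization of Proposition~\ref{prop:monodromy-cvl}), and the isometry claim follows from the coordinate decomposition of $\mult_n(\closedsquare)$ into biorthoschemes exactly as you describe. The one caveat is that the ``main obstacle'' you anticipate---fullness, i.e.\ vertex-closedness of the image---is not needed for this statement, which only asserts that the image is a subcomplex (automatic once the map is injective, cellular, and has compact domain); the genuinely hard step is the reconstruction argument for injectivity, which the paper carries out in detail.
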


The cell structure on $\poly_d^{mt}(\closedsquare)$ is \emph{bisimplicial} 
in the sense that every cell is a
product of two simplices. The vertices are enumerated by the
Fuss--Catalan numbers $C^{(4)}_d = \frac{1}{3d+1} \binom{4d}{d}$ and
labeled by the combinatorial ``basketballs'' (Definition~\ref{def:basketballs})
initially defined by Martin, Savitt and Singer in \cite{martin-savitt-singer-07}. 
There are $(d-1)! d^{d-2}$ top-dimensional cells.  The compatibility condition between 
the noncrossing partitions in each factor arise from the fact that 
they need to be combinatorial aspects of a common marked planar $d$-branched cover of the 
rectangle $\closedsquare$.
The complex $\br_d^m(\closedsquare)$ is the same as $\poly_d^{mt}(\closedsquare)$, but with points
labeled by geometric and combinatorial information associated to marked planar $d$-branched rectangles 
(Definition~\ref{def:br-rect-cplx})
so one way to restate Theorem~\ref{mainthm:rectangles} is that $\br_d^{m}(\closedsquare)$ 
embeds in $\br_d^m(\closedint) \times \br_d^m(\closedint)$.
The compatibility condition on top-dimensional cells can be restated algebraically.
The linear factorization of $\delta$ into transpositions coming from the first factor and the 
linear factorization of $\delta$ into transpositions coming from the second factor are related 
by the \emph{Hurwitz action} of a simple braid. The $d^{d-2}$ maximal factorizations and 
$(d-1)!$ simple braids parameterize the top-dimensional cells.  This fact is related to the 
noncrossing hypertrees results in \cite{Mc-ncht}.

Finally, the transition from $\closedsquare$ to $\closedannulus$ and its impact on 
$\poly_d^{mt}(\closedsquare)$ is very similar to that from $\closedint$ to $\circleint$
and its impact on $\poly_d^{mt}(\closedint)$.  In particular, the side identification in 
$\C$ that turns the rectangle $\closedsquare$ into the annulus $\closedannulus$ induces a face 
identification on the corresponding polynomial cell complex.

\begin{mainthm}[Annuli: Theorem~\ref{thm:main-annuli}]
    \label{mainthm:annuli}
    The space $\poly_d^{mt}(\closedannulus)$ of polynomials with
    critical values in a closed annulus is homeomorphic to 
    a quotient of the metric cell complex $\poly_d^{mt}(\closedsquare)$ by face identifications.
\end{mainthm}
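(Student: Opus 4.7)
My plan is to mirror the strategy used for the interval-to-circle case in Theorem~\ref{mainthm:circles}, lifted into the two-dimensional bisimplicial setting afforded by Theorem~\ref{mainthm:rectangles}. The key geometric input is that the closed annulus $\closedannulus \subset \C$ is the quotient of the closed rectangle $\closedsquare$ obtained by gluing two opposite sides, and this purely topological identification in $\C$ is what induces the face identifications on the branched rectangle complex.

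First, I would fix an explicit quotient map $q\colon \closedsquare \to \closedannulus$ that glues the left and right vertical edges and restricts to a homeomorphism on the complement of these edges (for instance, the map induced by exponentiating in logarithmic coordinates). Using the branched-cover description of $\poly_d^{mt}(\closedsquare)$ from Theorem~\ref{mainthm:rectangles}, every marked planar $d$-branched cover of $\closedsquare$ whose combinatorial data along the left edge agrees with its data along the right edge (under the gluing) descends via $q$ to a marked planar $d$-branched cover of $\closedannulus$, and every such cover of $\closedannulus$ arises by cutting along a vertical seam. This correspondence yields a cellular surjection $\Phi\colon \poly_d^{mt}(\closedsquare) \to \poly_d^{mt}(\closedannulus)$. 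The second step is to identify the fibers of $\Phi$ with face identifications on the bisimplicial cell structure: $\Phi$ is injective on the interior and on the top and bottom faces, and on faces supported by critical values on the vertical edges it identifies pairs of faces whose ``left-side'' data in the $|\ncperm_d|_\Delta \times |\ncperm_d|_\Delta$ factor structure matches the other face's ``right-side'' data after the gluing. Because the orthoscheme metric on each factor is preserved by this left-right swap of boundary data, the identifications are isometries, and the quotient is a metric cell complex whose underlying topological space is $\poly_d^{mt}(\closedannulus)$.

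The main technical obstacle will be pinning down the face identifications combinatorially: specifying the explicit rule by which the left-edge data of one cell pairs with the right-edge data of another, and verifying that no additional collapsing is required beyond these edge pairings. I expect this will require careful bookkeeping with the noncrossing-partition and basketball data parametrizing the bisimplicial cell structure, along with the Hurwitz-style relation between the two $\ncperm_d$ factors highlighted in the discussion of Theorem~\ref{mainthm:rectangles}. Once these identifications are isolated, continuity of $\Phi$ and the homeomorphism $\poly_d^{mt}(\closedsquare)/{\sim} \cong \poly_d^{mt}(\closedannulus)$ can be checked stratum by stratum, just as in the interval-to-circle case.
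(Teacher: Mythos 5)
Your outline is correct in spirit and lands on the right structural facts, but it takes a genuinely different route from the paper. The paper's proof is \emph{dynamic}: it chooses a nonsplitting point homotopy $H \colon \closedsquare \times [0,1] \to \C$ that is point-preserving until time $t=1$, when it glues the left and right sides to form $\closedannulus$; the induced polynomial homotopy $\widetilde{H}_n$ (Definition~\ref{def:poly-homotopy}) then hands over, essentially for free, a continuous surjection $\poly_d^{mt}(\closedsquare) \onto \poly_d^{mt}(\closedannulus)$ whose only identifications occur among polynomials with a critical value on the vertical sides (this is Proposition~\ref{prop:quotients}). The remaining content is the observation that when two such critical values merge at $t=1$ the corresponding monodromy permutations are multiplied, and that this identification rule depends only on the combinatorial cell label and not on the barycentric coordinates --- hence it is a face identification. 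Your route is \emph{static}: you build the quotient map directly by gluing marked branched covers of $\closedsquare$ along a seam. That is a legitimate alternative, and it has the virtue of making the identification rule on cell labels more explicit from the start, but it buys you nothing for free on the analytic side.

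The concrete gap is continuity and surjectivity of your map $\Phi$. A correspondence between marked branched covers of $\closedsquare$ and of $\closedannulus$ does not automatically define a continuous map between the polynomial spaces in their coefficient topology; establishing exactly this kind of continuity is what the unique-path-lifting machinery of Part~\ref{part:poly-spaces} (Theorem~\ref{thm:ll-map-unique} through Proposition~\ref{prop:quotients}) is for, and ``checked stratum by stratum'' will not suffice without it, since the delicate points are precisely where strata meet (polynomials acquiring or losing critical values on the seam). You should either invoke Proposition~\ref{prop:quotients} outright for the existence, continuity, and surjectivity of the quotient map, or reconstruct $\Phi$ from a polynomial homotopy as the paper does. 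A secondary imprecision: descent of a branched cover of $\closedsquare$ to one of $\closedannulus$ is not conditional on left-edge data matching right-edge data --- every $p \in \poly_d^{mt}(\closedsquare)$ descends, with the gluing of the $d$ left sides to the $d$ right sides of $\Pb_p$ dictated by the marking; the matching condition you describe is instead the rule for when two \emph{distinct} polynomials become identified. With those two corrections (and the $\delta$-conjugation bookkeeping of Definition~\ref{def:std-annulus-reps}, which you correctly flag as the combinatorial work), your plan goes through.
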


The vertices of the bisimplicial cell structure on
$\poly_d^{mt}(\closedannulus)$ are counted by the Catalan numbers
$C_d$, and there are $(d-1)! d^{d-2}$ top-dimensional cells. The complex we call 
$\br_d^m(\closedannulus)$ is the same as a the space $\poly_d^{mt}(\closedannulus)$ but with 
points labeled by geometric and combinatorial information associated to marked planar 
$d$-branched annuli (Definition~\ref{def:std-annulus-reps}).  A detailed exploration 
of the geometric combinatorics of the compact piecewise Euclidean cell complexes 
described by these first four theorems will appear later in this series.

The metric cell structures described in the preceding theorems are 
also connected to classical polynomial spaces via homeomorphisms, 
compactifications and deformation retractions.  Our three final main theorems 
describe these connections.

\begin{mainthm}[Homeomorphisms: Theorem~\ref{thm:main-homeomorphisms}]
    \label{mainthm:homeomorphisms}
    The complex plane $\C$, the punctured plane $\C_\zer$ and the real
    line $\R$ are homeomorphic to the open rectangle $\opensquare$, the
    open annulus $\openannulus$ and the open interval $\openint$
    respectively, and these induce homeomorphisms of polynomial spaces
    $\poly_d^{mt}(\C) \cong \poly_d^{mt}(\opensquare)$,
    $\poly_d^{mt}(\C_\zer) \cong \poly_d^{mt}(\openannulus)$ and
    $\poly_d^{mt}(\R) \cong \poly_d^{mt}(\openint)$.
\end{mainthm}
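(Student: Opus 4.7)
The plan proceeds in two stages: first, I would construct explicit homeomorphisms of the three base pairs, and then show that each lifts canonically to a homeomorphism of the corresponding polynomial spaces.

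For the base pairs, each case is standard. The open interval $\openint$ is homeomorphic to $\R$ via a rescaled hyperbolic tangent; taking a product of two copies yields $\opensquare \cong \R \times \R = \C$; and $\C_\zer \cong \openannulus$ follows from polar coordinates by combining an interval homeomorphism on the radial factor with the identity on the angular factor, identifying both sides with a product of an open interval and $\sph^1$.

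For the induced maps, let $h : \Ub \to \Vb$ denote one of these three base homeomorphisms. I would define
\[
    H : \poly_d^{mt}(\Ub) \to \poly_d^{mt}(\Vb)
\]
via the marked planar $d$-branched cover perspective developed in the earlier main theorems: a polynomial $p$ is determined by the multiset of its critical values in $\Ub$ together with the combinatorial data describing how the $d$ sheets of the branched cover $p : \C \to \C$ connect above those values. The map $H$ transports the multiset by $h$ and leaves the combinatorial data unchanged; by the Lyashko--Looijenga correspondence (equivalently, by Riemann existence), this uniquely specifies a polynomial in $\poly_d^{mt}(\Vb)$. Bijectivity is immediate upon applying the same recipe with $h^{-1}$. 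For continuity, the passage from a critical-value multiset to its polynomial is continuous — in fact holomorphic — on each stratum of fixed combinatorial type, and $h$ is continuous.

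\textbf{Main obstacle.} The principal technical subtlety is continuity of $H$ across the lower strata where critical values collide. Because $h$ is a homeomorphism, it respects the stratification by coincidence pattern of the critical values (multiplicities in $\Ub$ correspond bijectively to multiplicities in $\Vb$), and the continuity of the combinatorial labeling across adjacent strata is already packaged by the cell-complex structures provided by Theorems~\ref{mainthm:intervals}--\ref{mainthm:annuli}. Altogether, $H$ is a continuous bijection whose inverse is continuous by the symmetric construction, yielding the claimed homeomorphism.
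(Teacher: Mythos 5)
Your construction of the map is a genuinely different route from the paper's: you define $H$ statically, by transporting the critical-value multiset through $h$ and re-attaching "the same" monodromy data via Riemann existence, whereas the paper never writes down such a map directly. Instead it runs a point-preserving homotopy from $\C$ to $\opensquare$ (rescaling coordinates), lifts the induced multiset paths uniquely through the stratified covering $\LL \colon \poly^{mt}_{\lambda\to\mu} \to \mult_\mu(\C)$ (Theorems~\ref{thm:poly-strata-cover} and~\ref{thm:ll-map-unique}), and then handles noncompactness by exhausting $\C$ by nested compact squares and invoking Proposition~\ref{prop:homeo} and Lemma~\ref{lem:nested-cpt}. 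Unfortunately your version has a genuine gap exactly where the paper's machinery does its work: continuity across strata. You assert this is "already packaged by the cell-complex structures provided by Theorems~\ref{mainthm:intervals}--\ref{mainthm:annuli}," but in this paper those theorems are proved in Part~\ref{part:geo-comb} \emph{using} the present homeomorphism theorem, so the appeal is circular; and independently of logical order, those cell structures are finite complexes on the compact spaces $\poly_d^{mt}(\closedint)$, $\poly_d^{mt}(\closedsquare)$, etc., and give you nothing directly about the noncompact spaces $\poly_d^{mt}(\C)$, $\poly_d^{mt}(\C_\zer)$, $\poly_d^{mt}(\R)$ that actually appear in the statement. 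The assertion that $H$ is continuous on each stratum and that $h$ respects the coincidence pattern does not by itself yield continuity of $H$ at a polynomial sitting on a stratum boundary: one must show that the monodromy labels of nearby higher-stratum polynomials degenerate compatibly under $h$, which is precisely the content of the paper's unique-path-lifting argument for paths of weakly increasing shape (here the shape is constant, so both directions lift).

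A secondary, fixable imprecision: "leave the combinatorial data unchanged" is not well defined until you specify an identification of $\pi_1(\C \setminus \cvl(p), z)$ with $\pi_1(\C \setminus h(\cvl(p)), h(z))$. For $h \colon \C \to \opensquare$ this comes from the open embedding, but for $h \colon \C_\zer \to \openannulus$ the map does not extend over $0$, and you must check that the loop around $0$ (trivial in $\pi_1(\C\setminus\cvl(p))$ since $0$ is not a critical value) matches the loop around the inner disk of the annulus (trivial in $\pi_1(\C\setminus h(\cvl(p)))$ since that disk is regular). You also need the realizability half of the monodromy--critical-value correspondence (every admissible monodromy over a given multiset is achieved), which the paper extracts from Hurwitz transitivity in Lemma~\ref{lem:monodromy-cvl} and Proposition~\ref{prop:monodromy-cvl}; citing Riemann existence is acceptable but should be stated as such. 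With the stratum-boundary continuity supplied by an independent argument (for instance, the paper's path-lifting lemma, or a direct compactness-plus-monodromy-perturbation argument in the spirit of the proof of Proposition~\ref{prop:monodromy-cvl}), your static construction would go through.
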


\begin{mainthm}[Compactifications: Theorem~\ref{thm:main-compactifications}]
  \label{mainthm:compactifications}
  The three spaces $\poly_d^{mt}(\closedsquare)$, $\poly_d^{mt}(\closedannulus)$ and $\poly_d^{mt}(\closedint)$ are 
  compactifications of $\poly_d^{mt}(\opensquare)$,  $\poly_d^{mt}(\openannulus)$ and $\poly_d^{mt}(\openint)$.
\end{mainthm}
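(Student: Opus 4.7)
The plan is to verify, for each of the three pairs
$(\poly_d^{mt}(\openint),\poly_d^{mt}(\closedint))$,
$(\poly_d^{mt}(\opensquare),\poly_d^{mt}(\closedsquare))$, and
$(\poly_d^{mt}(\openannulus),\poly_d^{mt}(\closedannulus))$,
the three defining properties of a compactification: compactness and Hausdorffness of the closed space, a topological embedding of the open space, and density of its image.

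Compactness and Hausdorffness are immediate from Theorems~\ref{mainthm:intervals}, \ref{mainthm:rectangles}, and \ref{mainthm:annuli}, which identify each closed space with a finite piecewise Euclidean cell complex. These same theorems identify the cell-complex topology with the subspace topology inherited from the ambient space $\poly_d^{mt} \cong \C^{d-1}$ of all monic degree-$d$ polynomials modulo translation. Since the open space sits inside the closed space inside $\poly_d^{mt}$ as nested subsets of $\C^{d-1}$, transitivity of subspace topology makes the inclusion a topological embedding carrying the natural polynomial topology on the source.

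Density is the only substantive step. The key tool is the Lyashko--Looijenga map $\LL \colon \poly_d^{mt} \to \mult_{d-1}(\C) \cong \C^{d-1}$ sending a polynomial to its multiset of critical values. This is a proper, finite, surjective, holomorphic map between connected complex manifolds of equal dimension, and hence an open map. Given any $p$ in the closed polynomial space and any open neighborhood $B$ of $p$ in $\poly_d^{mt}$, the image $\LL(B)$ is an open neighborhood of $\LL(p)$ in $\mult_{d-1}(\C)$. Every multiset with entries in a closed region $\closedint$, $\closedsquare$, or $\closedannulus$ is a limit of multisets with entries in the corresponding open region (move each entry slightly into the interior), so the open-region multisets are dense in the closed-region multisets; hence $\LL(B)$ meets them. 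Lifting through $\LL$ produces some $q \in B$ whose critical values lie in the open region, which shows that the open polynomial space is dense in the closed one.

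The main obstacle, if any, is verifying openness of $\LL$ at branch points (polynomials with coinciding critical values). This follows abstractly from the general fact that nonconstant finite holomorphic maps between equidimensional complex manifolds are open, but it can also be bypassed by a direct two-step perturbation argument: the discriminant is a complex hypersurface in $\poly_d^{mt}$ and hence of real codimension two, so one can first perturb $p$ slightly into the regular stratum where $\LL$ restricts to a local diffeomorphism, and then use that local diffeomorphism to push each critical value into the open region.
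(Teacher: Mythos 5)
Your argument is essentially correct, but it diverges from the paper on both main steps, and one of them needs repair.

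For density, the paper (Proposition~\ref{prop:compact}) argues via polynomial homotopies: a point-preserving homotopy shrinking $\Ub$ into the interior $U$ is time-reversed and lifted through the $\LL$ map using the unique-path-lifting machinery of Part~\ref{part:poly-spaces}, exhibiting every polynomial over $\Ub$ as the endpoint of a path that otherwise lies over $U$. You instead combine openness of $\LL$ with density of $\mult_n(U)$ in $\mult_n(\Ub)$. This is a legitimate and arguably more economical route: the openness you need is precisely the paper's Lemma~\ref{lem:local-onto} (``locally onto,'' proved there by citing Beardon et al.), so no input beyond what the paper already has is required, and you avoid the path-lifting apparatus entirely. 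Your two-step perturbation fallback (move into the generic stratum, then use the local homeomorphism over the discriminant complement) is also sound.

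For compactness, however, you invoke Theorems~\ref{mainthm:intervals}, \ref{mainthm:rectangles} and~\ref{mainthm:annuli}. In the paper those cell-structure theorems are proved \emph{after} the compactification theorem and actually depend on it: for instance, the surjectivity step in the proof of Theorem~\ref{thm:main-intervals} uses compactness of the image via Proposition~\ref{prop:compact}. So deriving compactness from the cell structures is circular relative to the paper's development. The fix is cheap and is what the paper does (Lemma~\ref{lem:closed-bounded}): $\mult_n(\Ub)$ is compact as a continuous image of $\Ub^n$, and $\poly_d^{mt}(\Ub) = \LL^{-1}(\mult_n(\Ub))$ is compact because the $\LL$ map is finite, hence proper. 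With that substitution your proof stands; your identification of the cell-complex topology with the subspace topology from $\C^{d-1}$ is also stated rather than argued, but the paper is equally brief on that point.
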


Combining Theorem~\ref{mainthm:homeomorphisms} with Theorem~\ref{mainthm:compactifications} shows
that $\poly_d^{mt}(\closedsquare)$, $\poly_d^{mt}(\closedannulus)$ and $\poly_d^{mt}(\closedint)$ 
can be viewed as compactifications of $\poly_d^{mt}(\C)$, $\poly_d^{mt}(\C_\zer)$ and $\poly_d^{mt}(\R)$, 
respectively.  In particular, the complex $\poly_d^{mt}(\closedsquare)$ is a manifold, or more specifically a closed 
ball with corners.  For comparison, the space $\poly_d^{mt}(\closedint)$ is not a manifold, and neither 
is the product of two copies of $\poly_d^{mt}(\closedint)$ which contains $\poly_d^{mt}(\closedsquare)$ as a 
subcomplex (Theorem~\ref{mainthm:rectangles}).

Finally, some of these polynomial spaces are deformation retracts of others.

\begin{mainthm}[Deformation retractions: Theorem~\ref{thm:main-deformations}]
    \label{mainthm:deformations}
    A deformation retraction of $\closedsquare$ onto any embedded arc
    $\closedint \subset \closedsquare$ induces a deformation retraction from 
    $\poly_d^{mt}(\closedsquare)$ to $\poly_d^{mt}(\closedint)$. Similarly, 
    a deformation retraction of $\closedannulus$ onto any core curve $\circleint 
    \subset \closedannulus$ induces a deformation retraction from 
    $\poly_d^{mt}(\closedannulus)$ to $\poly_d^{mt}(\circleint)$. 
\end{mainthm}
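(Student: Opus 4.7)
The plan is to lift the given deformation retraction of the region of critical values, through the Lyashko--Looijenga map, to a deformation retraction of the ambient polynomial space. Let $\Ub \in \{\closedsquare, \closedannulus\}$, let $\Vb \in \{\closedint, \circleint\}$ be the target arc or core curve, and write $H_t : \Ub \to \Ub$ for the hypothesized deformation retraction, so that $H_0 = \mathrm{id}_\Ub$, $H_1(\Ub) = \Vb$, and $H_t|_\Vb = \mathrm{id}_\Vb$ for all $t$. Applying $H_t$ coordinate-wise to an unordered tuple of critical values produces an induced deformation retraction $\widehat H_t : \mult_{d-1}(\Ub) \to \mult_{d-1}(\Ub)$ onto the subspace $\mult_{d-1}(\Vb)$.

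The Lyashko--Looijenga map $\LL : \poly_d^{mt}(\Ub) \to \mult_{d-1}(\Ub)$, which sends a polynomial to its multiset of critical values, is a finite branched covering. For each $p \in \poly_d^{mt}(\Ub)$ the path $\gamma_p(t) := \widehat H_t(\LL(p))$ starts at $\LL(p)$ and terminates in $\mult_{d-1}(\Vb)$; path-lifting for branched coverings produces a unique lift $\widetilde \gamma_p : [0,1] \to \poly_d^{mt}(\Ub)$ with $\widetilde \gamma_p(0) = p$. Setting $\widetilde H_t(p) := \widetilde \gamma_p(t)$ gives the candidate deformation retraction. The endpoint and fixing conditions are then immediate: $\LL(\widetilde H_1(p)) = \widehat H_1(\LL(p)) \in \mult_{d-1}(\Vb)$, so $\widetilde H_1(p) \in \poly_d^{mt}(\Vb)$; and if $p \in \poly_d^{mt}(\Vb)$ already, then $\gamma_p$ is constant, so uniqueness of the lift forces $\widetilde \gamma_p$ to be constant as well.

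The main obstacle is showing that $\widetilde H_t(p)$ depends continuously on $p$, particularly at polynomials where $\LL$ ramifies, meaning where critical values of $p$ coincide. Over the open stratum of $\mult_{d-1}(\Ub)$ consisting of honest $(d-1)$-element subsets, $\LL$ is an ordinary covering map and continuity of path-lifting is classical. Near the discriminant, the ramification of $\LL$ is locally modeled by products of maps of the form $z \mapsto z^k$, which admit continuous lifts of continuously varying families of paths passing through the branch locus. The cleanest verification, however, uses the metric cell structures from Theorems~\ref{mainthm:rectangles} and~\ref{mainthm:annuli}: one chooses $H_t$ to respect the combinatorial data of the marked planar branched covers, so that the lifted homotopy is cellular and its continuity on each cell and across face identifications can be read off directly from the explicit descriptions of the branched rectangle and branched annulus complexes. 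Once continuity is established, $\widetilde H_t$ is the desired deformation retraction, and the annular case reduces to the rectangular one via the quotient by face identifications from Theorem~\ref{mainthm:annuli}.
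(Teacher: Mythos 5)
Your overall strategy---push the retraction of the critical-value region forward to multisets and lift it through the Lyashko--Looijenga map---is exactly the paper's (Proposition~\ref{prop:def-retracts} via the polynomial homotopies of Definition~\ref{def:poly-homotopy}), but the step you dispatch in one clause is where all the content lives, and as written it is a genuine gap. You assert that ``path-lifting for branched coverings produces a unique lift.'' This is false for the $\LL$ map: it is only a \emph{stratified} covering (Theorem~\ref{thm:poly-strata-cover}), and Example~\ref{ex:ll-map-many} exhibits a single path in $\mult_2(\C)$ with uncountably many lifts to $\poly_3^{mt}(\C)$. Every time a multiset of critical values splits apart after a collision, the lift has a genuine choice of which polynomial stratum to re-enter, so neither uniqueness nor continuity of your $\widetilde H_t$ is automatic. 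The paper's Theorem~\ref{thm:ll-map-unique} shows that a multiset path lifts uniquely precisely when its shape is \emph{weakly increasing} (critical values may merge but never split), and this is why Proposition~\ref{prop:def-retracts} is stated for \emph{nonsplitting} deformation retracting point homotopies: that hypothesis forces every path $\gamma_p(t)=\widehat H_t(\LL(p))$ to have weakly increasing shape, whence $\widetilde\gamma_p$ exists, is unique, and the assembled $\widetilde H_t$ is continuous because it fits over the stratified covers in the commuting square $\LL\circ\widetilde H=\widehat H\circ(\LL\times 1)$. Your proposal never imposes or verifies any such condition; an arbitrary deformation retraction of $\closedsquare$ onto an arc can split points (collapse and re-expand), and for such an $H_t$ your construction is not even well defined. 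The repair is to restrict to nonsplitting retractions, noting that the obvious retractions of a rectangle onto an embedded arc and of an annulus onto a core circle are nonsplitting.

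Two of your auxiliary remarks inherit the same problem. The claim that the local model $z\mapsto z^k$ ``admits continuous lifts of continuously varying families of paths passing through the branch locus'' is false without a monotonicity condition: a path that enters the branch point and leaves again may exit onto any of the $k$ sheets, so no continuous choice exists in general. And the proposed fallback through the cell structures of Theorems~\ref{mainthm:rectangles} and~\ref{mainthm:annuli} is both backwards relative to the paper's logical order (those theorems are proved in Part~\ref{part:geo-comb}, after and partly using the homotopy machinery of Part~\ref{part:poly-spaces}) and incompatible with the quantifier in the statement: the retraction $H_t$ is given, so you are not free to ``choose $H_t$ to respect the combinatorial data.''
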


This shows that the space of monic degree-$d$ polynomials
with distinct roots up to translation admits a compactification with 
a metric bisimplicial cell structure such that (1) there is a deformation
retraction of this space to the subspace of polynomials with critical
values on the unit circle, and (2) the induced cell structure on the
resulting space is isometric to the dual braid complex $K_d$.

\subsection*{Connections to the literature}
Many of the objects and ideas in our main theorems have appeared
before in various forms, but we believe this is the first article to
combine all of them.  Any list of connections to the rich literature on these
topics will be incomplete, but here are some of the previous appearances 
of the main characters: combinatorial data associated to polynomials,
compactifications, cell structures, and the dual Garside structure for
the braid group.

The preimages of lines and circles that arise in the combinatorial structures 
we derive from polynomials have appeared in many places over the years---we 
list a small fraction of
them here.  In this article, we label the points in $\poly_d^{mt}(\closedint)$ by 
preimages of line segments (\emph{banyans} or \emph{branched lines}).  
Similarly, Gauss's first proof of the Fundamental Theorem of Algebra \cite{gauss1799} 
utilizes the preimages of the real and imaginary axes under a complex polynomial.  
This idea was expanded upon by Martin, Savitt and Singer in \cite{martin-savitt-singer-07}
and by Savitt in \cite{savitt-09} to introduce the notion of
\emph{basketballs}, which can be used to label the vertices of our
cell structure on $\poly_d^{mt}(\closedsquare)$.  More recently, generalizations of these 
line preimages called \emph{signatures} of polynomials have appeared in work of A'Campo
\cite{acampo20} and Combe~\cite{combe19}.

We use preimages of circles (\emph{cacti} or \emph{branched circles}) to label
the points of $\poly_d^{mt}(\circleint)$. These graphs have appeared
in a variety of fields, including complex geometry 
\cite{catanese-paluszny,catanese-wajnryb,cacti-braids-polynomials},
complex dynamics \cite{nekrashevych14,calegari22}, and algebraic
topology \cite{salvatore22}.  Special care must be taken when
comparing results from different articles, as there are many
variations (including markings and metrics) and definitions for cacti
across subdisciplines.

Of particular relevance to our article is the appearance of cacti
(as quotients of an equivalence relation on a circle) in an
unpublished manuscript of W. Thurston that was posthumously completed
by his collaborators \cite{thurston20}. In this article, Thurston
identifies the space $\poly_d^{mc}(\circleint)$ as a 
spine for the space of polynomials with $d$ distinct roots,
albeit with a different metric and without the cell structure.
See \cite{dm-cnc} for a more detailed account of the connections between Thurston's work and ours.

There are also several interesting connections between this article
and the recent work of D. Calegari \cite{calegari22}, which provides
a description for the space of monic complex polynomials with 
$d$ distinct centered roots such that every critical point is in
the attracting basin of infinity, i.e. tends toward $\infty$
under iteration. This space is known to dynamicists as the 
\emph{shift locus of degree $d$}, and Calegari shows that it
has the structure of a complex of spaces in which each piece can be
viewed either in a combinatorial manner via extended laminations 
of a circle (generalizing the work of Thurston in \cite{thurston20})
or in an algebraic manner via spaces of the form $\poly_d^{mc}(\C-\{z_1,\ldots,z_k\})$, i.e. polynomials whose critical values avoid a fixed finite
set of points. In particular, Calegari conjectures that these pieces
are $K(\pi,1)$'s, with a proof in degree $3$ \cite[Theorem~9.16]{calegari22}
which makes essential use of the dual braid complex $K_d$. 
We are interested to see if there are further connections between 
our results and Calegari's work on the shift locus.

Recent work by Wegert \cite{wegert20}, A'Campo \cite{acampo22} and
A'Campo--Papadopoulos \cite{acampo-papadopoulos24} uses the pullbacks
of both lines and circles to study polynomials in a manner which is
similar to ours. In the latter two articles, the authors use this to
introduce open cell structures for both the space $\poly_d^{m}(\C)$ of
monic degree-$d$ polynomials with arbitrary roots and
$\poly_d^{m}(\C_\zer)$, the subspace of polynomials with distinct
roots. The numbers of top-dimensional cells in these complexes match
the numbers of vertices in our cell structures on
$\poly_d^{mt}(\closedsquare)$ and $\poly_d^{mt}(\closedannulus)$
respectively, so it seems likely that the two structures are dual to
one another.

Several other recent articles in algebraic topology have featured ideas similar 
to ours. Following a presentation by the second author on the contents of
this article at an Oberwolfach mini-workshop \cite{oberwolfach24}, Bianchi pointed out 
a connection with his recent work \cite{bianchi1,bianchi2}.
It appears that Bianchi defines an abstract cell structure for $\poly_d^{mt}(\closedsquare)$
which matches ours, including an equivalent set of algebraic labels.  Bianchi's
complex, however, does not include a metric. Other relevant
work includes that of Salter \cite{salter-monodromy1,salter-monodromy2} and Salvatore
\cite{salvatore22}. Salter's articles provide an ``equicritical''
stratification on $\poly_d^{m}(\C_\zer)$ which is related to (but
coarser than) the double stratification described in
Section~\ref{sec:poly-ll-maps}. The combinatorial tools used in
Salter's work are similar to ours, but lead to distinct
structures. Salvatore introduces a cell structure for
$\poly_d^{mt}(\C_\zer)$ which uses similar objects (nested trees of
cacti), but the resulting complex does not appear to be directly
related to ours. We should also note that none of these authors connect their complexes to the 
dual structure on the braid group.

In this article, we compactify $\poly_d^{mt}(\C)$ and
$\poly_d^{mt}(\C_\zer)$ to obtain $\poly_d^{mt}(\closedsquare)$ and
$\poly_d^{mt}(\closedannulus)$, respectively.  The space
$\poly_d^{mt}(\C_\zer)$ has several compactifications from algebraic
geometry, and it would be interesting to know more about how our
compactification relates to them. For example, our work is distinct
from the well-known Fulton--MacPherson compactification of
$\poly_d^{mt}(\C_\zer)$, in which collisions of distinct roots
are recorded using their relative positions and velocities 
\cite{fulton-macpherson94}. In contrast, each collision
in our compactification is described using metric cacti which record
the relative cyclic ordering of the colliding roots.

Another distinguishing feature of our work in comparison to the references
mentioned above is the concrete connection with the dual presentation
for the braid group. In particular, the identification of
$\poly_d^{mt}(\closedint)$ with the order complex 
$\size{\ncpart_d}_\simp = \Delta([1,\delta])$
and of $\poly_d^{mt}(\circleint)$ with the dual braid complex
$K_d = K([1,\delta])$ is new---although it was known to Bessis that
$\poly_d^{m}(\mathbb{C}_\zer)$ deformation retracts onto a copy of
$K_d = K([1,\delta])$ \cite[Proposition~10.6]{bessis-03}. 
Similarly, W. Thurston et al. showed that
$\poly_d^{m}(\mathbb{C}_\zer)$ deformation retracts onto
$\poly_d^{mt}(\circleint)$ \cite{thurston20}, but without the cell
structure, metric, or the connection with the dual presentation. Our explicit
description of the cell structure for $\poly_d^{mt}(\closedannulus)$
and the induced structure on $\poly_d^{mt}(\circleint)$ will hopefully
provide a useful bridge between the geometric combinatorics of
polynomials and the geometry of the braid group.

\subsection*{Generalizations and conjectures}
The metric cell structures for the polynomial spaces
$\poly_d^{mt}(\closedsquare)$ and $\poly_d^{mt}(\closedannulus)$
presented in this article prompt several immediate
generalizations and conjectures.  First, we believe that the
stratified Euclidean metric is non-positively curved in the 
following sense.

\begin{conj}\label{conj:cat0}
  The stratified Euclidean metric on the space $\poly_d^{mt}$ of all monic degree-$d$ 
  complex polynomials up to translation is $\cat(0)$.
\end{conj}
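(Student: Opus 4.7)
The plan is to apply the Cartan--Hadamard theorem for singular spaces: a complete, simply connected length space which is locally $\cat(0)$ is globally $\cat(0)$. The underlying space $\poly_d^{mt}$ is homeomorphic to $\C^{d-1}$ and therefore simply connected, by Theorem~\ref{mainthm:homeomorphisms} together with the contractibility of $\opensquare$. For completeness, I would use the exhaustion $\poly_d^{mt} = \bigcup_n \poly_d^{mt}(\closedsquare_n)$ by an increasing sequence of closed rectangles: each $\poly_d^{mt}(\closedsquare_n)$ is a compact piecewise Euclidean cell complex by Theorem~\ref{mainthm:rectangles}, and successive pieces include isometrically as subcomplexes, since the stratified metric is the pull-back via the Lyashko--Looijenga map of the flat metric on the multiset space and this pull-back is compatible with the inclusions of rectangles. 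This shows that the stratified Euclidean metric is proper and complete on $\poly_d^{mt}$, and a geodesic metric on each compact piece, hence on the union.

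The problem then reduces to verifying the link condition: $\link(p, \poly_d^{mt})$ is $\cat(1)$ for every point $p$. At a generic polynomial (simple critical points with pairwise distinct critical values) the Lyashko--Looijenga map is a local isometry to a flat open subset of $\C^{d-1}$, so the link is a round sphere. For degenerate $p$, Theorem~\ref{mainthm:rectangles} would let us identify a neighborhood of $p$ with an open subset of a subcomplex of $|\ncperm_d|_\Delta \times |\ncperm_d|_\Delta$ with the orthoscheme metric, so that $\link(p)$ embeds in the spherical join of two combinatorial spherical complexes built from noncrossing permutations. The geometric intuition is that the branched-cover structure of the Lyashko--Looijenga map contributes excess cone angle at each branching stratum; this is already visible in the interval case of Theorem~\ref{mainthm:intervals}, where $\poly_3^{mt}(\closedint)$ is a union of three right triangles glued along a common edge with total angle $3\pi \geq 2\pi$.

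The main obstacle is establishing $\cat(1)$ for the combinatorial spherical links that arise from the subcomplex structure inside $|\ncperm_d|_\Delta \times |\ncperm_d|_\Delta$. This is closely tied to the conjecture of Brady and the second author in \cite{BrMc-10} that the dual braid complex $K_d$ is locally $\cat(0)$, which has been verified only for $d \leq 7$ \cite{BrMc-10, haettel-kielak-schwer-16, jeong23}. I expect that a full resolution of Conjecture~\ref{conj:cat0} will require settling this long-standing open problem, and conversely that Conjecture~\ref{conj:cat0} may imply it via the deformation retraction of Theorem~\ref{mainthm:deformations} together with the cell-structure identifications in Theorems~\ref{mainthm:intervals} and \ref{mainthm:circles}. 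In low degrees, the conjecture should follow by combining the known link computations with the link analysis sketched above.
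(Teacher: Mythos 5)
The statement you are trying to prove is Conjecture~\ref{conj:cat0}: the paper offers no proof of it, and explicitly treats it as open. The surrounding discussion in the paper states that $\poly_d^{mt}$ being $\cat(0)$ would imply the main conjecture of \cite{BrMc-10} (that $K_d$ is locally $\cat(0)$, hence that $\braid_d$ is a $\cat(0)$ group), known only for $d \leq 7$, and Conjecture~\ref{conj:cat0-equivalence} asserts the converse implication. Your proposal is therefore not a proof but a reduction, and to your credit you say so: everything up to and including the reduction to the link condition is the standard (and surely intended) strategy --- Cartan--Hadamard for complete simply connected length spaces, properness via the exhaustion by the compact complexes $\poly_d^{mt}(\closedsquare_n)$ of Theorem~\ref{mainthm:rectangles}, flatness of the generic stratum where the Lyashko--Looijenga map is a local isometry onto $\conf_n(\C) \subset \C^n$. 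The genuine mathematical content, namely that the links of degenerate polynomials are $\cat(1)$, is exactly the open problem, and no argument is offered for it.

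Two specific cautions on the reduction itself. First, the fact that $\poly_d^{mt}(\closedsquare)$ sits inside $|\ncperm_d|_\Delta \times |\ncperm_d|_\Delta$ as a (full) subcomplex does not by itself help: a full subcomplex of a locally $\cat(0)$ complex need not be locally $\cat(0)$ (its links are full subcomplexes of spherical joins, and fullness does not imply the $\cat(1)$ condition), and in any case the ambient product is itself only known to be locally $\cat(0)$ when $|\ncperm_d|_\Delta$ is, which is again the open conjecture. So the embedding of Theorem~\ref{mainthm:rectangles} transports the difficulty rather than resolving it. Second, your cubic example only checks links at interior points of codimension-one faces (total angle $3\pi$ around the common hypotenuse); the hard links are at the deeper strata, e.g.\ at the vertex $[z^d + c]$ where all critical values coincide, and these are precisely the vertex links of the orthoscheme complexes whose $\cat(1)$ property is unverified for $d \geq 8$. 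Your closing expectation --- that the conjecture is essentially equivalent to the local $\cat(0)$ property of $K_d$ via Theorems~\ref{mainthm:intervals}, \ref{mainthm:circles} and \ref{mainthm:deformations} --- agrees with the authors' own Conjecture~\ref{conj:cat0-equivalence}, but neither direction is established in the paper.
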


This conjecture about a natural piecewise Euclidean metric on the space of all polynomials
has consequences for the study of braid groups.  It is not too difficult to show that for any 
fixed $d$, the space $\poly_d^{mt} = \poly_d^{mt}(\C)$ is $\cat(0)$ if and only if $\poly_d^{mt}(\closedsquare)$
is $\cat(0)$, which is true if and only if $\poly_d^{mt}(\closedannulus)$ is locally $\cat(0)$.  In other words, 
these three spaces have closely related curvature properties.  Similarly, $\poly_d^{mt}(\closedint)$ is $\cat(0)$
if and only if $\poly_d^{mt}(\circleint)$ is locally $\cat(0)$.  Finally, when the first three are (locally) $\cat(0)$
the last two are (locally) $\cat(0)$.  This means, in particular, that proving Conjecture~\ref{conj:cat0} for 
a particular $d$ would establish the main conjecture in \cite{BrMc-10} for this $d$, and show that the $d$-strand braid 
group $\braid_d$ is a $\cat(0)$ group.  We conjecture that this implication is reversible.

\begin{conj}\label{conj:cat0-equivalence}
  The stratified Euclidean metric on the space $\poly_d^{mt}$ is $\cat(0)$ if and 
  only if the orthoscheme metric on the dual braid complex $K_d$ is locally $\cat(0)$.
\end{conj}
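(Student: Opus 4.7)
The plan is to separate the two implications, using the chain of equivalences already recorded in the paragraph preceding the conjecture. The ``only if'' direction, indicated informally in that paragraph, follows by combining the equivalence that $\poly_d^{mt}$ is $\cat(0)$ iff $\poly_d^{mt}(\closedannulus)$ is locally $\cat(0)$ with the deformation retraction of Theorem~\ref{mainthm:deformations}: the retract $K_d = \poly_d^{mt}(\circleint)$ sits inside $\poly_d^{mt}(\closedannulus)$ as a locally convex subcomplex, so local $\cat(0)$-ness passes from the ambient complex to the retract. The genuine content of the conjecture is the reverse implication. Using Theorems~\ref{mainthm:intervals} and~\ref{mainthm:rectangles} to translate both sides into subcomplexes of product complexes, the reverse direction reduces to the following assertion: assuming the order complex $|\ncpart_d|_\Delta$ (with the orthoscheme metric) is $\cat(0)$, the subcomplex $\poly_d^{mt}(\closedsquare) \subset |\ncpart_d|_\Delta \times |\ncpart_d|_\Delta$ is also $\cat(0)$.

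With this setup the natural framework is Cartan--Hadamard for subcomplexes. The metric product $|\ncpart_d|_\Delta \times |\ncpart_d|_\Delta$ is $\cat(0)$ by assumption; the abstract already notes that $\poly_d^{mt}(\closedsquare)$ is homeomorphic to a closed $2(d-1)$-ball, hence simply connected, contractible, and complete; and for a simply connected complete subcomplex of a $\cat(0)$ space, being $\cat(0)$ is equivalent to being locally convex in the ambient metric. Local convexity can be checked link by link, and since the bisimplicial cell structure has finitely many cell types, it suffices to verify the link condition at each vertex of $\poly_d^{mt}(\closedsquare)$.

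The vertices of $\poly_d^{mt}(\closedsquare)$ are indexed by the Martin--Savitt--Singer basketballs (Definition~\ref{def:basketballs}), each of which determines a pair $(v_1,v_2)$ of noncrossing partitions together with compatibility data describing how maximal chain extensions in the two factors are related by the Hurwitz action. Thus each basketball sits at a product vertex $(v_1,v_2)$ whose link in $|\ncpart_d|_\Delta \times |\ncpart_d|_\Delta$ is the spherical join $\link(v_1,|\ncpart_d|_\Delta) * \link(v_2,|\ncpart_d|_\Delta)$, each factor of which is $\cat(1)$ under the hypothesis. The link inside $\poly_d^{mt}(\closedsquare)$ is a proper subcomplex of this join, cut out precisely by the basketball/Hurwitz compatibility. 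The main obstacle is to show that this subcomplex is $\pi$-convex in the join. Because the Hurwitz constraint couples the two factors nontrivially, the link is \emph{not} a subjoin of the form $L_1' * L_2'$, so naive product-convexity arguments do not apply.

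To attack this link condition I would aim for a ``flag-like'' property of the basketball subcomplex: show that every bisimplex of the product whose corner vertices are basketballs compatible with a common pair of maximal chains is itself a cell of $\poly_d^{mt}(\closedsquare)$. Equivalently, one can attempt to describe the link at a basketball vertex directly through the Fuss--Catalan-indexed poset of basketball refinements and verify that this poset embeds $\pi$-convexly into the join of the two chain links. The connection with noncrossing hypertrees and the Hurwitz action of simple braids noted after Theorem~\ref{mainthm:rectangles} suggests that the right convexity statement is controlled by those combinatorial structures; isolating the correct combinatorial criterion and then verifying it at the metric level is precisely where new input is needed and where the difficulty of the conjecture is concentrated.
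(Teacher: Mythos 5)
The statement you are trying to prove is Conjecture~\ref{conj:cat0-equivalence}, which the paper states as an \emph{open conjecture} and does not prove; there is no argument in the text to compare yours against, only the informal chain of equivalences in the surrounding paragraph (themselves asserted with ``it is not too difficult to show'' rather than proved). Your proposal is an honest research plan rather than a proof, and you say as much: the entire content of the reverse implication is deferred to the unverified claim that the link of each basketball vertex of $\poly_d^{mt}(\closedsquare)$ is $\pi$-convex in the spherical join $\link(v_1)\ast\link(v_2)$. That is exactly where the difficulty lives, and nothing in the paper supplies it, so the proposal has a genuine and acknowledged gap at its core. A few of the supporting steps also need repair. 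First, ``being $\cat(0)$ is equivalent to being locally convex in the ambient metric'' is only an implication: a complete connected locally convex subspace of a $\cat(0)$ space is convex and hence $\cat(0)$, but a subcomplex can be $\cat(0)$ in its induced length metric (which is what the stratified Euclidean metric is) without being locally convex in the product. So your reduction proves a \emph{sufficient} condition; if the link condition failed you could not conclude anything about the conjecture. Second, in the ``only if'' direction you assert that $K_d=\poly_d^{mt}(\circleint)$ sits in $\poly_d^{mt}(\closedannulus)$ as a locally convex subcomplex; that is again an unproved convexity claim, not a consequence of Theorem~\ref{mainthm:deformations} (a deformation retract of a locally $\cat(0)$ space need not be locally $\cat(0)$). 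The paper's own route to the forward direction goes through the asserted equivalences $\poly_d^{mt}\ \cat(0)\Leftrightarrow\poly_d^{mt}(\closedsquare)\ \cat(0)\Leftrightarrow\poly_d^{mt}(\closedannulus)$ locally $\cat(0)$, together with the (also asserted) implication to the interval and circle cases, none of which is proved in this article.

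As a plan of attack your outline is reasonable and consistent with how the authors frame the problem: translate both sides into the orthoscheme world via Theorems~\ref{mainthm:intervals} and~\ref{mainthm:rectangles}, and reduce to a combinatorial link condition at basketball vertices governed by the Hurwitz action and noncrossing hypertrees. But identifying and verifying that combinatorial criterion is precisely the open problem, so what you have written should be presented as a strategy for the conjecture, not as a proof of it.
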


One natural extension of our results comes from generalizing the algebraic
labels on our cell structures. For each Coxeter group $W$, there
is a corresponding Artin group $A_W$ which is the fundamental group
of the \emph{orbit configuration space} $Y_W$---the long-standing
\emph{$K(\pi,1)$ conjecture} for Artin groups claims that $Y_W$ is always a
classifying space for $A_W$.  See Paolini's survey \cite{paolini-survey} 
for background and \cite{charney-davis95,delucchi22,huang24,paolini-salvetti21} 
for progress on this problem.

In this article, we are concerned with the case where $W$ is the
symmetric group $\sym_d$, $A_W$ is the braid group $\braid_d$, and
$Y_W$ is the complement of the complex braid arrangement in $\C_d$.  Moreover,
$Y_W$ is homeomorphic to $\poly_d^{mc}(\C_\zer)$. The
linear and rectangular factorizations of $\delta$ defined in this series of articles 
generalizes to an arbitrary Coxeter group $W$, with $\delta$
replaced by a \emph{Coxeter element} of $W$. By mimicking our
construction of the branched annulus complex with algebraic cell labels, we are able
to define a metric bisimplicial structure on a manifold with corners
$\br_{W,\delta}^{m}(\closedannulus)$. By Theorems~\ref{mainthm:annuli}, \ref{mainthm:homeomorphisms} 
and \ref{mainthm:compactifications}, the space $\br_{W,\delta}^{m}(\closedannulus)$ is a 
compactification of $\poly_d^{mt}(\C_\zer)$ when $W = \sym_d$ and $\delta = (1\ \cdots\ d)$. 
We conjecture that a similar claim holds for general Coxeter groups (noting that 
$\br_{W,\delta}^{m}$ will not be compact when $W$ is infinite).

\begin{conj}\label{conj:dual-artin-groups}
  Let $W$ be a Coxeter group and let $\delta \in W$ be a Coxeter
  element. Then the bisimplicial complex
  $\br_{W,\delta}^{m}(\closedannulus)$ is homotopy equivalent to its
  interior, which is homeomorphic to the orbit configuration space $Y_W$.
\end{conj}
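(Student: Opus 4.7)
The plan is to replicate the blueprint established for the symmetric group case in the first six main theorems. There, Theorem~\ref{mainthm:annuli} gives the bisimplicial structure on $\br_d^m(\closedannulus) \cong \poly_d^{mt}(\closedannulus)$, Theorem~\ref{mainthm:homeomorphisms} identifies its interior with $\poly_d^{mt}(\C_\zer)$, and Theorems~\ref{mainthm:compactifications} and \ref{mainthm:deformations} provide the compactification/deformation retraction picture. For a general Coxeter group $W$ with Coxeter element $\delta$, the corresponding combinatorial object is the noncrossing partition lattice $\ncpart_{W,\delta} = [1,\delta]_W$ inside the absolute order on $W$ (known to be a lattice in the finite case by Bessis and Brady--Watt, and conjecturally so in general). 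The $W$-analogue of the branched rectangle complex is built from pairs of maximal chains of $\ncpart_{W,\delta}$ related by the Hurwitz action of a simple braid, exactly as in the $\sym_d$ setting but with transpositions replaced by the reflections of $W$; one then quotients by the face identifications paralleling the $\closedsquare \to \closedannulus$ collapse to define $\br_{W,\delta}^{m}(\closedannulus)$ abstractly.

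The first concrete step is to identify the interior of $\br_{W,\delta}^{m}(\closedannulus)$ with $Y_W$ when $W$ is finite. Here one would construct the Coxeter-theoretic analogue of the Lyashko--Looijenga map, using the Chevalley isomorphism $V_\C/W \cong \C^n$ and the discriminant hypersurface, exactly as Bessis does in his proof of the $K(\pi,1)$ conjecture for finite complex reflection groups of well-generated type. The interior of $\br_{W,\delta}^{m}(\closedannulus)$ is stratified over the complement of the discriminant via this map, and the top-dimensional cells correspond to the $W$-length factorizations of $\delta$; matching the orthoscheme simplices in each factor against the local monodromy of the discriminant should yield the homeomorphism with $Y_W$. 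The second step is the homotopy equivalence, which we would establish by constructing a fiberwise deformation retract onto the interior: the boundary strata of $\br_{W,\delta}^{m}(\closedannulus)$ correspond to collisions in the branched cover, and these can be pushed into the interior by the same kind of radial flow that appears in the $\sym_d$ case of Theorem~\ref{mainthm:deformations}.

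The main obstacle lies in the infinite case, where there is no polynomial model and no basic invariants to produce a Lyashko--Looijenga map. In that regime the identification with $Y_W$ must be established abstractly, presumably by exhibiting $\br_{W,\delta}^{m}(\closedannulus)$ as a cellulation of the Salvetti-style complex for $A_W$ (or of a natural thickening of the dual Garside classifying space $K([1,\delta]_W)$) and comparing it with a known model of $Y_W$. Even the cellulation step requires $[1,\delta]_W$ to be a lattice, which is open for arbitrary infinite $W$. A reasonable intermediate target, therefore, is to prove Conjecture~\ref{conj:dual-artin-groups} under the hypothesis that $\ncpart_{W,\delta}$ is a lattice, reducing the general statement to a known conjectural combinatorial input; for finite and affine $W$, where the lattice property is established, this would already yield a new compactification of $Y_W$ and a geometric strengthening of the corresponding cases of the $K(\pi,1)$ conjecture.
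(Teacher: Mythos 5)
The statement you are trying to prove is Conjecture~\ref{conj:dual-artin-groups}, which the paper explicitly presents as an open conjecture: there is no proof in the paper to compare against, and what you have written is, as you yourself signal, a research program rather than a proof. The honest assessment is that every load-bearing step in your outline is either a restatement of the conjecture or a known open problem. In the finite case, identifying the interior of $\br_{W,\delta}^{m}(\closedannulus)$ with $Y_W$ via a Bessis-style Lyashko--Looijenga map is plausible in outline, but it requires constructing the $W$-analogues of the regular/critical point complexes, the side chains, and the stratified covering property of the $\LL$ map from scratch in the reflection-arrangement setting; none of that is supplied, and the paper's own proofs of Theorems~\ref{mainthm:rectangles} and~\ref{mainthm:annuli} lean heavily on features specific to polynomials (monodromy plus critical values determine the polynomial, Proposition~\ref{prop:monodromy-cvl}; unique path lifting through the $\LL$ map, Theorem~\ref{thm:ll-map-unique}). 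The ``fiberwise radial flow'' you invoke for the homotopy equivalence with the interior is precisely the content that would need to be proved, not a tool that is available.

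There is also a factual error that undermines your proposed fallback. You assert that the lattice property of $[1,\delta]$ is ``established'' for affine $W$. It is not: outside of a few affine types (e.g.\ $\widetilde{A}_n$ for suitable Coxeter elements and $\widetilde{C}_n$), the interval $[1,\delta]$ in an affine Coxeter group fails to be a lattice, which is exactly why McCammond--Sulway had to pass to a larger Garside supergroup to prove $A_{W,\delta}^* \cong A_W$ in the Euclidean case \cite{mccammond-sulway17}. So conditioning the conjecture on the lattice property does not recover the affine case, and it is not even clear the lattice property is the right hypothesis: the bisimplicial complex is defined from reflection factorizations of $\delta$ and their Hurwitz orbits, which makes sense without any lattice assumption. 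Finally, note that as the paper itself observes, proving this conjecture (with the compactification statement) would imply $A_{W,\delta}^* \cong A_W$ in general and settle cases of the $K(\pi,1)$ conjecture --- so any proof must contain genuinely new input beyond transplanting the $\sym_d$ arguments. Your proposal correctly maps the terrain but does not close any of these gaps.
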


The fundamental group of $\br_{W,\delta}^{m}(\closedannulus)$ is the
``dual Artin group'' $A_{W,\delta}^*$, which has played an important
role in several recent advances in the study of Artin groups
\cite{delucchi22,mccammond-sulway17,paolini-salvetti21} and is
conjectured to be isomorphic to the standard Artin group $A_W$ in all
cases.  If Conjecture~\ref{conj:dual-artin-groups} can be proved with
a similar compactification to that of
Theorem~\ref{mainthm:compactifications}, this would imply the
isomorphism $A_{W,\delta}^* \cong A_W$.

\subsection*{Structure of the article}
The article is divided into three parts.  
Part~\ref{part:single-poly} describes the geometric and combinatorial data 
that can be extracted from a single monic complex polynomial.
Part~\ref{part:poly-spaces} shifts attention to spaces of polynomials and 
proves Theorems~\ref{mainthm:homeomorphisms}, \ref{mainthm:compactifications} 
and~\ref{mainthm:deformations}. And in Part~\ref{part:geo-comb} we combine 
the results from Parts~\ref{part:single-poly} and~\ref{part:poly-spaces} to 
establish Theorems~\ref{mainthm:intervals},  \ref{mainthm:circles}, \ref{mainthm:rectangles} 
and~\ref{mainthm:annuli} about specific cell complexes.
Although some of the content included here already exists in the literature, 
it is almost always written to handle much more general situations.  These discussions are 
intended to make our results accessible to a broader audience. 

Finally, it is important to note that the geometric combinatorics described in Part~\ref{part:single-poly} 
is similar to but distinct from those described in the first article in the series.
In \cite{DoMc22} the polynomial $p$ had to have \emph{distinct roots} and the 
constructions used the \emph{polar} coordinates of its (necessarily nonzero) 
critical values.  Here the polynomial $p$ is \emph{arbitrary} and the constructions use
the \emph{rectangular} coordinates of its (possibly zero) critical
values.  A connection between the two versions is discussed in
Section~\ref{sec:annulus-thmD}.  

\subsection*{Acknowledgments}
We are indebted first and foremost to Daan Krammer,
whose 2017 remarks are the origin of this entire project. 
We also thank Andrea Bianchi, Danny Calegari, Theo Douvropoulos 
and Nick Salter for their
bibliographic assistance and helpful conversations. We would also like 
to thank the organizers of the conferences and seminars over the last several years for their 
invitations as we have discussed the results of the first article in the series \cite{DoMc22} 
and previewed the results contained here, albeit in shifting language and on a shifting foundation.  
The transition from polar coordinates to the conceptually simpler and more symmetric rectangular coordinates that we alluded to above meant rewriting the 
foundational aspects of our constructions and significantly delayed the 
completion of the full article.  We are grateful for the comments we received.

Preliminary versions of these results were presented in seminars and colloquia: 
Otto-von-Guericke-Universit\"at Magdeburg (2020 MD), Temple University (2022 MD),
Haverford College (2022 MD), Northeastern University (2022 JM),
University at Albany (2023 MD), Iowa State University (2023 JM), and 
Isfahan University (2024 JM), and at conferences:
AMS Special Session on The Geometry of Complex Polynomials and Rational Functions (2020 MD), 
AMS Special Session on Groups, Geometry and Topology (2021 MD),
Braids and Beyond: the Dehornoy memorial conference at the University of Caen (2021 JM), 
Perspectives on Artin Groups, ICMS Edinburgh, Scotland (2021 JM),
Artin Groups, $\cat(0)$ geometry and related topics at the Ohio State University (2021 JM),
Garside theory and applications, Berlin, Germany (2021 JM), 
Braids in representation theory and algebraic combinatorics at ICERM (2022 JM), 
Arrangements in Ticino, Switzerland (2022 JM), 
Artin groups at the American Institute of Mathematics (2023 MD), 
a mini-workshop on Artin groups and triangulated categories at Oberwohlfach (2024 JM), 
Hot Topics: Artin groups and arrangements at SLMath (2024 JM).

\newpage
\part{A Single Polynomial}\label{part:single-poly}

The goal of Part~\ref{part:single-poly} is to introduce $4$ cell
complexes constructed from a monic complex polynomial $p \colon \C \to
\C$ with critical values in a fixed closed rectangle $\closedsquare$,
and then to define a map based on the information they contain.  The
four complexes $\Pb_p$, $\Pb'_p$, $\Qb_p$, and $\Qb'_p$ focus on the
regular points, critical points, regular values and critical values of
$p$, respectively. The letter $\Pb$ stands for ``polygon'' and
indicates a subdivided polygon in the domain. The letter $\Qb$ stands
for ``quadrilateral'' and indicates a subdivided rectangle in the
range.  The polynomial $p$ sends the ``point'' complexes with a $\Pb$
to the ``value'' complexes with a $\Qb$, and the ``critical''
complexes with primes are cellular duals of the ``regular'' complexes
without primes.  Since a cell complex and its dual contain essentially
the same information, we are really just introducing two complexes in
two different forms.  In Part~\ref{part:geo-comb} we use the
combinatorial structure of the regular point complex $\Pb_p$ and the
metric structure of critical value complex $\Qb'_p$ to construct
compact piecewise Euclidean cell complexes.

Part~\ref{part:single-poly} is structured as follows.
Section~\ref{sec:part-mult} establishes conventions for combinatorial
objects and maps. Section~\ref{sec:poly-brcover} does the same for
polynomials and branched covers. Section~\ref{sec:complex-map}
constructs the four complexes listed above.  Section~\ref{sec:nccomb}
reviews planar noncrossing combinatorics. And
Section~\ref{sec:geo-comb} defines a single map that collates the all of the geometric
and combinatorial information extracted from monic polynomials.

\section{Partitions and Multisets}\label{sec:part-mult}

This section records facts about basic combinatorial objects and the
maps between them defined for any space $\Xb$.  We begin with an example
illustrating set partitions, multisets, and integer partitions,
followed by the precise definitions.

\begin{exmp}[Partitions and multisets]\label{ex:part-mult}
  Let $\Xb$ be a space with five distinct points $a,b,c,d,e \in \Xb$ and
  consider the $7$-tuple $\bx=(a,b,a,c,d,c,e) \in \Xb^7$.  The set
  partition recording which coordinates are equal is $[\lambda] =
  \{\{1,3\},\{2\},\{4,6\},\{5\},\{7\}\}$, or $13|2|46|5|7$.  If we
  forget which entry came from which coordinate, the result is the
  $7$-element multiset $M = \{a,a,b,c,c,d,e\} =
  \{a^2,b^1,c^2,d^1,e^1\}$, or $a^2b^1c^2d^1e^1$. The multiplicities
  of $M$, $\{2,1,2,1,1\}$, form a partition $\lambda = 2^21^3$ of the
  number $7$, which can also be viewed as the sizes of the blocks of
  the set partition $[\lambda]$.  See Figure~\ref{fig:part-mult}.  In
  symbols, $\size{M} = 7$ and $\lambda \vdash 7$. The Young diagram of
    $\lambda$ is shown in Figure~\ref{fig:young}.
\end{exmp}

\begin{figure}
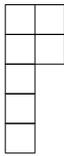

  \yng(2,2,1,1,1)
  \caption{The Young diagram of the integer partition $\lambda =
    2^21^3$.\label{fig:young}}
\end{figure}

In this article, the structure of the space $\Xb$ is always extremely simple, and we
indicate its shape using a visual shorthand.

\begin{defn}[Visual shorthand]\label{def:visual}
  The symbol $\closeddisk$ is a visual shorthand for the closed unit
  disk $\D$ or, more generally, for any closed topological disk
  embedded as a subspace of $\C$, including $\closedsquare$, which is
  shorthand for a closed rectangle.  Although the intended meaning
  should be clear from context, it is also rarely necessary to sharply
  distinguish between the unit disk and a topological disk.  See
  Corollary~\ref{cor:homeo}. Similarly, $\circleint$ is the unit
  circle $\T$ (or any embedded circle), $\closedint$ is the closed
  unit interval $\Ib=[-1,1]$ (or any embedded arc), and $\closedannulus$
  is the closed annulus formed by removing the open disk of radius
  $1/2$ from the closed unit disk $\D$ (or any embedded topological
  closed annulus).  We write $\opensquare$, $\opendisk$, and
  $\openannulus$ for the points in the interior of $\closedsquare$,
  $\closeddisk$ and $\closedannulus$ and $\openint$ for the points in
  $\closedint$ other than its two endpoints.  Finally, let
  $\opencircleint$ denote the unit circle with the point $-1$ removed.  
\end{defn}

\begin{rem}[Spaces]\label{rem:spaces}
  The reader may assume that $\Xb$ is an elementary subset of $\C$,
  such as an interval $\closedint$, a rectangle $\closedsquare$, a
  circle $\circleint$, an annulus $\closedannulus$, or $\C$ itself.
  In particular, $\Xb$ is a path-connected locally Euclidean
  Riemannian manifold of dimension $m >0$, possibly with boundary and
  possibly with corners.
\end{rem}
\begin{defn}[Set partitions]\label{def:set-part}
  Let $[n] = \{1,2,\ldots,n\}$.  A \emph{set partition of $[n]$} is a
  partition of $[n]$ into pairwise disjoint nonempty subsets whose
  union is $[n]$.  There is a standard shorthand notation for a set
  partition when $n$ is small.  Commas and brackets are removed, the
  elements in each block are listed in increasing order, and the
  blocks, separated by vertical bars, are listed in increasing order
  of their minimal elements.  See Example~\ref{ex:part-mult}.  We
  write $\spart_n$ to denote the collection of all set partitions
  of $[n]$ partially ordered by refinement.  The poset $\spart_n$
  is also known as the (set) \emph{partition lattice}.  Its unique
  minimum element is the \emph{discrete set partition} $1|2|\cdots|n$
  with $n$ blocks of size $1$, and its unique maximum element is the
  \emph{indiscrete set partition} $12\cdots n$ with $1$ block of size
  $n$.
\end{defn}

We often write $[\lambda] \vdash [n]$ as notation for a set partition
to highlight the parallel with integer partitions, but it needs to be
used with caution.  See Remark~\ref{rem:int-set}.

\begin{defn}[Multisets]\label{def:multisets}
  A (finite) \emph{multiset} $M = (S,m)$ is a finite set $S$ together
  with a \emph{multiplicity function} $m\colon S \to \N =
  \{1,2,\ldots\}$.  The number $m(x)$ is the \emph{multiplicity of
  $x\in S$}.  A (finite) multiset can be concisely described using the
  notation $M = \{x_1^{m_1},x_2^{m_2},\ldots,x_k^{m_k}\}$ where the
  underlying set is $S = \{x_1,\ldots,x_k\}$, with $x_i = x_j$ if and
  only if $i=j$, and where the exponent $m_i = m(x_i)$ denotes the
  multiplicity of the element $x_i$. In the shorthand notation, the
  commas and braces are removed.  All multisets in this article are
  finite and we drop the adjective.  The \emph{size} of a multiset is
  the sum of its multiplicities: $\size{M} = \size{M}_\mult = n =
  \sum_i m_i$.  This is the unmarked notion of size for multisets. The
  number of distinct elements in $M$ is the \emph{size of $M$ as a
  set}: $\size{M}_\set = \size{S}=k$ where $S = \set(M)$. When
  $\size{M}=n$, $M$ is an \emph{$n$-element multiset}, and when its
  elements are in $\Xb$, it is a \emph{multiset in $\Xb$}.  The
  collection $\mult_n(\Xb)$ of all $n$-element multisets in $\Xb$ can
  also be viewed as $\sym_n \backslash \Xb^n$, the orbits in $\Xb^n$
  under the coordinate permuting symmetric group action, since being
  able to permute the coordinates makes their order irrelevant.  Given
  our assumption on spaces (Remark~\ref{rem:spaces}) $\Xb^n$ a locally
  Euclidean manifold of dimension~$mn$ where $m = \dim(\Xb)$, possibly with boundary and/or
  corners.  The topology on $\mult_n(\Xb)$ comes from the quotient map
  $\Xb^n \to \mult_n(\Xb) = \Xb^n / \sym_n$.
  In this article, $\Xb$ is always a subset of $\C$, so all multisets have
  elements in $\C$, and $\mult_n$ is used as shorthand for
  $\mult_n(\C)$. In general, we omit arguments in this manner only
  when the argument is all of $\C$.
\end{defn}

\begin{defn}[Integer partitions]\label{def:int-part}
  A \emph{partition of a positive integer $n$} can be viewed as a
  multiset of positive integers whose sum is $n$, but care is required
  since partitions and multisets have distinct standard terminology.
  Let $\lambda$ be a multiset of $k$ positive integers whose sum is
  $n$.  The elements of $\lambda$ are its \emph{parts}, the number $k$
  is its \emph{length} and the number $n$ is its \emph{size}.  We
  write $\len(\lambda)=k$ and $\lambda \vdash n$.  Note that the
  length of $\lambda$ (as a partition) is its size as a multiset, and
  its size (as a partition) is its sum.  The parts of a partition are
  typically listed in weakly decreasing order, summarized using
  exponents, and visualized using Young diagrams as in
  Example~\ref{ex:part-mult}.  Concretely, a typical partition is of
  the form $\lambda = \lambda_1^{a_1} \lambda_2^{a_2} \cdots
  \lambda_\ell^{a_\ell}$, where $\lambda_1 > \lambda_2 > \cdots >
  \lambda_\ell$ are its $\ell$ distinct parts, the number $a_i$
  indicates the number of times the part $\lambda_i$ occurs, its
  length $k = a_1 + \cdots + a_\ell$ is the total number of parts, and
  its size $n = \sum_{i=1}^\ell a_i \cdot \lambda_i$ is the sum of its
  parts.  We write $\ipart_{n}$ to denote the collection of all
  integer partitions of $n$.
\end{defn}

\begin{figure}
  \begin{tikzcd}[scale=.95]
    \bx=(a,b,a,c,d,c,e) \arrow[r,rightarrow] \arrow[d,rightarrow] &
    \textrm{$[\lambda] = 13|2|46|5|7$} \arrow[d,rightarrow] & \Xb^n
    \arrow[r,rightarrow] \arrow[d,rightarrow] & \spart_n
    \arrow[d,rightarrow] \\ M=a^2b^1c^2d^1e^1 \arrow[r,rightarrow] &
    \lambda=2^21^3 & \mult_n(\Xb) \arrow[r,rightarrow] & \ipart_n 
  \end{tikzcd}
  \caption{An $n$-tuple $\bx \in \Xb^n$ determines a set partition
    $[\lambda] \vdash [n]$ in $\spart_n$, an $n$-element multiset
    $M \in \mult_n(\Xb)$, and an integer partition $\lambda \vdash n$ in
    $\ipart_n$. In the example $n=7$. \label{fig:part-mult}}
\end{figure}

The maps in Figure~\ref{fig:part-mult} form a commuting
square, and they are defined as follows.
  
\begin{defn}[Maps]\label{def:set-maps}
  The map $\Xb^n \to \spart_n$ sends each $n$-tuple $\bx$ to the set
  partition $[\lambda]$ recording which of its coordinates are equal.
  The map $\Xb^n \to \mult_n(\Xb)$ sends each $n$-tuple $\bx$ to its
  multiset $M$ of coordinates.  The map $\mult_n(\Xb) \to \ipart_n$
  sends a multiset $M$ to its multiset of multiplicities.  And the map
  $\spart_n \to \ipart_n$ sends the set partition $[\lambda]$ to the
  multiset of its blocks sizes. The vertical maps are always onto and
  the horizontal maps are onto since $\Xb$ has infinitely many elements
  (Remark~\ref{rem:spaces}). For the top and left maps, we name the
  maps by their output, writing $M = \mult(\bx)$ and $[\lambda] =
  \spart(\bx)$.  For the three maps (horizontal, vertical and implicit
  diagonal) that end at $\ipart_n$, we describe $\lambda$ as the
  \emph{shape of $M$}, the \emph{shape of $[\lambda]$} and the
  \emph{shape of $\bx$}. and write $\lambda = \shape(M) =
  \shape([\lambda]) = \shape(\bx)$.  The common name for these maps is
  unambiguous in practice since the object being evaluated determines
  the domain.
\end{defn}

\begin{rem}[Functions]\label{rem:functions}
  The elements of the four spaces in Figure~\ref{fig:part-mult} can
  also be interpreted as functions.  A point $\bx \in \Xb^n$ corresponds
  to a function $f\colon [n] \to \Xb$, and note that both the domain and
  range have distinguishable elements.  If we make the elements of the
  domain / the range / both the domain and the range
  indistinguishable, then $f$ carries less information, and the
  remaining information is captured by the multiset $M$ / the set
  partition $[\lambda]$ / the shape $\lambda$.
\end{rem}

The integer partitions of $n$ can be viewed as a poset or as an
acyclic category.

\begin{rem}[Acyclic categories]
  An \emph{acyclic category} is a category where (1) the only arrows
  starting and ending at the same object are the identity arrows, and
  (2) if there is a nonidentity arrow $f$ from $a$ to $b$, there is no
  arrow from $b$ to $a$.  Acyclic categories are to posets as
  Hatcher's $\Delta$-complexes are to simplicial complexes. Every
  poset can be viewed as an acyclic category by using the elements as
  objects and drawing a single arrow $p \to q$ if and only if $p \leq
  q$.  In the other direction, every acyclic category has an
  underlying poset structure by defining $p \leq q$ if there exists an
  arrow $p \to q$.  Note that sequentially applying both constructions
  to a poset reproduces the origin poset, but applying both
  constructions to an acyclic category produces a quotient category
  where all arrows with the same endpoints have been identified.
  Acyclic categories were introduced by Andre Haefliger under the name
  ``small categories without loops'' to study complexes of groups
  \cite{Haefliger-91}. See also \cite{bridson-haefliger}.  The
  ``acyclic category'' terminology is from \cite{Kozlov-08}.
\end{rem}

\begin{defn}[Ordering integer partitions]\label{def:ordering-int-part}
  If $\lambda$ is an integer partition and $\pi$ is a partition of the
  parts of $\lambda$ then there is a new partition $\mu$ whose parts
  are the sums of the numbers in the blocks of $\pi$. We write 
  $\lambda \stackrel{\pi}{\to} \mu$, or simply $\lambda \to \mu$ when
  the partition $\pi$ is implicitly understood. If $\lambda =
  3^12^11^2 \vdash 7$ and $\pi = \{\{3,2\},\{1,1\}\}$, for example,
  then $\mu = 5^12^1$.  Note that multiple arrows can exist with the
  same endpoints.  In our example, $\pi' = \{\{3,1,1\},\{2\}\}$ is a
  distinct partition that also turns $\lambda$ into $\mu$.  The
  collection of all arrows of this form turn the set $\ipart_n$ into
  an acyclic category.  And it can be simplified to its underlying
  poset structure by defining $\lambda \leq \mu$ if there exists a
  partition $\pi$ with $\lambda \stackrel{\pi}{\to} \mu$.  With this
  partial order $\ipart_n$ becomes a bounded graded poset.  It is
  bounded below by the \emph{discrete partition} $\lambda = 1^n$,
  bounded above by the \emph{indiscrete partition} $\lambda = n^1$ and
  the grading is determined by the number of parts.  The map $\shape$
  from $\spart_n$ to $\ipart_n$ can be viewed as a functor between
  acyclic categories, which simplifies to an order-preserving
  rank-preserving poset homomorphism if the acyclic category structure
  on $\ipart_n$ is replaced with its underlying poset structure.  See
  Figure~\ref{fig:spart4-ipart4}.
\end{defn}

The acyclic category of integer partitions is used to index
stratifications of polynomial spaces. See Section~\ref{sec:poly-ll-maps}.

\begin{figure}
  \includegraphics[width=\textwidth]{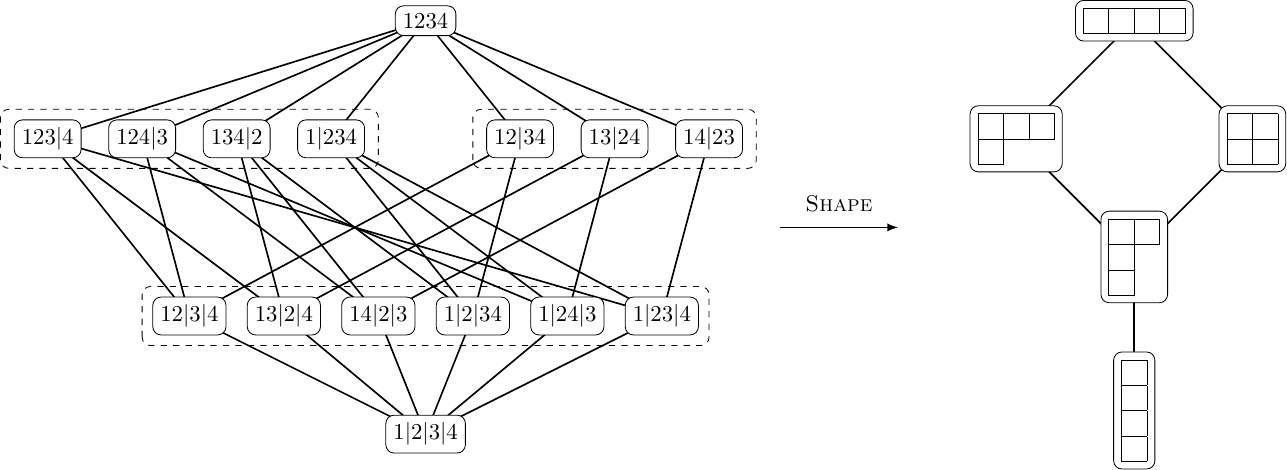}
  \caption{The poset homomorphism $\shape$ from the $15$ element poset
    $\spart_4$ to the $5$ element poset $\ipart_4$. The dashed
    lines enclose set partitions sent to the same integer
    partition.\label{fig:spart4-ipart4}}
\end{figure}

\begin{rem}[Set partition notation]\label{rem:int-set}
  Although we write $[\lambda] \vdash [n]$ to denote a set partition
  of shape $\lambda \vdash n$, the reader should note that this
  notation is ambiguous. In Figure~\ref{fig:spart4-ipart4}, for
  example, there are $4$ distinct set partitions of shape $3^11^1$,
  and $[3^11^1]$ could refer to any one of them. There is no such
  ambiguity at either extreme since the discrete set partition
  $1|2|\cdots|n$, with $n$ blocks of size $1$, is the unique set
  partition of shape $1^n$ and the indiscrete set partition $12\cdots
  n$, with $1$ block of size $n$, is the unique set partition of shape
  $n^1$.
\end{rem}

\section{Polynomials and Branched Covers}\label{sec:poly-brcover} 

This section records basic facts about polynomials
(\ref{subsec:poly}), branched covers between
surfaces~(\ref{subsec:branched-covers}), and planar branched covers
between disks (\ref{subsec:planar-covers}).

\subsection{Polynomials}\label{subsec:poly}
For any $U \subset \C$, let $\C_U$ be the complement $\C \setminus U$,
so that $\C_\zer$ is $\C^\ast$ with $\zer=\{0\}$. Let $\D$ be the
closed unit disk with unit circle boundary $\T = \partial \D$.

\begin{defn}[Polynomials]\label{def:poly}
  For each $d\in \N$, let $\poly_d \subset \C[z]$ be the collection of
  \emph{complex polynomials of degree $d$}.  Concretely, for $p \in
  \poly_d$, we write $p(z) = c_0 z^d + c_1 z^{d-1} + \cdots + c_{d-1}
  z + c_d$ with $c_0 \in \C_\zer$ and $c_i \in \C$ for $i \in [d]$.
  The polynomial $p$ is \emph{monic} if $c_0 =1$ and \emph{centered}
  if $c_1=0$. Let $\poly_d^m$ be the subspace of monic polynomials and
  let $\poly_d^{mc}$ be the subspace of monic, centered polynomials.
  Using \emph{coefficient coordinates}, there are natural
  homeomorphisms $\poly_d^{m} \cong \C^d$ and $\poly_d^{mc} \cong
  \C^{n}$ where $n = d-1$.
\end{defn}

\begin{defn}[Points and values]\label{def:cpt-cvl}
  Let $p \colon \C \to \C$ be a degree-$d$ polynomial. 
  The roots of the derivative $p'$ form an $n$-element 
  multiset (where $n = d-1$) called the \emph{critical points} of $p$.
  In
  symbols, if $p'(z) = d \cdot c_0 \cdot (z-z_1)^{n_1} \cdots
  (z-z_k)^{n_k}$, then $\cpt = \cpt(p) = \{ z_1^{n_1}, \ldots,
  z_k^{n_k} \}$. The $z_i$ are the $k$ critical points, the $n_i =
  n_{z_i}$ are their multiplicities, and we typically order the $z_i$
  so that the $n_i$ are weakly decreasing.  Note that $n = n_1 +
  \cdots + n_k 
  = d-1$ is the size of $\cpt$.  The \emph{critical
  values} of $p$ are the images of the critical points, and the
  $n$-element multiset $\cvl = \cvl(p)$ is $p(\cpt)$. The points in
  the domain and the values in the range that are not critical are
  \emph{regular points} and \emph{regular values}. A subset is
  \emph{regular} if every element is regular.  Finally, denote the
  full preimage of $\cvl$ by $\cpt^+ = \{z_1, \dots, z_m\}=
  p^{-1}(\cvl)$, where $m\geq k$. If $m>k$, the regular points in
  $\cpt^+$ are listed at the end.  We still have $n_1 + \cdots + n_m =
  n$ since $n_i =0$ for regular points.
\end{defn}

The following polynomial is going to be our running example throughout
Part~\ref{part:single-poly}.

\begin{example}\label{ex:deg5}
  Let $p$ be the complex polynomial
  \[z^5 + \left(\frac{-17+6i}{4}\right)z^4 +
  \left(\frac{73-63i}{15}\right)z^3 +
  \left(\frac{34-12i}{25}\right)z^2 +
  \left(\frac{-308+252i}{125}\right)z.\] Its critical points are
  $\cpt(p) =\{-\frac25, \frac25, \frac{7-7i}{5}, \frac{10+i}{5}\}$,
  all with multiplicity~$1$, and its (rounded) critical values are
  $\cvl(p) \approx \{.8-.6i, -.6+.5i, -8.5-4.3i, 3.6-6.9i\}$, again
  all with multiplicity~$1$.  The critical values are listed in the
  same order as the critical points: for example, $p(\frac25) \approx
  -.6+.5i$.  Figure~\ref{fig:deg5-original-range} on
  page~\pageref{fig:deg5-original-range} shows the range of $p$ with
  its $4$ critical values marked as yellow dots.
  Figure~\ref{fig:deg5-original-domain} on
  page~\pageref{fig:deg5-original-domain} shows the domain of $p$ with
  its $4$ critical points marked as yellow dots.
\end{example}

Polynomials are examples of planar branched covers, and general
branched covers are locally modeled on degenerate polynomials.

\begin{defn}[Degenerate polynomials]\label{def:degenerate}
  The polynomial $p(z) = a\cdot (z-b)^d +c$ of degree $d>1$ is
  \emph{degenerate} in the sense that both its critical point multiset
  $\cpt(p) = \{b^n\}$ and its critical value multiset $\cvl(p) =
  \{c^n\}$ are indiscrete, and as far from generic as possible.
  Regular values have $d$ point preimages under $p$, but the critical
  value $c$ has only one, so $c$ has $n=d-1$ missing preimages. 
  The \emph{power polynomial} $p \colon \C \to \C$ with
  $p(z) = z^d$ is the identity when $d=1$ and degenerate for $d>1$.
\end{defn}

\begin{defn}[Branched even coverings]\label{def:br-even-cover}
  Let $\db = (d_1,\ldots, d_m) \in \N$ be an $m$-tuple of natural
  numbers with sum $d = d_1 + \cdots + d_m$.  Let $\pb_\db \colon \Zb
  \to \Wb$ be the map where $\Wb = \D$, $\Zb = \bigsqcup_{i\in[m]} \D$
  is a disjoint union of $m$ copies of the closed unit disk $\D$, and
  the $i^{th}$ component map $\pb_i \colon \D \to \D$ is the power map
  $\pb_i(z) = z^{d_i}$.  And let $p_\db \colon Z \to W$ be the map from 
  the interior of the domain to the interior of the range.  When 
  $\db = \one = (1,1,\ldots,1)$, each $p_i$ is a homeomorphism and 
  $p_\db$ is the prototypical \emph{even covering} used to define a
  (finite sheeted) covering map.  For general $\db$ we say that
  $p_\db$ is a \emph{branched even covering}. When a degenerate 
  component map exists, $0 \in W$ is a (unique) critical value
  and its multiplicity is $d-m$. The critical points are at the origin 
  in components where the component map is degenerate.
\end{defn}

\subsection{Branched covers}\label{subsec:branched-covers}
Branched covers are surface maps that locally look like branched even
covers.  Here is what we mean by ``surface'' and ``looks like''.

\begin{defn}[Surfaces]\label{def:surfaces}
  Unless otherwise stated, all surfaces in this article are either
  closed compact oriented $2$-dimensional manifolds, possibly with
  boundary, or open surfaces homeomorphic to the interior of one of
  our closed surfaces.  We use boldface letters for closed surfaces,
  non-bold letters for open surfaces, and the same letter for a closed
  surface and its open interior, i.e. a closed surface $\Zb$ has
  interior $Z$.  For a connected closed surface $\Zb$, let $\wh \Zb$
  be the closed surface without boundary obtained by attaching a disk
  to each circle of $\partial \Zb$.  If $\wh \Zb$ has genus $g$, then
  we say $\Zb$ has genus $g$.  The \emph{Euler characteristics} are
  $\euler(\wh \Zb) = 2- 2g$ and $\euler(\Zb) =\euler(Z) = 2-2g-b$
  where $b$ is the number of components of $\partial \Zb$. In
  particular, for connected closed surfaces, $\euler(\Zb) =1$ if and
  only if  $\Zb$ is a closed disk with $g=0$, $b=1$.
\end{defn}

\begin{defn}[Maps]\label{def:surface-maps}
  For maps between surfaces we make a standing assumption that the
  range is always connected, but the domain is allowed to be
  disconnected. Note, however, that the number of connected components
  for any surface considered here is finite.  Let $\qb \colon \Zb \to \Wb$ be
  a map between surfaces.  A \emph{component map of $\qb$} is 
  a restriction of $\qb$ to a component of its domain.  If these have been
  indexed, $\qb_i$ is the restriction of $\qb$ to the $i^{th}$ component
  $\Zb_i$. 
\end{defn}

\begin{figure}
  \begin{tikzcd}[scale=1]
    X \arrow[r,rightarrow,"p"]  \arrow[d,leftarrow,"f"] & 
    Y \arrow[d,rightarrow,"g"] &
    Z \arrow[r,rightarrow,"p"]  \arrow[d,leftarrow,"h"] & 
    W \arrow[d,equal] \\ 
    Z \arrow[r,rightarrow,"q"] & W &
    Z \arrow[r,rightarrow,"q"] & W 
  \end{tikzcd}
  \caption{The maps $p$ and $q$ on the left look alike.  The maps $p$
    and $q$ on the right are topologically equivalent. The three
    labeled vertical maps are homeomorphisms, and both squares commute.
  \label{fig:equivalent}}
\end{figure}

\begin{defn}[Equivalent maps]\label{def:equivalent}
  We say that a map $q \colon Z \to W$ \emph{looks like} a map $p
  \colon X \to Y$ if there are homeomorphisms $f \colon Z \to X$ and
  $g \colon Y \to W$ such that $q = g \circ p \circ f$. There is also
  a more restricted notion when $X=Z$ and $Y=W$ and the homeomorphism
  of the range must be the identity map.  We say that $q$ is
  \emph{topologically equivalent} to $p$ if there is a
  self-homeomorphism $h \colon Z \to Z$ of the domain so that $q = p
  \circ h$.  In the language of \cite{LaZv04}, the first version is a
  \emph{flexible} notion of equivalence, and the second is a
  \emph{rigid} one.  See Figure~\ref{fig:equivalent}.
\end{defn}

\begin{defn}[Branched covers]\label{def:br-covers}
  Let $q \colon Z \to W$ be a map between open surfaces
  (Definition~\ref{def:surfaces}).  Let $D \subset W$ be an open disk
  and let $C = q^{-1}(D)$ be its preimage.  We say that $D$ is
  \emph{branched evenly covered} if the restricted map $q \colon C \to
  D$ looks like (Definition~\ref{def:equivalent}) a branched even
  covering map $p_\db$ for some tuple $\db$
  (Definition~\ref{def:br-even-cover}).  The map $q$ is an \emph{(open)
  branched covering map} if every point $w\in W$ has a neighborhood
  $D$ which is branched evenly covered.  For maps $\qb \colon \Zb \to
  \Wb$ between closed surfaces (Definition~\ref{def:surfaces}), we say
  that $\qb$ is a \emph{(closed) branched covering map} if it
  restricts to an open branched cover $q \colon Z \to W$ on the
  interiors and an ordinary covering map $\partial \qb \colon
  \partial \Zb \to \partial \Wb$ on the boundaries. Not only do
  closed branched covers restrict to open branched covers, but every
  open branched cover (of the open surfaces being considered) 
  extends to a closed branched cover.
\end{defn}

Branched covering maps retain many properties of ordinary covering
maps.

\begin{lem}[Surjective maps]\label{lem:surj-maps}
  If $\qb \colon \Zb \to \Wb$ is a branched cover, then $\qb$ is
  surjective. In particular, for any $w\in \Wb$, the natural map that
  sends a preimage of $w$ to the (index of the) component containing
  it is onto.
\end{lem}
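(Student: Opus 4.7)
The plan is to prove both claims simultaneously by showing that for every connected component $\Zb_i$ of $\Zb$, the restricted map $\qb|_{\Zb_i} \colon \Zb_i \to \Wb$ is surjective. Taking $i$ to range over all components gives surjectivity of $\qb$ itself (the first claim), and the fact that each individual $\Zb_i$ maps onto $\Wb$ is exactly the statement that every fiber $\qb^{-1}(w)$ meets every component of $\Zb$ (the second claim). The strategy is the standard connectedness argument from covering space theory, adapted to the branched setting using the key observation that each power map $z \mapsto z^{d_i}$ sends $\D$ onto $\D$, so that the local model $\pb_\db$ is surjective onto $\D$ whenever its domain is nonempty.

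Let $U_i = \qb(\Zb_i) \subset \Wb$. To see $U_i$ is open, pick $w \in U_i$ and a preimage $z \in \Zb_i$. Let $D$ be a branched evenly covered neighborhood of $w$ with preimage $C = \qb^{-1}(D)$, so that $C$ looks like a disjoint union of disks, each mapping onto $D$ via some power map. The connected component of $C$ containing $z$ is a connected subset of $\Zb$ meeting $\Zb_i$, hence is contained in $\Zb_i$; its image under $\qb$ is all of $D$, so $D \subset U_i$. To see $U_i$ is closed, I would argue that $\Wb \setminus U_i$ is open: if $w \notin U_i$, take a branched evenly covered neighborhood $D$ with preimage $C$. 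If any component of $C$ lay in $\Zb_i$, its surjective image would supply a preimage of $w$ in $\Zb_i$. Hence $\Zb_i \cap \qb^{-1}(D) = \emptyset$, and $D \cap U_i = \emptyset$.

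Since $\Zb_i$ is nonempty, $U_i$ is nonempty; since $\Wb$ is connected by the standing assumption in Definition~\ref{def:surface-maps}, the clopen subset $U_i$ must equal all of $\Wb$. Both conclusions of the lemma follow.

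The only real bookkeeping point—the potential obstacle, if one can call it that—is the observation that connected components of the local preimage $C$ sit inside unique global components of $\Zb$, which is what allows the local surjectivity coming from power maps to be upgraded to global surjectivity of $\qb|_{\Zb_i}$. Everything else is formal, and the argument works uniformly whether or not the local model happens to have empty domain at some points (in which case such points automatically lie in the complement of $U_i$ and contribute to the openness of $\Wb \setminus U_i$ rather than obstructing the argument).
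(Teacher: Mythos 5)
Your proof is correct, but it takes a different route from the paper's. You run the classical clopen-image argument: for each component $\Zb_i$ you show $\qb(\Zb_i)$ is open (the component of a branched evenly covered preimage containing a given $z$ lies in $\Zb_i$ and surjects onto $D$ because each power map $z\mapsto z^{d_j}$ is onto the disk) and closed (if some component of $\qb^{-1}(D)$ met $\Zb_i$ it would already supply a preimage of $w$), then invoke connectedness of $\Wb$. The paper instead uses path lifting: it picks a \emph{regular} path $\beta$ from a point near $w_1$ to a point near $w_2$, lifts it starting in the component of $\qb^{-1}(D_1)$ containing $z_1$, and observes the lift ends in a component of $\qb^{-1}(D_2)$ containing a preimage of $w_2$; connectedness of the lifted path forces everything to stay in $\Zb_1$. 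The two arguments lean on the same two inputs (surjectivity of the local model and connectedness of $\Wb$), but yours is slightly more self-contained: it never needs to produce a path avoiding the critical values or to justify lifting through branch points, whereas the paper's version quietly uses that regular paths exist and lift, which rests on the critical locus being small. The paper's path-lifting picture, on the other hand, is the one that gets reused later (e.g.\ in the proof of Proposition~\ref{prop:disk-pre} and throughout the path-lifting machinery of Part~\ref{part:poly-spaces}), so it earns its keep as a template. Your closing remark about components of the local preimage sitting inside unique global components is exactly the right bookkeeping point, and the observation that an empty local fiber would simply witness openness of the complement is a nice touch, though of course the lemma's content is precisely that this cannot occur when $\Zb$ is nonempty.
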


\begin{proof}
  Let $\Zb_1$ be a component of $\Zb$ with $z_1 \in \Zb_1$ and $w_1 =
  \qb(z_1)$.  Since $\Wb$ is connected
  (Definition~\ref{def:surface-maps}), for any $w_2 \in \Wb$ there is
  a regular path $\beta$ from a value near $w_1$ to a value near
  $w_2$, where ``near'' means in a branched evenly covered neighborhood
  $D_i$ of $w_i$.  Once $\beta$ is lifted to start in the component of
  $\qb^{-1}(D_1)$ containing $z_1$, it ends in a component of
  $\qb^{-1}(D_2)$ containing a lift of $w_2$.
\end{proof}

\begin{defn}[Branch points]\label{def:br-points}
  Let $q \colon Z \to W$ be an open branched cover. By definition,
  every $z\in Z$ has an open disk neighborhood sent to an open disk
  neighborhood containing $w= q(z)$ that looks like a power map $p(z)
  = z^d$ for some unique $d = d_z$. The positive integer $d_z$ is the
  \emph{degree of $z$} and the nonnegative integer $n_z = d_z-1$ is
  the \emph{multiplicity of $z$}. When $d_z=1$ and $n_z=0$, $p(z)=z$
  is the identity map and $q$ is a local homeomorphism near $z$. When
  $d_z>1$ and $n_z > 0$, the point $z$ is a \emph{branch point of
  multiplicity $n_z$}.  Branch points are also called \emph{critical
  points} or \emph{ramification points}.  The critical values, regular
  values, regular points, and the preimage set $\cpt^+$ of $q$ are
  defined exactly as in Definition~\ref{def:cpt-cvl}.  For closed
  branched covers $\qb \colon \Zb \to \Wb$, the points $z \in \partial
  \Zb$, being regular, have degree $d_z=1$ and multiplicity $n_z=0$.
\end{defn}

A polynomial is a branched cover, its critical points are its branch
points, and the two notions of multiplicity agree, so the rest of the
terminology is also consistent.  Globally, a branched cover is a
covering map away from finitely many points.

\begin{defn}[Degree and metric]\label{def:degree-metric}
  Let $\qb \colon \Zb \to \Wb$ be a closed branched cover.  If $V =
  \set(\cvl) \subset W$ is the set of critical values and $U =
  \qb^{-1}(V) = \cpt^+ \subset Z$ is the full set of preimages, then
  the restricted map $\qb \colon (\Zb \setminus U) \to (\Wb \setminus
  V)$ is a covering map. Moreover, since $\Wb$ is connected
  (Definition~\ref{def:surface-maps}), $\Wb \setminus V$ is connected,
  and $\qb$ has a constant \emph{global degree} $d$, making $\qb$ a
  \emph{$d$-branched cover}.  Branched covers are also called
  \emph{ramified covers}.  And note that if $\Wb$ has a metric, there
  is a unique \emph{induced metric} on $\Zb$ so that $\qb$ is a local
  isometry except at $\cpt$.
\end{defn}

\begin{rem}[Riemann--Hurwitz formula]\label{rem:riemann-hurwitz}
  Let $\qb \colon \Zb \to \Wb$ be a $d$-branched cover. If $m =
  \size{\cpt^+}$ and $\ell = \size{\cvl}_\set$, then $\qb$ satisfies
  the \emph{Riemann--Hurwitz formula}: $(\euler(\Zb) - m) = d \cdot
  (\euler(\Wb) - \ell)$, because Euler characteristic is
  multiplicative for covering maps.
\end{rem}

\begin{lem}[Disk preimages]\label{lem:disk-preimage}
  If $\qb \colon \Zb \to \Wb$ is a closed branched cover where $\Zb$
  is connected and $\Wb$ is a closed disk with one critical value, then
  $\Zb$ is a closed disk with one critical point.  Similarly, if $q
  \colon Z \to W$ is an open branched cover where $Z$ is connected and
  $W$ is an open disk with one critical value, then $Z$ is an open disk
  with one critical point.
\end{lem}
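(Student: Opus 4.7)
The plan is to apply the Riemann--Hurwitz formula and read off the topology of $\Zb$ from Euler characteristics alone. Since $\Wb$ is a closed disk, $\euler(\Wb) = 1$, and the hypothesis of one critical value gives $\ell = 1$. Inserting into the Riemann--Hurwitz formula (Remark~\ref{rem:riemann-hurwitz}) yields
\[
\euler(\Zb) - m \,=\, d\,(\euler(\Wb) - \ell) \,=\, 0,
\]
so $\euler(\Zb) = m$, where $m = \size{\cpt^+}$ is the total number of preimages of the unique critical value.

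Next I would pin down $m$ and the topology of $\Zb$ using elementary surface invariants. Since $\Zb$ is a connected compact oriented surface with boundary and $\qb$ is surjective (Lemma~\ref{lem:surj-maps}), it has some genus $g \geq 0$ and some $b \geq 1$ boundary circles, so $\euler(\Zb) = 2 - 2g - b \leq 1$. Surjectivity of $\qb$ also forces $m \geq 1$, since the unique critical value has at least one preimage. The equation $2 - 2g - b = m$ then squeezes all three quantities, giving $g = 0$, $b = 1$, and $m = 1$. Hence $\Zb$ is a closed disk and the critical value has a unique preimage $z$; since the local degrees over this value must sum to the global degree $d \geq 2$, the point $z$ satisfies $d_z = d$ and is the sought critical point.

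For the open version, I would appeal to the remark at the end of Definition~\ref{def:br-covers} that any open branched cover of the open surfaces under consideration extends to a closed branched cover $\qb \colon \Zb \to \Wb$. The boundary of $\Zb$ lies outside $Z$, so $Z$ is connected if and only if $\Zb$ is connected (otherwise any decomposition of $\Zb$ would restrict to a decomposition of $Z$). Applying the closed version to $\qb$ then yields that $\Zb$ is a disk with one critical point, so $Z$ is an open disk with one critical point. The only delicate point to verify is that ``one critical value'' really does force $d \geq 2$ so that a critical point must exist, but this is immediate since the existence of a critical value requires at least one preimage of degree $\geq 2$; after that, the argument is purely arithmetic and the main obstacle is essentially none beyond correctly setting up Riemann--Hurwitz.
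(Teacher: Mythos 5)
Your proof is correct and follows essentially the same route as the paper: reduce the open case to the closed case via the extension property, apply Riemann--Hurwitz with $\euler(\Wb)=1$ and $\ell=1$ to get $2-2g-b=m$, and squeeze $g=0$, $b=m=1$ from the constraints $g\geq 0$, $b\geq 1$, $m\geq 1$. Your extra observation that the unique preimage must have local degree $d$ (hence is the critical point) is a small detail the paper leaves implicit, but the argument is the same.
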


\begin{proof}
  Open branched covers extend to closed branched covers in our
  setting, so we only need to prove the closed version. 
  Let $g$ be the genus of $\Zb$, let
  $b$ be the number of components of $\partial \Zb$ and let $m
  =\size{\cpt^+}$.  Since $\euler(\Wb)=1$ and
  $\size{\cvl}_\set=1$, we have $\euler(\Zb) = \size{\cpt^+}$ by
  Remark~\ref{rem:riemann-hurwitz}, so $2-2g-b = m$ and therefore $2 =
  2g+b+m$.  We know that $g \geq 0$ by definition, $b>0$ because
  $\partial \Zb = \qb^{-1}(\partial \Wb)$ is nonempty, and $m >0$
  because $\qb$ has at least one critical point, so the only solution
  is $g=0$ and $b=m=1$, making $\Zb$ a closed disk.
\end{proof}

\begin{lem}[Degree and preimages]\label{lem:degree-pre}
  Let $\qb \colon \Zb \to \Wb$ be a $d$-branched cover.  For any $w
  \in \Wb$, the sum of the degrees of its preimages is $d$.
\end{lem}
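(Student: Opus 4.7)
The plan is to separate the claim into the regular case and the critical case, and to use the local branched-even-covering model at critical values to reduce the second case to the first.

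First I would handle regular values. Let $V = \set(\cvl)$ be the finite set of critical values. By Definition~\ref{def:degree-metric}, the restriction $\qb\colon \Zb \setminus U \to \Wb \setminus V$ is an ordinary covering map of constant degree $d$. Hence for any regular $w \in \Wb \setminus V$, the preimage $\qb^{-1}(w)$ consists of exactly $d$ points, each of degree $1$ (Definition~\ref{def:br-points}), so the sum of degrees is $d$, as claimed.

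Next I would handle a critical value $w \in V$. By Definition~\ref{def:br-covers}, there is a neighborhood $D$ of $w$ and a homeomorphism identifying the restricted map $\qb\colon \qb^{-1}(D) \to D$ with a branched even covering $\pb_{\db}$ for some tuple $\db = (d_1, \ldots, d_m)$. Under this identification, the preimages of $w$ correspond to the origins of the $m$ disk components of the model, and by Definition~\ref{def:br-even-cover} the point in the $i$-th component has degree $d_i$. Thus the sum of the degrees of the preimages of $w$ equals $d_1 + \cdots + d_m$.

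To compute this sum, I would pick any regular value $w' \in D \setminus \{w\}$ and count its preimages in two ways. On the one hand, $w'$ is regular, so by the first step it has exactly $d$ preimages in $\Zb$, and all of these lie in $\qb^{-1}(D)$ provided $D$ is chosen small enough that $\qb^{-1}(D)$ is the union of neighborhoods of the preimages of $w$ (which we may arrange; this is built into the definition of branched even cover). On the other hand, inside the model each component map $z \mapsto z^{d_i}$ is exactly $d_i$-to-one on nonzero values, contributing $d_i$ preimages of $w'$. Summing over components gives $d_1 + \cdots + d_m$ preimages of $w'$, and equating the two counts yields $d_1 + \cdots + d_m = d$.

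The only subtle point, and what I would flag as the main obstacle, is justifying that every preimage of the nearby regular value $w'$ actually lies inside the local model $\qb^{-1}(D)$; this amounts to shrinking $D$ so that no preimage of $w'$ can escape the distinguished neighborhood. This is a standard consequence of properness and the fact that the set $\cpt^+$ of preimages of critical values is finite (so the $m$ component neighborhoods in the local model exhaust $\qb^{-1}(D)$ by definition of a branched even cover). Once this is in hand, the two-way count immediately gives the desired equality.
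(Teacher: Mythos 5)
Your proof is correct and uses essentially the same argument as the paper: take a branched evenly covered neighborhood $D$ of $w$ and count the preimages of a nearby regular value two ways, getting $d_i$ in the component containing $z_i$ and $d$ in total. The "subtle point" you flag is actually automatic, since $C = \qb^{-1}(D)$ is by definition the full preimage and the local model already decomposes it into one disk component per preimage of $w$.
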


\begin{proof}
  Let $\qb^{-1}(w) = \{z_1,\ldots, z_m\}$ and let $d_i$ be the degree
  of $z_i$.  Let $D$ be a branched evenly covered neighborhood of $w$
  with preimage $C = \qb^{-1}(D)$.  The $\sum d_i$ must be equal to
  $d$ since any regular point in $D$ has $d_i$ preimages in the
  component of $C$ containing $z_i$, and $d$ preimages total.
\end{proof}
  
\begin{lem}[Degree and multiplicity]\label{lem:degree-mult}
  If $\qb \colon \Zb \to \Wb$ is a $d$-branched cover and $n$ is the
  common size of the multisets $\cpt$ and $\cvl$, then $n+ \euler(\Zb)
  = d \cdot \euler(\Wb)$.  In particular, when $\Wb$ is a disk and $\Zb$ is a
  disjoint union of $b$ disks, $n+b =d$.
\end{lem}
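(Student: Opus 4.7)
The plan is to derive this directly from the Riemann--Hurwitz formula in Remark~\ref{rem:riemann-hurwitz} combined with the degree-summing identity in Lemma~\ref{lem:degree-pre}. Using the notation $m = \size{\cpt^+}$ and $\ell = \size{\cvl}_\set$ from that remark, Riemann--Hurwitz gives
\[
\euler(\Zb) - m = d \cdot (\euler(\Wb) - \ell),
\]
so it suffices to show that $m - d\ell = -n$, equivalently $n + m = d\ell$.

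To establish this bookkeeping identity, I would apply Lemma~\ref{lem:degree-pre} at each critical value. For every $v \in \set(\cvl)$, the sum of the degrees of its preimages is $d$, and summing over the $\ell$ distinct critical values yields $d\ell$. On the other hand, this is the same as $\sum_{z \in \cpt^+} d_z$, because $\cpt^+$ is by definition the disjoint union over $v \in \set(\cvl)$ of the preimage sets $\qb^{-1}(v)$. Writing $d_z = n_z + 1$ and recalling from Definition~\ref{def:cpt-cvl} that $n = \sum_{z \in \cpt^+} n_z$ (where $n_z = 0$ for the regular points in $\cpt^+$) while the number of terms in the sum is $m$, I get
\[
d\ell \;=\; \sum_{z \in \cpt^+} d_z \;=\; \sum_{z \in \cpt^+}(n_z + 1) \;=\; n + m.
\]
Substituting $m - d\ell = -n$ into the Riemann--Hurwitz identity produces $\euler(\Zb) - m = d\,\euler(\Wb) + m - n - d\ell = d\,\euler(\Wb) + m - n - (n+m)$, which simplifies to $n + \euler(\Zb) = d \cdot \euler(\Wb)$, as desired.

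For the special case, one just substitutes: a closed disk $\Wb$ has $\euler(\Wb) = 1$ by Definition~\ref{def:surfaces}, and a disjoint union of $b$ closed disks has Euler characteristic $b$, so the formula collapses to $n + b = d$. There is no real obstacle here; the only point requiring a moment's care is the relation $n + m = d\ell$, which amounts to recognizing that $n+m$ is nothing other than $\sum_{z \in \cpt^+} d_z$ when degrees and multiplicities are related by $d_z = n_z + 1$.
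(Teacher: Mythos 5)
Your proof is correct and follows essentially the same route as the paper: compute $\sum_{z\in\cpt^+} d_z$ two ways (as $n+m$ via $d_z=n_z+1$ and as $d\ell$ via Lemma~\ref{lem:degree-pre}) and combine $n+m=d\ell$ with Riemann--Hurwitz. The only blemish is a garbled intermediate expression ($d\,\euler(\Wb)+m-n-d\ell$ is not equal to $d\,\euler(\Wb)-d\ell$); the correct substitution is simply $\euler(\Zb)-m=d\,\euler(\Wb)-d\ell=d\,\euler(\Wb)-(n+m)$, which gives the stated conclusion.
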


\begin{proof}
  Suppose $U = \cpt^+ = \{z_1,\ldots, z_m\}$ and $V = \set(\cvl) =
  \{w_1,\ldots,w_\ell\}$.  The sum of the degrees of the $z_i$ is $\sum_i
  d_i = \sum_i (n_i+1) = n+m$. By Lemma~\ref{lem:degree-pre}, the sum
  of degrees of $q^{-1}(w_j) = d$ for each $j \in [\ell]$, so the sum of
  the degrees of the $z_i$ is $d\cdot \ell$. Thus $n+m = d\cdot \ell$. By
  Remark~\ref{rem:riemann-hurwitz}, $(\euler(\Zb) - m) = d\cdot
  (\euler(\Wb) - \ell)$.  Adding these two equations completes the proof.
\end{proof}

These results hold, of course, for our running example.
  
\begin{example}\label{ex:riemann-hurwitz}
  The polynomial $p$ of Example~\ref{ex:deg5} has degree $d=5$ and
  multiplicity $n=4$ (Lemma~\ref{lem:degree-mult}).  It has $4$
  critical values, so $\size{V}=\size{\cvl}_\set = 4$.  Each critical
  value has $4$ preimages, one critical point of degree $2$ and three
  regular points of degree $1$, so the sum of the degrees of every
  value is $5$ (Lemma~\ref{lem:degree-pre}) and $\size{U} =
  \size{\cpt^+} = 16$.  Finally, the domain and range are $\C$ with
  $\euler(\C)=1$, so the Riemann--Hurwitz formula
  (Remark~\ref{rem:riemann-hurwitz}) is satisfied since $(1-16) = 5
  \cdot (1-4)$.
\end{example}

\subsection{Planar branched covers}\label{subsec:planar-covers}
We now restrict attention to branched covers between subsurfaces of spheres.

\begin{defn}[Surfaces in spheres]\label{def:disks-curves}
  For any closed surface $\Wb \subset \C \subset \Ch = \Cp^1$ with
  interior $W$, let $\beta = \partial \Wb$ be the union of its simple
  closed boundary curves, let $W^c$ be the complement of $\Wb$ in $\Ch$
  and let $\Wb^c = W^c \sqcup \beta$ be the closed surface which is the
  complement of $W$ in $\Ch$.  We call $\Wb^c$ the \emph{closed
  complement of $\Wb$}. Note that $\Ch = W \sqcup \beta \sqcup W^c =
  \Wb \cup \Wb^c$, and $\Wb$ is a closed disk if and only if
  $\beta$ is a Jordan curve if and only if $\Wb^c$ is a closed disk.
  More generally, $\Wb$ is a disjoint union of closed disks if and only
  if its closed complement $\Wb^c$ is connected. 
\end{defn}
 
\begin{defn}[Planar branched covers]\label{def:planes}
  A \emph{(closed) planar branched cover} is a branched cover $\qb
  \colon \Zb \to \Wb$ where $\Zb$ and $\Wb$ are closed disks, and an
  \emph{(open) planar branched cover} is a branched cover $q \colon Z
  \to W$ where $Z$ and $W$ are open disks.  Closed planar branched
  covers restrict to open planar branched covers and every open planar
  branched cover extends to a closed planar branched cover.  Also,
  open planar branched covers look like branched covers $p\colon \C
  \to \C$, hence the name.  Closed planar branched covers look like 
  branched covers
  $\pb \colon \D \to \D$. A planar branched cover with only one
  critical point is \emph{degenerate}.
\end{defn}

\begin{example}[Degenerate branched covers]\label{ex:degenerate-cover}
  The map $p \colon \C \to \C$ defined by $p(z)= z^d$ extends to a map
  $\wh p \colon \Ch \to \Ch$, where $\Ch = \Cp^1$. The map $\wh p$
  preserves the decomposition $\Ch = \D \cup \D^c$, so it splits into
  two degenerate planar $d$-branched covers $\pb \colon \D \to \D$ and
  $\pb^c \colon \D^c \to \D^c$ with critical points $0$ and $\infty$,
  respectively, that overlap on the $d$-fold cover $\partial \pb
  \colon \T \to \T$.
\end{example}

Complex polynomials form the most natural examples of open planar
branched covers, and in the sense described below, the two 
notions actually coincide.

\begin{rem}[Polynomials and Branched Covers]\label{rem:poly-branch}
  Let $[p]_\topolo$ denote the topological equivalence class of a 
  planar branched cover $p \colon \C \to \C$ presented 
  in Definition~\ref{def:equivalent}. When $p,q \colon \C \to \C$ are 
  complex polynomials, we say that $p$ and $q$ are \emph{linearly 
  equivalent} if there is a linear function $h(z) = az +b$ so that 
  $p = q \circ h$, i.e. $p(z) = q(az+b)$.  Let $[p]_\linear$ 
  denote the linear equivalence class of a complex polynomial $p$.
  Since polynomials are planar branched covers and linear maps 
  are homeomorphisms, there is a well defined function $[p]_\linear 
  \mapsto [p]_\topolo$ from polynomials up to linear equivalence to
  planar branched covers up to topological equivalence.  In fact, this
  function is a bijection: every planar branched cover $p \colon \C
  \to \C$ is topologically equivalent to a polynomial (surjectivity), and
  two polynomials that are topologically equivalent are linearly
  equivalent (injectivity).  See \cite[Chapter~$1$]{LaZv04}.
\end{rem}

\begin{defn}[Spherical branched covers]\label{def:spheres}
  A \emph{spherical branched cover} is a branched cover $\wh \qb
  \colon \wh \Zb \to \wh \Wb$ where $\wh \Zb$ and $\wh \Wb$ are
  $2$-spheres.  Every planar $d$-branched cover $\qb \colon \Zb \to
  \Wb$ extends to a spherical $d$-branched cover $\wh q \colon \wh \Zb
  \to \wh \Wb$ by attaching the degenerate planar $d$-branched cover
  $\pb^c \colon \D^c \to \D^c$ (Example~\ref{ex:degenerate-cover}) to the
  boundaries.  This completes $\Zb$ to $\wh \Zb$ and $\Wb$ to $\wh
  \Wb$, and $\pb^c$ agrees with $\qb$ on the boundary map $\partial \qb
  \colon \partial \Zb \to \partial \Wb$. In the other direction, if
  $\wh \qb \colon \wh \Zb \to \wh \Wb$ is a spherical branched cover
  and there is a point $w^c \in \wh \Wb$ with only one preimage $z^c \in
  \wh \Zb$, then removing a branched evenly covered neighborhood $W^c$
  of $w^c$ and its preimage $Z^c$ containing $z^c$ leaves a planar
  $d$-branched cover.
\end{defn}

\begin{example}[Non-disk preimages]\label{ex:non-disks}
  Let $\Wb$ be a closed disk in $\Ch$ that does not contain $0$ or
  $\infty$, and let $\Wb^c$ be its closed disk complement.  Under $\wh
  p$, the spherical $d$-branched cover of Example~\ref{ex:degenerate-cover},
  the preimage $\Zb$ of $\Wb$ is $d$ disjoint topological disks since
  $\Wb$ is regular and the preimage $\Zb^c$ of $\Wb^c$ is a connected
  surface with $d$ boundary components.  In particular, $\Zb^c$ is not
  a disk, even though $\Wb^c$ is a disk. 
\end{example}

The preimage of a disk under a spherical branched cover need not be a
disk, but the preimage under a planar branched cover is a union of
disks.

\begin{prop}[Disks and preimages]\label{prop:disk-pre}
  Let $\pb \colon \D \to \D$ be a closed planar branched cover and let
  $\Wb$ be a closed disk in the range.  If $\partial \Wb$ is regular,
  then $\Zb = \pb^{-1}(\Wb)$ is a disjoint union of $b$ closed disks
  $\Zb_i$, its closed complement $\Zb^c$ is connected, and its
  boundary $\partial \Zb$ is a set of disjoint nonnested simple closed
  curves. Moreover, the component maps $\pb_i \colon \Zb_i \to \Wb$
  are closed planar branched covers, and the number of components 
  $b$ is equal to $d-n_w$, where $n_w$ is the total multiplicity of the 
  critical values in $\Wb$.
\end{prop}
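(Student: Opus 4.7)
The plan is to extend $\pb$ to the spherical cover $\wh\pb\colon\wh\D\to\wh\D$ from Definition~\ref{def:spheres}, use the uniqueness of the preimage of $\infty$ to prove connectedness of $\Zb^c=\wh\pb^{-1}(\Wb^c)$, and then derive everything else from planarity of the $2$-sphere together with Lemma~\ref{lem:degree-mult}. To begin, regularity of $\partial\Wb$ implies that $\pb$ restricts to an ordinary (unbranched) covering over a collar neighborhood of $\partial\Wb$; hence $\partial\Zb = \pb^{-1}(\partial\Wb)$ is a compact $1$-manifold covering the circle $\partial\Wb$, and therefore a disjoint union of simple closed curves in $\D$.

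In the spherical extension, the closed complement $\Wb^c\subset\wh\D$ is itself a closed disk containing the new critical value $\infty$ of multiplicity $d-1$, whose unique $\wh\pb$-preimage is the point $\infty\in\wh\D$. Since $\partial\Wb^c=\partial\Wb$ is regular, $\Zb^c$ is a compact $2$-manifold with boundary $\partial\Zb^c=\partial\Zb$. To see $\Zb^c$ is connected, fix any $z\in\Zb^c$, choose a path $\alpha\colon[0,1]\to\Wb^c$ from $\wh\pb(z)$ to $\infty$ meeting critical values of $\wh\pb$ only at $\infty$ (possible since the critical value set is finite and $\Wb^c$ is two-dimensional), and lift $\alpha$ starting at $z$. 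The lift remains in $\wh\pb^{-1}(\Wb^c)=\Zb^c$ throughout, and as $\alpha(t)\to\infty$ the lift is forced into every neighborhood of the unique preimage $\infty\in\wh\D$; thus every point of $\Zb^c$ lies in the component containing $\infty$.

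A connected compact subsurface of the $2$-sphere is planar, so $\Zb^c$ is homeomorphic to $\wh\D$ with $b$ open disks removed, and the closures of those $b$ disks are precisely the components $\Zb_1,\dots,\Zb_b$ of $\Zb$. This simultaneously yields the disjoint-disk decomposition of $\Zb$, the connectedness of $\Zb^c$, and the nonnested property of $\partial\Zb$ (since the $b$ removed disks are pairwise disjoint and none is contained in another). Each component map $\pb_i\colon\Zb_i\to\Wb$ inherits the local branched-covering structure from $\pb$ and is surjective by a path-lifting argument in the spirit of Lemma~\ref{lem:surj-maps}, so each is a closed planar branched cover. Finally, apply Lemma~\ref{lem:degree-mult} to the connected degree-$d$ cover $\wh\pb|_{\Zb^c}\colon\Zb^c\to\Wb^c$: the total multiplicity of critical values of $\wh\pb$ in $\Wb^c$ equals $(n-n_w)+(d-1)=2n-n_w$, while $\euler(\Wb^c)=1$ and $\euler(\Zb^c)=2-b$, which gives $(2n-n_w)+(2-b)=d$ and therefore $b=d-n_w$.

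The principal technical care is in the connectedness argument: one must verify that the lift of $\alpha$ extends continuously through the branch point above $\infty$. This follows from the local model $z\mapsto z^d$ near a branch point, under which preimages of points approaching $0$ necessarily approach $0$, but the step should be stated explicitly. The remainder of the argument is an exercise in Euler characteristics and the planar topology of the $2$-sphere.
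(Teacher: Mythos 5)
Your proof is correct and follows essentially the same route as the paper's: extend to the spherical cover, use the uniqueness of the preimage of $\infty$ to get connectedness of $\Zb^c$ (your explicit path-lifting argument is just an inline re-derivation of Lemma~\ref{lem:surj-maps}, which the paper cites directly), and conclude the disk decomposition from planarity before counting components with Lemma~\ref{lem:degree-mult}. The only cosmetic difference is that you run the Euler characteristic count on the complement $\Zb^c \to \Wb^c$ (correctly accounting for the multiplicity $d-1$ at $\infty$), whereas the paper applies the ``in particular'' clause of Lemma~\ref{lem:degree-mult} directly to $\pb \colon \Zb \to \Wb$ to get $n_w + b = d$ in one step.
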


\begin{proof}
  Since $\partial \Wb$ is regular, its preimage $\partial \Zb$ is a
  collection of simple closed curves, and $\Zb^c={\wh
    \pb}^{-1}(\Wb^c)$ is a closed submanifold in $\Ch$ with $\infty
  \in \Zb^c$. Since $\infty$ is the only preimage of $\infty$
  (Definition~\ref{def:spheres}), $\Zb^c$ is connected
  (Lemma~\ref{lem:surj-maps}), so $\Zb = \Zb_1 \sqcup \cdots \sqcup
  \Zb_k$ is disjoint union of disks
  (Definition~\ref{def:disks-curves}).  The restricted maps $\pb_i
  \colon \Zb_i \to \Wb$ between spaces homeomorphic to $\D$ is itself
  a branched cover since it still satisfies the required conditions:
  local conditions are local, a connected component of a cover is a
  cover, and the boundary map is a covering.  And 
  $b=d-n_w$ by Lemma~\ref{lem:degree-mult}.
\end{proof}

\section{Cell Complexes and Cellular Maps}\label{sec:complex-map}

Given a branched cover $p \colon \C \to \C$ and a rectangle
$\closedsquare$ in the range, we use the critical values $\cvl(p)$ to
define $4$ complexes: two coordinate complexes $\Qb_p$ and $\Qb'_p$ in
the range and two branched coordinate complexes $\Pb_p$ and $\Pb'_p$
in the domain.  The focus here is on rectangular coordinates, but
similar constructions using polar coordinates were given in the first
paper in this series \cite{DoMc22}.  After defining the coordinate
complexes $\Qb_p$ and $\Qb'_p$ in the range
(\ref{subsec:cplx-range}), we record the properties of branched
cellular maps (\ref{subsec:br-cell-maps}) that make it possible to
define the branched coordinate complexes $\Pb_p$ and $\Pb'_p$ in the
domain (\ref{subsec:cplx-domain}).

\subsection{Complexes in the range}\label{subsec:cplx-range}
The constructions in the range are very straightforward since we are
simply subdividing a rectangle $\closedsquare$ vertically and
horizontally through the critical values of $p$ in the case of
$\Qb'_p$ and at representative regular values in the case of $\Qb_p$.
The two constructions are not quite ``dual'' to each other, since taking
cellular duals is not an involution on planar cell complexes with
boundary.  We begin with an example of two cell complexes in $\R$,
followed by the definitions to make it precise.

\begin{example}[$\Ib_p$ and $\Ib'_p$]
  The real critical value interval $\Ib'_p$ for the polynomial of
  Example~\ref{ex:deg5} is the interval $[-10,5]$ subdivided at
  $C'=\{x'_1,x'_2,x'_3,x'_4\}$, the real parts of $\cvl(p)$. The
  real regular value interval $\Ib_p$ is an interval that has $\Ib'_p$
  as its cellular dual.  Figure~\ref{fig:deg5-interval-original-dual}
  shows $\Ib_p$ on the top and $\Ib'_p$ on the bottom.  In the
  ``critical'' interval $\Ib'_p$ the $x'_i$ label vertices and in the
  ``regular'' interval $\Ib_p$ they label edges.
\end{example}

\begin{figure}
    \centering
    \includegraphics[scale=.5]{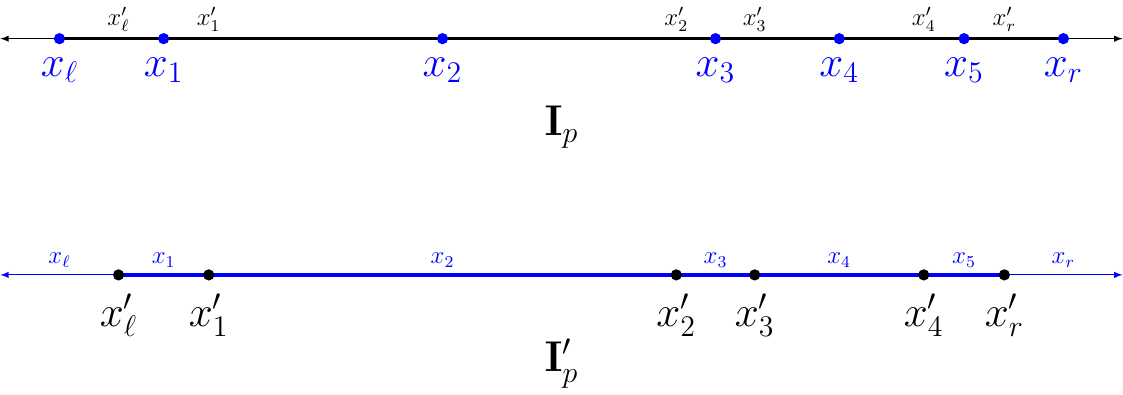}
    \caption{The real critical value interval $\Ib'_p$, shown on the
      bottom, is the interval $[-10,5]$ subdivided at
      $C'=\{x'_1,x'_2,x'_3,x'_4\}$, the real parts of the critical
      values of our running example.  The real regular value interval
      $\Ib_p$, shown on the top, is an undual of $\Ib'_p$.  It is a
      subdivision of $[-11,6]$, subdivided at the midpoints of the
      midpoints of the edges of $\Ib'_p$.  In the ``critical'' interval
      $\Ib'_p$ the points in $C'$ label vertices and in the ``regular''
      interval $\Ib_p$ they label edges.
    \label{fig:deg5-interval-original-dual}}
\end{figure}

\begin{defn}[Intervals]\label{def:int-subdivide}
  Let $[x_l,x_r] \subset \R$ be a compact interval and let $\Ib$ be
  $[x_l,x_r]$ with the structure of a cell complex.  The \emph{basepoint of $\Ib$} 
  is $x_\ell$. The \emph{combinatorial cell structure} of $\Ib$ is completely determined
  by the number $k$ of vertices in its interior.  The cell structure
  of the original interval has $1$ edge and $2$ vertices and we say it
  has been $0$-subdivided. More generally we say that $\Ib$ has been
  \emph{$k$-subdivided} when it has $k+2$ vertices $x_\ell=v_0 < v_1 <
  \cdots < v_k < v_{k+1} = x_r$ indexed in the order they occur in
  $\R$, and $k+1$ open edges $e_i$ with endpoints $v_{i-1}$ and $v_i$,
  $i \in [k+1]$.  A \emph{metric cell structure} on $\Ib$ corresponds
  to a \emph{metric $k$-subdivision}.  It has an \emph{absolute}
  description that records the locations $x_i \in (x_l,x_r)=I =
  \inter(\Ib)$ of the vertices $v_i$, $i\in [k]$.  This is a
  $k$-element set $C = \{x_1, \ldots, x_k\} \in \set_k(I)$, and we
  write $\Ib_C$ for $\Ib$ with this metric cell
  structure. Alternatively, it is sufficient to give a \emph{relative}
  description, by listing the relative widths of the $k+1$
  subintervals.  The \emph{weight} or \emph{relative width} $\width_i$ of the $i^{th}$
  subinterval $(x_{i-1},x_i)$ is the positive real $\width_i =
  \frac{\size{x_i-x_{i-1}}}{\size{x_r-x_\ell}}$. Note that $\width_1 +
  \cdots + \width_{k+1} =1$.  The metric cell structures on an interval
  $\Ib_C$ that has been $k$-subdivided are in bijection with
  $(k+1)$-tuples $\bary(\Ib_C) = (\width_1,\ldots, \width_{k+1})$ of
  positive reals with sum $1$.  In Definition~\ref{def:geom-comb},
  the $\width_i$ are used as the barycentric coordinates of a point in an
  open $k$-simplex, hence the name.
\end{defn}

\begin{defn}[Dual intervals]\label{def:dual-intervals}
  Let $\Ib, \Ib' \subset \R$ be subdivided intervals $\Ib=\Ib_k$ and
  $\Ib'=\Ib'_{k'}$.  We say that $\Ib' = (\Ib)'$ is a \emph{dual} of $\Ib$ and
  $\Ib = \int(\Ib')$ is an \emph{undual} of $\Ib'$ if the $k'+2$ vertices of $\Ib'$
  are in bijection with the $k+1$ edges of $\Ib$ and each vertex of
  $\Ib'$ is contained in the corresponding edge of $\Ib$.  Concretely,
  $k'= k-1$ and the vertices $V$ of $\Ib$ and the vertices $V'$ of
  $\Ib'$ strictly alternative in $\R$, with $x_i < x'_i < x_{i+1}$ for
  $i = 0,\ldots,k'+2$.  Given $\Ib$, creating a dual $\Ib' = (\Ib)'$ involves choosing a point from
  each bounded component of $\R \setminus V$.  Given $\Ib'$, creating an undual 
  $\Ib = \int(\Ib')$ involves choosing a point from each component of
  $\R \setminus V'$.
\end{defn}    

The derivative and integral metaphor of Definition~\ref{def:dual-intervals} is 
inspired by Rolle's Theorem.  If the vertices $x_i$ of $\Ib$ are the distinct simple roots of 
real polynomial $p$, then the roots of its derivative $p'$ are one choice for the vertices 
$x'_i$ of its dual $\Ib'$. 

\begin{figure}
  \centering
  \includegraphics[scale=.45]{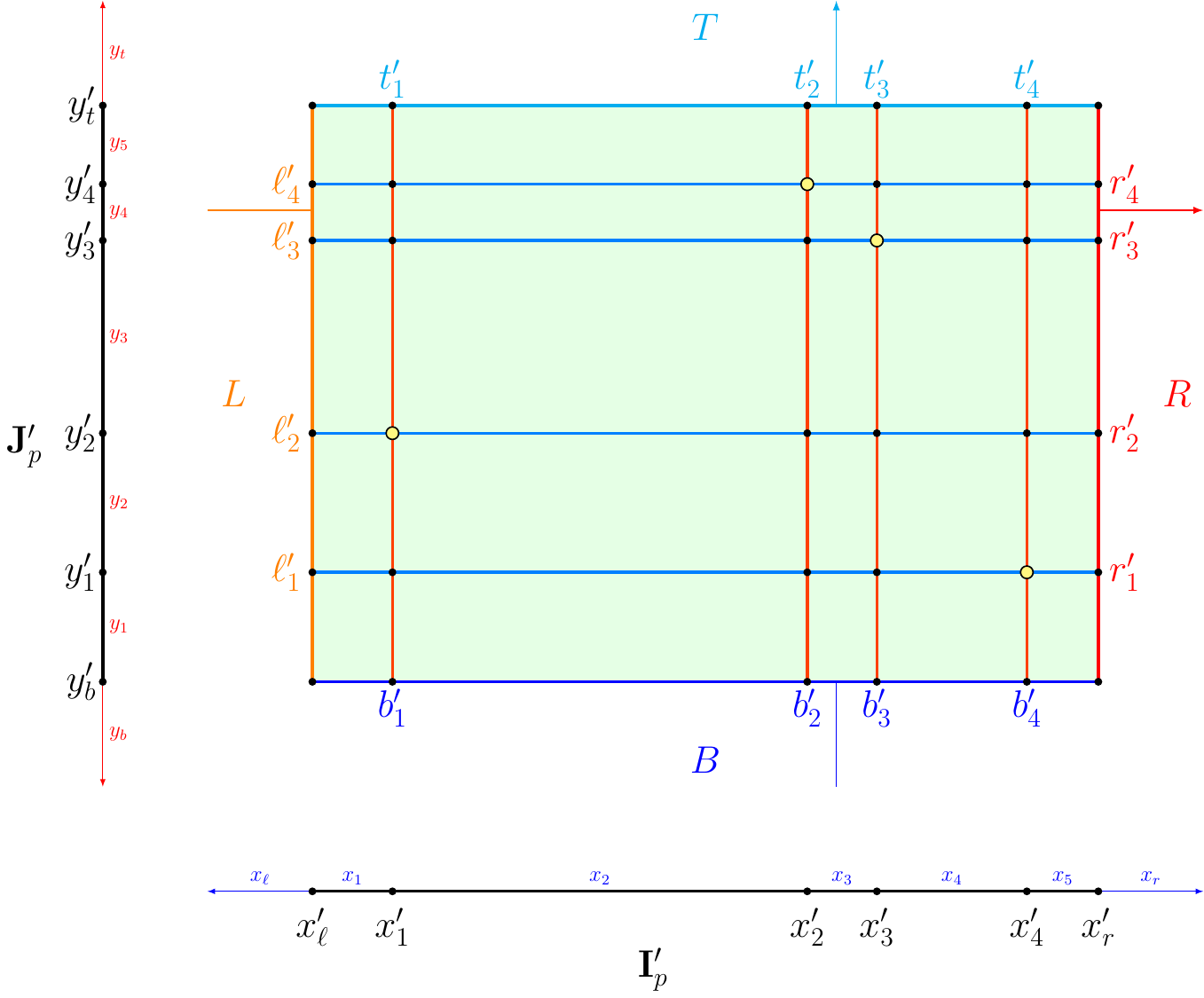}
  \caption{The range of the polynomial $p$ of Example~\ref{ex:deg5}
    with its $4$ critical values marked as yellow dots.  They are
    shown inside a rectangle $[-10,5] \times [-9,2] \subset \R^2 = \C$
    that has been subdivided into the critical value complex
    $\Qb'_p$.}
  \label{fig:deg5-original-range}
\end{figure}

\begin{example}[Critical value complex $\Qb'_p$]\label{ex:Q'_p}
  Let $p$ be the polynomial of Example~\ref{ex:deg5} and let
  $\closedsquare = [-10,5] \times [-9,2] \subset \R^2 = \C$ be a
  rectangle in its range, chosen to contain $\cvl(p)$. The
  \emph{critical value complex $\Qb'_p$ of $p$ with rectangle
  $\closedsquare$} is the rectangle $\closedsquare$ after it has been
  subdivided vertically and horizontally through the critical values
  of $p$. Figure~\ref{fig:deg5-original-range} shows the range of $p$ with
  its $4$ critical values marked as yellow dots.  The metric cell
  complex $\Qb'_p$ is the product of factor metric cell complexes
  $\Ib'_p$ and $\Jb'_p$ which subdivide the factors of $\closedsquare$
  at the real and imaginary parts of $\cvl(p)$.  The real parts of
  $\cvl(p)$, listed in increasing order, are
  $C'=\{x'_1,x'_2,x'_3,x'_4) \approx \{-8.5,-.6,.8,3.6\}$, and the
  imaginary parts in increasing order, are $D' =
  \{y'_1,y'_2,y'_3,y'_4\} \approx \{-6.9,-4.3,-.6,.5\}$. The metric on
  $\Qb'_p$ can be recorded by listing the relative widths of the open
  subintervals of $\Ib'_p$ and $\Jb'_p$.  In this case, $\bary(\Ib'_p)
  \approx (.102,.528,.089,.191,.091)$ and $\bary(\Jb'_p) \approx
  (.190,.241,.334,.098,.136)$.  Since we are recording relative
  widths, the sum of each list is $1$. They are used as barycentric coordinates in 
  Definition~\ref{def:geom-comb}. 
\end{example}
  
\begin{figure}
  \centering
  \includegraphics[scale=.45]{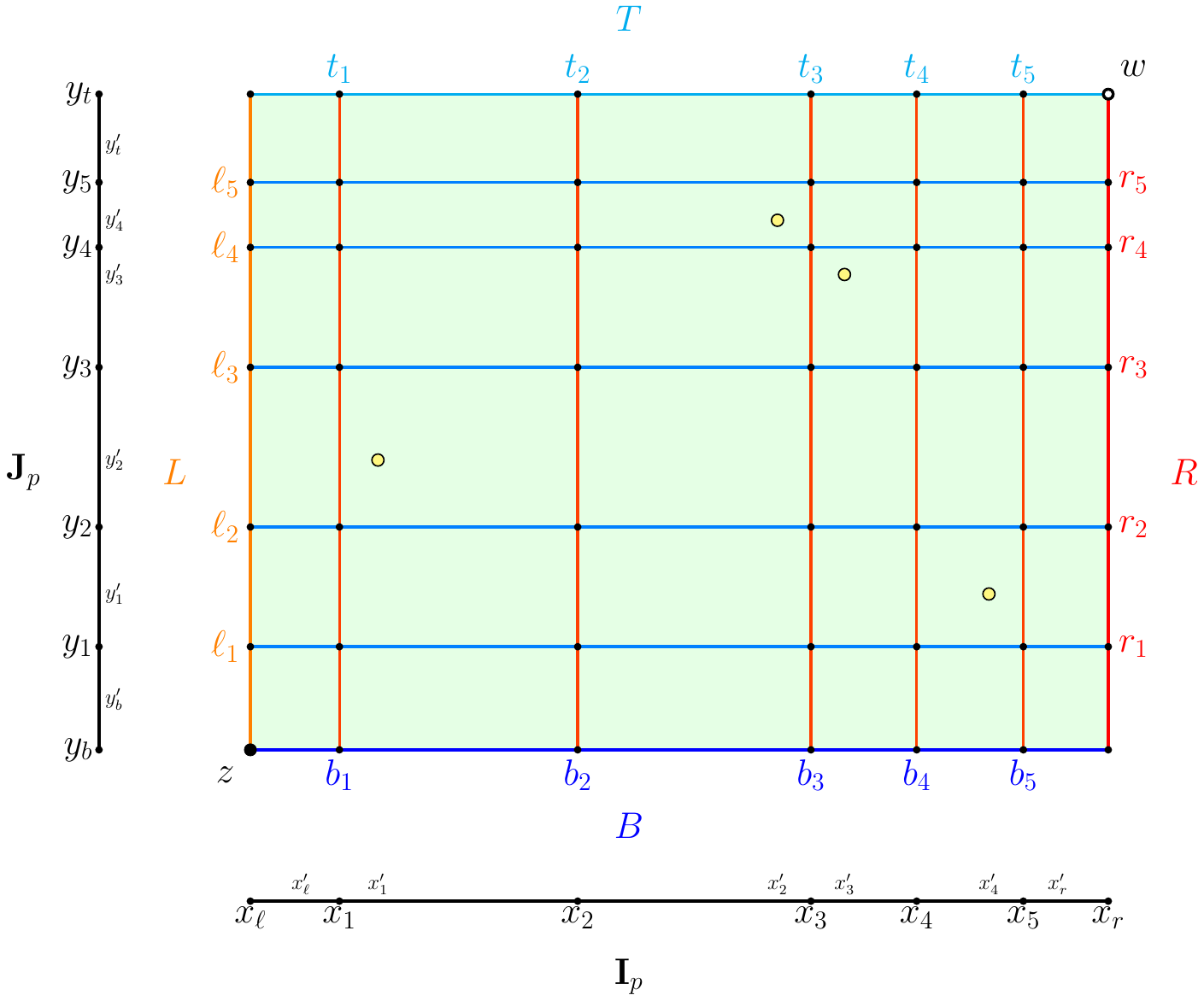}
  \caption{The range of the polynomial of Example~\ref{ex:deg5} with
    its $4$ critical values marked as yellow dots.  They are shown
    inside a rectangle $[-11,6] \times [-10,3] \subset \R^2 = \C$ that
    has been subdivided into the regular value complex
    $\Qb_p$.  The basepoint $z$ and breakpoint $w$ are also marked.
    \label{fig:deg5-original-dual-range}}
\end{figure}

\begin{example}[Regular value complex $\Qb_p$]\label{ex:Q_p}
  The regular value complex $\Qb_p$ for the polynomial $p$ of
  Example~\ref{ex:deg5} with rectangle $\closedsquare$ is constructed
  from the critical value complex $\Qb'_p$ and its factor complexes
  $\Ib'_p$ and $\Jb'_p$.  Specifically, if $\Ib_p$ is any interval
  with a cell structure whose dual is $\Ib'_p$ and $\Jb_p$ is any interval with a cell structure
  whose dual is $\Jb'_p$, then the complex $\Qb_p = \Ib_p \times \Jb_p$ whose
  dual is $\Qb'_p$ is the \emph{regular value complex for $p$ with rectangle 
  $\closedsquare$}.  The choices made when constructing $\Qb_p$ from $\Qb'_p$ 
  impact the metric on $\Qb_p$, but its combinatorial cell structure is well-defined.  
  Let $\Ib$ and $\Jb$ be the intervals that are subdivided to form $\Ib_p$ and $\Jb_p$, 
  and let $\Qb$ be the rectangle that is subdivided to form $\Qb_p$.  
  Figure~\ref{fig:deg5-original-range} shows one choice for $\Qb = \Ib
  \times \Jb$.  The enlarged rectangle is $[-11,6] \times [-10,3]$,
  and points $C = \{x_1,\ldots, x_5\}$ and $D = \{y_1,\ldots,y_5\}$
  are chosen to be the midpoints of the corresponding intervals in
  $\Ib'_p$ and $\Jb'_p$, respectively.  The vertex $z=(x_\ell,y_b)$ is 
  its \emph{basepoint} and the opposite vertex $w=(x_r,y_t)$ is its \emph{breakpoint}.
\end{example}

\begin{defn}[Coordinate rectangles]\label{def:coord-rectangle}
  Let $\Re\colon \C \to \R$ and $\Im\colon \C \to \R$ be maps so that
  for any $z = x+yi \in \C$, $\Re(z)=x$ is its \emph{real part}, and
  $\Im(z) = y$ is its \emph{imaginary part}.  Let $I = (x_\ell,x_r)$
  and $J = (y_b,y_t)$ be two non-empty open intervals in $\R$.  Their
  closures $\Ib = [x_\ell,x_r]$ and $\Jb = [y_b,y_t]$ have a natural
  cell structure with open edges $I$ and $J$, vertices $x_\ell$,
  $x_r$, $y_b$ and $y_t$, and \emph{basepoints} $x_\ell$ and $y_b$.  
  They determine a \emph{coordinate rectangle} $\Qb = \Ib +\Jb i = \{ z \mid \Re(z) \in
  \Ib, \Im(z) \in \Jb \}$.  Equivalently, $\Qb = \Ib \times \Jb$ under
  the natural identification of $\C$ with $\R^2$.
\end{defn}
  
\begin{figure}
  \centering
  \begin{tabular}{ccc}
    \includegraphics[width=0.4\linewidth]{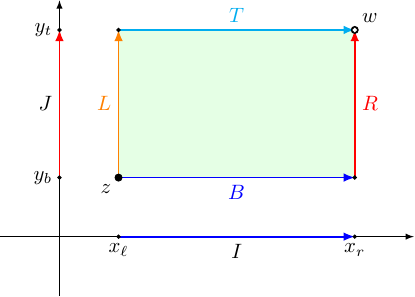}
    & \hspace*{1em} &
    \includegraphics[width=0.4\linewidth]{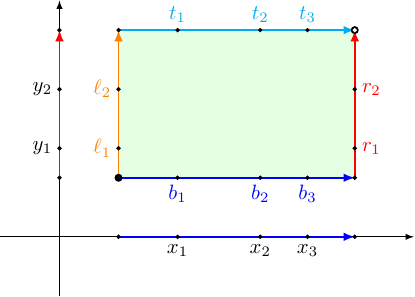}
  \end{tabular}
  \caption{The vertices and sides of $\Qb = \Ib + \Jb i$ are shown on
    the left, including its basepoint $z$ and its breakpoint $w$. 
    The $10$ points of a $(3,2)$-configuration induced by $C
    = \{x_1,x_2,x_3\} \subset I$ and $D = \{y_1, y_2\} \subset J$ are
    shown on the right.}
    \label{fig:rectangle}
\end{figure}

\begin{defn}[Sides and cells]\label{def:side-cell}
  The rectangle $\Qb$ has a natural cell structure.  The \emph{sides}
  of $\Qb$ are the open edges of the \emph{top side} $\Tb = \Ib \times
  \{y_t\}$, \emph{bottom side} $\Bb = \Ib \times \{y_b\}$, 
 \emph{left side} $\Lb = \{x_\ell\} \times \Jb$, and \emph{right side} $\Rb = \{x_r\} \times \Jb$.   
 The vertex $z=(x_\ell,y_b)$ is its 
 \emph{basepoint} and the opposite vertex $w=(x_r,y_t)$ is its \emph{breakpoint}.
  The isometric \emph{side identifications}  $f_\Bb \colon \Bb \to \Ib$, 
  $g_\Rb \colon \Rb \to \Jb$,  $f_\Tb \colon \Tb \to \Ib$, and
  $g_\Lb \colon \Lb \to \Jb$, drop the fixed coordinate. The intervals
  $\Ib$ and $\Jb$ are oriented left-to-right as subsets of $\R$ and
  the sides of $\Qb$ are oriented via their side identifications:
  $\Bb$ and $\Tb$ are oriented left-to-right; $\Lb$ and $\Rb$ are
  oriented bottom-to-top.  In the boundary $\partial \Qb$, $\Lb$ and
  $\Tb$ are oriented clockwise while $\Bb$ and $\Rb$ are oriented
  counterclockwise.
\end{defn}

Cell structures on $\Ib$ and $\Jb$ produce a product cell structure on
$\Qb = \Ib \times \Jb$.

\begin{defn}[Rectangles and subdivisions]\label{def:rect-metric}
  Let $\Qb = \Ib \times \Jb$ be a coordinate rectangle
  (Definition~\ref{def:coord-rectangle}). When $\Ib$ is $k$-subdivided
  and $\Jb$ is $l$-subdivided, we say $\Qb$ has been
  \emph{$(k,l)$-subdivided} and has a \emph{$(k,l)$-structure}.  It
  has $(k+1)(l+1)$ $2$-cells, $(k+1)(l+2)$ horizontal edges,
  $(k+2)(l+1)$ vertical edges, and $(k+2)(l+2)$ vertices $v_{i,j}
  =(x_i,y_j)$.  The $k\cdot l$ new vertices in the interior are those
  with $i \in [k]$ and $j\in [l]$.
  When $\Ib$ and $\Jb$ have been given metric cell structures $\Ib_C$
  and $\Jb_D$, the rectangle $\Qb$ has a metric cell structure
  $\Qb_{C,D}$. Using the side identifications, the sides of
  $\Qb_{C,D}$ have metric cell structures  $\Tb_C$, $\Bb_C$, $\Lb_D$, and $\Rb_D$.
  When $C = \{x_1, x_2, \ldots, x_k\}$ and $D = \{y_1,
  y_2, \ldots, y_l\}$, we write 
  $C_T = \{t_1, \ldots, t_k\}$,  
  $C_B = \{b_1,\ldots, b_k\}$, 
  $D_L = \{\ell_1, \ldots, \ell_l\}$,
  and $D_R = \{r_1,\ldots, r_l\}$, 
  for the interior vertices of 
  $\Tb_C$, $\Bb_C$, $\Lb_D$, and $\Rb_D$, respectively.
  The $2(k+l)$ new vertices in $\partial \Qb_{C,D}$ are a metric
  \emph{$(C,D)$-configuration} and a combinatorial
  \emph{$(k,l)$-configuration}. The right hand side of
  Figure~\ref{fig:rectangle} shows the $10$ points of a
  $(3,2)$-configuration. 
\end{defn}
  
\begin{defn}[Critical value complex $\Qb'_p$]\label{def:coord-cplx}
  Let $p \colon \C \to \C$ be a planar branched cover, and let
  $\closedsquare = [x'_\ell,x'_r]\times [y'_b,y'_t]$ be a coordinate
  rectangle in the range with $\cvl(p) \subset \closedsquare$.  Define
  $E' = \set(\cvl(p))$ and let $C' = \Re(E') \cap I'$ and $D' =
  \Im(E') \cap J'$, the real and imaginary parts of $E'$ in the open
  intervals $I'$ and $J'$, respectively.  Let $\Ib'_p = \Ib'_{C'}$,
  $\Jb'_p = \Jb'_{D'}$ and $\Qb'_p = \Qb'_{C',D'}$
  (Definitions~\ref{def:int-subdivide} and~\ref{def:rect-metric}).
  This is the unique minimal horizontal and vertical subdivision of
  $\closedsquare$ so that $\cvl(p)$ is in the $0$-skeleton. We call $\Qb'_p$ the \emph{critical value complex} on
  $\closedsquare$, whose factors are the \emph{real critical value
  complex} $\Ib'_p$ and the \emph{imaginary critical value complex}
  $\Jb'_p$ of $p$ on $\closedsquare$.  
\end{defn}

\begin{rem}[Cells and sides]\label{rem:cells-sides}
  The cell structure $\Qb'_p$ is almost entirely independent of the choice of $\closedsquare$, but
  changes occur depending on whether or not the sides of $\closedsquare$ are regular.
  For example, if a rectangle $\closedsquare \supset \cvl(p)$ contains a critical value in 
  its right side and we slightly extend $\Ib'$ to the right so that this doesn't happen, then 
  the new extended $\Ib'_p$ has one more subdivision than the old $\Ib'_p$ and the new extended 
  version of $\Qb_p'$ has an extra row of $2$-cells on the right compared to the previous $\Qb'_p$.
  Similar comments apply to the other sides.  Given a branched cover $p$, one could simply choose a rectangle 
  $\closedsquare$ large enough so that its sides are regular, eliminating this dependency, 
  but we need to allow critical values in the boundary of $\closedsquare$ in order to create compact 
  spaces of polynomials.  See Lemma~\ref{lem:closed-bounded} in Part~\ref{part:poly-spaces}.
\end{rem}

\begin{defn}[Regular value complex $\Qb_p$]\label{def:reg-val-cplx}
  Let $\Ib_p$ be a subdivision of an interval
  $\Ib=[x_\ell,x_r]$ and let $\Jb_p$ be a subdivision of an interval $\Jb=[y_b,y_t]$ so that
  $\Ib'_p = (\Ib_p)'$ and $\Jb'_p = (\Jb_p)'$ as cellular duals.  In other words
  $\Ib_p = \int(\Ib'_p)$ is an undual of $\Ib'_p$ and $\Jb_p = \int(\Jb'_p)$ 
  is an undual of $\Jb'_p$. The product of the
  \emph{real regular value complex} $\Ib_p$ and the \emph{imaginary
  regular value complex} $\Jb_p$ is the \emph{regular value complex}
  $\Qb_p = \Ib_p \times \Jb_p$ for $p$ based on $\closedsquare$.  It is based on 
  $\closedsquare$ in the sense of Remark~\ref{rem:cells-sides}, but it is drawn on 
  the larger rectangle $\Qb$ with $\closedsquare = \Qb' \subset \inter(\Qb)$ 
  by construction.  Extending the notation of Definition~\ref{def:dual-intervals}, 
  we write $\Qb_p = \int(\Qb'_p)$ since $\Qb_p$ is an undual of $\Qb'_p$. 
  Note that if $\Qb'_p$ is $(k',l')$-subdivided, then $\Qb_p$ is
  $(k,l)$-subdivided with $k=k'+1$ and $l=l'+1$. 
  The basepoint $z=(x_\ell,y_b)$ and breakpoint $w=(x_r,y_t)$ are marked because they are 
  needed. The basepoint is used when computing fundamental groups (Definition~\ref{def:monodromy}), and
  the breakpoint is used when indexing the sides of a branched rectangle 
  (Definition~\ref{def:monic-poly-labels}).  Note that we mark a basepoint and a breakpoint
  only on the regular value complex $\Qb_p$ where both are regular, and not on the 
  critical value complex $\Qb'_p$ where they may not be regular.
\end{defn}  

\subsection{Branched cellular maps}\label{subsec:br-cell-maps}
A cell complex in the range of a planar branched cover with regular
edges induces a cell structure on its preimage and there is a branched
cellular map between them.  A cellular map between cell complexes is
one that maps each cell in the domain homeomorphically to a cell of
the same dimension in the range.  A branched cellular map allows
branching in $2$-cells.

\begin{defn}[Branched cellular maps]\label{def:br-cellular}
  Let $\pb \colon \Zb \to \Wb$ be a branched covering map between
  surfaces that restricts to a map $\pb \colon \Xb \to \Yb$ from a
  cell complex $\Xb \subset \Zb$ to a cell complex $\Yb \subset \Wb$.
  We say that the restricted map $\pb$ is a \emph{branched cellular
  map} when (1) for each open $i$-cell in $\Xb$ there is an open
  $i$-cell in $\Yb$ that contains its image under $\pb$, (2) the maps
  between $1$-cells are homeomorphisms and (3) the maps between open
  $2$-cells are open planar branched covers
  (Definition~\ref{def:planes}). In particular, every branched
  cellular map looks like a cellular map between cell complexes if you
  only look near the $1$-skeleton.
\end{defn}

\begin{lem}[Disk preimages]\label{lem:2-cells}
  Let $\pb \colon \Zb \to \Wb$ be a planar branched cover and let $\Yb
  \subset \Wb$ be a cell complex.  For every open $2$-cell $F$ in
  $\Yb$, the preimage $\pb^{-1}(F)$ is a disjoint union of open
  $2$-cells and the component maps are planar branched covers.
\end{lem}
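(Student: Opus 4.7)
The plan is to reduce to the closed-disk case handled by Proposition~\ref{prop:disk-pre} by bracketing the open $2$-cell $F$ between a single closed disk $\overline{D} \subset F$ that carries all the critical values and the complementary open annular collar $F \setminus \overline{D}$, over which $\pb$ restricts to an ordinary covering. Since $F$ is an open $2$-cell it is homeomorphic to an open disk and $\cvl(\pb) \cap F$ is finite, so one chooses a closed topological disk $\overline{D} \subset F$ whose interior contains every critical value of $\pb$ in $F$ and whose boundary $\partial D$ is regular; any sufficiently tight enclosing loop can be perturbed to avoid the finite critical-value set.

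Proposition~\ref{prop:disk-pre} applied to $\overline{D}$ yields $\pb^{-1}(\overline{D}) = \bigsqcup_{i=1}^k \overline{D_i}$ as a disjoint union of closed disks, with component maps of degrees $d_i$ summing to $d$. Meanwhile $\pb$ restricted to $\pb^{-1}(F \setminus \overline{D}) \to F \setminus \overline{D}$ is an ordinary covering of an open annulus of degree $d$, so each of its connected components is a finite connected cover of an annulus, hence itself an open annulus $A_j$ for $j = 1, \ldots, m$, with degrees $\deg(A_j)$ also summing to $d$.

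Next one matches annuli to disks. The inner end of each $A_j$ limits to a single circle in $\pb^{-1}(\partial D) = \bigsqcup_i \partial \overline{D_i}$, which lies in a unique $\partial \overline{D_{i(j)}}$; a covering-degree comparison then forces $\deg(A_j) = d_{i(j)}$ and the limit circle to coincide with $\partial \overline{D_{i(j)}}$ as a set. The degree sums $\sum_i d_i = d = \sum_j \deg(A_j) = \sum_j d_{i(j)}$, together with the observation that every $\overline{D_i}$ must have at least one $A_j$ attached (otherwise the component of $\pb^{-1}(F)$ containing $\overline{D_i}$ would fail to be open in $\Zb$), force $j \mapsto i(j)$ to be a bijection. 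Consequently each connected component $C$ of $\pb^{-1}(F)$ has the form $\overline{D_i} \cup A_{j(i)}$: a closed disk glued to an open annulus along a common boundary circle, which is homeomorphic to an open disk. The restriction $\pb|_C \colon C \to F$ is then a branched cover between open disks, hence an open planar branched cover by Definition~\ref{def:planes}.

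The main technical obstacle is the annulus-disk matching in the previous paragraph: establishing that each inner end of $A_j$ coincides with exactly one full boundary circle $\partial \overline{D_{i(j)}}$ rather than a proper arc or a union spread across several disks. This is forced by the connectedness of the single inner end of $A_j$, the disjointness of the circles $\partial \overline{D_i}$, and a careful bookkeeping of covering degrees between the branched disk pieces and the unbranched annular pieces---quantities controlled by the hypothesis that $\pb$ is a $d$-branched cover with constant global degree on the regular locus.
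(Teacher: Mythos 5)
Your proof is correct and follows essentially the same route as the paper's: shrink the critical values into a closed disk $\Wb_F \subset F$, apply Proposition~\ref{prop:disk-pre} to get closed disks upstairs, and observe that the regular annular collar $F \setminus \Wb_F$ pulls back to open annuli that pad each closed disk out to an open disk. The only difference is that you spell out the annulus-to-disk matching and the degree bookkeeping, which the paper compresses into the single phrase ``provide an annular padding around the components''; your elaboration is sound.
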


\begin{proof}
  Let $\Wb_F \subset F$ be a closed disk such that all of the
  critical values of $p$ in $F$ are in the interior of $\Wb_F$.  By
  Proposition~\ref{prop:disk-pre}, its preimage $\Zb_F =
  \pb^{-1}(\Wb_F)$ is a union of disks, and the preimage of the open
  annulus region $F \setminus \Wb_F$, being regular, is a union of
  open annuli which provide an annular padding around the components
  of $\Zb_F$, showing that the components of $\pb^{-1}(F)$ are open
  disks.
\end{proof}

Let $\Yb \subset \Wb$ be a cell complex in the range of a branched
cover.  We say that $\Yb$ is \emph{$1$-regular} if the open $1$-cells
of $\Yb$ are regular.  Note that the regular value complex $\Qb_p$ and
the critical value complex $\Qb'_p$ are both $1$-regular by
construction: in the regular case because the critical values lie in
$2$-cells and in the critical case because the critical values are
$0$-cells.  For $1$-regular cell complexes in the range of a branched
cover, the preimage has a cell structure.

\begin{defn}[Induced cell structures]\label{def:induced-cell}
  Let $\pb \colon \Zb \to \Wb$ be a planar branched cover. If $\Yb
  \subset \Wb$ is a $1$-regular cell complex, then its preimage $\Xb =
  \qb^{-1}(\Yb)$ has an \emph{induced cell structure from $\Yb$ via
  $\qb$} and the restricted map $\pb \colon \Xb \to \Yb$ is a branched
  cellular map. The \emph{$i$-regions} of $\Xb$ are the connected
  components of the preimages of the open $i$-cells of $\Yb$ with the
  obvious attaching maps.  The $1$-regions in $\Xb$ are open $1$-cells
  (since the open $1$-cells are evenly covered), and the $2$-regions
  are open $2$-cells by Lemma~\ref{lem:2-cells}. The assumption that
  $\pb$ is a \emph{planar} branched cover is crucial here, since
  Example~\ref{ex:non-disks} gives an example where a preimage of a
  closed disk is a closed surface with multiple boundary
  cycles. Finally, if $\Yb$ has a metric and $\Xb$ has the induced
  metric, then $\pb$ is a local isometry away from the critical points
  in $\Xb$.
\end{defn}

For our purposes, the most important cell complexes are disk diagrams.

\begin{defn}[Disk diagrams]\label{def:disk-diagrams}
  A \emph{disk diagram} $\Yb$ is a compact contractible cell complex
  embedded in a surface $\Wb$.  If $\Yb$ is homeomorphic to a closed
  disk, it is \emph{nonsingular}, otherwise it is \emph{singular}.
  For disk diagrams, being singular is equivalent to having a cut
  point, or even a local cut point, whose removal (locally)
  disconnects $\Yb$.  Disk diagrams can also be characterized as cell
  complexes embedded in a surface where there are arbitrarily small
  neighborhoods that are topological disks.  When the surface $\Wb$
  containing the (singular or nonsingular) disk diagram $\Yb$ is
  itself a disk, $\Wb$ deformation retracts to $\Yb$.
\end{defn}

\begin{rem}[Components and critical complexes]\label{rem:comp-crit-cplx}
  Let $\pb \colon \Zb \to \Wb$ be a planar branched cover, let $\Yb
  \subset \Wb$ be a disk diagram and let $\Wb^\Yb$ be a small closed
  disk neighborhood of $\Yb$ so that any critical values of $\pb$ in
  $\Wb^\Yb$ are in $\Yb$.  By Proposition~\ref{prop:disk-pre}, the
  preimage $\Zb^\Yb = \pb^{-1}(\Wb^\Yb)$ is a union of disks and the
  component maps $\pb^\Yb_i \colon \Zb_i^\Yb \to \Wb^\Yb$ are planar
  branched covers.  In particular, working component-by-component,
  restricting $\Zb$ to $\Zb^\Yb_i$, $\Wb$ to $\Wb^\Yb$, and $\pb$ to
  $\pb_i^\Yb$, reduces the general case to a set of special cases in
  with $\cvl(\qb_i^\Yb) \subset \Yb$.
\end{rem}

\begin{cor}[Disk diagrams]\label{cor:disk-diagrams}
  If $\pb \colon \Zb \to \Wb$ is a planar branched cover and $\Yb
  \subset \Wb$ is a $1$-regular disk diagram, then $\Xb =
  \pb^{-1}(\Yb)$ is a disjoint union of disk diagrams.  In particular,
  if $\cvl(\pb) \subset \Yb$, then $\Xb$ itself is a disk diagram.
\end{cor}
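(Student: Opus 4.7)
The plan is to combine Remark~\ref{rem:comp-crit-cplx}, Proposition~\ref{prop:disk-pre}, and Definition~\ref{def:induced-cell}, and then lift a deformation retraction through the branched cover. First I would apply Remark~\ref{rem:comp-crit-cplx} to choose a closed disk neighborhood $\Wb^\Yb$ of $\Yb$ in $\Wb$ whose only critical values of $\pb$ lie in $\Yb$. By Proposition~\ref{prop:disk-pre}, the preimage $\Zb^\Yb = \pb^{-1}(\Wb^\Yb)$ is a disjoint union of closed disks $\Zb^\Yb_1,\ldots,\Zb^\Yb_b$, and each component map $\pb^\Yb_i\colon \Zb^\Yb_i \to \Wb^\Yb$ is itself a closed planar branched cover with all of its critical values in $\Yb$. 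Moreover, since $\Yb$ is $1$-regular, Definition~\ref{def:induced-cell} equips $\Xb = \pb^{-1}(\Yb)$ with an induced cell structure, and $\Xb$ is compact as the preimage of a compact set under a continuous map of compact surfaces.

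Next, working component-by-component, I would set $\Xb_i = \Xb \cap \Zb^\Yb_i$ and argue that each $\Xb_i$ is a disk diagram, i.e.\ a compact contractible cell complex embedded in the surface $\Zb$. Compactness and the embedded cell structure are already established. For contractibility, I would use the fact that $\Wb^\Yb$ deformation retracts onto $\Yb$ (since $\Yb$ is a disk diagram inside the disk $\Wb^\Yb$), and lift this deformation retraction to one from $\Zb^\Yb_i$ onto $\Xb_i$. Here the lifting is well-defined: the retraction moves every point of $\Wb^\Yb$ along a path ending in $\Yb$, these paths can be chosen to stay in $\Wb^\Yb\setminus\Yb$ (which is regular since $\cvl(\pb^\Yb_i)\subset \Yb$) until they reach $\Yb$, and the restriction of $\pb^\Yb_i$ to the preimage of $\Wb^\Yb\setminus\Yb$ is an honest covering map, so unique path lifting applies. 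Because $\Zb^\Yb_i$ is a disk, this shows each $\Xb_i$ is contractible. Shrinking the choice of $\Wb^\Yb$ shows that $\Xb_i$ also has arbitrarily small disk neighborhoods, confirming the disk diagram characterization.

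The ``in particular'' claim reduces to a component count. When $\cvl(\pb)\subset \Yb$, we may take $\Wb^\Yb$ to contain every critical value of $\pb$, so the total critical multiplicity $n_w$ inside $\Wb^\Yb$ equals $n$. Since $\Zb$ is a single disk, Lemma~\ref{lem:degree-mult} gives $n = d-1$, and Proposition~\ref{prop:disk-pre} then yields $b = d - n_w = 1$. Thus $\Zb^\Yb$ is connected, so $\Xb = \Xb_1$ is a single disk diagram.

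The main obstacle I expect is the path-lifting step that produces the deformation retraction of $\Zb^\Yb_i$ onto $\Xb_i$: one needs to ensure that the homotopy retracting $\Wb^\Yb$ onto $\Yb$ can be chosen so that away from $\Yb$ it avoids the critical values (possible because all critical values sit in $\Yb$) and so that at the endpoints the lift glues continuously across the branch points of $\pb^\Yb_i$ lying in $\Xb_i$. The local model $z\mapsto z^k$ near a branch point makes this continuity straightforward, but it is the one place where a little care is required; every other ingredient in the argument is already packaged in Proposition~\ref{prop:disk-pre}, Remark~\ref{rem:comp-crit-cplx}, and Definition~\ref{def:induced-cell}.
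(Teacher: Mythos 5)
Your proposal is correct and follows essentially the same route as the paper: reduce via Remark~\ref{rem:comp-crit-cplx} and Proposition~\ref{prop:disk-pre} to component maps with all critical values in $\Yb$, then lift the deformation retraction of the disk onto $\Yb$ through the cover on the regular complement to conclude contractibility of each $\Xb_i$. Your explicit component count $b=d-n_w=1$ for the ``in particular'' clause just makes precise what the paper leaves implicit in its reduction.
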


\begin{proof}
  By Remark~\ref{rem:comp-crit-cplx} it is sufficient to prove this
  when $\cvl(\pb) \subset \Yb$ and $\Zb$ is a disk. In this case, 
  $\Xb$ is a cell complex and the map $p \colon \Xb \to \Yb$ is a branched 
  cellular map (Definition~\ref{def:br-cellular}).  Moreover, the deformation
  retraction from $\Wb$ to $\Yb$ (Definition~\ref{def:disk-diagrams})
  lifts to a deformation retraction from $\Zb$ to $\Xb$ (since the
  complement of $\Yb$ is covered by the complement of $\Xb$).  Since
  $\Xb$ and $\Zb$ are homotopy equivalent, $\Xb$ is contractible.
\end{proof}

It is easy to determine whether or not the preimage disk diagram is
singular.

\begin{cor}[Singular diagrams]\label{cor:singular}
  Let $\pb \colon \Zb \to \Wb$ be a planar branched cover that
  restricts to a map $\Xb \to \Yb$ between disk diagrams.  When $\Yb$
  is singular, $\Xb$ is singular.  And when $\Yb$ is nonsingular,
  $\Xb$ is singular if and only if $\partial \Xb$ contains a critical
  point of $p$.
\end{cor}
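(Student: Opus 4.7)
The plan is to analyze everything locally using the power-map model $w \mapsto w^k$ near each preimage point, identifying precisely when the local picture contains a cut point of $\Xb$. First I would apply Remark~\ref{rem:comp-crit-cplx} to reduce to the case $\cvl(\pb) \subset \Yb$, so that Corollary~\ref{cor:disk-diagrams} guarantees $\Xb$ is a (connected, contractible) disk diagram; critical values outside $\Yb$ play no role, since their preimages miss $\Xb$.

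For the first assertion, suppose $y \in \Yb$ is a local cut point, so some disk neighborhood $D$ of $y$ in $\Wb$ has $(\Yb \cap D) \setminus \{y\}$ split into components $A_1, \ldots, A_b$ with $b \geq 2$. I would pick any $x \in \pb^{-1}(y)$ and a small disk neighborhood $D'$ of $x$ on which $\pb$ is modeled by $w \mapsto w^k$. Then
\[
(\Xb \cap D') \setminus \{x\} \;=\; \pb^{-1}\bigl((\Yb \cap D) \setminus \{y\}\bigr) \;=\; \bigsqcup_{i=1}^{b} \pb^{-1}(A_i),
\]
each piece is nonempty (by surjectivity of $\pb$ on $D'$) and the pieces are pairwise disjoint, giving at least $b \geq 2$ local components near $x$. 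So $x$ is a local cut point of $\Xb$, proving $\Xb$ is singular.

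For the second assertion, $\Yb$ is a closed topological disk. If $x \in \partial \Xb$ is a critical point of local degree $k \geq 2$, then $y = \pb(x)$ lies in $\partial \Yb$, where $\Yb$ locally looks like a closed half-disk. Pulling the half-disk back through the local model $w \mapsto w^k$ produces $k$ disjoint angular sectors meeting only at $x$, so $x$ becomes a local cut point and $\Xb$ is singular. Conversely, assume every critical point of $\pb$ lying in $\Xb$ is in the interior of $\Xb$. The power-map model over an interior-of-$\Yb$ disk shows that each such interior critical point is a manifold interior point of $\Xb$, while every regular point of $\Xb$ maps locally homeomorphically to $\Yb$ and is therefore a manifold point of $\Xb$ (interior or boundary according to the location of its image). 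Hence $\Xb$ is a compact, connected, contractible $2$-manifold with boundary, which forces $\Xb$ to be a closed disk, i.e., nonsingular.

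The main subtlety will be the local analysis at boundary points of $\Yb$: I need to confirm that the nonsingularity of $\Yb$ gives an honest closed half-disk model near each $y \in \partial \Yb$ (so that a $k$-to-$1$ branched cover really produces $k$ sectors sharing only the cut point), and to verify in the singular case that distinct $A_i$'s really give disjoint pieces of $\Xb \cap D'$ as they approach $x$.
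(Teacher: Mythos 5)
Your proof is correct and takes essentially the same route as the paper: a local analysis of cut points using the power-map model at preimages of cut points of $\Yb$ and at boundary critical points of $\Xb$, with the converse following because regular boundary points and interior critical points are manifold points. You simply spell out the local sector computation and the reduction to $\cvl(\pb)\subset\Yb$ more explicitly than the paper does.
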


\begin{proof}
  Suppose $\Yb$ is singular and let $v$ be a local cut vertex in
  $\partial \Yb$ (Definition~\ref{def:disk-diagrams}).  Any $u$ in
  $\partial \Xb$ sent to $v$ is also a local cut vertex, regardless of
  whether the local model is branched, so $\Xb$ is also singular.
  Next, suppose $\Yb$ is nonsingular and there are no critical points
  in the boundary of $\Xb$.  Then the local models are homeomorphisms,
  there are no local cut points in the boundary of $\Xb$, so $\Xb$ is also
  nonsingular.
  Finally, suppose $\Yb$ is nonsingular and there is a critical point
  $u$ in the boundary of $\Xb$.  Its image $v$ is a critical value in
  the boundary of $\Yb$, the branched local model near $u$ shows that
  $u$ is a local cut vertex, and $\Xb$ is singular.
\end{proof}

\begin{example}[Branched lines and banyans]\label{ex:banyans}
  Let $\qb \colon \Zb \to \Wb$ be a planar branched cover, and let
  $\Yb \subset \Wb$ be a closed interval.  If $\Yb$ is subdivided so
  that any critical values in $\Yb$ are vertices, then its preimage
  $\Xb = \qb^{-1}(\Yb)$ is a $1$-complex
  (Definition~\ref{def:induced-cell}) with components $\Xb = \Xb_1
  \sqcup \cdots \sqcup \Xb_k$.  The $1$-complexes $\Xb_i$ are trees
  because they are contractible (Corollary~\ref{cor:disk-diagrams}).
  We call them \emph{branched lines} or \emph{banyan trees}, a type of
  tree with multiple branches and multiple roots.  The cell complex
  $\Xb$ is a \emph{banyan grove}, a type of forest.
\end{example}

\begin{example}[Branched disks and cacti]\label{ex:cacti}
  Let $\qb \colon \Zb \to \Wb$ be a planar branched cover, and let
  $\Yb \subset \Wb$ be a closed disk.  If $\Yb$ is given the cell
  structure of a nonsingular $2$-complex with only one $2$-cell and a
  subdivided boundary $\partial \Yb$ with regular edges, then its
  preimage $\Xb = \qb^{-1}(\Yb)$ is a $2$-complex
  (Definition~\ref{def:induced-cell}) with components $\Xb = \Xb_1
  \sqcup \cdots \sqcup \Xb_k$.  The $2$-dimensional cell complexes
  $\Xb_i$ are disk diagrams (Corollary~\ref{cor:disk-diagrams}) called
  \emph{branched disks} or \emph{cactus diagrams}, since they resemble
  prickly pear cacti.  The cell complex $\Xb$ is a \emph{garden of
  cactus diagrams}.  The components $\partial \Xb_i$ of $\partial \Xb$
  are the boundaries of the components $\Xb_i$ of $\Xb$.  The
  component $\partial \Xb_i$ is a \emph{branched circle} or
  \emph{cactus graph}, and the full $1$-skeleton $\partial \Xb$ is a
  \emph{garden of cactus graphs}.
\end{example}

Metric banyans and metric cacti appeared in the first paper in
this series \cite{DoMc22}, but the definitions given here are more
concise.

\subsection{Complexes in the domain}\label{subsec:cplx-domain}
We now define the regular point complex $\Pb_p$ and the critical point
complex $\Pb'_p$ using Definition~\ref{def:induced-cell}.

\begin{figure}
  \centering   
  \includegraphics[scale=.8]{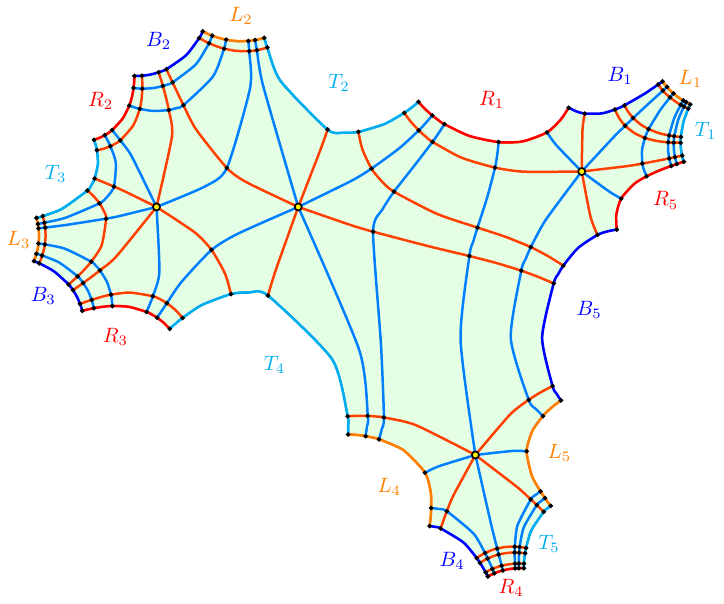}
  \caption{The domain of the polynomial $p$ of Example~\ref{ex:deg5}
    with its $4$ critical points marked as yellow dots.  They are
    shown inside the critical point complex $\Pb'_p$, which is the
    preimage of the critical value complex $\Qb'_p$ shown in
    Figure~\ref{fig:deg5-original-range}.
  \label{fig:deg5-original-domain}}
\end{figure}

\begin{example}[Point complexes $\Pb'_p$ and $\Pb_p$]\label{ex:pt-cplx}
  Let $p$ be the polynomial of Example~\ref{ex:deg5}, let $\Qb'_p$
  be the critical value complex in the range of $p$ shown in
  Figure~\ref{fig:deg5-original-range}, and let $\Qb_p$ be the regular 
  value complex in the range of $p$ shown in Figure~\ref{fig:deg5-original-dual-range}. 
  The \emph{critical point complex} $\Pb'_p$ in the domain of $p$ is shown in
  Figure~\ref{fig:deg5-original-domain}.  The pullback metric on
  $\Pb'_p$ makes each of the $125$ topological $2$-cells into a
  Euclidean rectangle with a metric determined by its image in
  $\Qb'_p$.  The disk diagram $\Pb'_p$ is nonsingular, in this case,
  because $\partial \Qb'_p$ is regular (Corollary~\ref{cor:singular}).
  The \emph{regular point complex} $\Pb_p$ in the domain of $p$ is shown in
  Figure~\ref{fig:deg5-original-dual-domain}. Since the branching
  occurs in the interior of $2$-cells, the $1$-skeleton of $\Pb_p$ is
  a $5$-sheeted cover of the $1$-skeleton of $\Qb_p$, and it has a $5$-branched 
  $(5,5)^5$-structure.  The side labels are explained in Section~\ref{sec:geo-comb}.
\end{example}

\begin{figure}
  \centering
  \includegraphics[scale=.8]{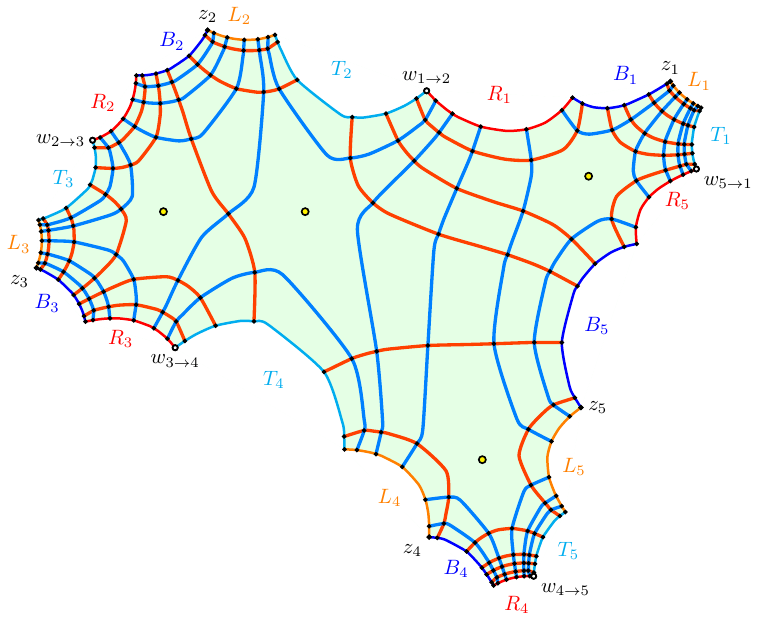}
  \caption{The domain of the polynomial $p$ of Example~\ref{ex:deg5}
    with its $4$ critical points marked as yellow dots.  They are
    shown inside the regular point complex $\Pb_p$, which is the
    preimage of the regular value complex $\Qb_p$ shown in
    Figure~\ref{fig:deg5-original-range}.  The breakpoint preimages 
    $w_{m\to m+1}$ and basepoint preimages $z_i$ are also marked.
 \label{fig:deg5-original-dual-domain}}
\end{figure}

\begin{defn}[Point complexes $\Pb_p$ and $\Pb'_p$]\label{def:br-coord-cplx}
  Let $p \colon \C \to \C$ be a planar branched cover, and let
  $\closedsquare = [x'_\ell,x'_r]\times [y'_b,y'_t]$ be a coordinate
  rectangle in the range with $\cvl(p) \subset \closedsquare$.  The
  coordinate complexes $\Qb_p$ and $\Qb'_p$ in the range are both
  $1$-regular, and by Definition~\ref{def:induced-cell} their
  preimages under $p$ have induced cell structures.  We call these
  cell complexes in the domain the \emph{regular point complex
  $\Pb_p$} and the \emph{critical point complex $\Pb'_p$}.  The map
  $p$ restricts to  the \emph{regular complex map} $\Pb_p \to \Qb_p$ and
  the \emph{critical complex map} $\Pb'_p \to \Qb'_p$.  Note that the cell 
  structure on $\Qb'_p$ and the critical complex map $\Pb'_p \to \Qb'_p$
  can be reconstructed simply from the cell structure of $\Pb'_p$.
  The regular complex map is a branched cellular map, and the the critical 
  complex map is a cellular map since the rectangular $2$-cells of the critical value complex $\Qb_p$ are 
  regular by construction. The regular complex map is a $d$-sheeted covering map
  between their $1$-skeletons because the $1$-skeleton of $\Qb_p$ is
  regular.  If $\Qb_p$ has a $(k,l)$-structure (Definition~\ref{def:rect-metric}), 
  then we say that $\Pb_p$ has a $d$-branched \emph{$(k,l)^d$-structure}.
\end{defn}

\begin{rem}[Unique geodesics]\label{rem:unique-geod}
  The critical map from $\Pb'_p$ to $\Qb'_p$ is a cellular map between
  piecewise Euclidean complexes built out of Euclidean rectangles.
  The complex $\Qb'_p$, as a subdivided rectangle, is obviously a
  $\cat(0)$ metric space, and so is $\Pb'_p$ since every interior
  vertex has an integer multiple of $2\pi$ in angle.  In particular,
  both spaces have unique geodesics \cite{bridson-haefliger}. 
\end{rem}

\begin{rem}[Combinatorial invariance]\label{rem:comb-inv}
  Technically speaking, the construction of the regular point complex
  $\Pb_p$ depends on the regular value complex $\Qb_p$ which in turn
  depends on a choice of subdivided intervals $\Ib_p$ and $\Jb_p$
  that have $\Ib'_p$ and $\Jb'_p$ as cellular duals, but the
  combinatorial structures of $\Ib_p$, $\Jb_p$, $\Qb_p$ and $\Pb_p$ are
  independent of these choices.  In particular, continously varying the choice of vertices 
  of $\Ib_p$ and $\Jb_p$, continuously varies the $1$-skeleton of $\Qb_p$
  and continuously varies the $1$-skeleton of $\Pb_p$ (since this is taking place in the 
  regular portion of the range) without changing any of their cell structures.  It does, of course, 
  still depend on the choice of rectangle $\closedsquare$ since this changes the cell 
  structure of the critical value complex $\Qb'_p$ (Remark~\ref{rem:cells-sides}).
\end{rem}

\section{Noncrossing Combinatorics}\label{sec:nccomb}

The combinatorial structure of the regular point complex $\Pb_p$ can
be described using planar noncrossing combinatorics. This section
establishes conventions for noncrossing partitions
(\ref{subsec:ncpart}), drawn on planar branched rectangles
(\ref{subsec:brrect}), and it connects them to noncrossing matchings
(\ref{subsec:ncmatch}) and noncrossing permutations
(\ref{subsec:ncperm}).  Since these structures are associated to
planar $d$-branched covers and degree-$d$ polynomials, we use $[d]$ as
our indexing set.

\subsection{Noncrossing partitions}\label{subsec:ncpart}
Noncrossing partitions can be defined combinatorially, metrically, and
topologically.  The combinatorial version is the simplest, the metric
version explains the name, and the topological version, introduced
here, is the most convenient to use in our context.  We begin with
some basic notation for points and subsets of the complex plane.

\begin{defn}[Points and subsets]\label{def:pts-subsets}
  Recall that $\D$ is the closed unit disk and $\T = \partial \D$ is
  its unit circle boundary. Let $e \colon \C \to \C_\zer$ be the
  function $e(z) = \exp(2\pi i z)$ that sends $\R$ to $\T$ with kernel
  $\Z$.  The $d^{th}$ roots of unity $\sqrt[d]{1} = \{e(j/d) \mid j \in
  [d]\}$ are indexed by $j \in [d]$, viewed as residue classes of the
  integers mod $d$.  More generally, every $d$-element subset of $\T$
  can be identified with the integers mod $d$ by sending $1$ to the
  element with smallest positive argument, and proceeding in a
  counterclockwise fashion around the circle.
\end{defn}

\begin{defn}[Combinatorial noncrossing partitions]\label{def:nc-comb}
  A set partition $[\lambda] \vdash [d]$ is a (combinatorial)
  \emph{noncrossing partition} if whenever there is a $4$-element
  subset $i < j < k < \ell$ in $[d]$ with $i$ and $k$ in the same
  block, and $j$ and $\ell$ in the same block, then all four are in
  the same block. The collection $\ncpart_d$ of all noncrossing
  partitions of $[d]$, ordered by refinement, is an induced subposet
  of $\spart_d$.
\end{defn}

\begin{defn}[Metric noncrossing partitions]\label{def:nc-metric}
  Let $\sqrt[d]{1} \subset \C$ be the $d^{th}$ roots of unity with $j\in
  [d]$ labeling $e(j/d)$ (Definition~\ref{def:pts-subsets}).  A
  set partition $[\lambda] \vdash [d]$ is a (metric) \emph{noncrossing
  partition}, if the convex hulls of the $d^{th}$ roots labeled by the
  numbers in each block of $[\lambda]$ form pairwise disjoint
  subspaces of~$\C$.
\end{defn}

\begin{defn}[Topological noncrossing partitions]\label{def:nc-top}
  Let $\Xb$ be a closed disk in the complex plane, let $S
  \subset \partial \Xb$ be a subset of its boundary with $d$ path
  components and fix a bijective labeling of the components of $S$ by
  the numbers in $[d]$ in the counterclockwise order they occur in the
  boundary of $\Xb$.  For every subspace $\Ub$ (typically a closed
  subsurface) with $S \subset \Ub \subset \Xb$ we define a set
  partition $\ncpart(\Ub)$ where $i$ and $j$ are in the same block of
  $\ncpart(\Ub)$ if and only if the path components of $S$ labeled $i$
  and $j$ are in the same path component of $\Ub$.  A set partition
  $[\lambda] \vdash [d]$ is a (topological) \emph{noncrossing
  partition} if there exists a subspace $\Ub$ with $S \subset \Ub
  \subset \Xb$ such that $\ncpart(\Ub) = [\lambda]$.  Note that if we
  view the subspaces of $\Xb$ containing $S$ as a poset under
  inclusion, then the map to $\ncpart_d$ is weakly order-preserving.
  In other words, if $S \subset \Ub_1 \subset \Ub_2 \subset \Xb$ as
  subspaces, then $\ncpart(\Ub_1) \leq \ncpart(\Ub_2)$ as set partitions.
\end{defn}

The three definitions are, of course, equivalent.

\begin{prop}[Noncrossing partitions]\label{prop:ncpart}
  The combinatorial, metric and topological definitions of noncrossing
  partitions define the same collection of set partitions.
\end{prop}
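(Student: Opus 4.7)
The plan is to establish the cycle of implications metric $\Rightarrow$ topological $\Rightarrow$ combinatorial $\Rightarrow$ metric, which will give the equivalence of all three definitions.

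First, I would verify metric $\Rightarrow$ topological by construction. Take $\Xb = \D$ and let $S \subset \T$ be the union of $d$ pairwise disjoint closed arcs centered at the $d$th roots $e(j/d)$, labeled counterclockwise by $j \in [d]$. Given a metrically noncrossing $[\lambda]$, let $\Ub$ be the union of $S$ with the convex hulls of the $d$th roots in each block. The hulls are pairwise disjoint by the metric hypothesis, so the path components of $\Ub$ are indexed exactly by the blocks, and $\ncpart(\Ub) = [\lambda]$.

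Next, I would handle topological $\Rightarrow$ combinatorial contrapositively. Suppose $[\lambda]$ is not combinatorially noncrossing: there exist $i < j < k < \ell$ with $\{i,k\}$ in one block $B$ and $\{j,\ell\}$ in a distinct block $B'$. For any $\Ub$ with $S \subset \Ub \subset \Xb$ satisfying $\ncpart(\Ub) = [\lambda]$, pick points $p_m$ in the component of $S$ labeled $m$ for $m \in \{i,j,k,\ell\}$. Since $i$ and $k$ lie in the same block, there is a path in $\Ub$ from $p_i$ to $p_k$; extracting a simple arc $\gamma$ in $\Xb$ from this path, with interior in $\inter(\Xb)$, and concatenating with the boundary arc of $\partial \Xb$ from $p_k$ to $p_i$ through $p_j$ produces a Jordan curve. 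By the Jordan Curve Theorem applied to the closed disk $\Xb$, this curve separates $p_j$ from $p_\ell$. But $j$ and $\ell$ lie in the same block, so any connecting path in $\Ub$ must cross $\gamma$, putting all four of $i,j,k,\ell$ in the same path component of $\Ub$ and contradicting $B \neq B'$.

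Finally, I would close the cycle with combinatorial $\Rightarrow$ metric by a direct planar-geometry argument. The convex hulls $H_1, H_2$ of two distinct blocks are convex polygons inscribed in $\T$ with disjoint vertex sets, and $H_i \cap \T$ consists precisely of the vertices of $H_i$. Thus $H_1 \cap H_2 \neq \emptyset$ forces some chord $[i,k]$ of $H_1$ to cross some chord $[j,\ell]$ of $H_2$ transversally in $\inter(\D)$, and such a crossing occurs if and only if $i$ and $k$ separate $j$ and $\ell$ on $\T$, i.e.\ $i < j < k < \ell$ up to relabeling. This is exactly a violation of the combinatorial condition, so if $[\lambda]$ is combinatorially noncrossing the hulls must be pairwise disjoint. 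The step I expect to be the main obstacle is the topological-to-combinatorial implication: one needs to convert an abstract path in the potentially wild subspace $\Ub$ into a genuine simple arc in $\Xb$ whose interior lies in $\inter(\Xb)$, so that the Jordan Curve Theorem applies cleanly. In the cases of interest (where $\Ub$ is a graph or a surface with boundary arising from a branched cover) this extraction is routine, but the general topological formulation requires care.
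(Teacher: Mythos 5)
Your proof is correct, but it is organized differently from the paper's. The paper establishes the equivalence C $\Leftrightarrow$ M by citing Kreweras's classical result, and then proves only M $\Rightarrow$ T (via convex hulls, as you do) and T $\Rightarrow$ C; you instead run the cycle M $\Rightarrow$ T $\Rightarrow$ C $\Rightarrow$ M and supply a direct planar-geometry proof of C $\Rightarrow$ M (interleaved endpoints of crossing chords of inscribed convex polygons), which makes the argument self-contained. The more substantive divergence is in T $\Rightarrow$ C. The paper argues directly that a path $\alpha$ in $\Ub$ joining the $i$th and $k$th components of $S$ and a path $\beta$ joining the $j$th and $\ell$th components must intersect, via a winding-number computation on the map $(s,t)\mapsto \beta(t)-\alpha(s)$; crucially, that argument works for arbitrary continuous paths and needs no simplicity or general-position hypothesis. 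Your Jordan-curve route requires extracting from the path a \emph{simple} arc whose union with a boundary arc is an embedded circle, and as you correctly flag, this is the one step that needs care: a path in $\Ub$ from $p_i$ to $p_k$ may repeatedly touch $\partial \Xb$ away from $S$, so the closed curve you form need not be simple without further surgery. The standard fix (pass to a simple subarc, then perturb or take an innermost intersection with the chosen boundary arc) works, but the paper's winding-number lemma sidesteps the issue entirely, which is why it is the cleaner choice for the fully general topological definition. Both approaches are valid; yours buys self-containedness on C $\Leftrightarrow$ M at the cost of one extra point-set-topology technicality in T $\Rightarrow$ C.
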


\begin{proof}
  (C $\Leftrightarrow$ M) The equivalence of the combinatorial and
  metric definitions is classical \cite{kreweras72}. (M $\Rightarrow$
  T) Given $\Xb$ and $S$ as in Definition~\ref{def:nc-top}, fix an
  identification of $\Xb$ with closed unit disk $\D$ so that the path
  component of $S$ labeled $j$ contains $e(j/d) \in \sqrt[d]{1}$.  For
  each metric noncrossing partition $[\lambda]$, the union of its
  convex hulls (union $S$), viewed as a subspace $\Ub \subset \Xb =
  \D$, shows that $[\lambda]$ is a topological noncrossing
  partition. If we need $\Ub$ to be a subsurface, simply replace the
  subspace with a small closed neighborhood in $\D$. (T $\Rightarrow$
  C) Let $[\lambda]$ be a topological noncrossing partition in a disk
  $\Xb$, let $\Ub$ be a subspace with $\ncpart(\Ub) = [\lambda]$ and
  let $i < j < k < \ell$ be numbers in $[n]$ with $i$ and $k$ in the
  same block and $j$ and $\ell$ in the same block.  If $\alpha\colon
  \Ib \to \Ub$ is a path in $\Ub$ from the $i^{th}$ component of $S$ 
  to the $k^{th}$ component of $S$, and $\beta\colon \Ib \to \Ub$ is 
  a path in $\Ub$ from the $j^{th}$ component of $S$ to the $\ell^{th}$ 
  component of $S$, then $\alpha$ and $\beta$ intersect.\footnote{If not, then
  $(s,t) \mapsto (\beta(t) - \alpha(s))$ is a contractible map $\Ib^2
  \to \C^\ast$ whose boundary cycle is sent to a loop of winding
  number $1$, contradiction. The winding number can be computed by
  assuming $\Xb$ is a rectangle and $\alpha$ and $\beta$ connect
  opposite corners.  The map in this case sends the vertices of
  $\Ib^2$ to the coordinate axes and its sides to paths in specific
  quadrants.}  Thus all four points are in the same path component of
  $\Ub$, the numbers are in the same block of $\ncpart(\Ub)=
      [\lambda]$, and $[\lambda]$ is a combinatorial noncrossing
      partition.
\end{proof}

\subsection{Branched rectangles}\label{subsec:brrect}
The topological noncrossing partitions of interest here are induced by
subspaces of planar branched covers of rectangles.  This subsection
establishes our conventions for such spaces.  We first specify a
standard rectangle.

\begin{defn}[Standard rectangle]\label{def:std-rectangle}
  The \emph{standard rectangle} is $\Qb = \Ib +\Jb i \subset \C$ where
  both $\Ib$ and $\Jb$ are the \emph{standard interval} $[-1,1]$.  The
  sides of $\Qb$ touch the unit circle at their midpoints $\sqrt[4]{1}$ with
  $i \in T$, $-1 \in L$, $-i \in B$ and $1 \in R$, and these are
  their \emph{representatives}.  The vertices of
  $\Qb$ are $\{\pm (1 \pm i)\} = \sqrt[4]{-4}$.  The breakpoint is $w=1+i$ and the basepoint is $z=-(1+i)$.
\end{defn}

Similar conventions apply to the power map preimages of the standard
rectangle.

\begin{defn}[Standard branched rectangles]\label{def:std-brrectangles}
  Let $p(z) = z^d$ be the power map (Definition~\ref{def:degenerate})
  and let $\Qb$ is the standard rectangle
  (Definition~\ref{def:std-rectangle}).  The \emph{standard
  $d$-branched rectangle} is $\Pb = p^{-1}(\Qb)$. The $d$-fold
  covering map $\partial \pb \colon \partial \Pb \to \partial \Qb$
  between their boundaries is used to partition the $4d$ open edges, or \emph{sides}, 
  $S$, into $d$ \emph{top sides
  $S_T$} , $d$ \emph{left sides $S_L$}, $d$ \emph{bottom sides $S_B$},
  and $d$ \emph{right sides $S_R$}. For example $S_T = p^{-1}(T)$.
  The midpoints of the sides of $\Pb$ touch the unit circle
  at the $4d$ points $\sqrt[4d]{1}$, and these are their \emph{standard
  representatives}.  More precisely, the top sides are
  represented by the points $\sqrt[d]{i}$, the left sides by
  the points $\sqrt[d]{-1}$, the bottom sides by the
  points $\sqrt[d]{-i}$ and right sides are by the points
  $\sqrt[d]{1}$.  Sides are indexed by the indexing of their representatives 
  (Definition~\ref{def:pts-subsets}).  For example, $T_m$ 
  contains $e(\frac{4m-3}{4d})$, the $m^{th}$ element of $\sqrt[d]{i} = \{e(\frac{4m-3}{4d}) \mid m \in [d]\}$.  
  The  \emph{basepoint preimage} between sides $L_m$ and $B_m$
  is $z_m$, and the \emph{breakpoint preimage} between sides $R_m$ and $T_{m+1}$
  is $w_{m\to m+1}$.  See Figure~\ref{fig:br-rectangle}.
\end{defn}

\begin{figure}
    \centering
    \includegraphics{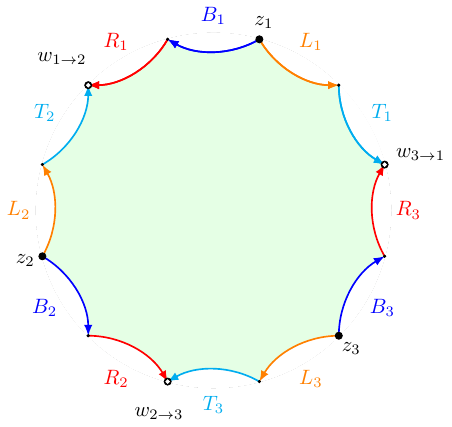}
    \caption{A standard $3$-branched rectangle with labeled cells.  The $3$ large white dots
    are the indexed breakpoints $w_{m\to m+1}$ and the $3$ large black dots are the
    indexed basepoints $z_i$.
    \label{fig:br-rectangle}}
\end{figure}

The standard $4d$-gon $\Pb$ roughly looks like a regular hyperbolic
polygon, but more precisely, the sides are portions of the curves $r^d
\cos(d \theta) = \pm 1$ (for the preimages of the right and left
sides) and $r^d \sin(d \theta) = \pm 1$ (for the preimages of the top
and bottom sides).  Nevertheless, we often treat $\Pb$ as though it
was in the Poincar\'e disk model with a hyperbolic metric. Arcs and
convex hulls are drawn with this hyperbolic approximation in mind. See
Figure~\ref{fig:br-rectangle} for an example when $d=3$.

The standard branched rectangle has side-based noncrossing partitions.

\begin{defn}[Noncrossing partitions of sides]\label{def:ncpart-sides}
  Let $\ncpart_d^T$ be the (topological) noncrossing partitions of the
  top sides $S_T$ in $\Pb$, with $\ncpart_d^L$, $\ncpart_d^B$, and
  $\ncpart_d^R$ defined similarly.  There are canonical isomorphisms
  $\ncpart_d \cong \ncpart_d^T \cong \ncpart_d^L \cong \ncpart_d^B
  \cong \ncpart_d^R$ based on the indexing of the sides, and we write
  $[\lambda]^T$, $[\lambda]^L$, $[\lambda]^B$, $[\lambda]^R$, for the
  image of a noncrossing partition $[\lambda] \in \ncpart_d$ in these
  other copies.
\end{defn}

Since the identifications in Definition~\ref{def:ncpart-sides} are
based on subscripting conventions, they necessarily break certain
symmetries.

\begin{rem}[Broken symmetries]\label{rem:broken}
  Iteratively rotating $\Pb$ through a counterclockwise angle of
  $2\pi/4d$ sends a subsurface defining $[\lambda]^T$ to a subsurface
  defining $[\lambda]^L$, then to a subsurface defining $[\lambda]^B$,
  then to a subsurface defining $[\lambda]^R$, then to a subsurface
  defining $[\lambda^+]^T$.  The plus indicates that the
  (combinatorial) noncrossing partition $[\lambda] \vdash [d]$ stays
  the same until the last step, when $[\lambda^+]$ is $[\lambda]$ with
  every number increased by one (mod $d$).  In
  Figure~\ref{fig:br-rectangle}, for example, a component of a
  subsurface containing $T_2$ is sent to one containing $L_2$, then
  $B_2$, then $R_2$, then $T_3$.
\end{rem}

The map $p(z)=z^d$ lifts subsets of the sides of $\Qb$ to subsets of
the sides of $\Pb$.

\begin{defn}[Points in branched sides]\label{def:pts-in-brsides}
  Given subsets $C \subset I$ and $D \subset J$, we define
  corresponding subsets of the sides of $\Pb$ using the map $p(z) =
  z^d$.  For example, $S_{T,C} = p^{-1}(C_T) = \bigcup_{m \in [d]}
  C_{T_m}$ where $C_{T_m} = p^{-1}(C_T) \cap T_m$ and $C_T$ is given
  as in Definition~\ref{def:rect-metric}. When $C$ and $D$ are finite,
  we index the points in the sides of $\Pb$ by indicating the side in
  the first subscript and the point in the second subscript.  For
  example, we write $t_{m,i}$ for the point in $C_{T_m}$ with
  $p(t_{m,i}) = t_i \in C_T$, where $f_T(t_i) =x_i \in C$.  If
  $\size{C} = k$ and $\size{D} = l$, then there are $2d(k+l)$ lifted
  points in the sides $S$, with $\size{S_{T,C}} = \size{S_{B,C}} = dk$
  and $\size{S_{L,D}} = \size{S_{R,D}} = dl$. We call this a
  \emph{$(k,l)^d$-configuration of points} in the boundary of the
  $d$-branched rectangle $\Pb$.
\end{defn}

If we select a point in $I$ and a point in $J$, these lift to a point
in each side of~$\Pb$.

\begin{defn}[Representative points]\label{def:rep-pts}
  A point $x\in I$ with $C = \{x\}$ determines a
  $(1,0)^d$-configuration of $2d$ points, one in each of the $d$ top
  sides and $d$ bottom sides.  In particular, there is $t_{m,x} \in
  T_m$ and $b_{m,x} \in B_m$.
  Similarly, a point $y \in J$ with $D = \{y\}$ determines a
  $(0,1)^d$-configuration of $2d$ points, one in each of the $d$ left
  sides and $d$ right sides.  In particular, there is $\ell_{m,y} \in
  L_m$ and $r_{m,y} \in R_m$.
  The combination of these two is a $(1,1)^d$-configuration with $4d$
  points, one in each side.  In all three cases, we say these points
  \emph{represent} the corresponding side.
\end{defn}

\subsection{Noncrossing matchings}\label{subsec:ncmatch}
A noncrossing matching is both a special type of noncrossing partition
on $[2d]$ points and an encoding of a noncrossing partition on $[d]$
points.  Both viewpoints can be illustrated on $d$-branched
rectangles.

\begin{defn}[Noncrossing matchings]\label{def:ncmatch}
  A (combinatorial) \emph{noncrossing matching} $[\mu]$ is a
  noncrossing partition of the set $[2d]$, necessarily of even size,
  where every block has size~$2$. In other words, $\shape([\mu]) =
  2^d$.  Note that the numbers in a block are forced to have opposite
  parity since they are separated by an even number of ends of
  noncrossing arcs.  In particular, a (metric) noncrossing matching
  can be viewed as a noncrossing bijection between the $d$ points in
  $\sqrt[d]{1}$ with even labels and the $d$ points in $\sqrt[d]{-1}$
  with odd labels.  A bijection between two sets is sometimes called a
  \emph{matching}, hence the name.  Let $\ncmatch_{2d}$ denote the set
  of all noncrossing matchings of $[2d]$.
\end{defn}

There is also a topological definition.

\begin{defn}[Side matchings]\label{def:side-matchings}
  The $d$-branched rectangle $\Pb$ contains $2d$ top and bottom sides,
  $T_1, B_1, T_2, B_2, \ldots, T_d, B_d$ in counterclockwise order,
  starting at the first breakpoint $w_{d\to1}$, containing the $2d$ points $\sqrt[2d]{1}$ as
  their midpoints. A (topological noncrossing) \emph{top-bottom
  matching} is a topological noncrossing partition of $S_T \cup S_B$
  where every block contains exactly $2$ sides. Let
  $\ncmatch_{2d}^{TB}$ denote the collection of all such top-bottom
  matchings.  The bijection $S_T \cup S_B \to [2d]$ sending $T_m$ to
  $2m-1$ and $B_m$ to $2m$ extends to a bijection $\ncmatch_{2d}^{TB}
  \to \ncmatch_{2d}$. Alternatively, a top-bottom matching can be
  viewed as a noncrossing matching of the $2d$ points in a
  $(1,0)^d$-configuration (Definition~\ref{def:rep-pts}).
  Similarly, $\Pb$ contains $2d$ left and right sides,
  $L_1, R_1, L_2, R_2, \ldots, L_d, R_d$ in counterclockwise order,
  starting at the first breakpoint $w_{d\to1}$. A (topological noncrossing) \emph{left-right
  matching} is a topological noncrossing partition of $S_L \cup S_R$
  where every block contains exactly $2$ sides. Let
  $\ncmatch_{2d}^{LR}$ denote the collection of all such left-right
  matchings.  The bijection $S_L \cup S_R \to [2d]$ sending $L_m$ to
  $2m-1$ and $R_m$ to $2m$ extends to a bijection $\ncmatch_{2d}^{LR}
  \to \ncmatch_{2d}$.   Alternatively, a left-right
  matching can be viewed as a noncrossing matching of the $2d$ points in a 
  $(0,1)^d$-configuration (Definition~\ref{def:rep-pts}).
  We write $[\mu]^{LR}$ and $[\mu]^{TB}$ for the
  left-right and top-bottom matchings that correspond to $[\mu] \in
  \ncmatch_{2d}$.
\end{defn}

\begin{figure}
  \bt{cc}
    \includegraphics[width=5.9cm]{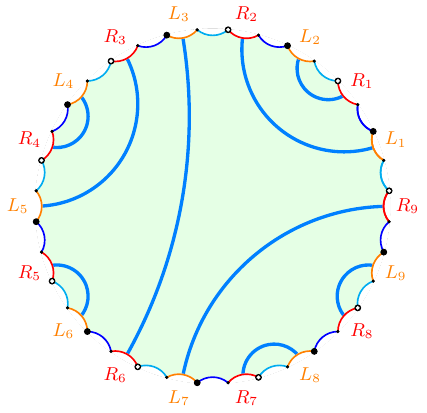} &
    \includegraphics[width=5.9cm]{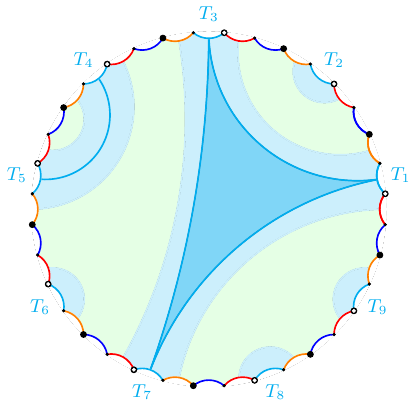}\\
    \includegraphics[width=5.9cm]{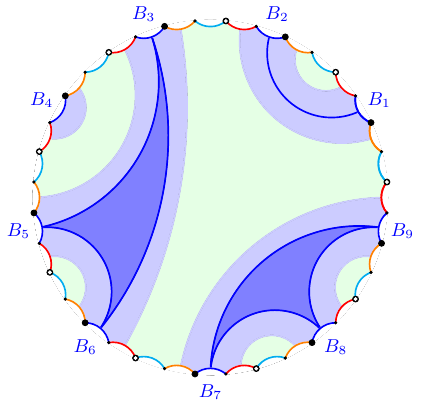} &
    \includegraphics[width=5.9cm]{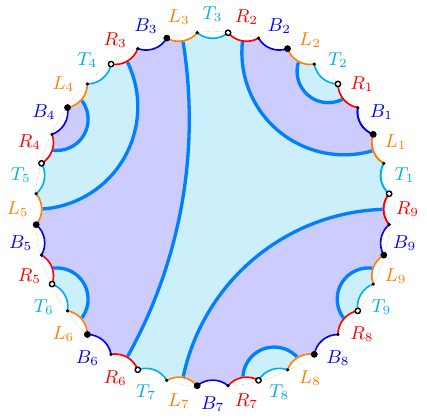}\\
  \et
  \caption{The noncrossing left-right matching $[\mu]^{LR}$ (upper
    left), the noncrossing top partition $[\lambda]^T =
    137|2|45|6|8|9$ (upper right), the noncrossing bottom partition
    $[\lambda]^B = 12|356|4|789$ (lower left), and the $2$-coloring
    (lower right), all contain the same combinatorial information. The
    partition figures include the convex hulls of representative
    points to make them easier to see.
    \label{fig:ncpart-ncmatch}}
\end{figure}

There are bijections between $\ncmatch_{2d}^{LR}$, $\ncpart_d^T$ and
$\ncpart_d^B$, and between $\ncmatch_{2d}^{TB}$, $\ncpart_d^L$ and
$\ncpart_d^R$.  We begin with an example.

\begin{exmp}\label{ex:ncpart-ncmatch}
  Let $\Pb$ be a standard $9$-branched rectangle and let $[\mu]^{LR}$
  be a left-right matching in $\ncmatch_{18}^{LR}$ with blocks
  $\{L_1,R_2\}$, $\{L_2,R_1\}$, $\{L_3,R_6\}$, $\{L_4,R_4\}$,
  $\{L_5,R_3\}$, $\{L_6,R_5\}$, $\{L_7,R_9\}$, $\{L_8,R_7\}$, and
  $\{L_9,R_8\}$.  The upper left corner of
  Figure~\ref{fig:ncpart-ncmatch} shows the multiarc whose arcs are convex 
  hulls of the representative points in a $(0,1)^9$-configuration (e.g. the
  ``hyperbolic'' arc from $\ell_{1,y}$ to $r_{2,y}$). The complementary
  regions of $\Pb$ with these arcs removed can be $2$-colored: a
  lighter sky blue if it contains a top side and a darker sea blue if
  it contains a bottom side (lower right).  The sky blue subsurface
  $\Ub_T$ contains $S_T$ and the corresponding (topological)
  noncrossing partition of top sides is $[\lambda]^T = 137|2|45|6|8|9
  \in \ncpart_9^T$ (upper right).  The sea blue subsurface $\Ub_B$
  contains $S_B$ and the corresponding (topological) noncrossing
  partition of bottom sides is $[\lambda]^B = 12|356|4|789 \in
  \ncpart_9^B$ (lower left). The convex hulls of representative points
  in each block (e.g. the ``hyperbolic'' triangle connecting $b_{3,x}$,
  $b_{5,x}$, and $b_{6,x}$), are also included.  This process is also
  reversible in the sense that the original left-right matching can be
  recovered from either the top or the bottom noncrossing partition.
  Given $[\lambda]^T$, for example, we can define $\Ub_T$ to be an
  $\epsilon$-neighborhood of convex hulls of the representative points
  in each block union the sides $S_T$.  The $9$ arcs of $\partial
  \Ub_T$ in the interior of $\Pb$ are a multiarc that defines the 
  left-right matching~$[\mu]^{LR}$.
\end{exmp}

Arguing as in Example~\ref{ex:ncpart-ncmatch} establishes the following result.

\begin{prop}[Matchings and partitions]\label{prop:match-part}
  There are bijections 
  \[
    \ncpart_d^T \leftrightarrow \ncmatch_{2d}^{LR} \leftrightarrow \ncpart_d^B
  \]
  and
  \[
    \ncpart_d^L \leftrightarrow \ncmatch_{2d}^{TB} \leftrightarrow \ncpart_d^R.
  \]
\end{prop}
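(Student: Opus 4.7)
The plan is to construct the two bijections topologically using the two-coloring technique illustrated in Example~\ref{ex:ncpart-ncmatch}, and then verify they are mutually inverse. We focus on the first chain $\ncpart_d^T \leftrightarrow \ncmatch_{2d}^{LR} \leftrightarrow \ncpart_d^B$; the second chain $\ncpart_d^L \leftrightarrow \ncmatch_{2d}^{TB} \leftrightarrow \ncpart_d^R$ follows by rotating $\Pb$ through an angle of $\pi/2$, which, by Remark~\ref{rem:broken}, exchanges the roles of top/bottom sides and left/right sides (up to a cyclic shift of indices).

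Starting from $[\mu]^{LR} \in \ncmatch_{2d}^{LR}$, realize it as a multiarc $\alpha \subset \Pb$ whose $d$ disjoint simple arcs connect the representative points of the matched left and right sides, drawn as convex hulls in the hyperbolic approximation so that they are pairwise disjoint. The open complement $\Pb \setminus \alpha$ is a union of open disks. The boundary of each such complementary region consists alternately of arcs of $\alpha$ and arcs of $\partial \Pb$; since each endpoint of $\alpha$ lies on a left or right side, and consecutive sides on $\partial \Pb$ alternate among the four types in the pattern $\ldots R_{m-1}, T_m, L_m, B_m, R_m, T_{m+1},\ldots$, the $\partial\Pb$-arcs of a single region contain either only top sides or only bottom sides. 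The union of the regions containing $S_T$ is a subsurface $\Ub_T$ and the union of the remaining regions is a subsurface $\Ub_B \supset S_B$. By Definition~\ref{def:nc-top}, these determine noncrossing partitions $[\lambda]^T = \ncpart(\Ub_T) \in \ncpart_d^T$ and $[\lambda]^B = \ncpart(\Ub_B) \in \ncpart_d^B$. Conversely, given $[\lambda]^T \in \ncpart_d^T$, take $\Ub_T$ to be a small closed regular neighborhood of the union of $S_T$ with the convex hulls of the representative top points in each block. The frontier $\partial \Ub_T \cap \inter(\Pb)$ is a disjoint union of arcs, and each arc has both endpoints in $S_L \cup S_R$ since the neighborhood was centered on top sides. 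A Euler characteristic count, using that $\Pb$ is a disk and that both $\Ub_T$ and its complement are disjoint unions of disks (as in Corollary~\ref{cor:disk-diagrams}), shows that the frontier consists of exactly $d$ arcs; the alternation of left and right sides around $\partial \Pb$ then forces each arc to join a left-side representative to a right-side representative, yielding $[\mu]^{LR} \in \ncmatch_{2d}^{LR}$. The construction starting from $[\lambda]^B \in \ncpart_d^B$ is entirely parallel.

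The main obstacle will be verifying that these forward and reverse constructions are mutually inverse. Going from $[\mu]^{LR}$ to $[\lambda]^T$ and back is transparent, since the regular neighborhood of the block-wise convex hulls is isotopic in $\Pb$ to the ``sky blue'' subsurface $\Ub_T$ produced by the two-coloring, and both $\ncpart(\Ub_T)$ and the matching recovered from $\partial \Ub_T$ depend only on the isotopy class. In the other direction, starting from $[\lambda]^T$, passing to $[\mu]^{LR}$, and then two-coloring the resulting multiarc produces two subsurfaces whose restriction to $S_T$ recovers the original partition of top sides, and similarly on the bottom. This confirms that the two maps $\ncpart_d^T \to \ncmatch_{2d}^{LR}$ and $\ncmatch_{2d}^{LR} \to \ncpart_d^T$ are inverse bijections, and by the same argument with roles reversed, so are $\ncpart_d^B \to \ncmatch_{2d}^{LR}$ and its inverse.
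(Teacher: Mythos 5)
Your proposal is correct and follows essentially the same route as the paper: the paper's proof consists of the single line ``Arguing as in Example~\ref{ex:ncpart-ncmatch} establishes the following result,'' and that example is precisely the two-coloring of the multiarc complement in one direction and the regular neighborhood of block-wise convex hulls in the other. You have simply written out in full the details (the alternation of side types around $\partial \Pb$, the Euler characteristic count, and the verification that the two constructions are mutually inverse) that the paper leaves implicit.
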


The bijections between different types of noncrossing partitions 
are related to the Kreweras complement map.

\begin{defn}[Kreweras complement maps]\label{def:kreweras}
  A \emph{topological Kreweras complement} is a bijection $\krew^{XY}
  \colon \ncpart_d^X \to \ncpart_d^Y$ from
  Proposition~\ref{prop:match-part}, where $X$ is $T$, $L$, $B$ or $R$
  and $Y = X^{\textrm{op}}$ is the \emph{opposite side} $B$, $R$, $T$
  or $L$, respectively.  Combining a topological Kreweras complement
  map with the bijections of Definition~\ref{def:ncpart-sides}
  produces the classical order-reversing \emph{Kreweras complement
  map} (see \cite{kreweras72}) $\krew \colon \ncpart_d \to \ncpart_d$,
  or its inverse.  The bottom-to-top and right-to-left maps produce
  the Kreweras complement, and the top-to-bottom and left-to-right
  maps produce its inverse.  Figure~\ref{fig:kreweras} illustrates the
  process.
\end{defn}

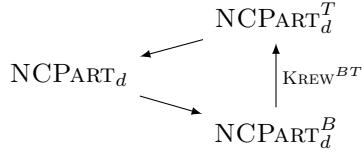
\begin{figure}
    \centering
    \begin{tikzcd}[row sep=tiny]
      & \ncpart_d^T \arrow[dd,latex-,"\krew^{BT}"] \\
      \ncpart_d \arrow[ur,latex-] & \\
      & \ncpart_d^B \arrow[ul,latex-]
    \end{tikzcd}
    \caption{Three bijections that do not form a commutative diagram, since  
    the composition of all three starting at $\ncpart_d$ is the Kreweras 
    complement map $\krew$, a non-trivial order-reversing automorphism.}
    \label{fig:kreweras}
\end{figure}

\subsection{Noncrossing permutations}\label{subsec:ncperm}
When noncrossing partitions are converted to noncrossing permutations, 
the Kreweras map has an algebraic description.

\begin{defn}[Noncrossing permutations]\label{def:ncperm}
  A set partition $[\lambda] \vdash [d]$ can be converted into a
  permutation $\pi \in \sym_d$ by turning each block of $[\lambda]$ of
  size $k$ into a $k$-cycle in which the elements of the block are
  listed in increasing order. For example, if $[\lambda] =
  137|2|45|6|8|9$, then $\pi = \perm([\lambda]) =(1\ 3\ 7)(4\ 5)$ in
  $\sym_9$. This defines a map $\perm\colon \spart_d \into \sym_d$
  which is injective because the set partition can be reconstructed
  from its permutation image.  When $[\lambda]$ is a noncrossing
  partition, we call $\pi = \perm([\lambda])$ a \emph{noncrossing
  permutation}.  Let $\ncperm_d$ denote the collection of noncrossing
  permutations in $\sym_d$. The restriction $\perm \colon \ncpart_d \to 
  \ncperm_d$ is a bijection as shown by Biane in \cite{biane97}.
  In the metric version of noncrossing partitions, the disjoint cycles of
  $\perm([\lambda])$ record the order in which vertices occur in the
  counterclockwise boundary cycle of the convex hull of each block of
  $[\lambda]$. 
\end{defn}

The bijection from $\ncpart_d$ to $\ncperm_d$ makes the latter into
a partially ordered set, and this has a useful interpretation which
is purely algebraic.

\begin{defn}[Absolute order]\label{def:abs-order}
    Let $T$ be the set of all transpositions in the symmetric group $\sym_d$
    and, for each $\sigma \in \sym_d$, define the \emph{absolute reflection length} 
    $\ell(\sigma)$ to be the length of a minimal factorization of $\sigma$
    into elements of $T$. Declaring $\sigma \leq \tau$ if 
    $\ell(\sigma) + \ell(\sigma^{-1}\tau) = \ell(\tau)$ makes $\sym_d$ into
    a partially ordered set which happens to be a lattice. 
    If we define $\delta$ to be the $d$-cycle $(1\ 2\ \cdots\ d)$, then the 
    interval $[1,\delta]$ in $\sym_d$ is the set $\ncperm_d$, so the set
    of noncrossing permutations is partially ordered with the 
    \emph{absolute order}. One should think of the interval 
    $[1,\delta]$ as a union of geodesics from the identity
    to $\delta$ in the Cayley graph of $\sym_d$ with respect 
    to the generating set $T$. Moreover, the map $\perm \colon \ncpart_d 
    \to \ncperm_d$ is a poset isomorphism.
\end{defn}

\begin{defn}[Noncrossing permutations of sides]\label{def:ncperm-sides}
  Composing the identifications in Definition~\ref{def:ncpart-sides}
  with the function in Definition~\ref{def:ncperm} produces new
  functions $\perm\colon \ncpart_d^X \to \ncperm_d$ where $X$ is $T$,
  $L$, $B$, or $R$.  We write $\pi^X$ when $\pi^X =
  \perm([\lambda]^X)$ for some $[\lambda]^X \in \ncpart_d^X$.  Alternatively, 
  let $\Ub$ be a closed, possibly disconnected, subsurface of $\Pb$ with contractible 
  components and $\ncpart(\Ub) = [\lambda]$.  The sides $S_X$ occur in the boundary 
  $\Ub$ and the disjoint cycles of $\pi^X$ record the counterclockwise ordering of 
  the sides in the boundaries of each component.
\end{defn}

\begin{defn}[Permutations and matchings]\label{def:ncperm-ncmatch}
  Let $[\mu] \vdash [2d]$ be a noncrossing matching.  By
  Proposition~\ref{prop:match-part}, the left-right matching
  $[\mu]^{LR}$ determines noncrossing partitions $[\lambda]^T$ and
  $[\lambda]^B$.  The corresponding noncrossing permutations, $\pi^T$
  and $\pi^B$, are the \emph{top and bottom permutations} associated
  with $[\mu]^{LR}$.  Similarly, the top-bottom matching $[\mu]^{TB}$
  determines noncrossing partitions $[\lambda]^L$ and $[\lambda]^R$,
  and the corresponding noncrossing permutations, $\pi^L$ and $\pi^R$,
  are the \emph{left and right permutations} associated with
  $[\mu]^{TB}$.
\end{defn}

There is a close algebraic connection between the two noncrossing
permutations associated with a noncrossing side-to-side matching.

\begin{example}\label{ex:factors}
 The noncrossing matching $[\mu]^{LR}$ in
 Example~\ref{ex:ncpart-ncmatch} determines the noncrossing partitions
 $[\lambda]^T = 137|2|45|6|8|9$ and $[\lambda]^B = 12|356|4|789$,
 which become the noncrossing permutations $\pi^T = (1\ 3\ 7)(4\ 5)$
 and $\pi^B = (1\ 2)(3\ 5\ 6)(7\ 8\ 9)$ in $\sym_9$.  Note that the
 composition $\pi^T \cdot \pi^B = (1\ 2\ \cdots\ 9)$. This can be
 understood geometrically.  The permutation $\pi^B$ is applied first.
 It identifies $6$ with the side $B_6$ which is connected to $B_3$ in the
 counterclockwise order in the boundary of the block $\{3,5,6\}$ and
 this side is identified with the number $3$.  The permutation $\pi^T$
 identifies $3$ with the side $T_3$ which is connected to $T_7$ in the
 counterclockwise order in the boundary of the block $\{1,3,7\}$ and
 this is identified with the number $7$.  This can be visualized as a
 path from $B_6$ to $B_3$ to $T_3$ (passing through $L_3$) to $T_7$.
 This is nearly a full circuit around the arc from $R_6$ to $L_3$,
 from the bottom side $B_6$ adjacent to $R_6$ to the top side $T_7$,
 also adjacent to $R_6$.  Between the start and end sides there is
 exactly one vertex where the indexing changes
 (Remark~\ref{rem:broken}), so $6$ goes to $7$ in the composition.
 Similarly, $i$ goes to $i+1$ mod $9$ for every $i \in [9]$.
\end{example}

Arguing as in Example~\ref{ex:factors} establishes the following.

\begin{prop}[Factors]\label{prop:factors}
  Let $[\mu] \vdash [2d]$ be a noncrossing matching and let $\delta$
  be the $d$-cycle $(1\ 2\ \cdots\ d)$. The top and bottom
  permutations associated to the left-right matching $[\mu]^{LR}$
  satisfy the equation $\pi^T \cdot \pi^B = \delta$, and the left and
  right permutations associated to the top-bottom matching
  $[\mu]^{TB}$ satisfy the equation $\pi^L \cdot \pi^R = \delta$.
\end{prop}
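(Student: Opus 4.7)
The plan is to read both factor permutations directly off the matching $[\mu]^{LR}$ and observe that a single matching pair forces the composition to send each $i\in[d]$ to $i+1$. As set up in Example~\ref{ex:ncpart-ncmatch}, the matching arcs of $[\mu]^{LR}$ partition $\Pb$ into sky blue regions (each containing a block of top sides) and sea blue regions (each containing a block of bottom sides), and the cycles of $\pi^T$ and $\pi^B$ record the CCW cyclic order of top and bottom sides in the boundaries of these regions. Each of the $2d$ sides in $S_L\cup S_R$ lies in exactly one pair of $[\mu]^{LR}$ and hence carries exactly one arc endpoint, while no top or bottom side carries any. Since $B_m$ is flanked in $\partial\Pb$ by $L_m$ and $R_m$, each maximal subarc of $\partial\Pb$ inside the boundary of a sea blue region contains exactly one bottom side, terminating at arc endpoints on the adjacent $L$ and $R$.

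Tracing the boundary of the sea blue region containing $B_i$ counterclockwise from $B_i$, we move along the portion of $R_i$ to its arc endpoint, follow the matching arc to the partner endpoint on $L_b$ where $\{R_i,L_b\}\in[\mu]^{LR}$, and descend through the basepoint $z_b$ to $B_b$. Hence $\pi^B(i)=b$ exactly when $\{R_i,L_b\}\in[\mu]^{LR}$. The dual analysis, using that $T_m$ is flanked by $R_{m-1}$ and $L_m$, yields $\pi^T(j)=a+1$ exactly when $\{L_j,R_a\}\in[\mu]^{LR}$: tracing CCW from $T_j$ leads along $L_j$ to its arc endpoint, across the matching arc to $R_a$'s endpoint, and up over the breakpoint $w_{a\to a+1}$ to $T_{a+1}$. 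Now fix $i\in[d]$ and let $\{R_i,L_b\}$ be the unique matching pair containing $R_i$. The first calculation gives $\pi^B(i)=b$, and reading the same pair from the $T$-side (so $j=b$ and $a=i$) gives $\pi^T(b)=i+1$, so $(\pi^T\cdot\pi^B)(i)=i+1\pmod d$. This proves $\pi^T\cdot\pi^B=\delta$, and $\pi^L\cdot\pi^R=\delta$ follows by applying the same argument to the $90^\circ$ rotation of $\Pb$, which cyclically permutes the four families of sides.

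The main obstacle is justifying the boundary tracing rigorously: one must verify that each maximal subarc of $\partial\Pb$ in a monochromatic region contains exactly one side of the designated type, bounded by arc endpoints on the expected $L$ and $R$ sides, and one must track the orientation of the traversal carefully across each matching arc to see that the ``+1'' shift comes from crossing a breakpoint exactly once. Both are consequences of the structural facts in Section~\ref{subsec:ncmatch}: the matching distributes one endpoint to each of the $2d$ sides in $S_L\cup S_R$ and none to the top or bottom sides, so any CCW traversal of $\partial\Pb$ between two consecutive same-type sides of a common block must first encounter an $L$ or $R$ side whose unique arc endpoint terminates the boundary piece, and the broken symmetry of Remark~\ref{rem:broken} explains why the indexing increments by one precisely as we pass over the breakpoint $w_{i\to i+1}$.
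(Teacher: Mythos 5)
Your proof is correct and is essentially the paper's own argument: the paper proves this proposition by the one-line instruction ``argue as in Example~\ref{ex:factors}'', and your general identities $\pi^B(i)=b$ and $\pi^T(b)=i+1$ for a matching pair $\{R_i,L_b\}$ are precisely the circuit around that arc described there, with the $+1$ accounted for by the single breakpoint crossed (Remark~\ref{rem:broken}). The only slip is in the final symmetry step: the rotation that carries the left--right picture to the top--bottom picture is the single side-step rotation by $2\pi/(4d)$ of Remark~\ref{rem:broken}, not a $90^\circ$ rotation of the $4d$-gon (a true quarter turn permutes the four side families differently depending on $d \bmod 4$); either invoke that single-step rotation or simply re-run your trace using the fact that $R_i$ is flanked by $B_i$ and $T_{i+1}$.
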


Proposition~\ref{prop:factors} gives an algebraic description of the
Kreweras complement map.

\begin{rem}[Kreweras complements]\label{rem:kreweras}
  Proposition~\ref{prop:factors} shows, for example, that the
  permutation version of $\krew^{BT}$ sends $\pi^B$ to $\pi^T = \delta
  \cdot (\pi^B)^{-1}$, its \emph{left complement} with respect to
  $\delta = (1\ 2\ \cdots\ d)$.  Similarly, the permutation version of
  $\krew^{TB}$ sends $\pi^T$ to $\pi^B = (\pi^T)^{-1} \cdot \delta$,
  its \emph{right complement} with respect to $\delta$. 
\end{rem}

\section{Geometric Combinatorics}\label{sec:geo-comb}

As we transition from looking at a single polynomial (Part~\ref{part:single-poly})
to looking at spaces of polynomials (Part~\ref{part:poly-spaces}), we introduce a way of sending
each monic polynomial to a point in a product of order complexes. 
After turning the combinatorics of the regular complex $\Pb_p$ into two chains in 
noncrossing partition lattices (\ref{subsec:side-chains}), we combine this with the 
metric of the critical value complex $\Qb'_p$ to create the $\geocomb$ map 
from a polynomial space to a product of two simplicial complexes (\ref{subsec:geocomb}).
 
\subsection{Chains of side partitions}\label{subsec:side-chains}
The combinatorial structure of the regular point complex $\Pb_p$ encodes two 
chains of noncrossing partitions.  We begin by listing our standing assumptions.

\begin{rem}[Standing assumptions]
  In this section $p\colon \C \to \C$ is a monic complex polynomial of
  degree $d$ with fixed coordinate rectangle $\closedsquare \supset
  \cvl(p)$.  The critical value complex $\Qb'_p$, regular value
  complex $\Qb_p$ and regular point complex $\Pb_p$ are as defined in
  Section~\ref{sec:complex-map}.  We assume that $\Qb'_p$ is
  $(k',l')$-subdivided and $\Qb_p$ is $(k,l)$-subdivided, with
  $k=k'+1$ and $l=l'+1$.  All illustrations in this section use the
  degree $5$ polynomial $p$ of Example~\ref{ex:deg5}.
\end{rem}

\begin{figure}
  \centering
  \begin{tabular}{ccc}
    \bt{c}\includegraphics[scale=.75]{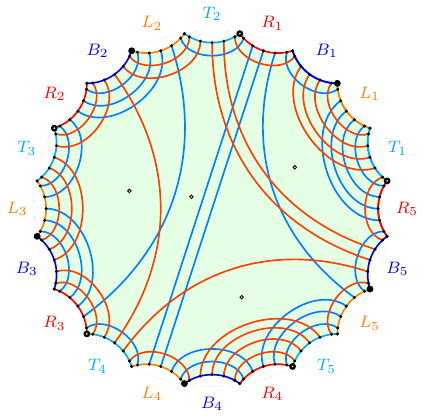}\et &
    $\longrightarrow$ &
    \bt{c}\includegraphics[scale=.35]{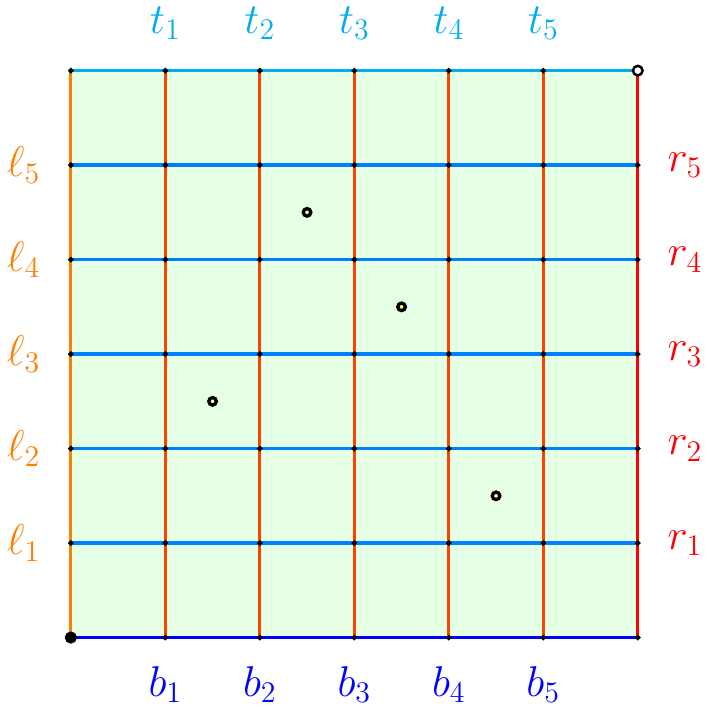}\et
  \end{tabular}
  \caption{The ``regular'' map $\pb \colon \Pb_p \to \Qb_p$ for the
    polynomial $p$ of Example~\ref{ex:deg5} is shown in standardized
    form. The regular value complex $\Qb_p$ of
    Figure~\ref{fig:deg5-original-dual-range} is shown here as a
    standard rectangle with an equally spaced $(5,5)$-subdivision.
    Similarly, the regular point complex $\Pb_p$ of
    Figure~\ref{fig:deg5-original-dual-domain} is shown here in a
    $20$-sided standard $5$-branched rectangle.
  \label{fig:deg5-stylized-dual}}
\end{figure}

Since we are only interested in its combinatorial structure, we redraw 
the regular point complex $\Pb_p$ as a cell structure on a standard 
branched rectangle.  We do this using a standard labeling that 
comes from the fact that $p$ is monic.

\begin{defn}[Labeling $\Pb_p$]\label{def:monic-poly-labels}
  Recall that cell structure of $\Qb'_p$ depends on the choice of rectangle $\closedsquare$ (Remark~\ref{rem:cells-sides}), 
  but once that choice has been made, the cell structure of $\Qb_p$ (and $\Pb_p$) is unaffected by the
  size of the larger rectangle on which $\Qb$ is built (Remark~\ref{rem:comb-inv}).  Thus we are 
  free to assume that interval $\Jb$ includes $0$.  Let $\gamma$ be
  the portion of the $x$-axis that starts in the right side $\Rb$ of
  $\Qb_p$ and extends to the right. Because $p$ is monic, the preimage
  of this regular ray lifts to $d$ topological rays that start in the
  right sides of $\Pb_p$ and end up asymptotic to the rays in the
  $d^{th}$ root of unity directions.  The side $\Rb_d = \Rb_0$ in
  $\Pb_p$ is the one whose ray ends up asymptotic to the positive
  $x$-axis, and the remaining sides and vertices are indexed
  so that its boundary labels look like a standard $d$-branched rectangle 
  (Definition~\ref{def:std-brrectangles}). See Figure~\ref{fig:deg5-original-dual-domain}.
\end{defn}

\begin{defn}[Standard representations]\label{def:std-spaces}
  A \emph{standard representation of $\Qb_p$} is one where the small
  rectangles in its $(k,l)$-subdivision are the same size, and a
  \emph{standard representation of $\Pb_p$} is one where its cell
  structure is drawn on the $4d$-gon that is the standard $d$-branched
  rectangle (Definition~\ref{def:std-brrectangles}), according to the
  monic polynomial labels (Definition~\ref{def:monic-poly-labels}).
\end{defn}

Figure~\ref{fig:deg5-stylized-dual} shows standard representations for
our running example.  The representations of
Definition~\ref{def:std-spaces} are possible because the regular point
complex $\Pb_p$ is always a nonsingular disk diagram.  This would not always be
possible for the critical point complex $\Pb'_p$ since it is singular
when the critical value complex $\Qb'_p$ has critical values in its
boundary (Corollary~\ref{cor:singular}).  The subdivision of $\Qb_p$
can be viewed as a horizontal subdivision and a vertical subdivision
that have been superimposed.

\begin{figure}
  \bt{ccc}
  \bt{c}\includegraphics[scale=.75]{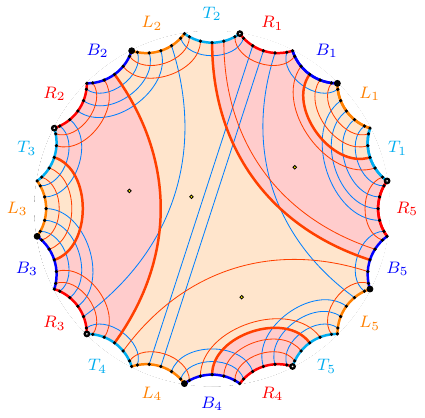}\et &
  $\longrightarrow$ &
  \bt{c}\includegraphics[scale=.35]{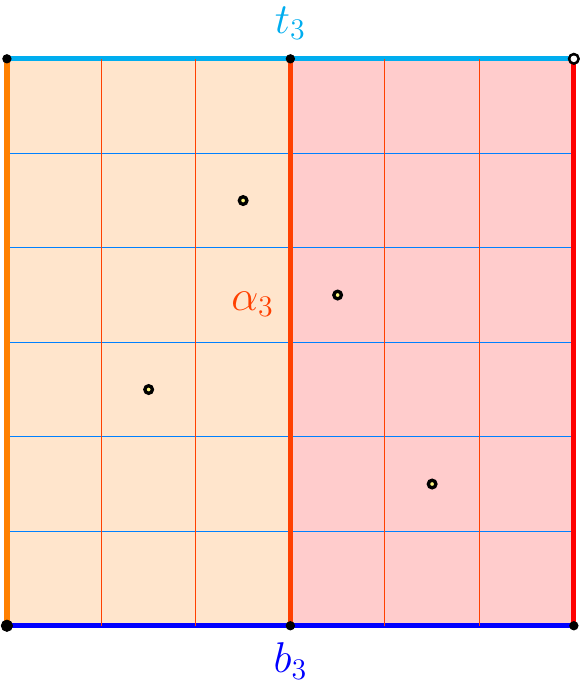}\et
  \et
  \caption{The vertical arc $\alpha_3$ from $b_3$ to $t_3$ in the
    range divides the regular value complex $\Qb_p$ into an orange
    left subsurface $\Vb_3^L$ and a red right subsurface $\Vb_3^R$.
    Their preimages divide the regular point complex $\Pb_p$ into an
    orange ``left'' subsurface $\Ub_3^L = p^{-1}(\Vb_3^L)$ and a red
    ``right'' subsurface $\Ub_3^R = p^{-1}(\Vb_3^R)$ separated by the
    multiarc $\widetilde \alpha_3 = p^{-1}(\alpha_3)$.  The corresponding
    topological noncrossing partitions are $\ncpart^L(\Ub_3^L) =
    1|245|3$ of the $5$ left sides of $\Pb_p$, and $\ncpart^R(\Ub_3^R)
    = 15|23|4$ of the $5$ right sides of
    $\Pb_p$.\label{fig:deg5-subsurfaces}}
\end{figure}

\begin{defn}[Arcs]\label{def:arcs}  
  The regular value complex $\Qb_p = \Ib_p \times \Jb_p$ is a cell
  structure on the rectangle $\Qb = \Ib \times \Jb$.  As intermediate
  steps we have $\Qb_{\Re(p)} = \Ib_p \times \Jb$ and let
  $\Qb_{\Im(p)} = \Ib \times \Jb_p$, which have a $(k,0)$-subdivision
  and a $(0,l)$-subdivision. Let $\alpha_i$ be the edge in
  $\Qb_{\Re(p)}$ from $b_i$ in the bottom side to $t_i$ in the top
  side, with $i \in [k]$. Let $\beta_j$ be the edge in $\Qb_{\Im(p)}$
  from $\ell_j$ in the left side to $r_j$ in the right side, with $j
  \in [l]$.  These persist as subdivided arcs in the
  $(k,l)$-subdivided $\Qb_p$ and we call them the \emph{vertical arcs
  $\alpha_i$} and the \emph{horizontal arcs $\beta_j$}.
\end{defn}
  
For our standard example, Figure~\ref{fig:deg5-subsurfaces} shows the
structures associated with a single vertical arc.  Here are the
definitions to make this precise.

\begin{defn}[Subsurfaces of $\Pb_p$ and $\Qb_p$]\label{def:subsurfaces}
  The closed complementary regions 
  (as in Definition~\ref{def:disks-curves})
  of the vertical arc $\alpha_i$ from $b_i$
  to $t_i$ are the \emph{left subsurface} $\Vb^L_i$ and the
  \emph{right subsurface} $\Vb^R_i$. The complementary regions of a
  horizontal regular arc $\beta_j$ from $\ell_j$ to $r_j$ are the
  \emph{top subsurface} $\Vb^T_j$ and the \emph{bottom subsurface}
  $\Vb^B_j$.  Collectively these the $2(k+l)$ \emph{side subsurfaces
  of $\Qb_p$}.  Their preimages are the \emph{left subsurface}
  $\Ub^L_i = p^{-1}(\Vb^L_i)$, the \emph{right subsurface} $\Ub^R_i =
  p^{-1}(\Vb^R_i)$, the \emph{top subsurface} $\Ub^T_j =
  p^{-1}(\Vb^T_j)$, and the \emph{bottom subsurface} $\Ub^B_j =
  p^{-1}(\Vb^B_j)$.  These are the $2(k+l)$ \emph{side subsurfaces of
  $\Pb_p$}.  See Table~\ref{tbl:partition-table}. 
\end{defn}

As in Example~\ref{ex:ncpart-ncmatch}, these subsurfaces produce
noncrossing partitions.

\begin{defn}[Side partitions and side matchings]\label{def:side-part-match}
  For each vertical arc $\alpha_i$ from $b_i$ to $t_i$, we have $L
  \subset \Vb^L_i \subset \Qb_p$, so $S_L \subset \Ub^L_i \subset
  \Pb_p$.  The \emph{$i^{th}$ left partition} $[\lambda_i]^L =
  \ncpart^L(\Ub^L_i)$ (Definition~\ref{def:ncpart-sides}) is the
  corresponding noncrossing partition of the left sides of $\Pb_p$.
  The preimage $\widetilde \alpha_i =p^{-1}(\alpha_i)$ of $\alpha_i$ is
  a multiarc, a collection of $d$ noncrossing arcs that connect the $d$ top sides
  and the $d$ bottom sides. The mulitarc $\widetilde \alpha_i$ defines a \emph{top-bottom 
  matching} $[\mu_i]^{TB}$ and forms the common boundary between the two subsurfaces $\Ub_i^L$ and $\Ub_i^R$.  
  Similarly, the \emph{$i^{th}$ right partition} is $[\lambda_i]^R =
  \ncpart^R(\Ub^R_i)$.  And for any horizontal arc $\beta_j$, the
  \emph{$j^{th}$ top partition} is $[\lambda_j]^T =
  \ncpart^T(\Ub^T_j)$ and the \emph{$j^{th}$ bottom partition} is
  $[\lambda_j]^B = \ncpart^B(\Ub^B_j)$.  The preimage 
  $\widetilde \beta_j = p^{-1}(\beta_j)$ of $\beta_j$ is a multiarc, a collection of
  $d$ noncrossing arcs that connect the $d$ left sides and the $d$
  right sides. The multiarc $\widetilde \beta_j$ defines a \emph{left-right matching} $[\mu_j]^{LR}$ and form
  the common boundary between the two subsurfaces $\Ub_j^T$ and $\Ub_j^B$.  These are the 
  \emph{side partitions} and \emph{side matchings} of $\Pb_p$. 
\end{defn}

\begin{table}
\newcommand\Tstrut{\rule{0pt}{2.6ex}}
\newcommand\Bstrut{\rule[-1.4ex]{0pt}{0pt}}
\bgroup
\def\arraystretch{1.5}
\begin{center}
\begin{tabular}{|c|c|c|c|c|}
    \hline
    name & partition & permutation & in $\Pb_p$ & in $\Qb_p$ 
    \Tstrut\Bstrut\\
    \hline
    $TB$ matching & $[\mu_i]^{TB} \in \ncmatch_{2d}^{TB}$ & --- & $\widetilde \alpha_i$ & $\alpha_i$ \Bstrut \\
    $LR$ matching & $[\mu_j]^{LR} \in \ncmatch_{2d}^{LR}$ & --- & $\widetilde \beta_j$ & $\beta_j$ \\
    \hline
    top partition & $[\lambda_j]^T \in \ncpart_d^T$ & $\pi_j^T$ & 
    $\Ub_j^T$ & $\Vb_j^T$\\
    left partition & $[\lambda_i]^L \in \ncpart_d^L$ & $\pi_i^L$ & 
    $\Ub_i^L$ & $\Vb_i^L$   \Tstrut \\
    bottom partition & $[\lambda_j]^B \in \ncpart_d^B$ & $\pi_j^B$ & 
    $\Ub_j^B$ & $\Vb_j^B$ \\
    right partition & $[\lambda_i]^R \in \ncpart_d^R$ & $\pi_i^R$ & 
    $\Ub_i^R$ & $\Vb_i^R$ \\
    \hline
\end{tabular}
\end{center}
\egroup
\caption{Notations for the side matchings and side partitions of the regular point complex $\Pb_p$ 
(Definition~\ref{def:side-part-match}).
\label{tbl:partition-table}}
\end{table}

\begin{figure}
  \bt{c}
  \includegraphics[scale=.3]{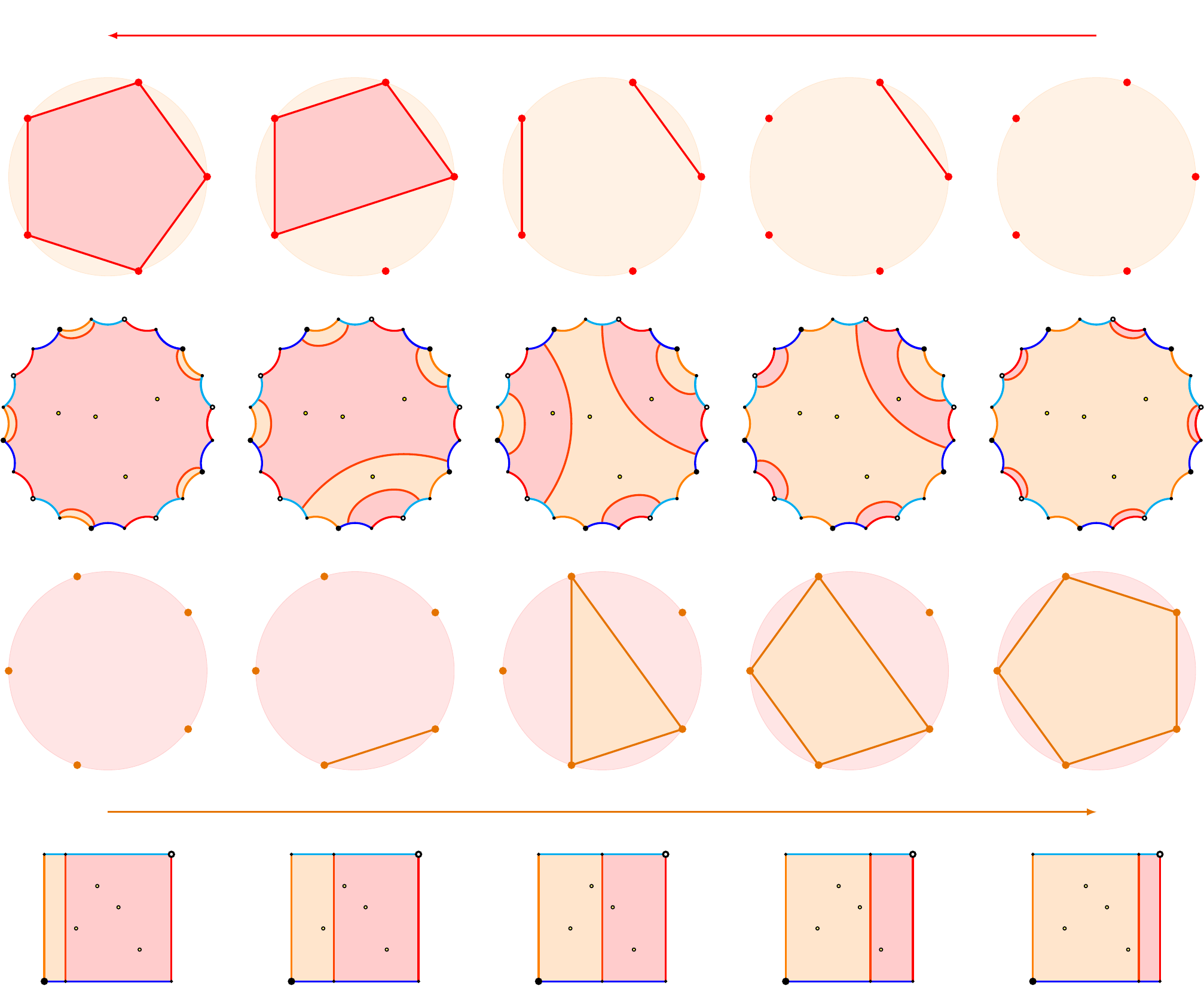}
  \et
  \caption{The bottom row shows the left and right subsurfaces
    $\Vb^L_i, \Vb^R_i \subset \Qb_p$ separated by the vertical arcs
    $\alpha_i$. The second row shows the corresponding left and right
    subsurfaces $\Ub^L_i, \Ub^R_i \subset \Pb_p$ separated by the $5$ 
    arcs of the multi arc $\widetilde \alpha_i$ which defines the top-bottom matching 
    $[\mu_i]^{TB}$. The third row shows 
    the chain $1|2|3|4|5 < 1|2|3|45 < 1|245|3 < 1|2345 < 12345$ of left 
    noncrossing partitions $[\lambda_i]^L \in \ncpart_5^L$. 
    And the top row shows the chain $12345 > 1235|4 > 15|23|4 > 15|2|3|4 >
    1|2|3|4|5$ of right noncrossing partitions $[\lambda_i]^R \in
    \ncpart_5^R$.
    \label{fig:bt-matchings}}
\end{figure}

Nested subsurfaces in $\Qb_p$ lift to nested subsurfaces in $\Pb_p$
and comparable noncrossing partitions.  For our standard example
Figure~\ref{fig:bt-matchings} shows the structures associated with the
vertical subdivisions of $\Qb_p$, and Figure~\ref{fig:lr-matchings}
shows the structures associated with the horizontal subdivisions of
$\Qb_p$. The side chains are strictly monotonic because of the way in
which the cell complexes for $p$ are defined.

\begin{lem}[Distinct]\label{lem:increasing}
  Nested side surfaces of $\Pb_p$ have distinct partitions.
\end{lem}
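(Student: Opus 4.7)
The plan is to prove distinctness by showing that the number of blocks of each side partition admits a Riemann--Hurwitz-style formula, and that this count strictly changes between nested side subsurfaces. I will work with the chain of left subsurfaces $\Ub^L_1 \subset \cdots \subset \Ub^L_k$; the right, top, and bottom chains are handled symmetrically.

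First I show that each $\Ub^L_i$ is a disjoint union of closed disks. The boundary $\partial \Vb^L_i$ is regular: the vertical arc $\alpha_i$ is regular because $x_i \notin C'$, and the portions of $\partial \Qb$ involved are regular because $\cvl(p) \subset \closedsquare = \Qb'_p$ lies in the interior of $\Qb$. Proposition~\ref{prop:disk-pre} then produces $b_i$ disk components of $\Ub^L_i = p^{-1}(\Vb^L_i)$, and Lemma~\ref{lem:degree-mult} yields $b_i = d - n_i$, where $n_i$ is the total multiplicity of critical values of $p$ in $\Vb^L_i$. To identify $b_i$ with the block count of $[\lambda_i]^L$, observe that $S_L \subset \Ub^L_i$ since $\Lb \subset \Vb^L_i$. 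By Lemma~\ref{lem:surj-maps}, each component of $\Ub^L_i$ surjects onto $\Vb^L_i$ under $p$, so every component contains at least one left side of $\Pb_p$. Hence the blocks of $[\lambda_i]^L = \ncpart^L(\Ub^L_i)$ correspond bijectively to the components of $\Ub^L_i$, and the block count is $b_i = d - n_i$.

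For consecutive indices $i$ and $i+1$, the construction of $\Ib_p$ as an undual of $\Ib'_p$ (Definition~\ref{def:dual-intervals}) guarantees the strict interleaving $x_i < x'_i < x_{i+1}$, where $x'_i \in C'$ is the real part of some critical value $c \in \cvl(p)$. Thus $c \in \Vb^L_{i+1} \setminus \Vb^L_i$, so $n_{i+1} \geq n_i + 1$ and hence $b_{i+1} < b_i$. Since the nesting $\Ub^L_i \subset \Ub^L_{i+1}$ forces $[\lambda_i]^L \leq [\lambda_{i+1}]^L$ and the block count strictly decreases, the two partitions must differ. Iterating handles non-consecutive nested pairs. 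The main conceptual step is recognizing the block count as the right invariant; the only subtle check is confirming that every component of $\Ub^L_i$ contains a side, so that no components are ``missed'' by the partition.
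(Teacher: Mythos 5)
Your proof is correct and follows essentially the same route as the paper's: both arguments locate a critical value in the difference $\Vb^L_{i+1} \setminus \Vb^L_i$ and use Proposition~\ref{prop:disk-pre} to conclude that the number of components (equivalently, blocks of the side partition) strictly drops, which forces the partitions to be distinct. You simply spell out two details the paper leaves implicit --- that every component of $\Ub^L_i$ contains a left side (via Lemma~\ref{lem:surj-maps}) and that the interleaving $x_i < x'_i < x_{i+1}$ from Definition~\ref{def:dual-intervals} supplies the critical value --- which is a welcome but not essentially different elaboration.
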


\begin{proof}
  We prove this for nested left subsurfaces. For any
  $i_1,i_2 \in [k]$ with $i_1<i_2$, we have $\Ub_{i_1}^L \subset \Ub_{i_2}^L$ and
  $[\lambda_{i_1}]^L \leq [\lambda_{i_2}]^L$ (Definition~\ref{def:nc-top}).
  We know that there is at least one critical value of $p$ in
  $\Vb_{i_2}^L \setminus \Vb_{i_1}^L$ by construction.  As a consequence
  the subsurface $\Ub^L_{i_1}$ has more connected components than
  $\Ub^L_{i_2}$ (Proposition~\ref{prop:disk-pre}) and the noncrossing
  permutation $\perm([\lambda_{i_1}]^L)$ has more disjoint cycles than
  $\perm([\lambda_{i_2}]^L)$ (Definition~\ref{def:ncmatch}), so the
  inequality is strict.
\end{proof}

\begin{defn}[Side chains of $\Pb_p$]\label{def:side-chains}
  The $k$ vertical arcs $\alpha_i$ from $b_i$ to $t_i$ define nested
  left subsurfaces $L \subset \Vb^L_1 \subset \Vb^L_2 \subset \cdots
  \subset \Vb^L_k \subset \Qb_p$ which lift to $S_L \subset \Ub^L_1
  \subset \Ub^L_2 \subset \cdots \subset \Ub^L_k \subset \Pb_p$ which
  in turn define an increasing chain $[\lambda_1]^L < [\lambda_2]^L <
  \cdots < [\lambda_k]^L$ of left noncrossing partitions in
  $\ncpart_d^L$ (Definition~\ref{def:nc-top} and
  Lemma~\ref{lem:increasing}).  In the other direction, the nested
  right subsurfaces $\Qb_p \supset \Vb^R_1 \supset \Vb^R_2 \supset
  \cdots \supset \Vb^R_k \supset R$ lift to $\Pb_p \supset \Ub^R_1
  \supset \Ub^R_2 \supset \cdots \supset \Ub^R_k \supset S_R$ which
  define a decreasing chain $[\lambda_1]^R > [\lambda_2]^R > \cdots >
  [\lambda_k]^R$ of right noncrossing partitions in $\ncpart_d^R$.
  Similarly, the $l$ horizontal arcs $\beta_j$ from $\ell_j$ to $r_j$
  define an increasing chain $[\lambda_1]^B < [\lambda_2]^B < \cdots <
  [\lambda_l]^B$ of bottom noncrossing partitions in $\ncpart_d^B$,
  and a decreasing chain $[\lambda_1]^T > [\lambda_2]^T > \cdots >
  [\lambda_j]^T$ of top noncrossing partitions in $\ncpart_d^T$.
  These are the $4$ \emph{side chains of $\Pb_p$}.  Since the right
  and left chains are related by the topological Kreweras complement
  map (Definition~\ref{def:kreweras}), they contain the same
  information, and the same is true for the top and bottom chains.  We
  focus on the increasing left and bottom chains.
\end{defn}
 
\begin{figure}
  \bt{c}
  \includegraphics[scale=.3]{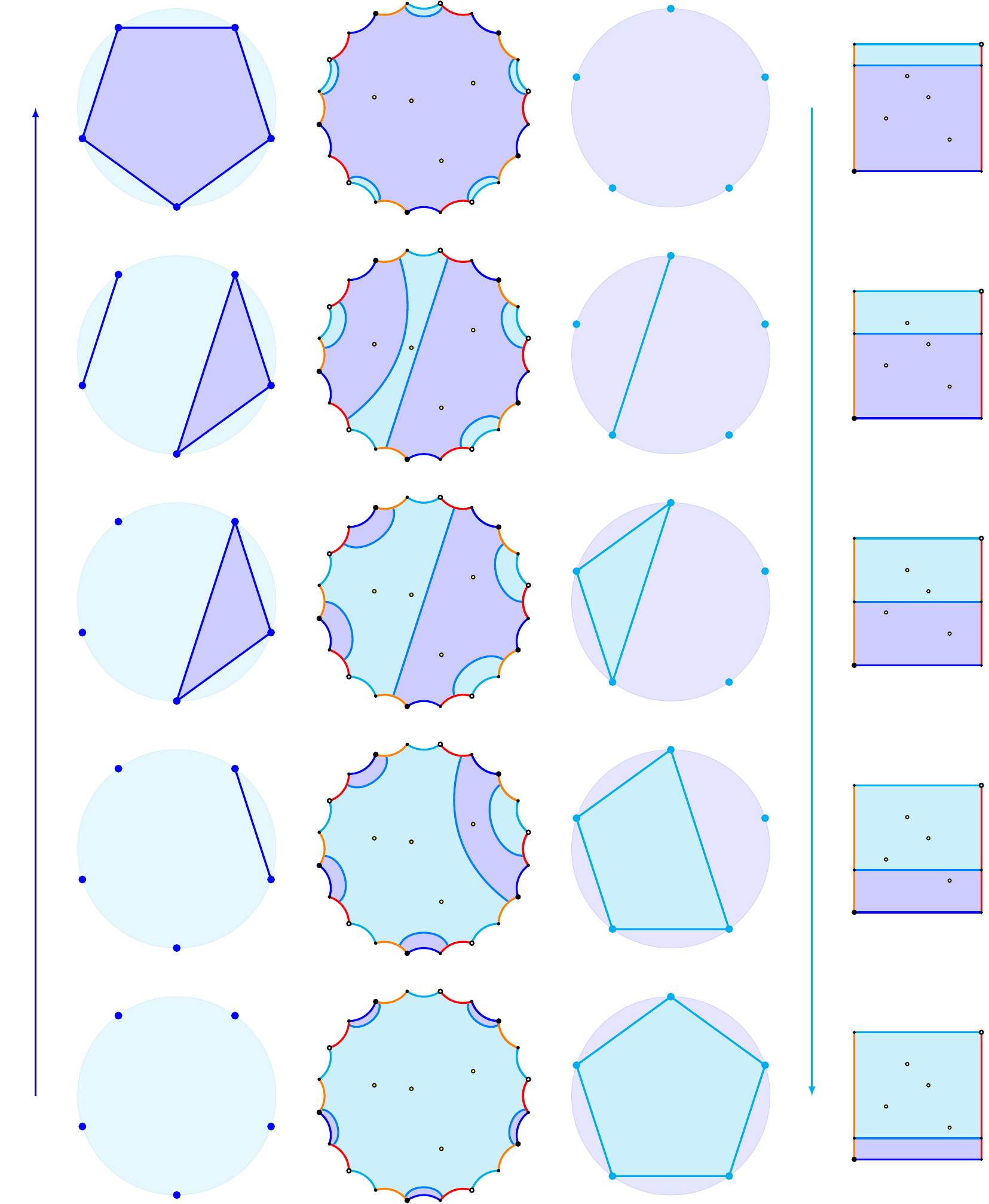}
  \et
  \caption{The last column shows the top and bottom subsurfaces
    $\Vb^T_j, \Vb^B_j \subset \Qb_p$ separated by the horizontal arcs
    $\beta_j$.  The second column shows the corresponding top and
    bottom subsurfaces $\Ub^T_j, \Ub^B_j \subset \Pb_p$ separated by the $5$ arcs
    of the multiarc $\widetilde \beta_j$ which defines the left-right matching
    $[\mu_j]^{LR}$. The first column shows the
    chain $1|2|3|4|5 15|2|3|4 < 145|2|3 < 145|23 < 12345$ of bottom
    noncrossing partitions $[\lambda_j]^B \in \ncpart_5^B$. And the
    third column shows the chain $1|2|3|4|5 < 1|24|3|5 < 1|234|5 <
    1|2345 < 12345$ of top noncrossing partitions $[\lambda_j]^T \in
    \ncpart_5^T$.
    \label{fig:lr-matchings}}
\end{figure}

In our example, the side chains start at the discrete partition and
end at the indiscrete partition because $\partial \Qb'_p$ is regular.

\begin{lem}[Discrete and indiscrete]\label{lem:discrete-indiscrete}
  The side chains of $\Pb_p$ start at the discrete partition if and
  only if the corresponding start side of $\Qb'_p$ is regular, and
  they end at the indiscrete partition if and only if the
  corresponding end side of $\Qb'_p$ is regular.
\end{lem}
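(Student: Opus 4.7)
The plan is to prove the claim for the left chain $[\lambda_1]^L < \cdots < [\lambda_k]^L$ in detail; the bottom chain is identical after swapping coordinates, and the right and top chains will then follow either by repeating the argument verbatim or by applying the Kreweras complement bijections of Definition~\ref{def:kreweras} (which swap the discrete and indiscrete partitions). Recall that $[\lambda_i]^L$ is induced by $\Ub^L_i = p^{-1}(\Vb^L_i)$, where $\Vb^L_i = [x_\ell,x_i] \times \Jb \subset \Qb_p$. I would first verify that $\partial \Vb^L_i$ is regular: the top and bottom edges sit at $y = y_t$ and $y = y_b$ outside the strip $[y'_b,y'_t]$ and are therefore disjoint from $\cvl(p)$; the arc $\alpha_i$ lies at $x = x_i$, an interior vertex of $\Ib_p$ strictly between consecutive critical-value real parts; and the left edge at $x = x_\ell < x'_\ell$ carries no critical values at all.

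With regular boundary in hand, Proposition~\ref{prop:disk-pre} decomposes $\Ub^L_i$ into $b_i = d - n_i$ closed-disk components, where $n_i$ is the total multiplicity of critical values inside $\Vb^L_i$. Each component is a planar $e_j$-branched cover of $\Vb^L_i$ with $\sum_j e_j = d$, and because the boundary is a regular $e_j$-fold cover of $\partial \Vb^L_i$, that component contains exactly $e_j$ of the $d$ left sides of $\Pb_p$. In particular every component picks up at least one left side, so $[\lambda_i]^L$ has exactly $b_i$ blocks whose sizes form the multiset $\{e_j\}$. I would then conclude that $[\lambda_i]^L$ is discrete iff $b_i = d$ iff $n_i = 0$, and is indiscrete iff $b_i = 1$ iff $n_i = d - 1$.

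The remaining step is to identify $n_1$ and $n_k$ using the strict alternation $x_0 < x'_0 < x_1 < x'_1 < \cdots < x'_{k-1} < x_k < x'_k < x_{k+1}$ coming from Definition~\ref{def:dual-intervals}. The interval $[x_\ell,x_1]$ meets $\{x'_0,\ldots,x'_k\}$ only in $\{x'_0\} = \{x'_\ell\}$, so $n_1$ equals the total multiplicity of critical values on the left side of $\closedsquare$; thus $[\lambda_1]^L$ is discrete iff the left side of $\closedsquare$ is regular. Dually, $[x_\ell,x_k]$ contains $\{x'_0,\ldots,x'_{k-1}\}$ but misses $x'_k = x'_r$, so $n_k = (d-1) - n_R$ where $n_R$ is the total multiplicity on the right side of $\closedsquare$; thus $[\lambda_k]^L$ is indiscrete iff the right side of $\closedsquare$ is regular. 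The main obstacle will be purely combinatorial bookkeeping: matching the ``start side'' and ``end side'' of each of the four chains to the correct edge of $\closedsquare$, and confirming that the Kreweras conversion used for the right and top chains does swap discreteness with indiscreteness so that the two equivalences line up.
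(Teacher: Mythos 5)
Your proof is correct and follows essentially the same route as the paper: the paper's (much terser) argument also reduces to the left-chain case and invokes the component count $b = d - n_w$ from Proposition~\ref{prop:disk-pre}, exactly as in the proof of Lemma~\ref{lem:increasing}, to equate discreteness of $[\lambda_1]^L$ with regularity of the first column of $\Qb_p$ and hence of the left side of $\closedsquare$. Your version just spells out the details the paper leaves implicit (regularity of $\partial \Vb_i^L$, the block count, and the vertex alternation locating $x'_\ell$ and $x'_r$).
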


\begin{proof}
  It is sufficient to prove this for the start of the left chain.  By
  the same reasoning as in the proof of Lemma~\ref{lem:increasing},
  the first partition $[\lambda_1]^L$ is discrete if and only if the
  first column of $\Qb_p$ is regular, which is true if and only if the
  left side of $\Qb'_p = \closedsquare$ has no critical values.
\end{proof}

\begin{rem}[Real and imaginary Morse functions]\label{rem:morse}
  The left chain of $\Pb_p$ essentially encodes the changes in the
  topology of the preimages of lower intervals for the \emph{real
  Morse function} $\Re \circ p \colon \C \to \R$, but with slight
  modifications because of the rectangle $\closedsquare =
  [x'_\ell,x'_r] \times [y'_b,y'_t]$ surrounding the critical
  values. The critical values of this Morse function are the points
  $C' = \{x'_1,\ldots,x'_k\}$ with the possible addition of $x'_\ell$
  and $x'_r$, if there is a critical value in the left and/or right
  side of $\closedsquare$.  The values $x_i$ used to define $\Ib_p$
  are representatives of the equivalence classes of regular levels of
  this Morse function, with possible duplication at the extremes if
  there are no critical values in the left and/or right side of
  $\closedsquare$.  To see the connection with Morse level sets, note
  that the regular point $x_i$ used to define the vertical arc
  $\alpha_i$ and the left subsurface $\Vb_i^L$, also defines the left
  half-space $\Hb_i^L$ with the vertical line $\{x_i\} \times \R$ as
  its boundary.  And since each $\Hb^L_i \setminus \Vb^L_i$ is regular,
  the preimages under $p$ have the same homotopy type.  In
  particular, the portion of the domain shown in the diagrams in the
  second row of Figure~\ref{fig:bt-matchings} contains all of the
  interesting information. Similar comments hold for the bottom chain
  of $\Pb_p$ and the \emph{imaginary Morse function} $\Im \circ p
  \colon \C \to \R$.
\end{rem}

\subsection{The geometric combinatorics map}\label{subsec:geocomb}
We now assemble all of the geometric combinatorial information about 
monic polynomials with critical values in a fixed closed rectangle $\closedsquare$
into a single continuous map from a polynomial space to a bisimplicial cell complex.
The range is built out of order complexes.

\begin{defn}[Order complex]\label{def:ord-cplx}
  Let $\size{\ncpart_d}_\simp$ be the ordered simplicial complex that is the 
  \emph{order complex} of $\ncpart_d$.  Its vertices are labeled by the elements of $\ncpart_d$ and 
    it has an ordered $k$-simplex on vertices $[\lambda_1],[\lambda_2],\ldots,[\lambda_k]$
    for every chain $[\lambda_1] < [\lambda_2] < \cdots < [\lambda_k]$ in $\ncpart_d$. 
    A point in $|\ncpart_d|_{\Delta}$ is a formal sum $\sum_{i\in [k]} \width_i \cdot [\lambda_i]$, 
    where the $[\lambda_i]$ label the vertices of an ordered simplex in $\size{\ncpart_d}_\simp$
    and the real numbers $\width_i \in (0,1)$ with $\sum_{i\in [k]} \width_i = 1$ are the barycentric 
    coordinates of a point in the interior of that simplex. 
\end{defn}

\begin{defn}[Geometric combinatorics map]\label{def:geom-comb}
  Let $\poly_d^{mc}(\closedsquare)$ be the set of monic centered complex
  polynomials of degree $d$ with $\cvl(p)$ in a fixed coordinate
  rectangle $\closedsquare$.  There is a well-defined
  \emph{geometric combinatorics map} 
  \[
    \geocomb \colon \poly_d^{mc}(\closedsquare) \to \size{\ncpart^L_d}_\simp \times
    \size{\ncpart^B_d}_\simp
  \] 
  defined by sending a polynomial $p$ to
  \[
    \geocomb(p) = \left(\sum_{i\in [k]} \width_i^I \cdot
    [\lambda_i]^L,\sum_{j\in [l]} \width_j^J \cdot [\lambda_j]^B \right).
  \]
  The left side partitions $[\lambda_i]^L$ of Definition~\ref{def:side-chains}
  label the vertices of a simplex in the order complex $\size{\ncpart_d^L}_\simp$, 
  and the numbers $(\width_1^I,\ldots,\width_k^I) = \bary(\Ib'_p)$ of 
  Definition~\ref{def:int-subdivide} are the barycentric coordinates of a point in 
  the interior of that simplex (Definition~\ref{def:ord-cplx}).
  Similarly, the bottom side partitions $[\lambda_i]^B$ label the vertices of a 
  simplex in the order complex $\size{\ncpart_d^B}_\simp$ and the relative widths 
  $(\width_1^J,\ldots,\width_l^J) = \bary(\Jb'_p)$ are the barycentric coordinates 
  of a point in in the interior of that simplex.  The map $\perm \colon 
  \ncpart_d \to \ncperm_d$ (Definition~\ref{def:ncperm}) 
   sends side partitions such as $[\lambda_i]^L$ to \emph{side permutations}
  $\pi_i^L = \perm([\lambda_i]^L)$. See Table~\ref{tbl:partition-table} on 
  page~\pageref{tbl:partition-table}.  This means that $\size{\ncpart_d}_\simp = 
  \size{\ncperm_d}_\simp$ with relabeled vertices.  There is an alternative version 
  of the geometric combinatorics map 
  \[
    \geocomb \colon \poly_d^{mc}(\closedsquare) \to \size{\ncperm^L_d}_\simp \times \size{\ncperm^B_d}_\simp
  \] 
  using noncrossing permutations, and defined by the formula
  \[
    \geocomb(p) = \left(\sum_{i\in [k]} \width_{i}^I \cdot
    (\pi_i^L), \sum_{j\in [l]} \width_{j}^J \cdot (\pi_j^B) \right).
  \]
In both formulations, the simplices are determined by the combinatorial structure of the 
regular point complex $\Pb_p$, and the points in these simplices are selected
using the metric structure of the critical value complex $\Qb'_p$.
\end{defn} 

\newpage
\part{Spaces of Polynomials}\label{part:poly-spaces}

In this part the focus shifts from a single polynomial to spaces of
polynomials.  The Lyashko--Looijenga map, or $\LL$ map, is a stratified
covering map from a polynomial space to a multiset space. Its
structure makes it possible to lift certain types of multiset paths 
(a continuously varying families of multisets) to a polynomial path (a
continuously varying family of polynomials).  And lifted polynomial
paths lead to polynomial homotopies that continuously modify spaces of
polynomials.  The goal of Part~\ref{part:poly-spaces} is to prove
Theorems~\ref{mainthm:homeomorphisms}, \ref{mainthm:compactifications}
and~\ref{mainthm:deformations} using these polynomial homotopies.

Part~\ref{part:poly-spaces} is structured as follows.
Section~\ref{sec:prod-mult} discusses the $\mult$ map as a stratified
covering $\mult \colon \Xb^n \to \mult_n(\Xb)$.
Section~\ref{sec:poly-ll-maps} discusses the Lyashko--Looijenga map,
or $\LL$ map, as a stratified covering $\LL \colon
\poly_d^{mt}(\Xb) \to \mult_n(\Xb)$.  Section~\ref{sec:path-lift}
shows that paths in $\mult_n(\Xb)$ with weakly increasing shapes can
be uniquely lifted to $\poly_d^{mt}(\Xb)$ through the $\LL$ map, using 
path lifting to $\Xb^n$ through the $\mult$ map as a model.
Section~\ref{sec:poly-homotopy} turns these unique polynomial lifts
into polynomial homotopies, and Section~\ref{sec:poly-spaces} uses
these to establish relationships between spaces of polynomials,
including homeomorphisms, compactifications, deformations and
quotients.  

\section{Products and Multisets}\label{sec:prod-mult}

This section describes stratifications of the product space $\Xb^n$
and the multiset space $\mult_n(\Xb)$ that turn the $\mult$ map into a
stratified covering.  After discussing product strata
(\ref{subsec:prod-strata}) we discuss multiset
strata~(\ref{subsec:mult-strata}).

\subsection{Product strata}\label{subsec:prod-strata}
The horizontal map $\spart \colon \Xb^n \to \spart_{[n]}$ in
Figure~\ref{fig:part-mult} on page~\pageref{fig:part-mult} subdivides
$\Xb^n$ into strata we call $\Xb_{[\lambda]}$ and into subspaces we
call $\Xb^{[\lambda]}$. We begin with an example and then define the
notation.

\begin{figure}
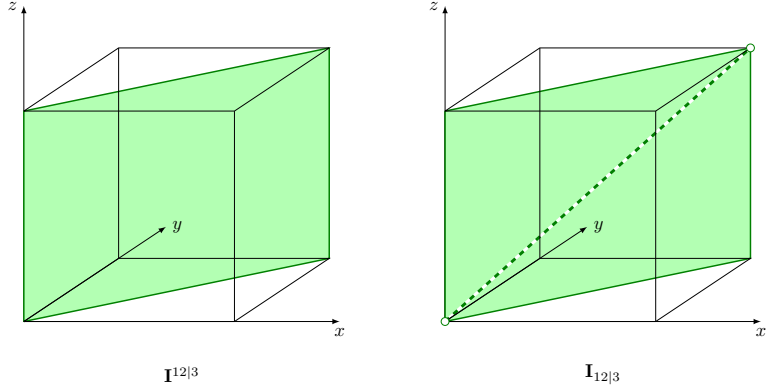

  \centering
  \includestandalone[scale=.7]{fig-cube-subspaces}
  \caption{On the left, the subspace $\Ib^{12|3}$ forms a rectangular
    subspace with side lengths of $\sqrt{2}$ and $1$ inside the
    $3$-cube $\Ib^3$. On the right, the stratum $\Ib_{12|3}$ is the
    same rectangle, but with the long diagonal $\Ib^{123} = \Ib_{123}$
    removed.}
  \label{fig:cube-subspaces}
\end{figure}

\begin{exmp}[Cubes]\label{ex:cubes}
  Let $\Ib = [0,1]$, or more generally let $\Ib = [x_\ell,x_r]$ be an 
  interval of length $s$.  The subspace $\Ib^{12|3}$ inside the $3$-cube $\Ib^3$ is an $(\sqrt{2})s \times
  s$ rectangle containing the long diagonal subspace
  $\Ib^{123}$.  The stratum $\Ib_{12|3}$ is the same rectangle with
  this long diagonal removed. Note that the closure of the stratum
  $\Ib_{12|3}$ is the subspace $\Ib^{12|3}$.  The stratum
  $\Ib_{12|3}$ is homeomorphic but not isometric to $\conf_2(\Ib)$,
  the space of two distinct unlabeled points in $\Ib$. See
  Figure~\ref{fig:cube-subspaces}.  More generally, $\Ib^n$ is an
  $n$-dimensional cube of side length~$s$.  For each set partition
  $[\lambda]\vdash [n]$ with $k$ blocks of size $a_1, \ldots, a_k$,
  the subspace $\Ib^{[\lambda]}$ is a product of $k$ line segments
  with lengths $(\sqrt{a_1})s, \ldots, (\sqrt{a_k})s$. There is a
  unique minimal simplicial cell structure on $\Ib^n$ that contains
  all of these subspaces $\Ib^{[\lambda]}$ as subcomplexes.
\end{exmp}

\begin{defn}[Product strata]\label{def:prod-strata}
  Let $\Xb_{[\lambda]} = \{ \bx \in \Xb^n \mid \spart(\bx) =
  [\lambda]\}$ for each $[\lambda] \in \spart_n$. These are the
  \emph{strata of $\Xb^n$}, and $\Xb^n$ is a disjoint union of these
  nonempty subspaces (Definition~\ref{def:set-maps}). The indiscrete
  stratum $\Xb_{[n^1]}$, where all coordinates are equal, is the
  \emph{long thin diagonal} of $\Xb^n$.  It is a topological diagonal
  copy of $\Xb$ with its metric dilated by a factor of $\sqrt{n}$.
  The discrete stratum $\Xb_{[1^n]}$ is $\conf_n(\Xb)$, the
  \emph{configuration space} of all collections of $n$ distinct
  labeled points.
\end{defn}

In addition to the strata $\Xb_{[\lambda]}$, we also define a
different, overlapping collection of subspaces $\Xb^{[\lambda]}$, also
indexed by set partitions, whose structure is easier to describe.

\begin{defn}[Product subspaces]\label{def:prod-subspaces}
  For a set partition $[\lambda] \vdash [n]$ let $\Xb^{[\lambda]}$ be
  the collection of points $\bx$ where coordinates belonging to the
  same block of $[\lambda]$ are required to be equal.  Unlike the
  definition of $\Xb_{[\lambda]}$, we do \emph{not} require that
  distinct blocks have distinct coordinate values.  For example,
  $\Xb^{13|2|4} = \{ (a,b,a,c)\in X^4\}$ while $\Xb_{13|2|4} = \{
  (a,b,a,c)\in \Xb^4 \mid a,b,c \textrm{ distinct}\}$.  Since an
  $n$-tuple $\bx$ is in $\Xb^{[\lambda]}$ if and only if $\shape(\bx)
  \geq [\lambda]$, $\Xb^{[\lambda]}$ is a disjoint union of
  strata: \[\Xb^{[\lambda]} = \bigsqcup_{[\mu] \geq [\lambda]}
  \Xb_{[\mu]}.\]
\end{defn}
  
The rectangular boxes in the $n$-cube (Example~\ref{ex:cubes})
illustrate the general properties of product subspaces and their
strata.  Let $[\lambda] \vdash [n]$ be a set partition with $k$
blocks.  We show that the subspace $\Xb^{[\lambda]}$ is a rescaled
version of $\Xb^k$ (Proposition~\ref{prop:subsp-metrics}), the stratum
$\Xb_{[\lambda]}$ is a rescaled version of $\conf_k(\Xb)$
(Proposition~\ref{prop:strata-metrics}), and the topological closure
of $\Xb_{[\lambda]}$ is $\Xb^{[\lambda]}$
(Proposition~\ref{prop:strata-closure}).  The proofs use natural maps
between set partitions and indexing sets.

\begin{defn}[Set partition maps]\label{def:spart-maps}
  Let $[\lambda] \vdash [n]$ be a set partition with $k$ blocks and
  let $\lambda = \lambda_1^{a_1}\cdots \lambda_\ell^{a_\ell}$ be its
  shape. There are canonical maps $[n] \to [\lambda] \to [\ell]$.  The
  first sends $i \in [n]$ to the block containing $i$ in $[\lambda]$,
  The second sends a block of $[\lambda]$ of size $\lambda_j$ to $j
  \in [\ell]$.  If we pick a bijection $[\lambda] \to [k]$ fixing an
  ordering of the blocks, we get maps $[n] \to [k] \to [\ell]$ between
  standard sets of integers.  Note that functions to and from
  $[\lambda]$ are technically functions to and from the set of blocks of
  $[\lambda]$.
\end{defn}

\begin{rem}[Points as functions]\label{rem:pt-fn}
  In the language of Remark~\ref{rem:functions}, the points in
  $X^{[\lambda]}$ bijectively correspond to functions $[\lambda]\to
  \Xb$, whereas the points in $\Xb_{[\lambda]}$ correspond to the
  injective functions $[\lambda] \into \Xb$.  A ``point'' $[n]\to \Xb$
  in $\Xb^{[\lambda]}$ factors through the set partition map $[n]\to
  [\lambda]$ (Definition~\ref{def:spart-maps}) to yield a ``point''
  $[\lambda]\to \Xb$, and composition with this set partition map
  reverses the process.  If $[\lambda]$ has $k$ blocks and we fix a
  bijection $[k] \to [\lambda]$ ordering the blocks of $[\lambda]$,
  then composition establishes an additional bijection, in fact a
  homeomorphism, between ``points'' in $\Xb^{[\lambda]}$ and ``points'' in
  $\Xb^k$.
\end{rem}

\begin{prop}[Subspace metrics]\label{prop:subsp-metrics}
  Let $[\lambda] \vdash [n]$ be a set partition with $k$ blocks.
  Ordering of the blocks of $[\lambda]$ produces a homeomorphism
  between $\Xb^{[\lambda]} \subset \Xb^n$ and $\Xb^k$.  Moreover, this
  becomes an isometry by rescaling the $i^{th}$ coordinate of $\Xb^k$ by
  $\sqrt{j}$, where $j$ is the size of the corresponding block in
  $[\lambda]$.
\end{prop}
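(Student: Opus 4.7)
The plan is to establish the homeomorphism via Remark~\ref{rem:pt-fn} and then verify the isometry claim by a direct computation of the product metric restricted to $\Xb^{[\lambda]}$.

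First, I would invoke Remark~\ref{rem:pt-fn} directly. A point in $\Xb^{[\lambda]}$ is an $n$-tuple whose coordinates are constant on each block of $[\lambda]$, equivalently a function $f \colon [\lambda] \to \Xb$. Once an ordering $[k] \to [\lambda]$ of the blocks is fixed, precomposition yields a bijection $\Xb^{[\lambda]} \leftrightarrow \Xb^k$. Continuity in both directions is immediate from the product topology: the forward map sends $\bx = (x_1, \ldots, x_n)$ to the tuple whose $i^{th}$ entry is $x_m$ for any $m$ in the $i^{th}$ block (which is well-defined and continuous, being a coordinate projection), and the inverse map is a diagonal-type assembly which is continuous because each output coordinate is a continuous function of the input.

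Second, for the metric claim, I would compute directly. On $\Xb^n$ the product metric satisfies $d_{\Xb^n}(\bx,\by)^2 = \sum_{m=1}^n d_{\Xb}(x_m, y_m)^2$. If $\bx, \by \in \Xb^{[\lambda]}$ correspond to functions $f, g \colon [\lambda] \to \Xb$, then whenever $m$ lies in the $i^{th}$ block $B_i$ we have $x_m = f(B_i)$ and $y_m = g(B_i)$. Grouping the sum by blocks gives
\[
d_{\Xb^n}(\bx, \by)^2 \;=\; \sum_{i=1}^{k} \size{B_i} \cdot d_{\Xb}\bigl(f(B_i), g(B_i)\bigr)^2.
\]
Under the bijection, $f$ and $g$ correspond to points $\bu, \bv \in \Xb^k$ with $u_i = f(B_i)$ and $v_i = g(B_i)$. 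If the $i^{th}$ factor of $\Xb^k$ is rescaled by $\sqrt{\size{B_i}}$, the resulting product metric on $\Xb^k$ satisfies $d_{\text{rescaled}}(\bu, \bv)^2 = \sum_{i=1}^k \size{B_i} \cdot d_{\Xb}(u_i, v_i)^2$, matching the expression above.

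I do not expect a serious obstacle here; the only point requiring a bit of care is justifying that the standard metric on $\Xb^n$ really is an $L^2$ product of copies of the metric on $\Xb$, so that grouping coordinates by blocks is legitimate. Under the standing assumption (Remark~\ref{rem:spaces}) that $\Xb$ is a path-connected locally Euclidean Riemannian manifold, the product Riemannian metric on $\Xb^n$ induces this $L^2$ distance formula, so the computation goes through verbatim. The verification of the scaling factor $\sqrt{\size{B_i}}$ (rather than $\size{B_i}$) comes from the fact that rescaling a Riemannian metric by a factor of $c$ multiplies distances by $c$ and hence $d^2$ by $c^2$, matching the coefficient $\size{B_i}$ that appears naturally from grouping.
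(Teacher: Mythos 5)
Your proposal is correct and follows essentially the same route as the paper: the homeomorphism comes from Remark~\ref{rem:pt-fn}, and your block-by-block $L^2$ computation is just the explicit version of the paper's observation that each block restricts to the long thin diagonal embedding of Definition~\ref{def:prod-strata}, which dilates by $\sqrt{j}$.
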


\begin{proof}
  Pick a bijection $[k] \to [\lambda]$ ordering the blocks of
  $[\lambda]$ and define a bijection between $\Xb^k$ and
  $\Xb^{[\lambda]}$ as described in Remark~\ref{rem:pt-fn}.  This
  function is continuous with a continuous inverse.  Moreover, when
  restricted to a single coordinate of $\Xb^k$ and the corresponding
  block of coordinates in $\Xb^{[\lambda]}$, this is the long thin
  diagonal embedding of Definition~\ref{def:prod-strata}, which
  accounts for the stretch factors.
\end{proof}

\begin{prop}[Stratum metrics]\label{prop:strata-metrics}
  If $[\lambda] \vdash [n]$ is a set partition with $k$ blocks, then
  the stratum $\Xb_{[\lambda]} \subset \Xb^n$ is homeomorphic to
  $\conf_k(\Xb)$.  Moreover, this homeomorphism can be promoted to an
  isometry by rescaling the $i^{th}$ coordinate of $\conf_k(\Xb)$ by
  $\sqrt{j}$, where $j$ is the size of the corresponding block in
  $[\lambda]$.
\end{prop}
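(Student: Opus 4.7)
The plan is to deduce this proposition directly from Proposition~\ref{prop:subsp-metrics} by restricting the homeomorphism/isometry there to the appropriate open subspace. Specifically, fix a set partition $[\lambda] \vdash [n]$ with $k$ blocks and choose a bijection $[k] \to [\lambda]$ ordering the blocks as in Definition~\ref{def:spart-maps}. Proposition~\ref{prop:subsp-metrics} then provides a homeomorphism $\Phi \colon \Xb^k \to \Xb^{[\lambda]}$, and an isometry after rescaling the $i^{th}$ coordinate of $\Xb^k$ by $\sqrt{j}$ where $j$ is the size of the $i^{th}$ block.

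First I would verify that $\Phi$ restricts to a bijection from $\conf_k(\Xb) \subset \Xb^k$ onto $\Xb_{[\lambda]} \subset \Xb^{[\lambda]}$. Under the correspondence of Remark~\ref{rem:pt-fn}, a point of $\Xb^{[\lambda]}$ is a function $f \colon [\lambda] \to \Xb$, and the stratum $\Xb_{[\lambda]}$ consists of those $\bx \in \Xb^n$ with $\spart(\bx) = [\lambda]$, i.e.\ coordinates $i$ and $i'$ are equal if and only if they lie in the same block of $[\lambda]$. Translated through $\Phi$, this is exactly the condition that $f$ is injective, which via the chosen ordering $[k] \to [\lambda]$ corresponds to a $k$-tuple of pairwise distinct points in $\Xb$, namely an element of $\conf_k(\Xb)$.

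Next I would note that $\conf_k(\Xb)$ inherits its topology as an open subspace of $\Xb^k$ (the complement of the pairwise-equality loci), and $\Xb_{[\lambda]}$ inherits its topology as a subspace of $\Xb^{[\lambda]}$. Since the restriction of a homeomorphism to a subspace (with its matching image) is a homeomorphism, $\Phi$ restricts to a homeomorphism $\conf_k(\Xb) \to \Xb_{[\lambda]}$. After the coordinate rescaling in Proposition~\ref{prop:subsp-metrics}, the global map $\Xb^k \to \Xb^{[\lambda]}$ is an isometry, and the restriction of an isometry to any subspace is an isometry with respect to the subspace metrics; so the restricted map $\conf_k(\Xb) \to \Xb_{[\lambda]}$ is an isometry with the same rescaling factors $\sqrt{j}$.

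There is no real obstacle here; the only thing that requires care is the bookkeeping between the two descriptions of a point of $\Xb^{[\lambda]}$ (an $n$-tuple versus a function on blocks), and the observation that injectivity of the block-function is precisely what distinguishes $\Xb_{[\lambda]}$ from the other strata contained in $\Xb^{[\lambda]}$ in the decomposition $\Xb^{[\lambda]} = \bigsqcup_{[\mu] \geq [\lambda]} \Xb_{[\mu]}$ of Definition~\ref{def:prod-subspaces}.
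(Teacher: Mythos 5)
Your proposal is correct and follows exactly the same route as the paper, whose entire proof is the one-line observation that the map from Proposition~\ref{prop:subsp-metrics} sends $\conf_k(\Xb)$ to $\Xb_{[\lambda]}$. You have simply spelled out the bookkeeping (injectivity of the block function versus distinctness of block values, and that restrictions of homeomorphisms and isometries remain such) that the paper leaves implicit.
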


\begin{proof}
  The map in Proposition~\ref{prop:subsp-metrics} sends $\conf_k(\Xb)$
  to $\Xb_{[\lambda]}$.
\end{proof}

\begin{prop}[Stratum closures]\label{prop:strata-closure}
  The closure of $\conf_n(\Xb)$ is $\Xb^n$, and for each set partition
  $[\lambda]$, the closure of the stratum $\Xb_{[\lambda]}$ is the
  subspace $\Xb^{[\lambda]}$.
\end{prop}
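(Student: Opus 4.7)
The plan is to prove the general statement first, since the claim about $\conf_n(\Xb)$ is just the special case $[\lambda] = [1^n]$. The proof splits naturally into showing $\overline{\Xb_{[\lambda]}} \subseteq \Xb^{[\lambda]}$ (closedness) and $\Xb^{[\lambda]} \subseteq \overline{\Xb_{[\lambda]}}$ (density).

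For the first inclusion, I would note that $\Xb^{[\lambda]}$ is cut out in $\Xb^n$ by finitely many equations of the form $x_i = x_j$, one for each pair of indices belonging to the same block of $[\lambda]$. Since these are closed conditions and $\Xb_{[\lambda]} \subseteq \Xb^{[\lambda]}$ by Definition~\ref{def:prod-subspaces}, we immediately get $\overline{\Xb_{[\lambda]}} \subseteq \Xb^{[\lambda]}$.

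For the second inclusion, the idea is to pass through the isometric identification of Proposition~\ref{prop:subsp-metrics}. If $[\lambda]$ has $k$ blocks, then under the block-ordering homeomorphism $\Xb^k \to \Xb^{[\lambda]}$, the stratum $\Xb_{[\lambda]}$ corresponds precisely to $\conf_k(\Xb) \subseteq \Xb^k$ (Proposition~\ref{prop:strata-metrics}). So it suffices to show that $\conf_k(\Xb)$ is dense in $\Xb^k$ for every $k \geq 1$. This I would prove by induction on $k$: the case $k=1$ is trivial since $\conf_1(\Xb) = \Xb$, and for the inductive step, given $(y_1,\ldots,y_k) \in \Xb^k$ and any $\varepsilon > 0$, first use the inductive hypothesis to approximate $(y_1,\ldots,y_{k-1})$ by a tuple $(y_1',\ldots,y_{k-1}') \in \conf_{k-1}(\Xb)$, and then choose $y_k'$ in an $\varepsilon$-neighborhood of $y_k$ distinct from all the $y_i'$. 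This last choice is possible because $\Xb$ is a locally Euclidean manifold of dimension $m > 0$ (Remark~\ref{rem:spaces}), so every neighborhood of $y_k$ contains uncountably many points and in particular avoids the finite set $\{y_1',\ldots,y_{k-1}'\}$.

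There is no real obstacle here; the argument is essentially just noting that finite sets in a positive-dimensional manifold are nowhere dense. The only point requiring mild care is that $\Xb$ is allowed to have boundary and corners, but local Euclidean charts of dimension $m \geq 1$ still guarantee uncountably many nearby points even at boundary or corner points. Finally, applying the isometry of Proposition~\ref{prop:subsp-metrics} transports the approximating sequence from $\conf_k(\Xb) \subseteq \Xb^k$ back into $\Xb_{[\lambda]} \subseteq \Xb^{[\lambda]}$, giving $\Xb^{[\lambda]} \subseteq \overline{\Xb_{[\lambda]}}$ and completing the proof.
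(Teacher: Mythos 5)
Your proposal is correct and follows essentially the same route as the paper: both directions are handled by the same two observations (equalities of coordinates are closed conditions, and density follows from Proposition~\ref{prop:subsp-metrics} together with the fact that points in a positive-dimensional manifold can be perturbed to be distinct). Your write-up merely fills in the induction that the paper compresses into ``it is easy to see.''
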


\begin{proof}
  For the types of spaces considered here (Remark~\ref{rem:spaces}),
  it is easy to see that every point in $\Xb^n$ is a limit of points
  where the coordinates are distinct.  Thus the closure of
  $\conf_n(\Xb)$ is $\Xb^n$ and, by
  Proposition~\ref{prop:subsp-metrics}, the closure of
  $\Xb_{[\lambda]}$ contains all of $\Xb^{[\lambda]}$.  Conversely,
  the closure of $\Xb_{[\lambda]}$ is contained in $\Xb^{[\lambda]}$
  since any convergent sequence of points where specific coordinates
  are always equal has a limit where those coordinates remain equal.
\end{proof}

\subsection{Multiset strata}\label{subsec:mult-strata}
The horizontal map $\shape \colon \mult_n(\Xb) \to \ipart_{n}$ in
Figure~\ref{fig:part-mult} on page~\pageref{fig:part-mult} subdivides
$\mult_n(\Xb)$ into strata we call $\mult_\lambda(\Xb)$ and into
subspaces we call $\mult^\lambda(\Xb)$. The special case where 
$\Xb$ is the interval $\Ib = [x_\ell,x_r]$ is important in Part~\ref{part:geo-comb}.

\begin{figure}
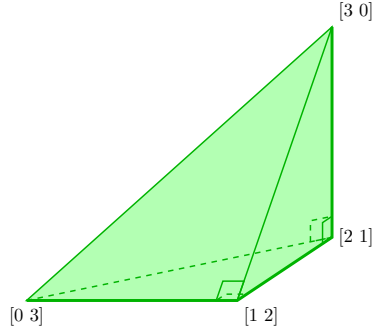

  \includestandalone[scale=.7]{fig-ortho3}
  \caption{A standard $3$-orthoscheme $\mult_3(\Ib)$. The vertices are
    multisets of the form $x_\ell^{a_\ell} x_r^{a_r}$ with $a_\ell + a_r = 3$, so we label
    them using the shorthand $[a_\ell \ a_r]$. The spine of the orthoscheme
    is indicated by the thick path from $[0\ 3]$ to
    $[1\ 2]$ to $[2\ 1]$ to $[3\ 0]$.\label{fig:ortho3}}
\end{figure}

\begin{defn}[Orthoschemes]\label{def:orthoschemes}    
  The cell structure on $\Ib^n$ of Example~\ref{ex:cubes} has $n!$
  top-dimensional simplices, it admits a cellular action by $\sym_n$
  via permuting coordinates, and the orbifold quotient $\mult_n(\Ib)=
  \Ib^n/\sym_n$ can be identified with a single closed $n$-dimensional
  simplex. Since this simplex is the convex hull of $n$
  pairwise-orthogonal line segments with equal length, it is called a
  \emph{standard $n$-orthoscheme} of side length $s$. This path of pairwise-orthogonal line segments 
  through the vertex set is its \emph{spine}.  The closed cells of a 
  standard $n$-orthoscheme are \emph{non-standard orthoschemes}
  with a \emph{spine} of pairwise-orthogonal line segments of length $(\sqrt{a_1})s, 
  \ldots, (\sqrt{a_k})s$ and $a_1 + \cdots + a_k \leq n$. 
  See Figure~\ref{fig:ortho3} for an example and see \cite{BrMc-10} and \cite{DoMcWi20} 
  for more on orthoschemes.
\end{defn}

\begin{defn}[Multiset strata]\label{def:mult-strata}
  For each $\lambda \vdash n$ in $\ipart_{n}$, let $\mult_\lambda(\Xb)
  = \{ M \in \mult_n(\Xb) \mid \shape(M)=\lambda\}$. These are the
  \emph{strata of $\mult_n(\Xb)$}, and $\mult_n(\Xb)$ is a disjoint
  union of these nonempty subspaces.  The indiscrete stratum
  $\mult_{[1^n]}$ is the isometric image of the long thin diagonal in 
  $\Xb^n$ sent to $\mult_n(\Xb)$.  The discrete stratum is
  $\mult_{1^n}(\Xb)$ is $\set_n(X)$, the collection of $n$ distinct
  unlabeled points in $\Xb$, also known as the \emph{unlabeled
  configuration space} $\uconf_n(\Xb)$).  The symmetric group acts
  freely on $\conf_n(\Xb)$, $\set_n(\Xb)$ is the quotient space
  $\conf_n(\Xb) / \sym_n$, and the map $\set\colon \conf_n(\Xb) \to
  \set_n(\Xb)$ which sends the $n$-tuple $\bx = (x_1,\ldots,x_n)$ to
  the $n$-element set $\set(\bx) = \{x_1,\ldots,x_n\}$ is an
  $n!$-sheeted covering map.  See Figure~\ref{fig:conf-spaces}.
\end{defn}

\begin{figure}
  \begin{tikzcd}[row sep=small, column sep=small]
    & \textrm{distinct} && \textrm{arbitrary} \\ \textrm{labeled} &
    \conf_n(\Xb) \arrow[rr,hookrightarrow]
    \arrow[dd,twoheadrightarrow,"\set"'] && \Xb^n
    \arrow[dd,twoheadrightarrow,"\mult"] \\ \\ \textrm{unlabeled} &
    \set_n(\Xb) \arrow[rr,hookrightarrow] && \mult_n(\Xb)
  \end{tikzcd}
  \caption{Configuration spaces with points that are distinct and/or
    labeled.  The horizontal maps are inclusions and the vertical maps
    are quotients. The one on the left is a covering
    map.\label{fig:conf-spaces}}
\end{figure}

\begin{defn}[Multiset subspaces]
  Next let $\mult^\lambda(\Xb) = \{ M \in \mult_n(\Xb) \mid \shape(M)
  \geq \lambda\}$.  We call these the \emph{subspaces of
  $\mult_n(\Xb)$} even though their structure is much more complicated
  than it is in the product case.  As before each subspace
  $\mult^\lambda(\Xb)$ is a disjoint union of strata: \[
  \mult^\lambda(\Xb) = \bigsqcup_{\mu \geq \lambda} \mult_\mu(\Xb).\]
\end{defn}

It is not too hard to see that for any set partition $[\lambda]$ with shape 
$\lambda = \shape([\lambda])$, the restricted map $\mult \colon 
\Xb_{[\lambda]} \to \mult_\lambda(\Xb)$ is a covering map.  This 
is because small changes to the coordinates of an $n$-tuple in the domain,
with equal coordinates staying equal, correspond to small changes to the elements 
of the underlying set $S = \set(M)$ without changing their multiplicities.
Although identifying the degree of the cover is, strictly speaking, unnecessary,
the answer is straightforward and we have not seen it in the literature, so 
we pause to review the relevant combinatorics.  We begin with a
concrete example.

\begin{example}[Preimages and symmetries]\label{ex:pre-sym}
  Let $a,b,c,d$ and $e$ be distinct points in $\Xb$ and let $\bx =
  (a,b,a,c,d,c,e)$ be a $7$-tuple in $\Xb^7$ with multiset $M =
  a^2b^1c^2d^1e^1$, set partition $[\lambda] = 13|2|46|5|7$ and shape
  $\lambda = 2^21^3$ as in Example~\ref{ex:part-mult}. Note that the
  set partition $[\lambda]$ and the multiset $M$ do not uniquely
  determine $\bx$. The set partition determines the blocks of equal
  coordinates, the multiset determines the set of coordinates to be
  assigned, and coordinate multiplicities must match the block sizes,
  but any assignment sending $a$ and $c$ to blocks $13$ and $46$, and
  $b$, $d$ and $e$ to blocks $2$, $5$ and $7$ will work.  The
  possible choices arise from the $\size{\sym_{\lambda}} = 2!\cdot 3!
  = 12$ symmetries of the shape $\lambda$.  Next, the set
  $\mult^{-1}(M) \subset \Xb^7$ contains $\binom{n}{\lambda} =
  \binom{7}{2,1,2,1,1} = 1260$ $7$-tuples, since this multinomial
  coefficient counts the number of rearrangements of two $a$'s, one
  $b$, two $c$'s, one $d$ and one $e$.  Finally, since the map from
  $\mult^{-1}(M)$ to set partitions of shape $\lambda$ is uniformly a
  $12$-to-$1$ map, there are $\frac{1}{\size{\sym_\lambda}}
  \binom{n}{\lambda} = \frac{1260}{12}=105$ distinct set partitions
  with shape $\lambda = 2^21^3$.
\end{example}

\begin{defn}[Symmetries of a shape]\label{def:sym-shape}
  Let $\lambda = \lambda_1^{a_1} \lambda_2^{a_2} \cdots
  \lambda_\ell^{a_\ell}$ be an integer partition of $n$ of length $k$
  and let $\ba = (a_1,\ldots,a_\ell)$ be the exponents of $\lambda$.
  The \emph{symmetry group of $\lambda$} is a subgroup of $\sym_k$.
  When the parts of $\lambda$ are viewed as distinguishable, the group
  $\sym_\lambda$ is the group of rearrangements of its $k$
  distinguished parts that maintain their weakly decreasing order.
  Concretely, the group is $\sym_\lambda = \sym_\ba = \prod_{i=1}^\ell
  \sym_{a_i}$ and it has size $\size{\sym_\lambda} = \ba! = \prod_{i=1}^\ell
  a_i!$. We also write $\binom{n}{\lambda}$ to denote the multinomial
  coefficient $\binom{\lambda}{a_1,\ldots,a_l}$. 
\end{defn}

Arguing as in Example~\ref{ex:pre-sym} establishes the following three
results.

\begin{prop}[Tuples with fixed multiset]\label{prop:fixed-mult}
  For any $M \in \mult_n(\Xb)$ with $\shape(M) =\lambda$, there are
  $\binom{n}{\lambda}$ $n$-tuples $\bx \in \Xb^n$ with $\mult(\bx)=M$.
\end{prop}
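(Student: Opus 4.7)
The plan is to interpret the preimage count in terms of functions and reduce it to a standard multinomial count, following the template laid out in Example~\ref{ex:pre-sym}. By Remark~\ref{rem:functions}, an $n$-tuple $\bx \in \Xb^n$ is the same data as a function $f\colon [n] \to \Xb$. The condition $\mult(\bx) = M$ says that the image of $f$ is $\set(M)$ and that, for each $y \in \set(M)$, the fiber $f^{-1}(y)$ has cardinality equal to the multiplicity $m(y)$ of $y$ in $M$. So counting $n$-tuples $\bx$ with $\mult(\bx)=M$ is the same as counting functions $f\colon [n] \to \set(M)$ with prescribed fiber sizes.

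First, I would fix notation: write $\set(M) = \{y_1, \ldots, y_k\}$ where $y_j$ has multiplicity $\lambda_j$, so that the weakly decreasing sequence $(\lambda_1, \ldots, \lambda_k)$ is the partition $\lambda = \shape(M)$ with $\lambda_1 + \cdots + \lambda_k = n$. An admissible function $f$ is determined by choosing which $\lambda_1$ elements of $[n]$ map to $y_1$, then which $\lambda_2$ of the remaining $n - \lambda_1$ map to $y_2$, and so on. Multiplying these binomial coefficients telescopes to the multinomial coefficient
\[
\binom{n}{\lambda_1,\lambda_2,\ldots,\lambda_k} = \frac{n!}{\lambda_1!\,\lambda_2!\cdots \lambda_k!},
\]
which is exactly $\binom{n}{\lambda}$ in the shorthand notation introduced in Definition~\ref{def:sym-shape}.

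No obstacle is anticipated: the proposition is a purely combinatorial consequence of the definition of the $\mult$ map, and the only subtlety is to make sure the indexing convention for $\binom{n}{\lambda}$ agrees with the one fixed in Definition~\ref{def:sym-shape}. The same counting argument is implicit in the final paragraph of Example~\ref{ex:pre-sym}, so the proof would amount to extracting the general principle from that worked example.
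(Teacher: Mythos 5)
Your proof is correct and matches the paper's approach: the paper proves this proposition only by the remark ``Arguing as in Example~\ref{ex:pre-sym} establishes the following,'' and that example counts the preimages of $M$ as rearrangements of a multiset, i.e.\ exactly the multinomial coefficient you obtain by prescribing fiber sizes. Your function-with-prescribed-fibers formulation is just a cleaner write-up of the same count, and your reading of $\binom{n}{\lambda}$ (parts of $\lambda$ in the bottom row) agrees with the usage in Example~\ref{ex:pre-sym}.
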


\begin{prop}[Tuples with fixed multiset and set partition]\label{prop:tuple-reconstruct}
  Given a multiset $M \in \mult_n(\Xb)$ and a set partition $[\lambda]
  \vdash [n]$ with a common shape $\lambda = \shape(M) =
  \shape([\lambda])$, there are $\size{\sym_{\lambda}}$ $n$-tuples
  $\bx \in \Xb^n$ with $\mult(\bx)=M$ and $\spart(\bx) = [\lambda]$.
\end{prop}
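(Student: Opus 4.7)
The plan is to compute the count directly by recasting tuples as functions and enumerating the possibilities. Using Remark~\ref{rem:functions}, I would view each $n$-tuple $\bx \in \Xb^n$ as a function $f_\bx \colon [n] \to \Xb$. The constraint $\spart(\bx) = [\lambda]$ is equivalent to saying that $f_\bx$ is constant on each block of $[\lambda]$ and takes distinct values on distinct blocks, so $f_\bx$ factors as the canonical surjection $[n] \twoheadrightarrow [\lambda]$ (from Definition~\ref{def:spart-maps}) followed by an injection $g_\bx \colon [\lambda] \hookrightarrow \Xb$ on the set of blocks. The constraint $\mult(\bx) = M$ then forces the image of $g_\bx$ to be exactly $\set(M)$ and forces the size of each block to equal the multiplicity in $M$ of its image.

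Thus the $n$-tuples $\bx$ with $\mult(\bx) = M$ and $\spart(\bx) = [\lambda]$ are in bijection with the bijections $g \colon [\lambda] \to \set(M)$ that match block sizes to multiplicities. To count these bijections, I would group both sides by their common shape $\lambda = \lambda_1^{a_1}\lambda_2^{a_2}\cdots \lambda_\ell^{a_\ell}$: since $\shape([\lambda]) = \lambda$ there are exactly $a_j$ blocks of size $\lambda_j$ in $[\lambda]$, and since $\shape(M) = \lambda$ there are exactly $a_j$ elements of multiplicity $\lambda_j$ in $\set(M)$. Since $g$ must pair blocks of size $\lambda_j$ with elements of multiplicity $\lambda_j$, and since this pairing can be done independently across the $\ell$ distinct part values, the total count is $\prod_{j=1}^\ell a_j! = \ba!$, which equals $\size{\sym_\lambda}$ by Definition~\ref{def:sym-shape}.

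There is no real obstacle here beyond careful bookkeeping, since the argument is a direct combinatorial count; the trickiest aspect is maintaining the distinction between $[\lambda]$ as a set of blocks and $\lambda$ as an integer partition recording their sizes. As a sanity check, I would verify consistency with Proposition~\ref{prop:fixed-mult}: summing $\size{\sym_\lambda}$ over all set partitions $[\lambda]$ of shape $\lambda$ should recover $\binom{n}{\lambda}$, which reproduces the classical identity that the number of set partitions of $[n]$ of shape $\lambda$ is $\binom{n}{\lambda}/\size{\sym_\lambda}$, and which matches the count exhibited in Example~\ref{ex:pre-sym}.
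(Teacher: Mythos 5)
Your proof is correct and follows essentially the same route as the paper, which establishes this proposition by "arguing as in Example~\ref{ex:pre-sym}": there the tuples are identified with assignments of the elements of $\set(M)$ to the blocks of $[\lambda]$ with matching multiplicities, counted by $\size{\sym_\lambda} = \ba!$. Your write-up just makes that bijection and the block-size bookkeeping explicit, which is a fine (and slightly more careful) rendering of the paper's sketch.
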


\begin{prop}[Set partitions with fixed shape]\label{prop:fixed-shape}
  For any integer partition $\lambda \vdash n$, there are
  $\frac{1}{\size{\sym_\lambda}} \binom{n}{\lambda}$ set partitions
  $[\lambda] \vdash [n]$ with $\shape([\lambda])=\lambda$.
\end{prop}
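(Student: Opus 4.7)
The plan is to derive this counting result from the two immediately preceding propositions by a double-counting argument, exactly as foreshadowed by the computation in Example~\ref{ex:pre-sym}. First I would fix a multiset $M \in \mult_n(\Xb)$ with $\shape(M) = \lambda$; such an $M$ exists because $\Xb$ has infinitely many elements (Remark~\ref{rem:spaces}). Then I would consider the fiber $\mult^{-1}(M) \subset \Xb^n$ and count it in two ways.

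On one hand, Proposition~\ref{prop:fixed-mult} gives $\size{\mult^{-1}(M)} = \binom{n}{\lambda}$. On the other hand, I would slice this fiber according to the composition with $\spart\colon \Xb^n \to \spart_n$. The commuting square of Figure~\ref{fig:part-mult} guarantees that for every $\bx \in \mult^{-1}(M)$, the set partition $\spart(\bx)$ has shape exactly $\shape(M) = \lambda$, so the map $\spart$ restricts to a map from $\mult^{-1}(M)$ into the set $S_\lambda$ of set partitions of $[n]$ of shape $\lambda$. Proposition~\ref{prop:tuple-reconstruct} then tells me two things at once: that this restricted map is surjective onto $S_\lambda$ (because for any $[\lambda] \in S_\lambda$ there exist $\size{\sym_\lambda} \geq 1$ tuples with $\mult(\bx)=M$ and $\spart(\bx)=[\lambda]$), and that every fiber of this restricted map has exactly $\size{\sym_\lambda}$ elements.

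Combining these, the fiber $\mult^{-1}(M)$ decomposes as a disjoint union
\[
  \mult^{-1}(M) = \bigsqcup_{[\lambda] \in S_\lambda} \{\bx \in \Xb^n \mid \mult(\bx)=M,\ \spart(\bx)=[\lambda]\},
\]
and each term has cardinality $\size{\sym_\lambda}$. Equating the two counts yields
\[
  \binom{n}{\lambda} = \size{\sym_\lambda} \cdot \size{S_\lambda},
\]
from which the claimed formula $\size{S_\lambda} = \frac{1}{\size{\sym_\lambda}}\binom{n}{\lambda}$ follows immediately.

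There is no real obstacle here; the work has already been done in Propositions~\ref{prop:fixed-mult} and~\ref{prop:tuple-reconstruct}. The only point that warrants a sentence of care is verifying surjectivity of the restricted $\spart$ map onto $S_\lambda$, which is why I would explicitly invoke Proposition~\ref{prop:tuple-reconstruct} for existence rather than merely for the count. Everything else is a one-line division.
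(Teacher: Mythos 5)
Your proposal is correct and follows exactly the paper's own argument: fix a multiset $M$ of shape $\lambda$, count its $\binom{n}{\lambda}$ preimages under $\mult$ via Proposition~\ref{prop:fixed-mult}, observe via Proposition~\ref{prop:tuple-reconstruct} that each set partition of shape $\lambda$ accounts for exactly $\size{\sym_\lambda}$ of them, and divide. Your extra sentence making the surjectivity of the restricted $\spart$ map explicit is a small but welcome refinement of a step the paper leaves implicit.
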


\begin{proof}
  Since the horizontal maps of Figure~\ref{fig:part-mult} are onto for any 
  $\Xb$ considered here,
  let $M \in \mult_n(\Xb)$ be a multiset of shape $\lambda$.  By
  Proposition~\ref{prop:fixed-mult} there are $\binom{n}{\lambda}$
  $n$-tuples in $\Xb^n$ with multiset $M$ and by
  Proposition~\ref{prop:tuple-reconstruct} each set partition of shape
  $\lambda$ accounts for $\size{\sym_{\lambda}}$ of these tuples.  The
  quotient of these two values counts the number of set partitions of
  shape $\lambda$.
\end{proof}

With these counts, we can clarify the structure of the stratum
$\mult_\lambda$.  We start with a concrete example stated in the
language of intermediate covers.

\begin{figure}
  \begin{tikzcd}
    \Xb_{13|2|46|5|7} \arrow[r,equal,"\sim"]
    \arrow[d,twoheadrightarrow,"2!3!"']
    & \conf_5(\Xb) \arrow[d,twoheadrightarrow,"2!3!"] &
    \Xb_{[\lambda]} \arrow[r,equal,"\sim"]
    \arrow[d,twoheadrightarrow,"\ba!"]
    & \conf_k(\Xb) \arrow[d,twoheadrightarrow,"\ba!"]\\
    \mult_{2^21^3}(\Xb) \arrow[r,equal,"\sim"]
    & \set_{2,3}(\Xb) \arrow[d,twoheadrightarrow,"\binom{5}{2,3}"] &
    \mult_{\lambda}(\Xb) \arrow[r,equal,"\sim"]
    & \set_{\ba}(\Xb) \arrow[d,twoheadrightarrow,"\binom{k}{\ba}"]\\
    & \set_5(\Xb) &
    & \set_k(\Xb)
  \end{tikzcd}
  \caption{The stratum $\mult_\lambda(\Xb)$ in $\mult_n(\Xb)$ is
    homeomorphic to an intermediate cover of $\set_k(\Xb)$, where $k =
    \len(\lambda)$.\label{fig:strata-cover}}
\end{figure}

\begin{example}[Stratum covers]\label{ex:strata-cover}
  The stratum $\Xb_{13|2|46|5|7}$ in $X^7$ is homeomorphic to
  $\conf_5(\Xb)$, but the corresponding stratum $\mult_{2^21^3}$ in
  $\mult_7(\Xb)$ is not homeomorphic to $\set_7(\Xb)$.  To see this,
  note that each element of $\set_7(\Xb)$ has $7!$ preimages in
  $\conf_7(\Xb)$, while each element of $\mult_{2^21^3}(\Xb)$ has only
  $\size{\sym_{2^21^3}} = 2!3! = 12$ preimages in $\Xb^7$
  (Proposition~\ref{prop:tuple-reconstruct}).  Instead,
  $\mult_{2^21^3}(\Xb)$ is homeomorphic to the intermediate cover
  $\set_{2,3}(\Xb)$, which is a $\binom{5}{2,3} = 10$ sheeted cover of
  $\set_5(\Xb)$. And $\conf_5(\Xb)$ is a $2!3!=12$ sheeted cover of
  $\set_{2,3}(\Xb)$.  See Figure~\ref{fig:strata-cover}.
\end{example}

As Example~\ref{ex:strata-cover} shows, the multiset strata
$\mult_\lambda(\Xb)$ are, in general, intermediate covers between
$\conf_k(\Xb)$ and $\set_k(\Xb)$.

\begin{defn}[Intermediate covers]\label{def:intermediate-covers}
  Let $\ba = (a_1,\cdots,a_\ell)$ be a tuple of positive integers with
  sum $k$.  The \emph{intermediate cover} $\set_{\ba}(\Xb)$ is the
  space of subsets of $\Xb$ of size $k$, with elements labeled so that
  there are exactly $a_i$ points with label $i$ for each $i\in[\ell]$.
  The map that forgets the label is a covering map $\set_\ba(\Xb) \to
  \set_k(\Xb)$ of degree $\binom{k}{\ba} =
  \binom{k}{a_1\ \cdots\ a_\ell}$.  If $f\colon [k] \to [\ell]$ is a
  function with the property that $|f^{-1}(i)| = a_i$ for all $i \in
  [\ell]$, then $f$ induces a map $\conf_k(\Xb) \to \set_\ba(\Xb)$
  which labels coordinates by their image under $f$, and this is an
  $\ba!$-sheeted covering since permutations within preimages do not
  change the result.
\end{defn}

This gives us a fairly precise description of the $\mult$ map as a
stratified covering.

\begin{thm}[Stratified covering map]\label{thm:mult-strata-cover}
  The $\mult$ map is a stratified covering map.  Concretely, if 
  $[\lambda] \vdash [n]$ is a set partition of shape $\lambda =
  \lambda_1^{a_1}\cdots \lambda_\ell^{a_\ell}$ with $\len(\lambda)=k$
  and $\ba = (a_1,\ldots,a_\ell)$ are the exponents of $\lambda$, then
  the stratum $\Xb_{[\lambda]}$ is homeomorphic to $\conf_k(\Xb)$, the
  stratum $\mult_\lambda(\Xb)$ is homeomorphic to the intermediate
  cover $\set_{\ba}(\Xb)$, and the map $\mult$ restricts to a covering
  map $\Xb_{[\lambda]} \onto \mult_\lambda(\Xb)$ of degree
  $\size{\sym_\lambda} = \ba!$.
\end{thm}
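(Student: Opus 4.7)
The first assertion that $\Xb_{[\lambda]} \cong \conf_k(\Xb)$ is already Proposition~\ref{prop:strata-metrics}, so the plan is to focus on the remaining two claims. The overall strategy is to exhibit a free, properly discontinuous action of $\sym_\ba$ on $\Xb_{[\lambda]}$ whose quotient is $\mult_\lambda(\Xb)$, and to identify this quotient with the intermediate cover $\set_\ba(\Xb)$ from Definition~\ref{def:intermediate-covers}.

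For the identification $\mult_\lambda(\Xb) \cong \set_\ba(\Xb)$, I would construct an explicit continuous bijection as follows. A multiset $M$ of shape $\lambda = \lambda_1^{a_1}\cdots\lambda_\ell^{a_\ell}$ has underlying set $\set(M)$ of size $k = \len(\lambda)$, and each element $x\in\set(M)$ has a well-defined multiplicity which equals some unique $\lambda_i$. Sending $M$ to the set $\set(M)$ with each element labeled by the index $i \in [\ell]$ for which its multiplicity equals $\lambda_i$ yields a point of $\set_\ba(\Xb)$, since precisely $a_i$ elements receive the label $i$. The inverse assigns multiplicity $\lambda_i$ to each label-$i$ element and forms the resulting multiset. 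Both directions are continuous with respect to the quotient topology that $\mult_n(\Xb)$ inherits from $\Xb^n$, and the restriction preserves fibers and strata.

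For the covering claim, Proposition~\ref{prop:tuple-reconstruct} already gives that each $M \in \mult_\lambda(\Xb)$ has exactly $\size{\sym_\lambda} = \ba!$ preimages in $\Xb_{[\lambda]}$, making the restricted $\mult$ map uniformly $\ba!$-to-one. To upgrade uniform finite fiber size to a genuine covering map, I would identify the deck transformation group. The subgroup $G_{[\lambda]} \leq \sym_n$ that stabilizes $[\lambda]$ setwise contains the normal subgroup $N_{[\lambda]} = \prod_{B\in[\lambda]} \sym_B$ consisting of permutations within blocks; this subgroup acts trivially on $\Xb_{[\lambda]}$ because coordinates within a block of $[\lambda]$ are forced to be equal. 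The quotient $G_{[\lambda]}/N_{[\lambda]} \cong \sym_\ba$ therefore acts on $\Xb_{[\lambda]}$ by permuting blocks of equal size. This action is free precisely because on the stratum $\Xb_{[\lambda]}$, distinct blocks carry distinct values (Definition~\ref{def:prod-strata}), so no nontrivial block permutation can fix a tuple. The action is automatically properly discontinuous since $\sym_\ba$ is finite and $\Xb_{[\lambda]}$ is Hausdorff (Remark~\ref{rem:spaces}). The orbits are precisely the fibers of $\mult \colon \Xb_{[\lambda]} \to \mult_\lambda(\Xb)$, so the quotient map is a covering of degree $\ba!$.

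The main subtlety is verifying that the action of $\sym_\ba \cong G_{[\lambda]}/N_{[\lambda]}$ on $\Xb_{[\lambda]}$ is well-defined and that its orbit space coincides precisely with $\mult_\lambda(\Xb)$, rather than merely mapping to it bijectively; this amounts to checking that two tuples in $\Xb_{[\lambda]}$ with the same multiset differ by an element of $G_{[\lambda]}$, which follows because any $\sigma \in \sym_n$ taking $\bx$ to $\bx'$ with $\spart(\bx) = \spart(\bx') = [\lambda]$ must preserve the set partition $[\lambda]$ setwise. Once this verification is in hand, the covering property and the degree calculation follow immediately.
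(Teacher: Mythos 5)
Your proposal is correct, and all three claims are established, but you organize the covering argument differently from the paper. The paper's proof transports everything to the labeled configuration spaces: it identifies $\mult_\lambda(\Xb)$ with $\set_\ba(\Xb)$ exactly as you do, identifies $\Xb_{[\lambda]}$ with $\conf_k(\Xb)$ via a chosen bijection $[k]\to[\lambda]$ (Proposition~\ref{prop:strata-metrics}), and then simply observes that the restricted $\mult$ map fits into the commuting square of Figure~\ref{fig:strata-cover} with the covering map $\conf_k(\Xb)\to\set_\ba(\Xb)$ that was already built into Definition~\ref{def:intermediate-covers}. You instead work intrinsically on $\Xb_{[\lambda]}$: you exhibit the deck group as $G_{[\lambda]}/N_{[\lambda]}\cong\sym_\ba$, where $G_{[\lambda]}\leq\sym_n$ is the setwise stabilizer of $[\lambda]$ and $N_{[\lambda]}$ is the product of the within-block symmetric groups, check that the action is free because distinct blocks carry distinct values on the stratum, and invoke the standard fact that a free action of a finite group on a Hausdorff space yields a covering quotient. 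The two mechanisms are the same at heart (the intermediate cover of Definition~\ref{def:intermediate-covers} is itself the quotient by a free $\ba!$-element action), but your version avoids choosing the bijection $[k]\to[\lambda]$ and makes the deck transformations explicit as coordinate permutations, while also pinning down why the fibers are exactly the orbits -- a point the paper leaves to the reader via Example~\ref{ex:strata-cover} and Proposition~\ref{prop:tuple-reconstruct}. The one place to be slightly careful in your write-up is the final identification of the orbit space with $\mult_\lambda(\Xb)$ as topological spaces: you should note that the saturation of $\Xb_{[\lambda]}$ under the full $\sym_n$-action is the disjoint union of the strata $\Xb_{[\mu]}$ over all $[\mu]$ of shape $\lambda$, so the subspace topology on $\mult_\lambda(\Xb)$ agrees with the quotient topology from any one of these strata because the quotient by a finite group action is an open map. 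This is routine and the paper elides it too, so it is not a gap.
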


\begin{proof}
  There is a natural homeomorphism from $\mult_\lambda(\Xb)$ to
  $\set_\ba(\Xb)$ which sends a multiset of shape $\lambda$ to its
  underlying set of size $k$, where each element of the set is labeled
  by its multiplicity in the multiset. Meanwhile, we know by
  Proposition~\ref{prop:strata-metrics} that $\Xb_{[\lambda]}$ is
  homeomorphic to $\conf_k(\Xb)$ once we fix a bijection $[k] \to
  [\lambda]$. Finally, if we use the same bijection and the map $[k]
  \to [\lambda] \to [\ell]$ to construct a covering map $\conf_k(\Xb)
  \to \set_\ba(\Xb)$ as in Example~\ref{ex:strata-cover}, then the
  homeomorphism and the covers form the commuting square in
  Figure~\ref{fig:strata-cover}.
\end{proof}

\begin{cor}\label{cor:mult-loc-onto}
  The map $\mult \colon \Xb^n \to \mult_n(\Xb)$ is locally onto.
\end{cor}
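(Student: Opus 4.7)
The plan is to observe that $\mult \colon \Xb^n \to \mult_n(\Xb)$ is an open map, from which local surjectivity follows formally. By Definition~\ref{def:multisets}, $\mult_n(\Xb) = \Xb^n/\sym_n$ carries the quotient topology from the coordinate-permuting action of $\sym_n$ on $\Xb^n$. Since $\sym_n$ acts by homeomorphisms, I would show that for any open set $U \subset \Xb^n$, the preimage $\mult^{-1}(\mult(U)) = \bigcup_{\sigma \in \sym_n} \sigma(U)$ is a finite union of open sets and therefore open, so $\mult(U)$ is open in $\mult_n(\Xb)$ by definition of the quotient topology.

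With openness established, local surjectivity at an arbitrary $\bx \in \Xb^n$ is automatic. Given any neighborhood $U$ of $\bx$, choose an open neighborhood $V \subset U$ of $\bx$; then $\mult(V)$ is open by the previous step and contains $\mult(\bx)$, so $\mult(U) \supset \mult(V)$ is a neighborhood of $\mult(\bx)$.

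There is no real obstacle here: the corollary is a formal consequence of the fact that quotient maps by group actions are open, not of the more refined stratified-covering structure of Theorem~\ref{thm:mult-strata-cover}. The stratified structure would give an alternative (longer) argument going stratum-by-stratum and using Proposition~\ref{prop:strata-closure} to reach points in lower strata near $M = \mult(\bx)$, but this is unnecessary given the quotient-topology viewpoint. I would keep the proof to two or three sentences.
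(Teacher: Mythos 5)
Your proof is correct, and it takes a genuinely different and more elementary route than the paper's. You use only the fact that $\mult_n(\Xb)$ carries the quotient topology from the $\sym_n$-action (which Definition~\ref{def:multisets} does stipulate) together with the standard observation that the quotient map of a group acting by homeomorphisms is open, since $\mult^{-1}(\mult(U)) = \bigcup_{\sigma\in\sym_n}\sigma(U)$; openness then gives local surjectivity formally. The paper instead argues stratum by stratum: it invokes the covering map $\Xb_{[\lambda]} \onto \mult_\lambda(\Xb)$ from Theorem~\ref{thm:mult-strata-cover} to handle the stratum containing $\bx$, and then uses the fact that the set partitions below $[\lambda]$ surject onto the integer partitions below $\lambda$ to see that a neighborhood of $\bx$ also hits every nearby lower stratum of $\mult_n(\Xb)$. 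Your argument is shorter and does not need Theorem~\ref{thm:mult-strata-cover} at all; what the paper's argument buys is the finer, stratum-wise conclusion recorded in its last sentence, and, more importantly, a template that transfers to the Lyashko--Looijenga map, where there is no group action to quotient by and hence no analogue of your openness argument. Both proofs establish the stated corollary.
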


\begin{proof}
  Fix $\bx \in \Xb^n$ with multiset $M = \mult(\bx)$, set partition
  $[\lambda]=\spart(\bx)$, and shape $\lambda=\shape(\bx)$. The fact
  that $\Xb_{[\lambda]} \to \mult_\lambda(\Xb)$ is a covering map (Theorem~\ref{thm:mult-strata-cover}) is
  sufficient to show that the image of a neighborhood of $\bx \in
  \Xb_{[\lambda]}$ contains a neighborhood of $M$ in
  $\mult_\lambda(\Xb)$.  For the other strata near $\bx$ and $M$ we
  note that a small neighborhood of $\bx \in \Xb_{[\lambda]}$ contains
  points in $\Xb_{[\mu]}$ if and only if $[\mu] \leq [\lambda]$ in
  $\spart_n$ and a small neighborhood of $M \in \mult_\lambda(\Xb)$
  contains points in $\mult_\mu(\Xb)$ if and only if $\mu \leq
  \lambda$ in $\ipart_n$.  Since the set partitions below $[\lambda]$
  map onto the integer partitions below $\lambda$, the image of this
  neighborhood contains multisets in $\mult_\mu(\Xb)$ near $M$ for
  every $\mu \leq \lambda$.  In particular, the portions of every
  stratum of $\mult_n(\Xb)$ with $M$ in the boundary are covered by a
  portion of a stratum in $\Xb^n$ with $\bx$ in its boundary.
\end{proof}

\section{Critical Values and Polynomials}\label{sec:poly-ll-maps}
This section describes a stratification of monic centered polynomials that 
turns the Lyashko--Looijenga map into a stratified covering map.
After introducing our version of the $\LL$ map (\ref{subsec:ll-map})
we discuss polynomial strata (\ref{subsec:poly-strata}).

\subsection{The $\LL$ map}\label{subsec:ll-map}
While the classical $\LL$ map is from $\C^n$ to $\C^n$, the version used here is 
a map from $\poly_d^{mt}$, the space of monic
degree-$d$ polynomials up to translation equivalence, to
$\mult_n$, the space of multisets in $\C$ of size $n = d-1$.  The
various spaces and maps discussed are shown in
Figure~\ref{fig:ll-maps}.

\begin{figure}
  \begin{tikzcd}[column sep=small]
    \C^n \arrow[d,equal] \arrow[rr,hook]
    \arrow[rrrr,bend left=20,"\mathsc{LL}-\textrm{classic}"]
    & & \C^d \arrow[rr,two heads] \arrow[d,equal] && \C^n \arrow[d,equal] \\
    \poly_d^{mc} \arrow[rr,hookrightarrow] \arrow[rd,equal] &&
    \poly_d^m \arrow[rr,twoheadrightarrow]
    \arrow[dl,twoheadrightarrow,"\aff"'] \arrow[dr,"\cvl"] &&
    \poly_n^m \arrow[dl,equal] \\
    & \poly_d^{mt} \arrow[rr,rightarrow,"\mathsc{LL}"'] &&
        \mult_n 
  \end{tikzcd}
  \caption{The classical Lyashko--Looijenga map is a map from 
  $\C^n$ to $\C^n$ with $n=d-1$.  It can also be formulated as a map $\LL$ from 
  $\poly_d^{mt}$ to $\mult_n$.\label{fig:ll-maps}}
\end{figure}
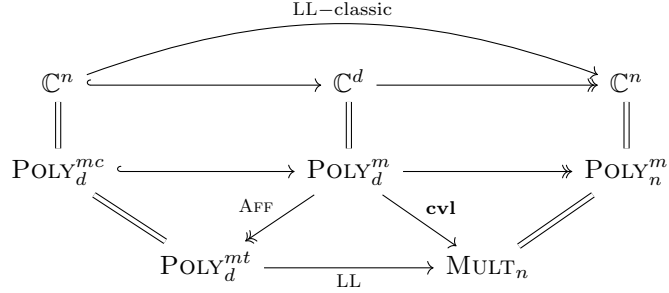

\begin{defn}[Critical value map]
  Let  $\mult_n = \mult_n(\C)$ be the space of
  $n$-element multisets in $\C$ (Definition~\ref{def:multisets}).  The 
  \emph{critical value map} $\cvl \colon \poly_d \to \mult_n$ 
  sends a degree-$d$ complex polynomial $p$ to its $n$-element multiset 
  $\cvl(p)$ of critical values in $\C$.
\end{defn}

In Figure~\ref{fig:ll-maps} the (diagonal) map $\cvl$ has been
restricted to polynomials that are monic.  Multisets in $\C$ are in
natural bijection with monic polynomials.

\begin{defn}[Multisets and monic polynomials]\label{def:mult-monic}
  There is a natural bijection between $\mult_n$ and $\poly_n^m$.
  The map $\poly \colon \mult_n \to \poly_n^m$ sends an
  $n$-element multiset $M \in \mult_n$ with $M = z_1^{m_1}
  z_2^{m_2}\cdots z_k^{m_k}$ and $\sum m_i = n$ to the polynomial
  $p=\poly(M)$ with $p(z) = (z-z_1)^{m_1} (z-z_2)^{m_2} \cdots
  (z-z_k)^{m_k}$.  The inverse map $\rts \colon \poly_n^m \to
  \mult_n$ sends a polynomial to its multiset of roots.
\end{defn}

Definition~\ref{def:mult-monic} is used in the lower righthand corner
of Figure~\ref{fig:ll-maps}.  Moreover, as discussed in
Definition~\ref{def:poly}, $\poly_d^m = \C^d$ and $\poly_d^{mc} =
\C^n$ using coefficients as coordinates, and these give the
identifications in Figure~\ref{fig:ll-maps} between the first and
second rows.  The classic version of the \emph{Lyashko--Looijenga map}
from $\C^n$ to $\C^n$ is shown in the top row.
Theorem~5.1.1 of \cite{LaZv04} summarizes its key properties.

\begin{thm}[$\LL$ map]\label{thm:ll-classic}
  The Lyashko--Looijenga map $\LL \colon \C^n \to \C^n$, from the space
  of monic centered degree-$d$ polynomials to the space of monic
  polynomials of degree $n=d-1$, is a polynomial finite map of degree
  $d^{d-2}$.
\end{thm}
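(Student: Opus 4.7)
The plan is to prove the theorem in three steps: polynomiality, finiteness, and degree.

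\textbf{Polynomiality.} Writing $p(z) = z^d + c_2 z^{d-2} + \cdots + c_d$, the coefficients of $p'$ are linear in the $c_i$, and $\LL(p)(w) = \prod_i (w - p(z_i))$, where the $z_i$ range over the critical points with multiplicity. One has the resultant formula $\LL(p)(w) = d^{-d}\,\mathrm{Res}_z(p'(z),p(z)-w)$; expanding in $w$ exhibits each target coefficient as a polynomial in $c_2, \ldots, c_d$. Equivalently, by the fundamental theorem of symmetric functions, the elementary symmetric polynomials in the critical values are symmetric polynomials in the roots of $p'$, hence polynomial in the coefficients of $p'$, hence polynomial in the $c_i$.

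\textbf{Finiteness.} I would invoke quasi-homogeneity. Assigning $c_i$ weight $i$ on the source and the $j$-th elementary symmetric function of the critical values weight $jd$ on the target, the source rescaling $c_i \mapsto t^i c_i$ corresponds to $p(z) \mapsto t^d p(z/t)$; this scales each critical value by $t^d$ and each target coordinate by $t^{jd}$. The preimage of the multiset $\{0^n\} \in \mult_n$ consists of polynomials whose critical values all vanish: if a critical point has multiplicity $n_i$ and $p$ vanishes there, then $p$ has a zero of order at least $n_i+1$, and the bound $\sum(n_i+1)\le d$ forces a single critical point of full multiplicity $n = d-1$, so $p(z) = (z-a)^d$. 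Centering gives $a=0$, hence $\LL^{-1}(\{0^n\}) = \{z^d\}$. For a quasi-homogeneous polynomial map $\C^n \to \C^n$, isolated preimage of the origin implies that preimages of compact sets are bounded, and therefore $\LL$ is proper with finite fibers, i.e. finite.

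\textbf{Degree.} I would count preimages of a generic multiset $M = \{w_1,\ldots,w_n\}$ of distinct complex numbers. Each $p \in \LL^{-1}(M)$ has $n$ simple critical points, one above each $w_i$, and extends to a branched cover $\wh{p} \colon \Ch \to \Ch$ ramified over $M \cup \{\infty\}$ with a transposition above each $w_i$ and a single $d$-cycle monodromy above $\infty$. By the Riemann existence theorem together with the monic-centered normalization, these polynomials correspond bijectively to ordered factorizations of the cycle $(1\ 2\ \cdots\ d)$ into $n = d-1$ transpositions in $\sym_d$, and D{\'e}nes' theorem (equivalent to Cayley's $d^{d-2}$ formula for labeled trees on $d$ vertices) yields the count $d^{d-2}$.

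\textbf{Main obstacle.} The most delicate step is the final bookkeeping. The pre-composition action $p(z) \mapsto p(\zeta z)$ with $\zeta^d = 1$ preserves both the monic-centered normalization and the critical value multiset, so it acts freely on the generic fiber of $\LL$; on the Hurwitz side, the cyclic centralizer of a $d$-cycle in $\sym_d$ converts conjugacy classes of factorizations into ordered ones. One must verify that these two $\Z/d$-actions match correctly so that each generic fiber contains exactly $d^{d-2}$ monic centered polynomials, rather than $d^{d-3}$ or $d^{d-1}$. A secondary concern is the quasi-homogeneity argument in Step 2, which is sensitive to the precise weight conventions; using elementary symmetric functions of critical values as coordinates on $\mult_n \cong \C^n$ makes the weights align cleanly.
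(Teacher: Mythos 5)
The paper offers no proof of this statement—it is quoted as Theorem~5.1.1 of \cite{LaZv04}—and your argument is correct and is essentially the standard proof appearing in that reference: symmetric functions (or the resultant, up to a sign $(-1)^{d-1}$ you omit) for polynomiality, quasi-homogeneity together with the computation $\LL^{-1}(\{0^n\})=\{z^d\}$ for properness and finiteness, and the Riemann existence theorem plus D\'enes' count of transposition factorizations of a $d$-cycle for the degree. You also correctly identify and resolve the one delicate point, namely that the free $\Z/d$-action by $p(z)\mapsto p(\zeta z)$ on a generic fiber of monic centered polynomials matches the passage from conjugacy classes of factorizations ($d^{d-3}$ of them, since the centralizer of a transitive tuple is trivial) to ordered factorizations of the fixed cycle $(1\ 2\ \cdots\ d)$, giving $d\cdot d^{d-3}=d^{d-2}$.
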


These properties make it well-suited for investigations by
algebraic geometers.  Here is a brief definition of the map and its
key properties.

\begin{defn}[$\LL$ map, classic version]
  The classic version of the \emph{Lyashko--Looijenga map} sends a 
  monic centered polynomial $p$ of degree~$d$ to the monic polynomial of 
  degree $n=d-1$ whose roots are the critical values of $p$.  The
  coefficients of the polynomials in both the domain and range can be
  used as coordinates to give an induced map from $\C^n$ to
  $\C^n$. See Figure~\ref{fig:ll-maps}. It is a \emph{polynomial} map
  because the coordinates in the range are defined by multivariable
  polynomial functions of the coordinates in the domain, and it is a
  \emph{finite map of degree $d^{d-2}$} because a
  generic point in the range has exactly $d^{d-2}$ preimages.
\end{defn}

The bottom row of Figure~\ref{fig:ll-maps} uses polynomials with an
affine domain.

\begin{defn}[Affine map]
  We say that $p, q\colon \C \to \C$ are \emph{equivalent up to
  translation} if there is a constant $b \in \C$ such that $q(z) =
  p(z+b)$.  This is an equivalence relation; let $[p] = \{q \mid
  q(z) = p(z+b), b\in \C\}$ be the \emph{translation equivalence
  class} of $p$. If $\poly_d^{mt}$ denotes monic polynomials of
  degree $d$ up to translation, then there is a quotient map
  $\aff\colon \poly_d^m \onto \poly_d^{mt}$ that sends $p$ to $[p]$.
  We call this the \emph{affine map} since the domain of each
  polynomial $p$ becomes, in essence, an affine space with no fixed
  origin, instead of a $1$-dimensional complex vector space.
\end{defn}

Polynomials up to translation are equivalent to polynomials that are
centered.

\begin{rem}[Centering polynomials]\label{rem:center-trans}
  For monic polynomials, the coefficient of the term just below the
  leading term is the negative of the sum of its roots.  In
  particular, $c_1 = 0$ and $p$ is centered if and only if the average
  of the roots is at the origin.  Under precomposition with a
  translation, the average of the roots is translated, and every
  equivalence class $[p]$ contains a unique representative that is
  centered.  The map $\poly_d^{mt} \to \poly_d^{mc}$ sending $[p]$ to
  its centered representative is a section of the affine quotient map
  $\aff \colon \poly_d^m \onto \poly_d^{mt}$ and a homeomorphism.
\end{rem}

Translation equivalence is a special case of a more general situation.

\begin{rem}[Composing with linear functions]\label{rem:linear-comp}
  Let $f(z) = az+b$ be an invertible linear transformation, let $g(z)
  = \frac{1}{a}(z-b)$ be its inverse, and recall that these maps are
  Euclidean similarities of the plane: translating, dilating, and
  rotating.  Postcomposing $p\colon \C \to \C$ with $f$ applies the
  similarity $f$ to the coordinate system in the range.  Concretely,
  if $q(z) = f(p(z)) = a\cdot p(z) + b$, then $\cpt(q)=\cpt(p)$ and
  $\cvl(q) = f(\cvl(p)) = a\cdot \cvl(p) + b$.  Precomposing $p$ with
  $f$ applies the inverse similarity $g$ to the coordinate system in
  the domain.  If $q(z) = p(f(z)) = p(az+b)$, then $\cpt(q) =
  g(\cpt(p)) = \frac{1}{a}(\cpt(p)-b)$ and $\cvl(q) = \cvl(p)$. 
\end{rem}

This leads to the factorization of the critical value map shown in
Figure~\ref{fig:ll-maps}.

\begin{defn}[$\LL$ map]
  By Remark~\ref{rem:linear-comp}, all of the polynomials in $[p]$
  have the same multiset of critical values, so the critical value map
  $\cvl\colon \poly_d^m \to \mult_n$ factors through the affine
  map $\aff \colon \poly_d^m \onto \poly_d^{mt}$ sending $p \mapsto
  [p]$.  Our version of the $\LL$ map is the induced map $\LL \colon
  \poly_d^{mt} \to \mult_n$.  Because of the identifications on
  the left and right sides of Figure~\ref{fig:ll-maps}, this version
  of the $\LL$ map has many of the properties listed in
  Theorem~\ref{thm:ll-classic}, such as finitely many point preimages
  bounded above by $d^{d-2}$.
\end{defn}

Of particular interest here are restrictions the $\LL$ map to polynomials with
critical values is a specific portion of the range.

\begin{defn}[Restricted maps]\label{def:restricted}
  For any subspace $\Xb \subset \C$ (Remark~\ref{rem:spaces}), let 
  $\poly_d(\Xb)$ be the collection of polynomials whose critical values 
  lie in $\Xb$, and we use superscripts to restrict attention to those that 
  are monic ($m$), centered ($c$) or only considered up to translation ($t$).  
  For any such $\Xb$, the lower portion of Figure~\ref{fig:ll-maps} can be
  restricted in this subspace resulting in a \emph{restricted
  $\LL$ map} $\LL \colon \poly_d^{mt}(\Xb) \to \mult_n(\Xb)$. 
\end{defn}

\subsection{Polynomial strata}\label{subsec:poly-strata}
The stratification of multisets (Section~\ref{sec:prod-mult}) leads to a double
stratification of polynomials based on critical point shape,
critical value shape, and the arrow between them in the integer 
partition acyclic category.

\begin{defn}[Polynomial strata]\label{def:poly-strata}
  The space $\poly_d$ has a double stratification by critical point
  shape and critical value shape.  For any polynomial $p$ of degree
  $d$, its \emph{critical point shape} is $\lambda = \lambda(p) =
  \shape(\cpt(p))$, its \emph{critical value shape} is $\mu = \mu(p) =
  \shape(\cvl(p))$, and the map from $\cpt(p)$ to $\cvl(p)$ determines an
  arrow $\lambda(p) \stackrel{p}{\to} \mu(p)$ in the acyclic category 
  $\ipart_n$ (Definition~\ref{def:ordering-int-part}).  Let
  $\poly_{\lambda\to\mu} = \{ p \in \poly_d \mid \lambda(p) 
  \stackrel{p}{\to} \mu(p) \textrm{ is } \lambda\to \mu \}$ be the ``preimage'' 
  of an arrow in the acyclic category $\ipart_n$.
  The double stratification is 
  \[
  \poly_d = \bigsqcup_{\substack{\lambda,\mu \vdash n\\ \lambda \to
      \mu}} \poly_{\lambda\to\mu}.
  \]
\end{defn}

\begin{rem}[Extreme preimages]\label{rem:ll-extremes}
  If $p$ is a monic polynomial with an indiscrete critical value
  multiset, then it has an indiscrete critical point multiset
  (Lemma~\ref{lem:disk-preimage}).  In particular, the unique preimage of
  $\cvl(p) = \{c^n\}$ under the $\LL$ map is $[z^d+c] \in
  \poly_d^{mt}$.  At the other extreme, a discrete multiset with $n$
  distinct critical values is a generic point with $d^{d-2}$ preimages
  (Theorem~\ref{thm:ll-classic}).
\end{rem}

The double stratification in Definition~\ref{def:poly-strata} factors
through the affine map.

\begin{defn}[Strata up to translation]
  For a fixed polynomial $p$, its critical values are invariant under
  translation (Remark~\ref{rem:linear-comp}), so the critical value
  shape $\mu(p)$ is a function of the translation equivalence class $[p]$.  
  Its critical points are moved under translation, but this leaves their shape invariant,
  so the critical point shape $\lambda(p)$ is also a function of $[p]$.  In
  particular, the double stratification of $\poly_d^m$ factors through
  the affine map to give a double stratification of $\poly_d^{mt}$
  with $\poly_{\lambda\to\mu}^{mt} = \aff(\poly_{\lambda\to\mu}^m)$.
\end{defn}

The double stratification is needed because of accidental equalities.

\begin{defn}[Accidental equalities]\label{def:accidents}
  When $p$ is a polynomial with $\lambda(p) < \mu(p)$, there are distinct
  critical points $z_1 \neq z_2 \in \cpt(p)$ with equal critical values
  $p(z_1)=p(z_2) \in \cvl(p)$, and we say that $p$
  has \emph{accidental equalities}.
\end{defn}

The Chebyshev polynomial of the first kind, defined by the equation
$T_d(\cos(\theta)) = \cos(d\cdot \theta)$, is an extreme example of this 
phenomenon.  It has $d$ distinct roots, all real, and $n=d-1$ distinct real 
critical points.  Every critical value, on the other hand, is either $1$ or 
$-1$, so for $d$ at least $4$, $T_d(z)$ has accidental equalities. Such 
accidental equalities, however, have a limited impact on path lifting.

The $\LL$ map, like the $\mult$ map, is a stratified covering map. 

\begin{thm}[Stratified covering map]\label{thm:poly-strata-cover}
  For all integer partitions $\lambda, \mu \vdash n$ with $\lambda
  \to \mu$, the restricted map $\LL\colon \poly_{\lambda\to\mu}^{mt}
  \to \mult_\mu(\C)$ is a covering map.
\end{thm}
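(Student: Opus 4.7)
I would show that $\LL \colon \poly_{\lambda\to\mu}^{mt} \to \mult_\mu(\C)$ is a proper local homeomorphism with finite fibers; any such map between Hausdorff spaces is a covering map.

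\emph{Finiteness and properness.} By Theorem~\ref{thm:ll-classic}, under the identifications $\poly_d^{mt} \cong \poly_d^{mc} \cong \C^n$ (Remark~\ref{rem:center-trans}) and $\mult_n \cong \poly_n^m \cong \C^n$ (Definition~\ref{def:mult-monic}), our $\LL$ coincides with the classical polynomial finite map of degree $d^{d-2}$, which is in particular proper with fibers of cardinality at most $d^{d-2}$. The preimage $\LL^{-1}(\mult_\mu(\C))$ decomposes as $\bigsqcup_{\lambda'\to\mu} \poly_{\lambda'\to\mu}^{mt}$, indexed by arrows of $\ipart_n$ ending at $\mu$, and each summand is clopen in this preimage: the multiset $\cpt(p)$ varies continuously with $[p]$ (as roots of $p'$), and a small deformation that keeps $\cvl(p)$ in $\mult_\mu(\C)$ cannot change $\lambda'$ (critical points can neither collide nor split) nor the arrow $\lambda'\to\mu$ (no critical point can suddenly be reassigned to a different cluster). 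Hence properness passes to each stratum.

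\emph{Local homeomorphism.} Fix $p \in \poly_{\lambda\to\mu}^{mt}$ with $\cvl(p) = \{w_1^{\mu_1},\ldots,w_\ell^{\mu_\ell}\}$, and choose pairwise disjoint small disks $D_j \ni w_j$ whose closures contain no other critical values. By Proposition~\ref{prop:disk-pre}, $p^{-1}(D_j)$ is a disjoint union of topological disks, and on each component containing a critical point $z_i$ of multiplicity $n_i$, the polynomial $p$ is modeled by $\zeta \mapsto \zeta^{n_i+1}$. When $w_j$ is perturbed within $D_j$, keeping the $\ell$-tuple distinct so the multiset remains in $\mult_\mu(\C)$, the local branched cover structure admits a unique continuous deformation to a new polynomial $p'$ preserving the arrow $\lambda \to \mu$. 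Patching these local deformations produces a continuous local section $U \to \poly_{\lambda\to\mu}^{mt}$ of $\LL$ defined on a neighborhood $U$ of $M = \LL(p)$ in $\mult_\mu(\C)$; this gives the desired local homeomorphism.

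\emph{Main obstacle.} The delicate step is constructing this local section at polynomials with accidental equalities ($\lambda < \mu$), where several distinct critical points of $p$ lie over a common critical value $w_j$. Here I would argue by dimension count: both $\poly_{\lambda\to\mu}^{mt}$ and $\mult_\mu(\C)$ are complex manifolds of dimension $\ell = \len(\mu)$, so local invertibility of $\LL$ at $p$ follows from injectivity of $d\LL_p$. Injectivity in turn follows from the local normal-form description above: two polynomials in $\poly_{\lambda\to\mu}^{mt}$ sharing all critical values with matching cluster assignments and agreeing to first order at $p$ must have identical normal-form expansions in each $D_j$, and hence agree in a neighborhood. Combining this local homeomorphism with properness and finiteness of fibers yields the covering map conclusion.
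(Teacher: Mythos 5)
Your global architecture is sound and matches the paper's final step: a surjective local homeomorphism between Hausdorff spaces with constant finite fibers is a covering map, and the paper also reduces to exactly this (Remark~\ref{rem:pt-preimages}). The properness/clopen-ness of the stratum inside $\LL^{-1}(\mult_\mu(\C))$ is also essentially correct, though your one-line justification ("critical points can neither collide nor split") glosses over the only delicate case, namely accidental equalities: two distinct critical points lying over the \emph{same} critical value could a priori collide without changing $\mu$, and ruling this out requires the metric comparison of Lemma~\ref{lem:distances} and Corollary~\ref{cor:pts-vls} (which show that the closest pair of distinct critical points always maps to a pair of \emph{distinct} critical values at the same distance). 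This is the content of Lemma~\ref{lem:poly-paths-strata}.

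The genuine gap is in the local homeomorphism step, which is the heart of the theorem, and your argument for it is circular at the crucial point. In the first version you assert that a perturbation of the critical values "admits a unique continuous deformation to a new polynomial $p'$" — but the existence, uniqueness in $\poly_d^{mt}$, and continuity of such a polynomial realizing the deformed branched-cover data is precisely the local section whose existence is to be proved; patching local models of the form $\zeta\mapsto\zeta^{n_i+1}$ gives a topological branched cover, not a polynomial with controlled dependence on parameters. In the fallback you invoke a dimension count, but the claim that $\poly_{\lambda\to\mu}^{mt}$ is a complex $\ell$-manifold ($\ell=\len(\mu)$) is itself unproved — a priori it is only the zero locus of clustering conditions inside a $k$-dimensional space of critical-point configurations — and your proof of injectivity of $d\LL_p$ ("agreeing to first order ... must have identical normal-form expansions ... hence agree") restates the conclusion rather than deriving it. The paper closes exactly this gap analytically: writing $p$ as an integral of $d\prod_j(w-z_j)^{m_j}$ with a base pair $p(b)=c$, it computes the Jacobian of the map $\theta_\bm^{b\to c}$ sending the distinct critical points to their critical values and shows (Lemma~\ref{lem:invertible}, imported from \cite{DoMc-20}) that its determinant factors as a product of $(b-z_j)^{m_j}$ and $(z_i-z_j)^{m_j}$, hence is nonzero exactly on $\conf_k(\C_\bb)$. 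This single computation simultaneously gives the manifold structure on the stratum (as the preimage of the $2\ell$-manifold $(\C_\cb)_{[\mu]}$ under a local homeomorphism) and the local invertibility of the restricted $\LL$ map. Without the Jacobian lemma or an equivalent transversality/monodromy argument, your proof does not go through.
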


Theorem~\ref{thm:poly-strata-cover} is a consequence of
\cite[Theorem~5.2.11]{LaZv04} or \cite[Theorem~B]{DoMc-20}, but in
both cases a certain amount of interpretive work is necessary.  A proof based on \cite{DoMc-20} is
included in Appendix~\ref{app:poly-strata-cover}.

\begin{rem}[Constant finite point preimages]\label{rem:pt-preimages}
  Theorem~\ref{thm:poly-strata-cover} is the polynomial analogue of 
  Theorem~\ref{thm:mult-strata-cover}, but with less detail about the degrees of the covers. The degrees of the covers and a polynomial analogue of 
  Proposition~\ref{prop:fixed-shape} has been established by Zvonkine \cite{zvonkine97}.
  In particular, the restricted map $\LL \colon
  \poly^{mt}_{\lambda\to\mu}(\C) \to \mult_\mu(\C)$ has finite point
  preimages of constant size, and this constant can be derived from the 
  combinatorics of $\lambda \to \mu$.  See also \cite[Theorem~5.2.2]{LaZv04}.
\end{rem}

\begin{rem}[Empty strata]\label{rem:empty-strata}
  For some arrows $\lambda \to \mu$ the subspace
  $\poly_{\lambda\to\mu}^{mt}$ is empty.  If $p$ is a polynomial where
  $\mu(p)$ is the indiscrete partition, for example, then
  $\lambda(p)$ is also indiscrete (Remark~\ref{rem:ll-extremes}).
  Thus $\poly_{\lambda\to\mu}^{mt}$ is empty whenever $\mu$ is
  indiscrete and $\lambda \neq \mu$. Note that
  Theorem~\ref{thm:poly-strata-cover} remains vacuously true in this
  case since $\poly_{\lambda\to\mu}^{mt}$ is a $0$-sheeted cover of
  $\mult_\mu(\C)$.
\end{rem}

The critical value map and the $\LL$ map are locally onto.

\begin{lem}[Locally onto]\label{lem:local-onto}
  The map $\cvl\colon \poly_d^m \to \mult_n(\C)$ is locally onto.
  As a consequence, the map $\LL \colon \poly_d^{mt} \to
  \mult_n(\C)$ is also locally onto.
\end{lem}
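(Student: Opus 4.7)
The plan is to deduce each statement from the other and then prove the $\LL$ version by imitating the argument for Corollary~\ref{cor:mult-loc-onto}, with Theorem~\ref{thm:poly-strata-cover} playing the role of Theorem~\ref{thm:mult-strata-cover}. For the reduction, note that the affine map $\aff \colon \poly_d^m \to \poly_d^{mt}$ is an open surjection: the centering section of Remark~\ref{rem:center-trans} identifies $\poly_d^m$ with $\C \times \poly_d^{mc}$ so that $\aff$ is the projection onto the second factor. Since $\cvl = \LL \circ \aff$, local surjectivity of $\cvl$ and of $\LL$ are equivalent, and it suffices to treat $\LL$.

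Fix $[p] \in \poly_d^{mt}$ with $M = \LL([p])$ of shape $\mu$ and critical point shape $\lambda$, so $[p]$ lies in the stratum $\poly^{mt}_{\lambda \to \mu}$, and let $U$ be a neighborhood of $[p]$. A small neighborhood of $M$ in $\mult_n(\C)$ meets $\mult_{\mu'}(\C)$ exactly when $\mu' \leq \mu$ in $\ipart_n$, so I need to show that for every $\mu' \leq \mu$, $\LL(U)$ contains multisets from $\mult_{\mu'}(\C)$ arbitrarily close to $M$. For the stratum of $M$ itself, Theorem~\ref{thm:poly-strata-cover} gives that $\LL \colon \poly^{mt}_{\lambda \to \mu} \to \mult_\mu(\C)$ is a covering map, hence a local homeomorphism at $[p]$: the image of $U \cap \poly^{mt}_{\lambda \to \mu}$ already contains a neighborhood of $M$ in $\mult_\mu(\C)$.

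For each $\mu' < \mu$ the plan is to exhibit an arrow $\lambda' \to \mu'$ and a stratum $\poly^{mt}_{\lambda' \to \mu'}$ whose closure meets every neighborhood of $[p]$; a second application of Theorem~\ref{thm:poly-strata-cover} then shows $\LL$ sends a neighborhood of $[p]$ inside that adjacent stratum onto a neighborhood of $M$ in $\mult_{\mu'}(\C)$. Producing these adjacent strata is the main obstacle. My preferred approach is an explicit local perturbation argument: near each critical value $c$ of $p$ of multiplicity $m_c$, Proposition~\ref{prop:disk-pre} supplies a local planar branched cover over a disk around $c$; within the space of branched covers of that disk one can degenerate-to-generic split $c$ into several nearby critical values whose multiplicities realize any prescribed refinement summing to $m_c$. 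Assembling these independent local perturbations into a global coefficient perturbation of $p$ (using continuity of the roots of $p$ and $p'$ in the coefficients) produces polynomials $q \in U$ realizing the required arrow $\lambda' \to \mu'$ with $\cvl(q)$ in $\mult_{\mu'}(\C)$ arbitrarily close to $M$. The cleanest shortcut, if one is willing to bypass this construction, is to invoke Theorem~\ref{thm:ll-classic}, which presents the classical $\LL$ map as a nonconstant holomorphic map $\C^n \to \C^n$ of finite degree, and then quote the open mapping theorem for holomorphic maps between equidimensional complex manifolds with discrete fibers, which yields openness of $\LL$ immediately.
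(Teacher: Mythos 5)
Your overall logic is sound, and the route you end up recommending is genuinely different from the paper's. The paper disposes of this lemma in three sentences: it cites the surjectivity result of \cite{beardon02} for the map from polynomials to critical values, asserts that the argument there extends to local surjectivity of $\cvl$, and then notes that $\LL$ inherits the property because $\cvl = \LL \circ \aff$ factors through it. Your reduction between $\cvl$ and $\LL$ via the openness of $\aff$ (using the centering section of Remark~\ref{rem:center-trans}) is correct and matches the paper's use of the factorization, just argued in both directions. Where you diverge is in how the essential content is supplied: your ``shortcut'' via Theorem~\ref{thm:ll-classic} --- the classical $\LL$ map is a finite polynomial map $\C^n \to \C^n$, hence has discrete fibers, hence is open by the open mapping theorem for equidimensional holomorphic maps, and openness transports to $\mult_n(\C)$ through the coefficient identification $\mult_n(\C) \cong \poly_n^m \cong \C^n$ --- is valid, self-contained modulo a standard complex-analytic fact, and arguably cleaner than the paper's appeal to an extension of \cite{beardon02}. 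There is no circularity in your use of Theorem~\ref{thm:poly-strata-cover}, since its proof in the appendix does not depend on this lemma; but note that once you have openness from the holomorphic argument, the stratum-by-stratum analysis becomes unnecessary.

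The caveat is your \emph{preferred} route, the local perturbation argument. The claim that near each critical value $c$ of multiplicity $m_c$ one can ``degenerate-to-generic split $c$ into several nearby critical values whose multiplicities realize any prescribed refinement,'' and that these local modifications assemble into a small coefficient perturbation of $p$, is precisely the statement being proved: realizing an arbitrary $\mu' \le \mu$ by polynomials near $p$ \emph{is} local surjectivity of $\LL$ onto the lower strata. Proposition~\ref{prop:disk-pre} tells you what the preimage of a disk looks like for a fixed branched cover; it does not produce a nearby \emph{polynomial} realizing a prescribed splitting, and ``continuity of the roots of $p'$ in the coefficients'' only gives you that small perturbations do not move critical values far, not that every nearby configuration of critical values is achieved. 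As written, that route assumes its conclusion. Commit to the open-mapping-theorem argument (or to the stratified-covering structure of Theorem~\ref{thm:poly-strata-cover} combined with an honest density/limit argument for adjacent strata, which is essentially what \cite{beardon02} provides) and the proof is complete.
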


\begin{proof}
  The main result of \cite{beardon02} is that the map from complex
  polynomials to critical values is onto.  In fact, their proof easily
  extends to show that it is locally onto in the sense that for any
  polynomial $p$ and for any neighborhood $N$ of $p$ in $\poly_d^m$
  the image $\cvl(N)$ contains a neighborhood of $\cvl(p)$.  As a
  factor of the $\cvl$-map, the $\LL$ map inherits this property.
\end{proof}

The stratified product, multiset and polynomial spaces $\C^n$, $\mult_n(\C)$, and $\poly_d^{mt}(\C)$, 
and the stratified maps $\mult$ and $\LL$ connecting them, give all three spaces a 
stratified Euclidean metric. 

\begin{defn}[Stratified Euclidean metrics]\label{def:strata-euclid-metric}
  The natural Euclidean product metric on $\C^n$ has a Euclidean metric 
  on each of its strata (Proposition~\ref{prop:strata-metrics}), and since the $\mult$ map 
  is a stratified cover defined as the quotient by the isometric symmetric group action, 
  there is a unique metric on $\mult_n(\C)$ that restricts to a local isometry 
  for each covering map in the stratification.  This is the \emph{stratified Euclidean metric on 
  $\mult_n(\C)$}. Simlilarly, the $\LL$ map is a stratified cover and there is a unique metric on $\poly_d^{mt}(\C)$ 
  that restricts to a local isometry for each covering map in its stratification.  This pulls the stratified Euclidean metric 
  on $\mult_n(\C)$ back to $\poly_d^{mt}(\C)$, and is the \emph{stratified Euclidean metric on $\poly_d^{mt}(\C)$}.  
  Since the $\LL$ map and the $\mult$ map use the same stratification in their common range, we have local 
  isometries between strata neighborhoods of polynomials, multisets and $n$-tuples. Concretely, if $p$ is a 
  monic centered polynomial and $\bx$ is an $n$-tuple with common multiset $M=\cvl(p) = \mult(\bx)$, then 
  there are small neighborhoods in the appropriate strata and local isometries so that $N_p \cong N_M \cong N_\bx$.
\end{defn}

Some cell structures on multiset spaces can also be lifted through the $\LL$ map.

\begin{defn}[Stratified cell structures]
    \label{def:stratified-cell-structures}
  A cell structure on $\mult_n(\Xb)$ is a \emph{stratified cell structure} 
  if all points in the same open cell have the same shape.  In other words, the 
  stratification of $\mult_n(\Xb)$ into open cells is a refinement of the 
  stratification $\mult_n(\Xb) = \bigsqcup_{\lambda\vdash n} \mult_\lambda(\Xb)$
  by the shape of these $n$-element multisets.  The open cells in a stratified 
  cell structure on $\mult_n(\Xb)$ lift through the individual covering maps of the 
  stratified $\LL$ map to provide the open cells of a cell structure on $\poly_d^{mt}(\Xb)$.
  And with this lifted cell structure on the domain, the $\LL$ map becomes a cellular map.
\end{defn}

\section{Unique Path Lifting}\label{sec:path-lift}
In this section we show that every path in $\mult_n(\C)$ lifts through the $\mult$ map and through 
the $\LL$ map, and this lift is unique when the shape of the path is weakly
increasing.  We focus first on the $\mult$ map (\ref{subsec:mult-path-lift}) and then
the $\LL$ map (\ref{subsec:poly-path-lift}).

\subsection{Path-lifting through the $\mult$ map}\label{subsec:mult-path-lift}
Lifting multiset paths to $\C^n$ is a simpler situation than lifting
them through the $\LL$ map, which is why they are being considered
first. The diagrams for the two situations are shown in
Figure~\ref{fig:mult-path-lift}. 

\begin{figure}
  \begin{tikzcd}
    \{0\} \arrow[d,hookrightarrow]
    \arrow[r,rightarrow,"\widetilde \alpha"] &
    \C^n \arrow[d, "\mult"] & 
    \{0\} \arrow[d,hookrightarrow]
    \arrow[r,rightarrow,"\widetilde \alpha"] &
    \poly_d^{mt}(\C) \arrow[d, "\LL"] \\
    {[0,1]} \arrow[r,"\alpha"]
    \arrow[ru,dashed,"\exists ! \widetilde \alpha"]
    \arrow[rd,"\geq"'] &
    \mult_n(\C) \arrow[d,"\shape"] & 
    {[0,1]} \arrow[r,"\alpha"]
    \arrow[ru,dashed,"\exists ! \widetilde \alpha"]
    \arrow[rd,"\geq"'] &
    \mult_n(\C) \arrow[d,"\shape"] \\
    & \ipart_n && \ipart_n
  \end{tikzcd}
  \caption{Paths can be uniquely lifted through the $\mult$ map (left)
    and the $\LL$ map (right) when the multiset path $\alpha$ has a
    weakly increasing shape. \label{fig:mult-path-lift}}
\end{figure}
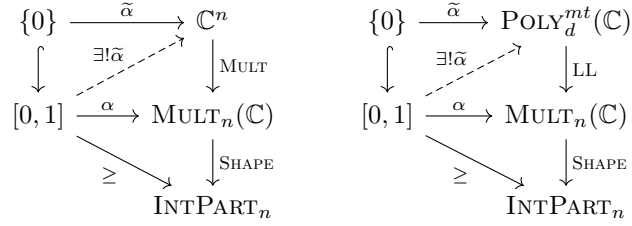

\begin{defn}[Shapes and Multiset Paths]\label{def:shape-mult}
  Let $\alpha \colon [0,1] \to \mult_n(\C)$ be a path and note that
  since $\alpha$ is continuous, the points in the set underlying the
  multiset move around continuously in $\C$. For each $t \in [0,1]$,
  the multiset $\alpha(t)$ has a shape $\lambda(t) =
  \shape(\alpha(t))$ in $\ipart_n$ that records its multiset of
  multiplicities.  We say that $\alpha$ has a \emph{weakly increasing
  shape} if for all $s \leq t$ in $[0,1]$, we have $\lambda(s) \leq \lambda(t)$ in
  $\ipart_n$.  Equivalently, the points (with multiplicity) moving around in 
  $\C$ are allowed to merge but not split,
  so that the size of the underlying set weakly decreases over time as
  the integer partition weakly increases. It has a \emph{weakly
  decreasing shape} if $s \leq t$ in $[0,1]$ implies $\lambda(s) \geq \lambda(t)$
  in $\ipart_n$, or equivalently, the
  underlying points are allowed to split but not merge.  For any path
  $\alpha \colon [0,1] \to Y$, the \emph{time reversal of $\alpha$} is
  $\alpha^\rev \colon [0,1] \to Y$ with $\alpha^\rev(1-t) =
  \alpha(t)$.  Note that $\alpha$ has a weakly increasing/decreasing
  shape if and only if $\alpha^\rev$ has a weakly
  decreasing/increasing shape.  Paths that remain in a single stratum
  $\mult_\lambda(\C)$ have a \emph{constant shape} $\lambda(t) =
  \lambda$ for all $t\in [0,1]$.
\end{defn}

We begin with an easy observation.

\begin{lem}[Paths and Strata]\label{lem:mult-paths-strata}
  Let $\alpha \colon [0,1] \to \mult_n(\C)$ be a multiset path and
  let $\beta \colon [0,1] \to \C^n$ be a path with $\alpha = \mult
  \circ \beta$.  If $\alpha(t)$ has constant shape $\mu$ and
  $\spart(\beta(0)) = [\mu]$, then $\alpha$ is a path in
  $\mult_\mu(\C)$ and $\beta$ is a path in $\C_{[\mu]}$.
\end{lem}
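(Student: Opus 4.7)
The plan is to split the statement into its two conclusions. The first, that $\alpha$ is a path in $\mult_\mu(\C)$, is immediate from unpacking Definition~\ref{def:mult-strata}: ``constant shape $\mu$'' means $\shape(\alpha(t)) = \mu$ for every $t \in [0,1]$, which is precisely the condition defining $\mult_\mu(\C) \subset \mult_n(\C)$. So only the claim $\beta([0,1]) \subset \C_{[\mu]}$ requires work.

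For that second claim, my approach will be a connectedness argument based on the decomposition
\[
\mult^{-1}(\mult_\mu(\C)) \;=\; \bigsqcup_{\substack{[\nu] \vdash [n] \\ \shape([\nu]) = \mu}} \C_{[\nu]}.
\]
First I observe that $\beta$ takes values in this preimage, since $\mult \circ \beta = \alpha$ and $\alpha$ lies in $\mult_\mu(\C)$. Next I will show that each stratum $\C_{[\nu]}$ in the disjoint union is \emph{open} inside $\mult^{-1}(\mult_\mu(\C))$. Granted this openness, the subsets $\C_{[\nu]}$ form an open partition of $\mult^{-1}(\mult_\mu(\C))$, so each is a union of connected components of that space. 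The image of the connected interval $[0,1]$ under the continuous map $\beta$ must then lie in a single piece, and since $\beta(0) \in \C_{[\mu]}$ by hypothesis, we conclude $\beta([0,1]) \subset \C_{[\mu]}$.

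The only real content is the openness claim, which I expect to be the main (though not deep) obstacle. Fix $\by \in \C_{[\nu]}$ with $\shape([\nu]) = \mu$. By continuity of the coordinate projections and the fact that $\{(y_1,\dots,y_n) : y_i \neq y_j\}$ is open in $\C^n$, I can choose a Euclidean neighborhood $N$ of $\by$ in $\C^n$ so small that whenever $y_i \neq y_j$ at $\by$, the inequality $y'_i \neq y'_j$ holds throughout $N$. Consequently, for any $\by' \in N$ the set partition $\spart(\by')$ can only \emph{refine} $[\nu]$; existing equalities may break, but no new ones can form. If in addition $\by' \in \mult^{-1}(\mult_\mu(\C))$, then $\shape(\by') = \mu$, and the only refinement of $[\nu]$ with the same number of blocks as $[\nu]$ is $[\nu]$ itself. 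Hence $\spart(\by') = [\nu]$, so $N \cap \mult^{-1}(\mult_\mu(\C)) \subset \C_{[\nu]}$. This establishes the openness, and the connectedness argument above then completes the proof.
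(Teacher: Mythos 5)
Your proposal is correct. It proves the same statement as the paper but by a different mechanism: the paper's proof is an informal dynamic argument --- view $\alpha$ as points with multiplicities moving in $\C$ without merging or splitting, attach the labels from $\spart(\beta(0))=[\mu]$ at time $0$, and observe that ``dragging the labels along'' is the unique continuous lift and stays in $\C_{[\mu]}$. Your proof replaces this with a static topological argument: decompose $\mult^{-1}(\mult_\mu(\C))$ as the disjoint union of the strata $\C_{[\nu]}$ over all set partitions $[\nu]$ of shape $\mu$, verify that each stratum is open in that preimage (your refinement argument --- distinct coordinates stay distinct nearby, and a refinement of $[\nu]$ with the same number of blocks equals $[\nu]$ --- is exactly right), and invoke connectedness of $[0,1]$. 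What your approach buys is rigor and economy: it applies to an \emph{arbitrary} path $\beta$ with $\mult\circ\beta=\alpha$ and the given initial condition, with no appeal to uniqueness of lifts. What the paper's approach buys is that it simultaneously foreshadows (indeed, informally asserts) the uniqueness of constant-shape lifts, which is the content of the covering-map statement in Theorem~\ref{thm:mult-strata-cover} and is used in the proof of Theorem~\ref{thm:mult-map-unique}; your lemma would feed into that theorem in exactly the same way, since the uniqueness there is supplied separately by the covering-map property. Either argument is acceptable; yours is the more careful one.
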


\begin{proof}
  Since $\alpha$ has constant shape, it is a path in $\mult_\mu(\C)$.
  View the path $\alpha$ as points with multiplicities moving around
  $\C$ without splitting or merging, and view $[\mu]=\spart(\beta(0))$
  as a way of replacing these multiplicities at the start point with
  initial labels.  The unique continuous way to lift $\alpha$ to a path
  $\beta$ in $\C^n$ is to drag these initial labels along as the
  points move, and this unique lift stays in $\C_{[\mu]}$.
\end{proof}

The main result in this section is that path liftings always exist and
the lifting is unique when the path to be lifted has a weakly
increasing shape.

\begin{thm}[Lifting paths to $\C^n$]\label{thm:mult-map-unique}
  For any path $\alpha \colon [0,1] \to \mult_n(\C)$ and for any lift
  of the start point $\alpha(0)$ to an $n$-tuple $\widetilde \alpha(0)
  \in \C^n$ with $\mult(\widetilde \alpha(0)) = \alpha(0)$, there
  always exists a lifted path $\widetilde \alpha \colon [0,1] \to
  \C^n$ with $\alpha = \widetilde \alpha \circ \mult$, and the lifted
  path $\widetilde \alpha$ is unique when $\alpha$ has weakly
  increasing shape.
\end{thm}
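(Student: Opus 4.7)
The plan is to prove existence and uniqueness separately; only the uniqueness part will use the weakly increasing shape hypothesis.

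For existence, the natural approach is via compactness of $[0,1]$ combined with local path lifting. The key local claim is that every point $\bx_0 \in \C^n$ has a neighborhood $V$ and a neighborhood $U$ of $M_0 = \mult(\bx_0)$ such that every path $\alpha' \colon [0,1] \to U$ starting at $M_0$ admits a continuous lift into $V$ starting at $\bx_0$. This follows from the fact that $\mult$ is the quotient by the finite group $\sym_n$ acting on the Hausdorff space $\C^n$: near $\bx_0$, the map factors through the quotient by the stabilizer $\sym_{[\mu_0]}$ where $[\mu_0] = \spart(\bx_0)$, and a lift can be constructed by tracking each cluster of coordinates of $\bx_0$ as it evolves in the multiset (using Corollary~\ref{cor:mult-loc-onto} to ensure every nearby multiset really does admit such a lift). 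Covering $[0,1]$ by finitely many such local intervals via a Lebesgue number argument and concatenating lifts gives the global lift.

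For uniqueness, let $\widetilde\alpha_1, \widetilde\alpha_2$ be two lifts agreeing at $t=0$, and set $T = \{t \in [0,1] : \widetilde\alpha_1 \equiv \widetilde\alpha_2 \text{ on } [0,t]\}$. Then $T$ is nonempty, downward closed, and closed by continuity, so $T = [0,t^*]$ for some $t^*$, and the task is to rule out $t^* < 1$. Set $\bx^* = \widetilde\alpha_i(t^*)$, $[\mu^*] = \spart(\bx^*)$, and $\lambda^* = \shape(\bx^*) = \lambda(t^*)$. The crucial observation is that continuity forces $\spart(\widetilde\alpha_i(t)) \leq [\mu^*]$ (i.e., is a refinement) for $t$ in a small neighborhood of $t^*$, since two coordinates of $\bx^*$ with distinct values remain distinct under small perturbation. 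Passing to shapes gives $\shape(\widetilde\alpha_i(t)) \leq \lambda^*$. The weakly increasing shape hypothesis provides the opposite inequality $\lambda(t) \geq \lambda^*$ for $t \geq t^*$. Combining these and recalling that $\lambda(t) = \shape(\widetilde\alpha_i(t))$ yields $\shape(\widetilde\alpha_i(t)) = \lambda^*$; since the only refinement of $[\mu^*]$ with shape $\lambda^*$ is $[\mu^*]$ itself, we conclude $\spart(\widetilde\alpha_i(t)) = [\mu^*]$ for $t \in [t^*, t^* + \epsilon]$. Both lifts are therefore paths in the single stratum $\C_{[\mu^*]}$, on which $\mult$ is a genuine covering map onto $\mult_{\lambda^*}(\C)$ by Theorem~\ref{thm:mult-strata-cover}. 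Classical unique path lifting for covering maps then gives $\widetilde\alpha_1 \equiv \widetilde\alpha_2$ on $[t^*, t^* + \epsilon]$, contradicting $t^* = \sup T$.

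The main obstacle lies in the existence argument, specifically in establishing local path lifting at the \emph{branch points} of $\mult$, i.e., tuples $\bx$ with nontrivial stabilizer in $\sym_n$ where the map fails to be a local covering. The uniqueness argument, by contrast, reduces cleanly to the covering case once one observes the dual constraints from continuity (forcing refinement of $[\mu^*]$) and from the weakly increasing shape (forcing coarsening), which together trap both lifts in a single stratum near any point of agreement.
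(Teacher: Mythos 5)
Your proof is correct, and the core ingredient is the same as the paper's --- reduce to unique path lifting for the stratum covering maps of Theorem~\ref{thm:mult-strata-cover} --- but your uniqueness bookkeeping is genuinely different. The paper first disposes of constant-shape paths via Lemma~\ref{lem:mult-paths-strata}, then handles a general weakly increasing path by decomposing it into finitely many segments whose shape is constant except at the right endpoint (possible because $\ipart_n$ is finite, though the paper does not say so) and using closedness of the agreement set at each jump. You instead take the maximal interval of agreement $[0,t^*]$ and prove a local trapping statement at $t^*$: continuity forces $\spart(\widetilde\alpha_i(t))$ to refine $[\mu^*]$ (so the shape can only decrease), while the weakly increasing hypothesis forces the shape to increase for $t\geq t^*$, and the two constraints pin both lifts inside the single stratum $\C_{[\mu^*]}$ on $[t^*,t^*+\epsilon]$, where classical unique path lifting applies. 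Your route avoids the unstated finite-decomposition step and makes explicit the semicontinuity of $\spart$ that the paper leaves implicit in the informal proof of Lemma~\ref{lem:mult-paths-strata}; the paper's route, in exchange, isolates the constant-shape case as a reusable lemma. On existence both arguments are sketches of comparable depth: the paper asserts it from Corollary~\ref{cor:mult-loc-onto}, and your Lebesgue-number reduction to a local lifting property of the finite quotient $\C^n\to\mult_n(\C)$ is the standard way to fill that in, though the local lift at a tuple with nontrivial stabilizer (a continuous selection of roots within each cluster) still deserves a sentence of justification, e.g.\ by induction on $n$ or by the path-lifting property of orbit maps of finite group actions.
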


\begin{proof}
  Since the map $\mult$ is a locally onto
  (Corollary~\ref{cor:mult-loc-onto}), every path in the range has at
  least one lift to the domain.  Let $\beta_1$ and $\beta_2$ be two
  lifts of $\alpha$ and recall that the set $\{t \mid
  \beta_1(t)=\beta_2(t)\}$ where they agree is always a closed subset
  of $[0,1]$.  If $\alpha$ has constant shape $\mu$, then by
  Lemma~\ref{lem:mult-paths-strata}, $\alpha$ is a path in
  $\mult_\mu(\C)$ and both $\beta_1$ and $\beta_2$ are paths in
  $\C_{[\mu]}$.  Since the restricted map $\mult \colon \C_{[\mu]} \to
  \mult_\mu(\C)$ is a covering map (Theorem~\ref{thm:mult-strata-cover}),
  unique path lifting for covers shows that $\beta_1$ and $\beta_2$
  agree for all $t \in [0,1]$ and $\alpha$ has a unique lift in this
  case.  To prove this when $\alpha$ has a weakly increasing but
  non-constant shape, it is sufficient to consider $\alpha$ where
  $\mu(t) = \mu(0)$ for all $0\leq t<1$ and $\mu(0) < \mu(1)$.  More
  complicated weakly increasing paths are concatenations of finitely
  many paths of this restricted type.  Arguing as above for the
  subpaths restricted to $[0,t]$, we find that $\beta_1(t) =
  \beta_2(t)$ for all $t <1$.  But the portion on which they agree is
  closed so $\beta_1(1)$ and $\beta_2(1)$ are also equal.
\end{proof}

For the remainder of the section, we examine what can go wrong for
paths that do not have weakly increasing shape. To start, paths 
with weakly decreasing shape can have multiple lifts, but only finitely many.

\begin{cor}[Finitely many lifts I]\label{cor:mult-map-finite}
  Let $\alpha \colon [0,1] \to \mult_n(\C)$ be a path and let
  $\widetilde \alpha(0) \in \C^n$ be a lift of its start point
  $\alpha(0)$.  If $\alpha$ has a weakly descreasing shape, then there
  exist at most $n!$ lifted paths $\widetilde \alpha \colon [0,1] \to
  \C^n$ with $\alpha = \mult \circ \widetilde \alpha$.
\end{cor}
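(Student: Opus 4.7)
The plan is to reduce the weakly-decreasing case to the weakly-increasing case via time reversal, where unique lifting is already known. Given a path $\alpha \colon [0,1] \to \mult_n(\C)$ with weakly decreasing shape, its time reversal $\alpha^\rev$ has weakly increasing shape (Definition~\ref{def:shape-mult}). Moreover, the assignment $\widetilde\alpha \mapsto \widetilde\alpha^\rev$ is a bijection between lifts of $\alpha$ and lifts of $\alpha^\rev$ in $\C^n$, because $\mult \circ \widetilde\alpha^\rev = (\mult \circ \widetilde\alpha)^\rev = \alpha^\rev$, and reversing time twice is the identity.

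Next, I would count lifts of $\alpha^\rev$ starting at a prescribed point. By Theorem~\ref{thm:mult-map-unique}, each lift of $\alpha^\rev$ through $\mult$ is uniquely determined by its starting $n$-tuple, which must lie in the preimage $\mult^{-1}(\alpha^\rev(0)) = \mult^{-1}(\alpha(1))$. By Proposition~\ref{prop:fixed-mult}, this preimage has cardinality $\binom{n}{\lambda(1)}$, where $\lambda(1) = \shape(\alpha(1))$. Since $\binom{n}{\lambda(1)} = \tfrac{n!}{\prod \lambda_i!} \leq n!$ for any partition $\lambda(1) \vdash n$, there are at most $n!$ lifts of $\alpha^\rev$ in total.

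Under the time-reversal bijection, a lift $\widetilde\alpha$ of $\alpha$ with $\widetilde\alpha(0) = \widetilde\alpha_0$ corresponds to a lift $\widetilde\alpha^\rev$ of $\alpha^\rev$ with $\widetilde\alpha^\rev(1) = \widetilde\alpha_0$. Each such lift of $\alpha^\rev$ is determined by its (free) starting point $\widetilde\alpha^\rev(0) \in \mult^{-1}(\alpha(1))$, so the lifts of $\alpha$ with prescribed initial point $\widetilde\alpha_0$ inject into this preimage set of size at most $n!$. This gives the required bound; no step presents a serious obstacle since all the heavy lifting is done by Theorem~\ref{thm:mult-map-unique} and Proposition~\ref{prop:fixed-mult}.
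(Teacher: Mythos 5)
Your proof is correct and follows essentially the same route as the paper: reverse time, invoke the unique-lifting theorem for weakly increasing shapes, and observe that each lift of $\alpha$ starting at $\widetilde\alpha(0)$ is determined by its endpoint in $\mult^{-1}(\alpha(1))$, a set of size at most $n!$. The only (harmless) difference is that you cite Proposition~\ref{prop:fixed-mult} to get the sharper count $\binom{n}{\lambda(1)}$ where the paper simply uses the crude bound $n!$ on fiber sizes.
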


\begin{proof}
  Since $\alpha$ has a weakly decreasing shape, its time reversal
  $\alpha^\rev$ has a weakly increasing shape.  By
  Theorem~\ref{thm:mult-map-unique}, for every preimage of the
  endpoint $\alpha(1)$ in $\C^n$, there is a unique lifted path
  $\widetilde \alpha^\rev$ that starts at this preimage.  It must end
  at one of the preimages of $\alpha(0)$ and this defines a function
  from the preimages of $\alpha(1)$ to the preimages of $\alpha(0)$.
  The number of preimages of $\alpha(1)$ sent to a fixed preimage
  $\widetilde \alpha(0)$ of $\alpha(0)$ is equal to the number of
  lifts of $\alpha$ starting at $\widetilde \alpha(0)$.  Since points
  in $\mult_n(\C)$ have at most $n!$ preimages in $\C^n$, $\alpha$ has
  at most $n!$ lifts starting at~$\widetilde \alpha(0)$.
\end{proof}

The upper bound of $n!$ occurs when the path goes from one extreme to the
other.

\begin{exmp}[Maximal number of lifts I]\label{ex:mult-map-finite}
  Let $\alpha\colon [0,1] \to \mult_n(\C)$ be any path that has a
  weakly decreasing shape which starts at an indiscrete multiset
  (with $1$ point of multiplicity $n$) and ends at a discrete multiset
  (with $n$ points of multiplicity $1$).  The start point $\alpha(0)$
  has a unique preimage in $\C^n$ and the endpoint $\alpha(1)$ has
  $n!$ preimages in $\C^n$.  The time-reversed path $\alpha^\rev$ has a
  unique lift that starts at each of the preimages of $\alpha(1)$ and
  these lifts all end at the unique preimage of $\alpha(0)$.  In
  particular, $\alpha$ has $n!$ distinct lifts that start at the
  unique preimage of $\alpha(0)$.
\end{exmp}

Finally, paths that are not weakly monotonic can have uncountably many lifts.

\begin{example}[Uncountable lifts I]\label{ex:mult-map-many}
  For $n=2$ and $\R \subset \C$, we give an explicit example of a
  path $\alpha \colon [0,1] \to \mult_2(\R) \subset \mult_2(\C)$ that
  has uncountably many lifts to $\R^2 \subset \C^2$. The space
  $\mult_2(\R)$ is the closed half-plane obtained by folding the plane
  $\R^2$ across the line $y=x$, and we identify $\mult_2(\R)$ with the
  lower half-plane $\{ (x,y) \mid x \geq y\}$.  Every path in the
  plane projects to a path in the half-plane by composing with the map
  $\mult$, and every path in the half-plane has at least one lift to
  the plane with a given lift of its starting point, but there is a
  choice of lift whenever the path in the half-plane leaves the
  diagonal boundary line.  Let $s\colon [0,1] \to \R$ be the continuous
  function
  \[
  s(t) = \left\{ \begin{array}{cl} t \sin(\pi/t)
    & \textrm{ if $t\neq 0$ } \\ 0 & \textrm{ if $t=0$}
  \end{array}\right.
  \]
  and let $s_t$ be a shorthand for $s(t)$.  Note that $s^{-1}(0) = \{t
  \in I \mid s_t = 0\} $ is the set $\{0\} \cup \{\frac1k \mid k \in
  \N\}$, and that the complement $I - s^{-1}(0)$ is a countable union
  of open intervals.  Next, let $\beta \colon [0,1] \to \R^2$ be the
  path where $\beta(t) = (s_t,-s_t)$ and let $\alpha$ be its
  projection: $\alpha = \mult \circ \beta$.  The image of $\beta$ is
  contained in the line $x+y=0$ of slope $-1$, and it is on the
  diagonal $x=y$ exactly for $t \in s^{-1}(0)$.  In the half-plane
  $\mult_2(\R)$ we have $\alpha(t) = (\size{s_t},-\size{s_t})$.  The
  path $\beta$ is one lift of $\alpha$ that starts at the point
  $(0,0)$ but there many others.  For each open time interval between
  consecutive points where $\alpha(t)$ is in the boundary, there are
  two possible lifts, one above the diagonal line $y=x$ and one below.
  And since there are countably many such intervals, there are
  $2^{\aleph_0}$ possible lifts.
\end{example}

\subsection{Path-lifting through the $\LL$ map}\label{subsec:poly-path-lift}
We now prove analogous results for path-lifting through the $\LL$ map.  We start 
by establishing the analog of Lemma~\ref{lem:mult-paths-strata} using the metric complexes 
constructed in Part~\ref{part:single-poly}.  Let $\closedsquare$ be a large rectangle 
in the range of $p$ with $\cvl(p) \subset \inter(\closedsquare)$, and let $\Pb'_p$ be the 
corresponding metric rectangular critical point complex (Definition~\ref{def:br-coord-cplx}) 
with its $\cat(0)$ metric (Remark~\ref{rem:unique-geod}).

\begin{lem}[Distances]\label{lem:distances}
  If $z_1, z_2$ are points in $\Pb'_p$ and the interior of the unique
  geodesic between them is disjoint from $\cpt(p)$, then the distance
  $d_{\Pb'_p}(z_1,z_2)$ between them in $\Pb'_p$ is equal to the
  distance $d_{\Qb'_p}(p(z_1),p(z_2))$ between their images in $\Qb'_p$.
\end{lem}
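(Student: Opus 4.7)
The plan is to exploit the fact that both $\Pb'_p$ and $\Qb'_p$ are $\cat(0)$ metric spaces built out of Euclidean rectangles (Remark~\ref{rem:unique-geod}), together with the observation that the complex map $p\colon \Pb'_p \to \Qb'_p$ is a local isometry away from the critical points (Definition~\ref{def:induced-cell} and Definition~\ref{def:degree-metric}). The strategy is to show that the unique $\Pb'_p$-geodesic $\gamma\colon[0,1]\to\Pb'_p$ from $z_1$ to $z_2$ maps under $p$ to the unique $\Qb'_p$-geodesic from $p(z_1)$ to $p(z_2)$ while preserving length.

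First I would record the lengths. Because the interior of $\gamma$ avoids $\cpt(p)$, each interior point $\gamma(t)$ has an open neighborhood on which $p$ is an isometry onto its image. By compactness, a suitable collar around any compact subinterval $[\epsilon,1-\epsilon]$ is covered by finitely many such neighborhoods, so the restricted map $p\circ\gamma\colon[\epsilon,1-\epsilon]\to\Qb'_p$ has the same length as $\gamma\colon[\epsilon,1-\epsilon]\to\Pb'_p$. Letting $\epsilon\to 0$ gives $\mathrm{length}(p\circ\gamma)=\mathrm{length}(\gamma)=d_{\Pb'_p}(z_1,z_2)$, since $\gamma$ is a $\Pb'_p$-geodesic.

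Next I would argue that $p\circ\gamma$ is actually the $\Qb'_p$-geodesic from $p(z_1)$ to $p(z_2)$. Since $p$ is a local isometry on a neighborhood of each interior point of $\gamma$, and $\gamma$ is locally length-minimizing, the image path $p\circ\gamma$ is locally length-minimizing on $(0,1)$, i.e., it is a local geodesic. But $\Qb'_p$ is a convex Euclidean rectangle, hence $\cat(0)$, so every local geodesic is globally length-minimizing and unique with given endpoints (\cite{bridson-haefliger}). Therefore $p\circ\gamma$ coincides with the unique $\Qb'_p$-geodesic from $p(z_1)$ to $p(z_2)$, and
\[
d_{\Qb'_p}(p(z_1),p(z_2))=\mathrm{length}(p\circ\gamma)=\mathrm{length}(\gamma)=d_{\Pb'_p}(z_1,z_2).
\]

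The only mildly subtle point to address carefully is the behavior at the endpoints, which may themselves lie in $\cpt(p)$. The local isometry property fails at critical points, but because the hypothesis only excludes critical points from the \emph{interior} of $\gamma$, the length computation and the local-geodesic verification only require the map to be a local isometry on the open arc $\gamma((0,1))$. Continuity of $p$ then extends the equality of lengths to the closed interval, so the endpoint issue is cosmetic rather than substantive.
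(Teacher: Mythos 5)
Your proposal is correct and follows essentially the same route as the paper: use the fact that $p$ is a local isometry between the two $\cat(0)$ rectangle complexes away from $\cpt(p)$, observe that the image of the geodesic is a local geodesic in $\Qb'_p$, and conclude via the fact that a local geodesic in a subdivided Euclidean rectangle is the unique global geodesic between its endpoints. Your extra care about endpoints lying in $\cpt(p)$ and the limiting argument for lengths are elaborations of details the paper leaves implicit, not a different method.
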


\begin{proof}
  By Remark~\ref{rem:unique-geod}, $p$ is a local isometry from the
  $\cat(0)$ metric space $\Pb'_p$ to the $\cat(0)$ metric space $\Qb'_p$
  except at the points in $\cpt(p)$.  The unique geodesic from $z_1$
  to $z_2$ is a local geodesic in $\Pb'_p$ and, because it has no
  critical points in its interior, it is sent to a local geodesic in
  the rectangular complex $\Qb'_p$. A local geodesic in a Euclidean
  rectangle, however, is the unique geodesic between its endpoints.
\end{proof}

\begin{cor}[Points and values]\label{cor:pts-vls}
  If the minimum distance between distinct critical points in $\Pb'_p$
  is $\epsilon$, then the minimum distance between distinct critical
  values in $\Qb'_p$ is at most $\epsilon$.  In particular, if the
  distinct critical values in $\Qb'_p$ are at least
  $\epsilon$-separated, the distinct critical points in $\Pb'_p$ are at
  least $\epsilon$-separated.
\end{cor}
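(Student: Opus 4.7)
The plan is to prove the first statement by picking two distinct critical points at minimum distance and tracking what happens to the geodesic between them under the local isometry provided by Lemma~\ref{lem:distances}; the second statement then follows by contraposition. Throughout we work in the ambient $\cat(0)$ spaces $\Pb'_p$ and $\Qb'_p$ guaranteed by Remark~\ref{rem:unique-geod}, which is essential for uniqueness of geodesics and the rigidity of local geodesics in $\Qb'_p$.

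First I would fix distinct critical points $z_1,z_2 \in \cpt(p)$ realizing the minimum $\epsilon = d_{\Pb'_p}(z_1,z_2)$, and let $\gamma$ denote the unique geodesic between them in $\Pb'_p$. The key intermediate claim is that the open interior of $\gamma$ contains no critical point. Suppose, for contradiction, that some $z_3 \in \cpt(p)$ lies on the interior of $\gamma$. Splitting $\gamma$ at $z_3$ yields two subsegments whose interiors each lie in $\gamma$, hence one of them, say from $z_1$ to $z_3$, has no critical point in its interior (by iterating this splitting finitely often if necessary, using that $\cpt(p)$ is finite). Then this subsegment is itself the unique geodesic between $z_1$ and $z_3$, so $d_{\Pb'_p}(z_1,z_3)$ is a strictly smaller distance between distinct critical points, contradicting the minimality of $\epsilon$.

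With the interior of $\gamma$ free of critical points, Lemma~\ref{lem:distances} applies directly, giving $d_{\Qb'_p}(p(z_1), p(z_2)) = d_{\Pb'_p}(z_1,z_2) = \epsilon$. The step I expect to be the main subtlety is verifying that $p(z_1) \neq p(z_2)$, since an accidental equality (Definition~\ref{def:accidents}) could in principle send distinct critical points to the same critical value. Here I would use that $p$ is a local isometry off of $\cpt(p)$, so the image $p\circ\gamma$ is a local geodesic in $\Qb'_p$ of positive length $\epsilon$. But $\Qb'_p$ is a Euclidean rectangle subdivided by its cell structure, so every local geodesic is a straight line segment, and a straight line segment of positive length has two distinct endpoints. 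Hence $p(z_1)$ and $p(z_2)$ are distinct critical values at distance exactly $\epsilon$, which shows the minimum distance between distinct critical values in $\Qb'_p$ is at most $\epsilon$.

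Finally, for the second statement I would argue by contraposition: if the distinct critical points in $\Pb'_p$ were not $\epsilon$-separated, then the minimum critical-point distance would be some $\delta < \epsilon$, and the first statement would produce two distinct critical values at distance at most $\delta < \epsilon$, contradicting the hypothesis that the critical values are at least $\epsilon$-separated. This completes the proof.
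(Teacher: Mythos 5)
Your proof is correct and follows essentially the same route as the paper's: realize the minimum with a pair $z_1,z_2$, observe that minimality forces the interior of the geodesic to be free of critical points, and apply Lemma~\ref{lem:distances}. The extra step verifying $p(z_1)\neq p(z_2)$ is already implicit in the lemma (the image distance equals $\epsilon>0$), and the second statement is, as you say, just the contrapositive.
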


\begin{proof}
  If $z_1, z_2 \in \cpt(p)$ are distinct critical points that realize
  the minimal distance $\epsilon$, then there can be no critical point
  in the interior of the unique geodesic connecting them and, by
  Lemma~\ref{lem:distances}, the distance between $p(z_1), p(z_2) \in
  \cvl(p)$ is exactly~$\epsilon$.
\end{proof}

Note that the implications in Lemma~\ref{lem:distances} and
Corollary~\ref{cor:pts-vls} are not reversible. The Chebyshev polynomial 
$T_d$ with $d\geq 4$ has distinct critical points with equal critical values.  
A small perturbation of $T_d$, breaking an accidental equality, has 
critical points that remain $\epsilon$-separated, but critical values that 
are not.  Corollary~\ref{cor:pts-vls} can be reframed as an assertion about
paths and strata.

\begin{lem}[Paths and strata]\label{lem:poly-paths-strata}
  Let $\alpha \colon [0,1] \to \mult_n(\C)$ be a multiset path and
  let $\beta \colon [0,1] \to \poly_d^{mt}$ be a lift of $\alpha$
  through the $\LL$ map, so that $\alpha = \LL \circ \beta$.  If
  $\alpha(t)$ has a constant shape $\mu$ and $\beta(0)$ has shape
  $\lambda \to \mu$, then $\alpha$ is a path in the stratum
  $\mult_\mu(\C)$ and $\beta$ is a path in the stratum
  $\poly_{\lambda\to\mu}^{mt}$.
\end{lem}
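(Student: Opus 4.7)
The first conclusion is immediate: by the hypothesis that $\alpha(t)$ has constant shape $\mu$, the path $\alpha$ takes values in $\mult_\mu(\C)$, which I would dispose of in one sentence.

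The heart of the matter is showing that $\beta$ stays inside the stratum $\poly_{\lambda\to\mu}^{mt}$. My plan is to prove that $\poly_{\lambda\to\mu}^{mt}$ is both open and closed as a subset of $\LL^{-1}(\mult_\mu(\C)) \subseteq \poly_d^{mt}$. Granting this, since $\beta([0,1])$ is connected (being the continuous image of $[0,1]$), lies in $\LL^{-1}(\mult_\mu(\C))$ because $\LL\circ\beta = \alpha$, and meets $\poly_{\lambda\to\mu}^{mt}$ at $\beta(0)$, the whole image must lie in that clopen piece. To establish openness at a point $p_0 \in \poly_{\lambda\to\mu}^{mt}$, I would apply Theorem~\ref{thm:poly-strata-cover} to obtain an evenly covered neighborhood $U$ of $\LL(p_0)$ in $\mult_\mu(\C)$, with a sheet $V \subseteq \poly_{\lambda\to\mu}^{mt}$ containing $p_0$ mapped homeomorphically to $U$. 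Using Remark~\ref{rem:pt-preimages} that $\LL$ has finite point preimages, I would choose a small open neighborhood $W \subseteq \poly_d^{mt}$ of $p_0$ disjoint from the other preimages of $\LL(p_0)$ and argue that $W \cap \LL^{-1}(\mult_\mu(\C)) \subseteq V \subseteq \poly_{\lambda\to\mu}^{mt}$. Closedness of $\poly_{\lambda\to\mu}^{mt}$ in $\LL^{-1}(\mult_\mu(\C))$ then follows automatically because the remaining strata $\poly_{\lambda'\to\mu}^{mt}$ with $\lambda' \neq \lambda$ are each also open in $\LL^{-1}(\mult_\mu(\C))$ by the same argument, and their union is the complement.

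The main obstacle is justifying that no preimage of $\LL(p_0)$ from a distinct stratum $\poly_{\lambda'\to\mu}^{mt}$ can lie arbitrarily close to $p_0$ in $\poly_d^{mt}$. This rests on dimension considerations: each stratum $\poly_{\lambda'\to\mu}^{mt}$ is a covering space of $\mult_\mu(\C)$ by Theorem~\ref{thm:poly-strata-cover}, so it has the same complex dimension $\len(\mu)$ as $\mult_\mu(\C)$ itself, and distinct strata are set-theoretically disjoint subsets of $\poly_d^{mt}$. If a sequence $p_n \in \poly_{\lambda'\to\mu}^{mt}$ converged to $p_0$, then $\LL(p_n) \to \LL(p_0)$ in $\mult_\mu(\C)$, and the covering-map property would force $p_n$ to approach one of the finitely many preimages of $\LL(p_0)$ within $\poly_{\lambda'\to\mu}^{mt}$; since $p_0 \notin \poly_{\lambda'\to\mu}^{mt}$, no such sequence can exist. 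The deeper point is that while the closures of different strata do touch at more degenerate loci (for example, configurations where further critical points coalesce), any such coalescence forces coarsening of the critical value shape as well, pushing the limiting polynomial out of $\LL^{-1}(\mult_\mu(\C))$ and therefore preserving the topological separation inside $\LL^{-1}(\mult_\mu(\C))$.
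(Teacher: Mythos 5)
Your argument is correct, but it is a genuinely different route from the paper's. The paper never mentions coverings in its proof of this lemma: it uses compactness of $[0,1]$ to get a uniform $\epsilon$ such that the distinct points of $\alpha(t)$ are $\epsilon$-separated for all $t$, and then invokes Corollary~\ref{cor:pts-vls} (which rests on the $\cat(0)$ geometry of the critical point complex $\Pb'_p$) to conclude that the distinct critical points of $\beta(t)$ stay $\epsilon$-separated in the pullback metric, hence can neither merge nor split. Your proof instead deduces the lemma from Theorem~\ref{thm:poly-strata-cover} and Remark~\ref{rem:pt-preimages} by pure point-set topology: each stratum $\poly_{\lambda'\to\mu}^{mt}$ is open in $\LL^{-1}(\mult_\mu(\C))$ because a finite-sheeted covering (in the subspace topology) forces any sequence in the stratum whose $\LL$-images converge to converge \emph{within} the stratum, there are only finitely many arrows ending at $\mu$, and connectivity of $\beta([0,1])$ finishes. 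This is sound and arguably softer and more portable, but it makes the lemma depend on the covering theorem (whose proof is deferred to the appendix), whereas the paper's proof is independent of it and yields the quantitative separation of critical points as a bonus.

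One concrete complaint: your closing ``deeper point'' --- that any coalescence of critical points forces a coarsening of the critical value shape --- is false, and it is false for exactly the reason the paper is careful about. Accidental equalities (Definition~\ref{def:accidents}, e.g.\ Chebyshev-type configurations) give distinct critical points with equal critical values, so two critical points can in principle collide without the critical value multiset changing shape at all. Ruling this out is the actual content of the lemma, and it is what Corollary~\ref{cor:pts-vls} accomplishes: the minimal pullback-metric distance between distinct critical points is bounded below by the minimal distance between distinct critical \emph{values}, even in the presence of accidental equalities. Your formal sequence/covering argument does not rely on this gloss, so the proof stands, but the heuristic as written would mislead a reader about where the difficulty lies.
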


\begin{proof}
  That $\alpha$ is a path in $\mult_\mu(\C)$ is clear from its
  constant shape (Lemma~\ref{lem:mult-paths-strata}).  
  Moreover, since the interval $[0,1]$ is compact and
  the points with multiplicity in $\alpha(t)$ neither merge nor split,
  there exists an $\epsilon>0$ such that the points with multiplicity
  in $\alpha(t)$ are $\epsilon$-separated for every $t$.  By
  Corollary~\ref{cor:pts-vls}, the distinct critical points of
  $\beta(t)$ are also $\epsilon$-separated for every $t$.  In
  particular, $\beta$ must have constant critical point shape, and
  this shape must be $\shape(\beta(0)) = \lambda$.
\end{proof}

We now proceed as before.  Any multiset path $\alpha$ with weakly 
increasing shape can be subdivided into finitely many subintervals 
where the shape is constant, and by Lemma~\ref{lem:poly-paths-strata}, 
the only possible lifts of these subintervals through the $\LL$ map occur 
in specific polynomial strata. The main result in this section is a polynomial version of
Theorem~\ref{thm:mult-map-unique}.

\begin{thm}[Lifting paths to $\poly_d^{mt}(\C)$]\label{thm:ll-map-unique}
  For any path $\alpha \colon [0,1] \to \mult_n(\C)$ and for any lift
  of $\alpha(0)$ to $\widetilde \alpha(0) =[p] \in \poly_d^{mt}(\C)$
  with $\LL(\widetilde \alpha(0)) = \cvl(p) = \alpha(0)$, there always
  exists a lifted path $\widetilde \alpha \colon [0,1] \to
  \poly_d^{mt}(\C)$ with $\alpha = \widetilde \alpha \circ \LL$, and
  the lifted path $\widetilde \alpha$ is unique when $\alpha$ has
  weakly increasing shape.
\end{thm}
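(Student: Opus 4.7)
The plan is to mimic the proof of Theorem~\ref{thm:mult-map-unique}, replacing the stratified covering structure of $\mult$ (Theorem~\ref{thm:mult-strata-cover}) with that of $\LL$ (Theorem~\ref{thm:poly-strata-cover}), and using Lemma~\ref{lem:poly-paths-strata} as the polynomial analogue of Lemma~\ref{lem:mult-paths-strata}.

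\emph{Existence.} For any multiset path $\alpha$ and any chosen lift $\widetilde\alpha(0)$ of its start point, existence of some lifted path follows from the fact that $\LL$ is locally onto (Lemma~\ref{lem:local-onto}). Indeed, by local surjectivity we can lift $\alpha$ piecewise on a finite cover of $[0,1]$ by subintervals whose images lie in neighborhoods on which $\LL$ admits a local section through the chosen starting polynomial, and concatenate.

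\emph{Uniqueness in the constant-shape case.} Suppose $\alpha$ has constant shape $\mu$ and $\widetilde\alpha(0) \in \poly^{mt}_{\lambda\to\mu}$ for some $\lambda \to \mu$. Let $\beta_1,\beta_2$ be two lifts of $\alpha$ with $\beta_1(0)=\beta_2(0)=\widetilde\alpha(0)$. By Lemma~\ref{lem:poly-paths-strata}, both $\beta_1$ and $\beta_2$ are paths entirely within the stratum $\poly^{mt}_{\lambda\to\mu}$. The restricted $\LL$-map $\poly^{mt}_{\lambda\to\mu} \to \mult_\mu(\C)$ is a covering map by Theorem~\ref{thm:poly-strata-cover}, so unique path lifting for covers forces $\beta_1 = \beta_2$ on all of $[0,1]$.

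\emph{Uniqueness in the weakly increasing case.} Any multiset path $\alpha$ with weakly increasing shape admits a partition $0 = t_0 < t_1 < \cdots < t_N = 1$ of $[0,1]$ such that $\alpha$ has constant shape $\mu_i$ on each half-open subinterval $[t_{i-1}, t_i)$, with $\mu_1 \leq \mu_2 \leq \cdots \leq \mu_N$. Given $\widetilde\alpha(0)$, we build the unique lift inductively: on each $[t_{i-1}, t_i)$ the restricted path has constant shape and (by continuity of the lift constructed so far) starts at a polynomial whose critical value shape equals $\mu_i$, so the previous paragraph provides a unique lift. The only delicate point is at each transition time $t_i$: the set $\{ t : \beta_1(t)=\beta_2(t)\}$ is closed in $[0,1]$, and since this set contains $[t_{i-1}, t_i)$ for each $i$ by the inductive hypothesis, taking limits gives agreement at $t_i$ as well, extending the uniqueness across all subdivisions.

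\emph{Main obstacle.} The potentially subtle point is ensuring that the unique lift defined on $[t_{i-1},t_i)$ actually extends continuously to $t_i$ and lands in the correct next stratum $\poly^{mt}_{\lambda'\to\mu_{i+1}}$ so that the inductive step can be restarted. Continuity of the extension is guaranteed because $\LL$ is a finite, proper, closed map (a consequence of Theorem~\ref{thm:ll-classic} and its stratified analogue Theorem~\ref{thm:poly-strata-cover}, together with Remark~\ref{rem:pt-preimages}), so the lift cannot escape to infinity as $t \to t_i^-$ and must accumulate at preimages of $\alpha(t_i)$. Any accumulation point is then a preimage of $\alpha(t_i)$, and the closedness of the agreement set forces a unique limit, giving the required extension.
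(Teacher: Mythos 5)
Your proposal is correct and follows essentially the same route as the paper's own proof: existence from the local surjectivity of $\LL$ (Lemma~\ref{lem:local-onto}), uniqueness on constant-shape pieces from Lemma~\ref{lem:poly-paths-strata} together with the stratified covering property of Theorem~\ref{thm:poly-strata-cover}, and then a finite decomposition of a weakly increasing path into constant-shape subintervals with the closedness of the agreement set handling each transition time. The extra discussion of properness in your final paragraph is harmless supplementary detail but not needed for the uniqueness argument, since two given lifts are already assumed continuous on all of $[0,1]$.
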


\begin{proof}
  Since the $\LL$ map is locally onto (Lemma~\ref{lem:local-onto}),
  every path in the range has at least one lift to the domain.  Let
  $\beta_1$ and $\beta_2$ be two lifts of $\alpha$ and recall that the
  set $\{t \mid \beta_1(t)=\beta_2(t)\}$ where they agree is always a
  closed subset of $[0,1]$.  If $\alpha$ has constant shape $\mu$,
  then by Lemma~\ref{lem:poly-paths-strata}, $\alpha$ is a path in
  $\mult_\mu(\C)$ and both $\beta_1$ and $\beta_2$ are paths in
  $\poly_{\lambda\to\mu}^{mt}$.  Since the restricted map $\LL \colon
  \poly_{\lambda\to\mu}^{mt} \to \mult_\mu(\C)$ is a covering map
  (Theorem~\ref{thm:poly-strata-cover}), unique path lifting for
  covers shows that $\beta_1$ and $\beta_2$ agree for all $t \in
  [0,1]$ and $\alpha$ has a unique lift in this case. To prove this
  when $\alpha$ has a weakly increasing but non-constant shape, it is
  sufficient to consider $\alpha$ where $\mu(t) = \mu(0)$ for all
  $0\leq t<1$ and $\mu(0) < \mu(1)$. More complicated weakly
  increasing paths are concatenations of finitely many paths of this
  restricted type.  Arguing as above for the subpaths restricted to
  $[0,t]$, we find that $\beta_1(t) = \beta_2(t)$ for all $t <1$.  But
  the portion on which they agree is closed so $\beta_1(1)$ and
  $\beta_2(1)$ are also equal.
\end{proof}

The consequences of Theorem~\ref{thm:ll-map-unique} are similar to
those of Theorem~\ref{thm:mult-map-unique}.  Here are polynomial
versions of Corollary~\ref{cor:mult-map-finite},
Example~\ref{ex:mult-map-finite}, and Example~\ref{ex:mult-map-many}.

\begin{cor}[Finitely many lifts II]\label{cor:ll-map-finite}
  Let $\alpha \colon [0,1] \to \mult_n(\C)$ be a path and let
  $\widetilde \alpha(0)  \in \poly_d^{mt}(\C)$ be a lift of its
  start point.  If $\alpha$ has a weakly decreasing shape, then there
  are at most $d^{d-2}$ lifted paths $\widetilde \alpha \colon [0,1]
  \to \poly_d^{mt}(\C)$ with $\alpha = \LL \circ \widetilde \alpha$.
\end{cor}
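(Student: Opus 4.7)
The plan is to mimic the argument used for Corollary~\ref{cor:mult-map-finite}, simply swapping the $\mult$ map for the $\LL$ map and using the polynomial version of unique path lifting (Theorem~\ref{thm:ll-map-unique}) in place of Theorem~\ref{thm:mult-map-unique}.

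First I would reverse time: since $\alpha$ has weakly decreasing shape, $\alpha^\rev$ has weakly increasing shape. By Theorem~\ref{thm:ll-map-unique}, for every preimage $[q] \in \LL^{-1}(\alpha(1))$ there is a unique lifted path $\widetilde{\alpha^\rev}_{[q]} \colon [0,1] \to \poly_d^{mt}(\C)$ starting at $[q]$. Each such lift ends at some point in $\LL^{-1}(\alpha(0))$, which defines a map $\Phi \colon \LL^{-1}(\alpha(1)) \to \LL^{-1}(\alpha(0))$. Reversing time again, I obtain a lift $\widetilde\alpha \colon [0,1] \to \poly_d^{mt}(\C)$ of $\alpha$ starting at $\Phi([q])$ and ending at $[q]$. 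Conversely, any lift $\widetilde\alpha$ of $\alpha$ starting at a fixed preimage gives a lift of $\alpha^\rev$ starting at its endpoint, so the number of lifts of $\alpha$ starting at a chosen $\widetilde\alpha(0) \in \LL^{-1}(\alpha(0))$ is exactly $|\Phi^{-1}(\widetilde\alpha(0))|$.

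Next I would bound the cardinality of the fiber. By Theorem~\ref{thm:ll-classic}, the Lyashko--Looijenga map is a polynomial finite map of degree $d^{d-2}$, so every point in the range has at most $d^{d-2}$ preimages. In particular, $|\LL^{-1}(\alpha(1))| \leq d^{d-2}$, and hence
\[
  |\Phi^{-1}(\widetilde\alpha(0))| \,\leq\, |\LL^{-1}(\alpha(1))| \,\leq\, d^{d-2},
\]
which gives the claim.

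Since Theorem~\ref{thm:ll-map-unique} has already been proved and Theorem~\ref{thm:ll-classic} is quoted from the literature, there is no real obstacle here; the only subtlety is to make sure the map $\Phi$ is well-defined and that the correspondence between lifts of $\alpha$ and lifts of $\alpha^\rev$ is bijective (which is immediate from the uniqueness clause in Theorem~\ref{thm:ll-map-unique}). The proof is essentially a verbatim translation of the proof of Corollary~\ref{cor:mult-map-finite}, with the bound $n!$ on multiset-fibers of $\mult$ replaced by the bound $d^{d-2}$ on polynomial-fibers of $\LL$.
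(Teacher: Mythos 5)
Your proof is correct and is exactly the argument the paper has in mind: the paper omits the proof of this corollary, stating only that it is "similar to that of Corollary~\ref{cor:mult-map-finite}," and your write-up is precisely that translation, with Theorem~\ref{thm:ll-map-unique} supplying unique lifting of the time-reversed path and the bound $d^{d-2}$ on $\LL$-fibers replacing the bound $n!$ on $\mult$-fibers. The bijection between lifts of $\alpha$ starting at $\widetilde\alpha(0)$ and the fiber $\Phi^{-1}(\widetilde\alpha(0))$ that you flag as the only subtlety is indeed immediate from the uniqueness clause, so nothing is missing.
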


The proof is similar to that of Corollary~\ref{cor:mult-map-finite}
and has been omitted.

\begin{exmp}[Maximal number of lifts II]\label{ex:ll-map-finite}
  Let $\alpha\colon [0,1] \to \mult_n(\C)$ be a path with a weakly
  decreasing shape that starts at the indiscrete multiset 
  and ends at a discrete multiset.  By Remark~\ref{rem:ll-extremes},
  the start point has a unique preimage and the endpoint has $d^{d-2}$
  preimages in $\poly_d^{mt}(\C)$.  Arguing as in
  Example~\ref{ex:mult-map-finite} shows that $\alpha$ has exactly
  $d^{d-2}$ lifts starting at the unique lift of the start point.
\end{exmp}

\begin{example}[Uncountable lifts II]\label{ex:ll-map-many}
  Let $\beta\colon [0,1] \to \poly_3(\C)$ be a continuous path in the
  space of monic cubic polynomials defined by setting $\beta(t) = p_t$
  where $p_t(z) = z^3 - \frac32 s_t z^2 + 1$ and $s_t$ is as defined
  in Example~\ref{ex:mult-map-many}.  Let $\alpha\colon [0,1] \to
  \mult_2(\C)$ be defined by the composition $\alpha = \cvl \circ
  \beta$.  Concretely, the derivative $p'_t(z)$ factors as $3z(z-
  s_t)$, $\cpt(p_t) = \{0,s_t\}$ and $\alpha(t) = \cvl(p_t) =
  \{1,1-\frac12 s_t^3\}$.  By Remark~\ref{rem:ll-extremes}, when
  $s_t=0$ and $\alpha(t) = \{1^2\}$ is indiscrete, $[z^3+1]$ is its
  unique preimage under the $\LL$ map, and when $s_t \neq 0$ and
  $\alpha(t)$ is generic, there are $3^1=3$ preimages.  As in
  Example~\ref{ex:mult-map-many}, the set $s^{-1}(0) = \{0\} \cup
  \{\frac1k \mid k \in \N\}$ and its complement $I - s^{-1}(0)$ is a
  countable union of open intervals.  For each open time interval
  between consecutive points where $\alpha(t)$ is indiscrete, there
  are three possible lifts.  And since there are countably many such
  intervals, there are $3^{\aleph_0}$ possible lifts of $\alpha$.
\end{example}

\section{Polynomial Homotopies}\label{sec:poly-homotopy}
Recall that $\poly_d^{mt}(\Ub)$ denotes the space of monic degree-$d$
polynomials up to translation with critical values in the subspace $\Ub
\subset \C$. For simplicity we call $\poly_d^{mt}(\Ub)$ the
\emph{polynomials over $\Ub$}.   The goal of this section is to clarify how 
the polynomial space $\poly_d^{mt}(\Ub)$ changes as $\Ub$ changes.  The 
first thing to point out is that there is something to prove.  For example, 
if $f\colon \C \to \C$ is a homeomorphism with $f(\Ub) = \Vb$, is it true that 
the space $\poly_d^{mt}(\Ub)$ of polynomials over $\Ub$ and the space $\poly_d^{mt}(\Vb)$ 
of polynomials over $\Vb$ are homeomorphic?  This is obvious for planar 
$d$-branched covers. 

\begin{example}
  Let $f\colon \C \to \C$ be a homeomorphism of $\C$ with $f(\closeddisk) = \closedsquare$, sending 
  the closed unit disk $\closeddisk$ homeomorphically to a closed rectangle $\closedsquare$.
  If $q$ is any planar $d$-branched cover with $\cvl(q) \subset \closeddisk$, then $f\circ q$ is a 
  planar $d$-branched cover with $\cvl(f \circ q) \subset \closedsquare$, and composing in the other 
  direction with $f^{-1}$ shows that this establishes a bijection between 
  the infinite-dimensional function space of planar $d$-branched 
  covers with critical values in $\closeddisk$ and the infinite-dimensional 
  function space of planar $d$-branched covers with critical values 
  in $\closedsquare$.  On the other hand, there is no homeomorphism $f\colon \C \to \C$ with 
  $f(\closeddisk) = \closedsquare$ where $f \circ p$ is a \emph{polynomial} for any polynomial $p$ with $\cvl(p) \subset \closeddisk$.
\end{example}

One way to show that this is true for polynomials is to characterize 
monic centered polynomials by their multiset of critical values and 
the monodromy, and we use this perspective in Part~\ref{part:geo-comb}.
Here we take a more direct approach. For the subspaces of $\C$ under 
consideration (Remark~\ref{rem:spaces}), homeomorphic subspaces of $\C$ 
can slowly transition from one to the other via a homotopy of $\C$, which 
ends up inducing a homeomorphism of the polynomials over those subspaces.  

\begin{defn}[Homotopies]\label{def:homotopies}
  A \emph{homotopy} is a continuous map $H \colon \Ub \times \Ib \to \Yb$ with $\Ib =
  [0,1]$. We write $H(u,t) = h_t(u) = h^u(t)$ for each $u \in \Ub$ and
  $t\in \Ib$.  These refer to the \emph{time map} $h_t \colon \Ub \to
  \Yb$, and the \emph{path map} $h^u \colon \Ib \to \Yb$.  We also
  write $u_t$ for $H(u,t) = h^u(t)$ and $\Ub_t$ for $H(\Ub,t) = h_t(\Ub)$,
  so that $h^u$ is a path from $u_0$ to $u_1$ and $H$ is a homotopy
  from $\Ub_0$ to $\Ub_1$.
\end{defn}

\begin{figure}
  \begin{tikzcd}[row sep=small, column sep=small]
    (z,t) \arrow[rr,mapsto] &&
    H(z,t) &
    \Ub \times \Ib \arrow[rr,"H"] &&
    \C  \\
    (\bz,t) \arrow[rr,mapsto] \arrow[dd,mapsto] &&
    H^n(\bz,t) \arrow[dd,mapsto] &
    \Ub^n \times \Ib \arrow[rr,"H^n"] \arrow[dd,"\mult \times \one"] &&
    \C^n  \arrow[dd,"\mult"] \\
    \\
    (M,t) \arrow[rr,mapsto] &&
    H_n(M,t) &
    \mult_n(\Ub) \times \Ib \arrow[rr,"H_n"] &&
    \mult_n(\C)
  \end{tikzcd}
  \caption{A point homotopy $H$, tuple homotopy $H^n$, and multiset
    homotopy $H_n$, with $z\in \Ub$, $\bz \in \Ub^n$ and $M \in
    \mult_n(\Ub)$.  \label{fig:induced-homotopies}}
\end{figure}

A homotopy of the points in $\C$ induces, for each $n$, a homotopy of
the tuples in $\C^n$ and a homotopy of multisets in $\mult_n(\C)$.

\begin{defn}[Induced homotopies]\label{def:induced-homotopies} 
    Given a subset $\Ub \subseteq \C$, a \emph{point homotopy} is 
    a homotopy $H \colon \Ub \times \Ib \to \C$ from
  $\Ub_0$ to $\Ub_1$ inside $\C$. This induces a \emph{tuple
  homotopy} $H^n \colon \Ub^n \times \Ib \to \C^n$ from $(\Ub_0)^n$ to
  $(\Ub_1)^n$ inside $\C^n$, obtained by applying the homotopy $H$ to 
  each factor of $\Ub^n$ simultaneously.
  Taking the quotient by the coordinate-permuting $\sym_n$ action on
  $\Ub^n$ yields the \emph{multiset homotopy} $H_n
  \colon \mult_n(\Ub) \times \Ib \to \mult_n(\C)$ from $\mult_n(\Ub_0)$ to
  $\mult_n(\Ub_1)$ inside $\mult_n(\C)$; see Figure~\ref{fig:induced-homotopies}.
  Concretely, if $M \in \mult_n(\Ub)$
  and $\bz \in \Ub^n$ is any $n$-tuple with $M = \mult(\bz)$, then
  $H_n(M,t) = \mult(H^n(\bz,t))$.
\end{defn}

\begin{defn}[Induced time maps and path maps]
  Let $H$ be a point homotopy with induced tuple homotopy $H^n$ and 
  multiset homotopy $H_n$.  The time maps are denoted $h_t \colon \Ub
  \to \C$ for $H$, $(h^n)_t \colon \Ub^n \to \C^n$ for $H^n$, and
  $(h_n)_t \colon \mult_n(\Ub) \to \mult_n(\C)$ for $H_n$.  
  Using the notation $\Ub_t = h_t(\Ub)$, we have the following path maps:
  a \emph{point path} $h^z \colon \Ib \to \Ub_t$ for $H$, a
  \emph{tuple path} $(h^n)^\bz \colon \Ib \to (\Ub_t)^n$ for $H^n$, and a
  \emph{multiset path} $(h_n)^M \colon \Ib \to \mult_n(\Ub_t)$ for $H_n$.
\end{defn}

The point paths in point homotopies may merge or split over time.

\begin{defn}[Splitting and merging]\label{def:split-merge}
  Let $H \colon \Ub \times \Ib \to \C$ be a point homotopy.  We say that
  $H$ \emph{splits points} if there are distinct points $u,v \in \Ub$
  and distinct times $s < t \in \Ib$ such that $u_s=v_s$ and $u_t \neq
  v_t$.  Similarly, we say that $H$ \emph{merges points} if there are
  distinct points $u,v \in \Ub$ and distinct times $s < t \in \Ib$ such
  that $u_s\neq v_s$ and $u_t=v_t$.  When $H$ does not split points it
  is \emph{nonsplitting} and when it does not merge points it is
  \emph{nonmerging}. A point homotopy \emph{preserves points} 
  if it is both nonsplitting and nonmerging.  In particular, for a
  point-preserving homotopy, $u_t = v_t$ at some time $s \in \Ib$ if and
  only if $u_t = v_t$ for all $t \in \Ib$.
\end{defn}

Nonsplitting homotopies can be initially simplified.

\begin{rem}[Initial inclusions]\label{rem:init-include}
  When $H$ is a nonsplitting point homotopy, points with $u_0 = v_0$
  stay together throughout, so the entire homotopy $H$ factors through
  the quotient map $h_0 \times \one \colon \Ub \times \Ib
  \twoheadrightarrow \Ub_0 \times \Ib$ to produce a simpler point homotopy
  $H' \colon \Ub_0 \times \Ib \to \C$.  This allows us to assume
  without loss of generality that all nonsplitting homotopies are
  injective at time $t=0$ with $\Ub = \Ub_0$.
\end{rem}

A nonsplitting point homotopy leads to multiset paths that are uniquely
liftable.

\begin{rem}[Nonsplitting]\label{rem:nonsplit}
  Let $H \colon \Ub \times \Ib \to \C$ be a point homotopy and let $H_n$
  be the corresponding multiset homotopy.  When $H$ is nonsplitting,
  the points in a multiset path can merge but not split, which means
  that they have a weakly increasing shape.  In particular, multiset
  paths in $H_n$ satisfy the necessary conditions to be uniquely
  liftable through the $\LL$ map (Theorem~\ref{thm:ll-map-unique}).
\end{rem}

\begin{figure}
    \begin{tikzcd}
        \poly_d^{mt}(\Ub) \times \Ib \arrow[r,"\widetilde{H}_n",dashed] \arrow[d,"\LL\times 1"] 
        & \poly_d^{mt}(\C) \arrow[d,"\LL"] \\
        \mult_n(\Ub) \times \Ib \arrow[r,"H_n"] & \mult_n(\C)
    \end{tikzcd}
    \caption{The polynomial homotopy $\widetilde{H}_n$ can be viewed as a lift
    of the multiset homotopy $H_n$ through the $\LL$ map, i.e. this
    diagram commutes.}
    \label{fig:LL-poly-homotopy}
\end{figure}

One consequence of Remark~\ref{rem:nonsplit} is homotopy at the level
of polynomial spaces.

\begin{defn}[Polynomial homotopies]\label{def:poly-homotopy}
  Let $H \colon \Ub \times \Ib \to \C$ be a nonsplitting point homotopy
  with $\Ub = \Ub_0 \subset \C$.  We can use Theorem~\ref{thm:ll-map-unique} 
  to define a function $\widetilde{H}_n \colon \poly_d^{mt}(\Ub) \times
  \Ib \to \poly_d^{mt}(\C)$ as follows.  Let $p$ be a monic degree-$d$
  polynomial with $[p] \in \poly_d^{mt}(\Ub)$ and let $M = \LL([p]) =
  \cvl(p) \in \mult_n(\Ub)$. By Remark~\ref{rem:nonsplit}, the multiset
  path $\alpha =(h_n)^M \colon \Ib \to \mult_n(\C)$ starting at $M$ has
  a weakly increasing shape and $[p]$ is a lift of its starting point
  $\alpha(0)$ through the $\LL$ map.  Next, by
  Theorem~\ref{thm:ll-map-unique} there is a unique lift of $\alpha$
  to a path $\beta \colon \Ib \to \poly_d^{mt}(\Ub)$ that starts at $[p]$.
  Finally, we define $\widetilde{H}_n$ at the point $([p],t) \in
  \poly_d^{mt}(\Ub) \times \Ib$ to be $\beta(t)$. By construction,
  $\LL \circ \widetilde{H}_n = H_n \circ (\LL\times 1)$ (see Figure~\ref{fig:LL-poly-homotopy}), 
  so $\widetilde{H}_n$ is a continuous map which we refer to as a \emph{polynomial homotopy}.
\end{defn}

\section{Polynomial Spaces}\label{sec:poly-spaces}

In this section we establish the homeomorphisms and compactifications
shown in Figure~\ref{fig:poly-homeo-compact} and the quotients and
deformation retractions shown in Figure~\ref{fig:poly-mult}.
The relationships between these polynomial spaces mirror the relationships
between subspaces of $\C$ used to define them, and this is what prompted 
our introduction of a visual shorthand (Definition~\ref{def:visual}).
We begin by establishing continuous maps between polynomial spaces.

\begin{rem}[Time maps]\label{rem:poly-time-maps}
    Let $H \colon \Ub \times \Ib \to \C$ be a nonsplitting point homotopy
    and let $\widetilde{H}_n$ be the corresponding polynomial homotopy.
    For any $s<t$ in $\Ib$, we can restrict $\Ib$ to the interval $[s,t]$ and apply
    Remark~\ref{rem:init-include} to obtain the point and multiset 
    time maps $h_t\colon \Ub_s \to \Ub_t$ and $(h_n)_t \colon \mult_n(\Ub_s) \to \mult_n(\Ub_t)$
    respectively. The polynomial homotopy $\widetilde{H}_n$ is then
    defined by lifting the multiset paths in $H_n$, which gives us the polynomial 
    time map $(\widetilde{h}_n)_t \colon \poly_d^{mt}(\Ub_s) \to \poly_d^{mt}(\Ub_t)$.
    In particular, this is a continuous map from one polynomial space to another.
\end{rem}

\begin{figure}
    \begin{tikzcd}
        \poly_{\lambda\to\mu}^{mt}(\Ub)  \arrow[r,"(\widetilde{h}_n)_t"] \arrow[d,"\LL"] 
        & \poly_{\lambda\to\mu}^{mt}(\Ub_t) \arrow[d,"\LL"] \\
        \mult_\mu(\Ub) \arrow[r,"(h_n)_t"] & \mult_\mu(\Ub_t)
    \end{tikzcd}
    \caption{The time map $(\widetilde{h}_n)_t$ is a lift of the time map
    $(h_n)_t$ through the $\LL$ map when restricted to a stratum in the double
    stratification of $\poly_d^{mt}(\C)$.}
    \label{fig:ll-time-map}
\end{figure}

\begin{prop}[Homeomorphisms]\label{prop:homeo}
    Let $\Ub$ be a compact subset of $\C$ and let 
    $H\colon \Ub\times \Ib \to \C$ be a point-preserving homotopy.
    Then for all $s < t$ in $\Ib$, the spaces
    $\poly_d^{mt}(\Ub_s)$ and $\poly_d^{mt}(\Ub_t)$ are homeomorphic.
\end{prop}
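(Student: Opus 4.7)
The plan is to construct an explicit continuous inverse to the time map $(\widetilde{h}_n)_t \colon \poly_d^{mt}(\Ub_s) \to \poly_d^{mt}(\Ub_t)$ from Remark~\ref{rem:poly-time-maps}. The key observation is that a point-preserving homotopy is simultaneously nonsplitting and nonmerging (Definition~\ref{def:split-merge}), so both the forward homotopy and its time-reversal satisfy the hypotheses required to define a polynomial homotopy (Definition~\ref{def:poly-homotopy}).

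More precisely, first I would reparameterize $H$ restricted to $\Ub\times [s,t]$ as a homotopy starting at $\Ub_s$ (using Remark~\ref{rem:init-include}), yielding a nonsplitting point homotopy $F\colon \Ub_s\times \Ib \to \C$ with $\Ub_t$ as its final image, and the time-reversal $F^{\rev}\colon \Ub_t \times \Ib \to \C$ with $(f^{\rev})_r = f_{1-r}$. Because $H$ is nonmerging, $F^{\rev}$ is nonsplitting (distinct points at time $0$ in $F^{\rev}$ correspond to distinct points at time $1$ in $F$, which must have been distinct at time $0$ by nonmerging). Thus both $F$ and $F^{\rev}$ qualify for Definition~\ref{def:poly-homotopy}, producing polynomial homotopies $\widetilde{F}_n$ and $\widetilde{F^{\rev}}_n$ and hence continuous time maps
\[
\Phi = (\widetilde{f}_n)_1\colon \poly_d^{mt}(\Ub_s) \to \poly_d^{mt}(\Ub_t), \qquad \Psi = (\widetilde{f^{\rev}}_n)_1\colon \poly_d^{mt}(\Ub_t) \to \poly_d^{mt}(\Ub_s).
\]

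The main step is to show that $\Psi \circ \Phi = \mathrm{id}$ and $\Phi \circ \Psi = \mathrm{id}$. Here the point-preserving hypothesis pays off twice. First, since $H$ is both nonsplitting and nonmerging, every multiset path $(f_n)^M \colon \Ib \to \mult_n(\C)$ has constant shape: points neither merge nor split. By Lemma~\ref{lem:poly-paths-strata}, the unique polynomial lift defining $\Phi([p])$ stays inside a single stratum $\poly_{\lambda\to\mu}^{mt}$, where the restricted $\LL$ map is a genuine covering map (Theorem~\ref{thm:poly-strata-cover}). Thus path lifting through $\LL$ reduces in this setting to ordinary covering-space path lifting, which is reversible. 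Second, given $[p]\in \poly_d^{mt}(\Ub_s)$, the polynomial path $\beta$ constructed in the definition of $\Phi([p])$, read in reverse, is a lift of the reversed multiset path $(f_n)^{M,\rev}$ that starts at $\Phi([p])$ and ends at $[p]$; by uniqueness of lifts through the covering $\LL\colon \poly_{\lambda\to\mu}^{mt}\to\mult_\mu(\C)$ applied to $(f_n)^{M,\rev} = (f^{\rev}_n)^{\Phi(M)}$, this reversed lift must coincide with the lift used to define $\Psi(\Phi([p]))$. Hence $\Psi(\Phi([p]))=[p]$, and the symmetric argument gives $\Phi(\Psi([q]))=[q]$.

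The main obstacle I anticipate is purely bookkeeping: carefully justifying that the two time maps $\Phi$ and $\Psi$ are defined using lifts of genuinely the same multiset paths (up to time reversal), so that the uniqueness clause of Theorem~\ref{thm:ll-map-unique}, specialized to the constant-shape setting via Lemma~\ref{lem:poly-paths-strata}, applies to both directions. Once this identification is made, the conclusion that $\Phi$ is a homeomorphism with inverse $\Psi$ follows immediately from the continuity provided by Remark~\ref{rem:poly-time-maps}. Compactness of $\Ub$ is not strictly needed for the argument, but it ensures that $\Ub_s$ and $\Ub_t$ remain reasonable subspaces of $\C$ in the sense of Remark~\ref{rem:spaces}, so that $\poly_d^{mt}(\Ub_s)$ and $\poly_d^{mt}(\Ub_t)$ are well-defined polynomial spaces.
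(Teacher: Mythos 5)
Your proof is correct and follows essentially the same route as the paper: restrict and time-reverse the homotopy, use the nonsplitting/nonmerging hypotheses to get continuous polynomial time maps in both directions, and check they are mutually inverse. In fact you supply more justification than the paper does for the key inverse step, by reducing to unique path lifting within a single stratum via Lemma~\ref{lem:poly-paths-strata} and Theorem~\ref{thm:poly-strata-cover}, where the paper simply asserts that the inverse point maps induce inverse polynomial maps.
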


\begin{proof}
    First, consider the restricted map $H \colon \Ub \times [s,t] \to \C$
    and its time reversal $H^{\rev} \colon \Ub \times [s,t] \to \C$ defined by 
    $H^{\rev}(u,x) = H(u,s+t-x)$. Since $h_t \colon \Ub_s \to \Ub_t$ and the time reversal 
    $h_s^{\rev} \colon \Ub_t \to \Ub_s$ are inverses of one another, each time map is 
    a bijection, and since $\Ub$ is compact, it is also a homeomorphism.
    Since $H$ is nonsplitting, we can apply Remark~\ref{rem:poly-time-maps}
    to see that the polynomial time map is a continuous
    function $(\widetilde{h}_n)_t \colon \poly_d^{mt}(\Ub_s) \to \poly_d^{mt}(\Ub_t)$. 
    Since $H$ is nonmerging, the time reversal $H^{\rev}$ is a 
    nonsplitting point homotopy which yields the continuous time map
    $(\widetilde{h}_n)_s^{\rev} \colon \poly_d^{mt}(\Ub_t) \to \poly_d^{mt}(\Ub_s)$.
    Finally, the fact that $h_t$ and $h_s^{\rev}$ are inverses tells us that 
    $(\widetilde{h}_n)_t$ and $(\widetilde{h}_n)_s^{\rev}$ are
    inverses of one another, so the proof is complete.
\end{proof}

The following corollary is an immediate application of
Proposition~\ref{prop:homeo}.

\begin{cor}[Topological variations]\label{cor:homeo}
  If $\Ub$ and $\Vb$ are two closed intervals, two closed topological
  disks, two closed annuli, or two closed circles embedded in $\C$ (so
  that there is a point-preserving homotopy $\Ub$ to $\Vb$ inside $\C$),
  then $\poly_d^{mt}(\Ub)$ and $\poly_d^{mt}(\Vb)$ are homeomorphic.
\end{cor}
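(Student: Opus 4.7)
The plan is to apply Proposition~\ref{prop:homeo} directly: the entire content of the corollary reduces to producing, in each of the four cases, a point-preserving homotopy $H\colon \Ub \times \Ib \to \C$ with $h_0(\Ub) = \Ub$ and $h_1(\Ub) = \Vb$. Since the subspaces involved are compact (Remark~\ref{rem:spaces}), any continuous family of injective continuous maps $h_t\colon \Ub \to \C$ gives a family of homeomorphisms of $\Ub$ onto its images. In particular, a point-preserving homotopy is precisely the restriction to $\Ub$ of an ambient isotopy $F\colon \C \times \Ib \to \C$ carrying $\Ub_0 = \Ub$ onto $\Ub_1 = \Vb$. So the whole argument reduces to the planar ambient isotopy statement: if $\Ub$ and $\Vb$ are ambient isotopic in $\C$, then $\poly_d^{mt}(\Ub) \cong \poly_d^{mt}(\Vb)$.

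I would then dispatch each of the four types of subspace using classical planar topology. For two Jordan curves $\Ub, \Vb \subset \C$, the Jordan--Schoenflies theorem produces a self-homeomorphism $f$ of $\C$ with $f(\Ub) = \Vb$; since the group of orientation-preserving homeomorphisms of $\C$ is path-connected (in fact contractible), one can connect $f$ to $\mathrm{id}_\C$ through a continuous path of homeomorphisms, yielding the required ambient isotopy. For two closed topological disks, the same argument applied to their boundary Jordan curves produces an ambient isotopy matching $\partial \Ub$ to $\partial \Vb$; a Jordan curve bounds a unique closed disk in $\C$, so the isotopy automatically carries $\Ub$ onto $\Vb$. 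For two closed annuli, each is bounded by two disjoint nested Jordan curves; one first builds an ambient isotopy matching the outer boundary curves, followed by a further isotopy, supported in a neighborhood of the bounded complementary region, matching the inner boundary curves. For two compact arcs, one first ambient-isotopes each to a standard straight segment (every tame arc in the plane is ambient isotopic to a line segment, again by a Schoenflies-type argument), then uses an affine isotopy to identify the two segments.

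In every case the resulting ambient isotopy $F$ restricts to a point-preserving homotopy $H = F|_{\Ub \times \Ib}$ from $\Ub$ to $\Vb$, and Proposition~\ref{prop:homeo} then provides the homeomorphism $\poly_d^{mt}(\Ub) \cong \poly_d^{mt}(\Vb)$. The main obstacle, such as it is, is upgrading the classical planar homeomorphism results (Jordan--Schoenflies and its corollaries) from mere homeomorphisms of pairs to explicit ambient isotopies; for the four elementary shapes appearing in the statement this is routine, and once the isotopy exists the polynomial-level work is entirely packaged inside Proposition~\ref{prop:homeo}.
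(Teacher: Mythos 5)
Your proof is correct and follows the paper's route exactly: the corollary is obtained by feeding a point-preserving homotopy from $\Ub$ to $\Vb$ into Proposition~\ref{prop:homeo}, and the paper treats the existence of that homotopy as essentially part of the hypothesis (see the parenthetical in the statement and the remark on shapes that follows it), so your Schoenflies/ambient-isotopy constructions are a legitimate way of discharging what the paper leaves implicit. The only small inaccuracy is the claim that a point-preserving homotopy is \emph{precisely} the restriction of an ambient isotopy of $\C$ --- you only need the easy direction (ambient isotopies restrict to point-preserving homotopies), and the converse is not automatic, so you should drop the word ``precisely.''
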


The homeomorphisms produced by Proposition~\ref{prop:homeo} and
Corollary~\ref{cor:homeo} are not canonically defined, but there are
sufficiently many to justify de-emphasizing the fine details of the
shapes listed in Definition~\ref{def:visual}.

\begin{rem}[Shapes]
  The parenthetical point-preserving homotopy condition in the
  statement of Corollary~\ref{cor:homeo} is, strictly speaking,
  unnecessary since all embeddings of these simple spaces into $\C$
  are connected by point-preserving homotopies, and more exotic
  possibilities would detract from our main point.  For any reasonably nice
  class of embeddings (such as all closed Euclidean rectangles in
  $\C$) there are more or less obvious point-preserving homotopies
  between them.  And by Proposition~\ref{prop:homeo} there are
  homeomorphisms between the corresponding polynomial spaces.  In
  particular, $\poly_d^{mt}(\closedsquare)$ can be discussed without
  specifying the precise closed rectangle, since there is only one
  such space up to a choice of homeomorphism.
\end{rem}

\begin{lem}[Closed and bounded]\label{lem:closed-bounded}
  If $\Ub \subset \C$ is closed/bounded/compact, then $\Ub^n$, $\mult_n(\Ub)$
  and $\poly_d^{mt}(\Ub)$ are closed/bounded/compact.
\end{lem}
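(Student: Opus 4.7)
The proof will decompose into three cases corresponding to the three spaces, addressed in order of increasing complexity.

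For $\Ub^n$, all three properties (closed, bounded, compact) are preserved under finite Cartesian products in $\C^n \cong \R^{2n}$: products of closed subsets of $\C$ are closed, products of bounded sets are bounded with respect to the product metric, and finite products of compact sets are compact.

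For $\mult_n(\Ub)$, the plan is to exploit the fact that $\mult \colon \C^n \to \mult_n(\C)$ is the topological quotient by the isometric coordinate-permuting action of $\sym_n$. Since $\sym_n$ is finite (hence compact), this quotient map is both continuous and closed. Applied to the $\sym_n$-invariant subset $\Ub^n$, this gives $\mult_n(\Ub) = \mult(\Ub^n)$, which is therefore closed in $\mult_n(\C)$ when $\Ub^n$ is closed and compact in $\mult_n(\C)$ when $\Ub^n$ is compact. With $\mult_n(\C)$ equipped with the stratified Euclidean metric of Definition~\ref{def:strata-euclid-metric}, the quotient map is $1$-Lipschitz (the quotient distance is the infimum over representatives), so bounded $\Ub^n$ yields bounded $\mult_n(\Ub)$.

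For $\poly_d^{mt}(\Ub)$, the starting observation is the equality $\poly_d^{mt}(\Ub) = \LL^{-1}(\mult_n(\Ub))$. Closedness in $\poly_d^{mt}(\C)$ is then immediate from continuity of $\LL$ and the already-established closedness of $\mult_n(\Ub)$. The crucial step is compactness, which reduces to the claim that $\LL$ is proper, so that preimages of compact sets are compact. Under the natural identifications $\poly_d^{mt} \cong \C^n$ (via centering, Remark~\ref{rem:center-trans}) and $\mult_n \cong \C^n$ (via Definition~\ref{def:mult-monic}), the $\LL$ map is the polynomial finite map of degree $d^{d-2}$ of Theorem~\ref{thm:ll-classic}, and finite polynomial maps between affine $\C$-spaces are proper in the Euclidean topology. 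Properness then gives compactness of $\LL^{-1}(\mult_n(\Ub))$, and boundedness follows from compactness together with the stratified Euclidean metric pulled back through $\LL$ (Definition~\ref{def:strata-euclid-metric}).

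The main obstacle is justifying that the polynomial finite map $\LL \colon \C^n \to \C^n$ is proper, rather than merely quasi-finite. One route is to invoke the standard fact from algebraic geometry that a finite morphism of affine $\C$-varieties is proper as a map of complex-analytic spaces. A more self-contained alternative uses Vieta-style estimates: if a sequence of centered representatives $p_k$ had coefficients escaping to infinity while $\cvl(p_k)$ remained in a fixed compact set, then after an appropriate rescaling the $p_k$ would degenerate to a lower-degree limit whose critical values are forced to escape to infinity as well, a contradiction. Either argument yields that bounded subsets of the range $\mult_n(\C)$ have bounded preimages, and combined with closedness this completes the compactness step.
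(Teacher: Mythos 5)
Your proof is correct, and it is substantially more careful than the paper's, which disposes of the whole lemma in two sentences: products preserve all three properties, and the properties ``project and lift through surjections where there is a finite upper bound on the size of a point preimage.'' For the projection direction (the $\mult$ map) your argument and the paper's agree in substance, though you supply the actual mechanisms: closedness of the quotient by a finite group action, continuity for compactness, and the $1$-Lipschitz property for boundedness. The real divergence is in the lifting direction for $\poly_d^{mt}(\Ub) = \LL^{-1}(\mult_n(\Ub))$. A continuous finite-to-one surjection does \emph{not} in general pull compact sets back to compact sets, so the paper's stated principle is, as written, too weak; what is actually needed is properness of $\LL$, which you correctly isolate as the main obstacle and then justify either via the standard fact that a finite morphism of affine $\C$-varieties is proper in the analytic topology (legitimately available here since Theorem~\ref{thm:ll-classic} asserts $\LL$ is a finite polynomial map of degree $d^{d-2}$) or via a direct degeneration estimate. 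The only cosmetic gap is the bounded-but-not-closed case of the third space: compactness is not available there, but $\poly_d^{mt}(\Ub) \subset \poly_d^{mt}(\overline{\Ub})$ and the latter is compact, so boundedness follows at once. In short, your route buys a complete argument where the paper offers an assertion; the cost is invoking (or proving) properness of the $\LL$ map, which the paper leaves implicit.
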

\begin{proof}
  The properties of being closed, bounded and/or compact pass through
  finite direct products and these properties also project and lift
  through surjections where there is a finite upper bound on the size
  of a point preimage.
\end{proof}

\begin{lem}[Nested compact sets]\label{lem:nested-cpt}
    Let $U$ and $V$ be open subsets of $\C$, and suppose
    they can be expressed as the unions of nested compact sets 
    $\Ub_1 \subset \Ub_2 \subset \cdots$ and
    $\Vb_1 \subset \Vb_2 \subset \cdots$ respectively.
    If there is a homotopy from $U$ to $V$ such that the restriction
    to each $\Ub_i$ is point-preserving, 
    then $\poly_d^{mt}(U) \cong \poly_d^{mt}(V)$.
\end{lem}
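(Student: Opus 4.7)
The plan is to reduce this to Proposition~\ref{prop:homeo} applied to each compact stage of the exhaustion, and then glue the resulting homeomorphisms. Let $H \colon U \times \Ib \to \C$ denote the given homotopy, with time maps $h_t$, and write $\Wb_i = h_1(\Ub_i)$. Since $\Ub_i$ is compact and $H$ restricted to $\Ub_i \times \Ib$ is point-preserving, $h_1|_{\Ub_i}$ is a continuous bijection from a compact space to a Hausdorff space, hence a homeomorphism onto $\Wb_i$; in particular $\Wb_i$ is compact, and the sequence $\Wb_1 \subset \Wb_2 \subset \cdots$ is nested. Since ``homotopy from $U$ to $V$'' means $h_1(U)=V$, we have $V = h_1(\bigcup_i \Ub_i) = \bigcup_i \Wb_i$, so $\{\Wb_i\}$ is a compact exhaustion of $V$.

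Next I would apply Proposition~\ref{prop:homeo} to the restricted homotopy $H|_{\Ub_i \times \Ib}$ to obtain homeomorphisms $f_i \colon \poly_d^{mt}(\Ub_i) \to \poly_d^{mt}(\Wb_i)$, each defined as the polynomial time map $(\widetilde{h}_n)_1$ of Remark~\ref{rem:poly-time-maps} (using the polynomial homotopy lift of the nonsplitting multiset homotopy). The crucial compatibility is that these homeomorphisms glue: $f_{i+1}$ restricted to $\poly_d^{mt}(\Ub_i) \subseteq \poly_d^{mt}(\Ub_{i+1})$ equals $f_i$. This is immediate because both maps are constructed by lifting the same multiset path $(h_n)^{\cvl(p)}$ through the $\LL$ map (Definition~\ref{def:poly-homotopy}), and Theorem~\ref{thm:ll-map-unique} guarantees the lift is unique, regardless of which stage $\Ub_i$ or $\Ub_{i+1}$ one works in.

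Any monic degree-$d$ polynomial has only finitely many critical values, so a polynomial lies in $\poly_d^{mt}(U)$ if and only if its critical values lie in some $\Ub_i$ (using that the $\Ub_i$ are nested and exhaust $U$), giving $\poly_d^{mt}(U) = \bigcup_i \poly_d^{mt}(\Ub_i)$; similarly $\poly_d^{mt}(V) = \bigcup_i \poly_d^{mt}(\Wb_i)$. The compatible maps $f_i$ therefore assemble into a well-defined bijection $f \colon \poly_d^{mt}(U) \to \poly_d^{mt}(V)$.

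The main technical point is promoting this set-theoretic bijection to a homeomorphism with respect to the subspace topologies inherited from $\poly_d^{mt}(\C)$. I would replace the given exhaustion, if necessary, with a standard one satisfying $\Ub_i \subset \inter(\Ub_{i+1})$, which forces any convergent sequence in $\poly_d^{mt}(U)$ to lie eventually in some $\poly_d^{mt}(\Ub_i)$ (using Lemma~\ref{lem:closed-bounded} to guarantee $\poly_d^{mt}(\Ub_i)$ is compact, so a limit of critical values in $\Ub_i$ stays in $\Ub_i \subset \inter(\Ub_{i+1})$, and coefficient convergence then localizes the whole polynomial to some stage). Continuity of $f$ then reduces to sequential continuity, which holds because each $f_i$ is continuous; the same argument applied to the reverse homotopy $H^\rev$ gives continuity of $f^{-1}$. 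The only genuine obstacle is this localization of convergent sequences to finite stages of the exhaustion, and it is handled by the standard compact-exhaustion trick.
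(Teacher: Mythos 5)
Your proposal is correct and follows essentially the same route as the paper, whose own proof is just three sentences: apply Proposition~\ref{prop:homeo} and Lemma~\ref{lem:closed-bounded} to each stage to get compact homeomorphic pieces $\poly_d^{mt}(\Ub_i) \cong \poly_d^{mt}(\Vb_i)$ and then observe that the two polynomial spaces are nested unions of these. You supply two details the paper leaves implicit --- that the stage-wise homeomorphisms agree on overlaps (via unique path lifting through the $\LL$ map) and that the glued bijection is continuous --- which genuinely need saying, since a nested union of homeomorphic compact sets is not automatically a homeomorphism without that compatibility.
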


\begin{proof}
    By Proposition~\ref{prop:homeo} and Lemma~\ref{lem:closed-bounded}, 
    $\poly_d^{mt}(\Ub_i)$ and $\poly_d^{mt}(\Vb_i)$
    are compact and homeomorphic. Since $\poly_d^{mt}(U)$ and 
    $\poly_d^{mt}(V)$ are each the nested union of these 
    homeomorphic compact sets, it follows that the two polynomial
    spaces are homeomorphic as well.
\end{proof}

We are now ready to prove Theorem~\ref{mainthm:homeomorphisms}.

\begin{thm}[Theorem~\ref{mainthm:homeomorphisms}]
    \label{thm:main-homeomorphisms}
    The complex plane $\C$, the punctured plane $\C_\zer$ and the real
    line $\R$ are homeomorphic to the open rectangle $\opensquare$, the
    open annulus $\openannulus$ and the open interval $\openint$
    respectively, and these induce homeomorphisms of polynomial spaces
    $\poly_d^{mt}(\C) \cong \poly_d^{mt}(\opensquare)$,
    $\poly_d^{mt}(\C_\zer) \cong \poly_d^{mt}(\openannulus)$ and
    $\poly_d^{mt}(\R) \cong \poly_d^{mt}(\openint)$.
\end{thm}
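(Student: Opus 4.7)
The plan is to apply Lemma~\ref{lem:nested-cpt} to each of the three pairs, reducing each claim to the construction of a single homotopy of $\C$ that (i) begins at the inclusion of the bounded open region, (ii) ends at a homeomorphism onto the intended unbounded target, and (iii) remains point-preserving on every member of a chosen compact exhaustion. The bulk of the work then consists of producing three such explicit families by coordinate-wise or radial stretching.

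For $\opensquare \cong \C$, I take $\opensquare = (-1,1)^2$ and the scalar family $\phi_t(x) = x/(1-t|x|)$, so that $\phi_0$ is the identity and $\phi_1$ is a homeomorphism $(-1,1)\to \R$. Setting $h_t(x+yi) = \phi_t(x)+\phi_t(y)\,i$ gives a homotopy $H \colon \opensquare \times \Ib \to \C$ from the inclusion to a homeomorphism, with the obvious compact exhaustion $\Ub_n = [-(1-1/n),\,1-1/n]^2$. For $\openannulus \cong \C_\zer$, I take $\openannulus = \{re^{i\theta} : 1/2 < r < 1\}$ and a radial family such as $\psi_t(r) = (1-t)r + t(r-1/2)/(1-r)$, setting $h_t(re^{i\theta}) = \psi_t(r)e^{i\theta}$. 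Positivity of the derivative makes each $\psi_t$ a homeomorphism onto its image, which stays in $(0,\infty)$, keeping the homotopy inside $\C_\zer$; the exhaustion $\Ub_n = \{re^{i\theta} : 1/2+1/n \leq r \leq 1-1/n\}$ works. For $\R \cong \openint$, the one-dimensional version $h_t(x) = x/(1-t|x|)$ of the first family carries $\openint = (-1,1) \subset \R$ to $\R$ itself, with $\Ub_n = [-(1-1/n),\,1-1/n]$. In each of the three cases, point-preservation of the restriction of $H$ to every $\Ub_n$ is immediate from the fact that every $\phi_t$ and $\psi_t$ is a homeomorphism onto its image, hence injective with continuous inverse on any compact subinterval.

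A minor technical point is that Lemma~\ref{lem:nested-cpt} is stated for open subsets of $\C$, while $\R$ and $\openint$ are not open in $\C$. However, the proof of that lemma only invokes Proposition~\ref{prop:homeo} one compact piece at a time and then passes to a direct limit, both of which work verbatim for the kinds of subsets allowed by Remark~\ref{rem:spaces}. Hence the same argument delivers $\poly_d^{mt}(\R) \cong \poly_d^{mt}(\openint)$.

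The main obstacle, if one can call it that, is pure bookkeeping: checking joint continuity of $(z,t)\mapsto h_t(z)$, strict monotonicity of the radial or coordinate profile at every fixed $t$, and that the images $h_t(U)$ exhaust the target as $t\to 1$. None of these verifications is deep, but each must be carried out cleanly so that Lemma~\ref{lem:nested-cpt} applies without modification to the first two cases and with the noted mild extension to the third.
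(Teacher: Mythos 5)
Your proof is correct and follows essentially the same route as the paper: both reduce each claim to Lemma~\ref{lem:nested-cpt} via an explicit coordinate-wise or radial stretching homotopy that is point-preserving on a compact exhaustion, the only (immaterial) difference being that you expand the bounded region onto the unbounded one while the paper shrinks $\C$ into $\opensquare$ and exhausts $\C$ by closed squares. Your extra care about $\R$ and $\openint$ not being open in $\C$ is a reasonable refinement of a point the paper passes over with ``similar arguments.''
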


\begin{proof}
    Let $\opensquare \subset \C$ be an open rectangle
    and let $H \colon \C \times \Ib \to \C$ be the standard
    point-preserving homotopy from $\C$ (viewed simply as 
    the plane) to $\opensquare$ obtained by rescaling the real 
    and imaginary coordinates separately. For each positive integer $k$,
    let $\Ub_k \subset \C$ be the closed square of side length $k$ centered
    at the origin. Then $\C$ is the union of the compact sets 
    $\Ub_1 \subset \Ub_2 \subset \cdots$, and $H$ 
    transforms these to a sequence of homeomorphic nested compact sets 
    $\Vb_1 \subset \Vb_2 \subset \cdots$ with $\opensquare$ as their union.
    By Lemma~\ref{lem:nested-cpt}, 
    $\poly_d^{mt}(\C) \cong \poly_d^{mt}(\opensquare)$.
    The other two homeomorphisms follow from similar arguments.
\end{proof}

Next, we consider the compatibility of the homeomorphisms above
with natural compactifications.

\begin{prop}[Special compactifications]\label{prop:compact}
  Let $\Ub \subset \C$ be a closed interval $\closedint$ / closed square
  $\closedsquare$ / closed disk $\closeddisk$ / closed annulus
  $\closedannulus$ and let $U$ be the corresponding open interval
  $\openint$ / open square $\opensquare$ / open disk $\opendisk$ /
  open annulus $\openannulus$, so that $\Ub$ is the compact closure of
  $U$.  Then the compact closure of $U^n$ in $\C^n$ is $\Ub^n$, the
  compact closure of $\mult_n(U)$ in $\mult_n(\C)$ is $\mult_n(\Ub)$,
  and the compact closure of $\poly_d^{mt}(U)$ in $\poly_d^{mt}(\C)$
  is $\poly_d^{mt}(\Ub)$.
\end{prop}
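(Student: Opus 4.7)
The plan is to prove each inclusion pair separately. In all three cases the ``$\supseteq$'' direction (that $\overline{U^n} \subseteq \Ub^n$, $\overline{\mult_n(U)} \subseteq \mult_n(\Ub)$, and $\overline{\poly_d^{mt}(U)} \subseteq \poly_d^{mt}(\Ub)$) is formal: Lemma~\ref{lem:closed-bounded} tells us that $\Ub^n$, $\mult_n(\Ub)$, and $\poly_d^{mt}(\Ub)$ are compact because $\Ub$ is, hence closed subsets of their Hausdorff ambient spaces $\C^n$, $\mult_n(\C)$, and $\poly_d^{mt}(\C)$, so each contains the closure of its open analogue.

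The first two ``$\subseteq$'' inclusions are routine. For $\Ub^n \subseteq \overline{U^n}$, the closure of a finite product in the product topology is the product of closures, and $\Ub = \overline{U}$ by construction of these four shapes. For $\mult_n(\Ub) \subseteq \overline{\mult_n(U)}$, start with any $M \in \mult_n(\Ub)$, lift to an $n$-tuple $\bx \in \Ub^n$ with $\mult(\bx)=M$, approximate $\bx$ by $\bx_k \in U^n$ using the previous paragraph, and push forward through the continuous quotient $\mult$ to get $M_k = \mult(\bx_k) \in \mult_n(U)$ with $M_k \to M$.

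The polynomial inclusion $\poly_d^{mt}(\Ub) \subseteq \overline{\poly_d^{mt}(U)}$ is the substantive step. The idea is to construct, for each of the four shapes $\Ub$, a point-preserving homotopy $H\colon \Ub \times \Ib \to \C$ with $h_0 = \mathrm{id}_\Ub$ and $h_t(\Ub) \subset U$ for every $t > 0$. For $\Ub$ equal to $\closedint$, $\closedsquare$, or $\closeddisk$, pick an interior point $z_0 \in U$ and take the contraction $h_t(z) = (1-t)z + tz_0$, which is a homeomorphism for $t < 1$ and strictly shrinks $\Ub$ inside $U$ for $t > 0$. For $\Ub = \closedannulus$, the same idea requires shrinking toward the core curve instead of a point: writing $\Ub = \{r_1 \leq |z| \leq r_2\}$ and $r_0 = (r_1+r_2)/2$, use $h_t(re^{i\theta}) = ((1-t)r + tr_0)e^{i\theta}$. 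In every case $h_t$ is a homeomorphism for small $t$, so the homotopy preserves points (nonsplitting and nonmerging) in the sense of Definition~\ref{def:split-merge}.

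Given such an $H$, Definition~\ref{def:poly-homotopy} produces a continuous polynomial homotopy $\widetilde{H}_n\colon \poly_d^{mt}(\Ub) \times \Ib \to \poly_d^{mt}(\C)$ satisfying $\LL \circ \widetilde{H}_n = H_n \circ (\LL\times 1)$. At time $t=0$, uniqueness of the lift of the constant multiset path forces $\widetilde{h}_0 = \mathrm{id}$, while for $t > 0$ we have $\cvl(\widetilde{h}_t(p)) = h_t(\cvl(p)) \subset h_t(\Ub) \subset U$, so $\widetilde{h}_t([p]) \in \poly_d^{mt}(U)$. Continuity of $\widetilde{H}_n$ in both variables then shows that for every $[p] \in \poly_d^{mt}(\Ub)$ the path $t \mapsto \widetilde{h}_t([p])$ approaches $[p]$ from inside $\poly_d^{mt}(U)$, giving $[p] \in \overline{\poly_d^{mt}(U)}$. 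The main obstacle is in this last paragraph: verifying that the polynomial homotopy is continuous at $t=0$ and that the nonsplitting condition really gives well-defined unique lifts everywhere in $\poly_d^{mt}(\Ub)$, not just at generic $[p]$; but both are handled by invoking Theorem~\ref{thm:ll-map-unique} applied to the weakly-increasing-shape paths $(h_n)^M$ that come from our contractions.
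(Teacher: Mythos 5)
Your proof is correct and follows essentially the same route as the paper: the easy containments come from compactness via Lemma~\ref{lem:closed-bounded}, and the substantive polynomial containment comes from a nonsplitting homotopy shrinking $\Ub$ into $U$, lifted through the $\LL$ map to a polynomial homotopy. The only differences are cosmetic: the paper time-reverses the shrinking homotopy into an expanding one so that each $[p]$ is the time-$1$ endpoint of a lifted path that otherwise stays in $\poly_d^{mt}(U)$, whereas you approach $[p]$ as $t\to 0^+$, and you handle the tuple and multiset cases by direct elementary arguments rather than by the same lifting machinery.
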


\begin{proof}
  By Lemma~\ref{lem:closed-bounded}, $\Ub^n$, $\mult_n(\Ub)$ and
  $\poly_d^{mt}(\Ub)$ are compact and closed, so it is clear that the
  closure of the $U$-version is contained in the $\Ub$-version in each
  case.  In the other direction, there exist point-preserving
  homotopies $H$ that shrink $\Ub$ into the interior of $U$ in each
  case, and the time-reversed version $H'$ expands a closed subspace
  of $U$, homeomorphic to $\Ub$, to all of $\Ub$.  Once we lift $H'$ to a
  tuple, multiset and polynomial homotopy, it becomes clear that every
  tuple / multiset / polynomial in $\Ub^n$ / $\mult_n(\Ub)$ /
  $\poly_d^{mt}(\Ub)$ is the endpoint of a path that otherwise remains
  in $U^n$ / $\mult_n(U)$ / $\poly_d^{mt}(U)$.  In particular, the
  version with $\Ub$ is contained in the closure of the version with
  $U$.
\end{proof}

As an immediate consequence, we obtain a proof of Theorem~\ref{mainthm:compactifications}.

\begin{thm}[Theorem~\ref{mainthm:compactifications}]
    \label{thm:main-compactifications}
    The polynomial spaces $\poly_d^{mt}(\closedsquare)$, $\poly_d^{mt}(\closedannulus)$ and 
    $\poly_d^{mt}(\closedint)$ are compactifications of $\poly_d^{mt}(\opensquare)$, 
    $\poly_d^{mt}(\openannulus)$ and $\poly_d^{mt}(\openint)$.
\end{thm}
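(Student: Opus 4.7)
The plan is to deduce this theorem as a direct packaging of Proposition~\ref{prop:compact} together with Lemma~\ref{lem:closed-bounded}. A compactification of a space $X$ consists of a compact space $Y$, a topological embedding $X \hookrightarrow Y$, and the requirement that the image of $X$ be dense in $Y$; I plan to verify each of these three conditions for all three pairs $(\opensquare,\closedsquare)$, $(\openannulus,\closedannulus)$, $(\openint,\closedint)$ in parallel.

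First I would handle compactness by invoking Lemma~\ref{lem:closed-bounded}: each of $\closedsquare$, $\closedannulus$, and $\closedint$ is a closed and bounded subset of $\C$, so the corresponding polynomial spaces $\poly_d^{mt}(\closedsquare)$, $\poly_d^{mt}(\closedannulus)$, and $\poly_d^{mt}(\closedint)$ are compact. Next I would observe that for each of the three pairs $U \subset \Ub \subset \C$ under consideration, the containment $\poly_d^{mt}(U) \subset \poly_d^{mt}(\Ub)$ is a subspace inclusion, since both are cut out of the ambient space $\poly_d^{mt}(\C)$ by the constraint that $\cvl(p)$ lies in $U$ (respectively $\Ub$), and both inherit their topology from $\poly_d^{mt}(\C)$. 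This makes the inclusion an embedding automatically.

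Finally, density is where Proposition~\ref{prop:compact} does essentially all the work: it states explicitly that $\poly_d^{mt}(\Ub)$ is the compact closure of $\poly_d^{mt}(U)$ inside $\poly_d^{mt}(\C)$ in each of the three relevant cases. Combining this closure statement with the compactness from Lemma~\ref{lem:closed-bounded} is precisely what it means for $\poly_d^{mt}(\Ub)$ to be a compactification of $\poly_d^{mt}(U)$, so the theorem follows at once.

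There is no substantive obstacle here; the real technical content has already been absorbed into Proposition~\ref{prop:compact}, whose proof uses a time-reversed point-preserving homotopy shrinking $\Ub$ into the interior of $U$, lifted via Definition~\ref{def:poly-homotopy} to a polynomial homotopy that witnesses every point of $\poly_d^{mt}(\Ub)$ as a limit of points of $\poly_d^{mt}(U)$. The only care needed is to state the three conditions of compactification explicitly and to match them cleanly against the earlier results.
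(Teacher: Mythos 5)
Your proposal is correct and follows essentially the same route as the paper, which proves this theorem in one line by citing Proposition~\ref{prop:compact} (the paper also cites Theorem~\ref{mainthm:homeomorphisms}, though as you observe that is only needed for the follow-up remark that these spaces compactify $\poly_d^{mt}(\C)$, $\poly_d^{mt}(\C_\zer)$ and $\poly_d^{mt}(\R)$, not for the statement as literally given). Your unpacking of the three conditions of a compactification --- compactness via Lemma~\ref{lem:closed-bounded}, the embedding via the common subspace topology inherited from $\poly_d^{mt}(\C)$, and density via Proposition~\ref{prop:compact} --- is a fuller but entirely consistent account of the same argument.
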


\begin{proof}
    This follows from Proposition~\ref{prop:compact} and Theorem~\ref{mainthm:homeomorphisms}.
\end{proof}

A simple argument shows that some polynomial spaces are dense in others.

\begin{rem}[Dense]\label{rem:dense}
  Let $H \colon \T \times \Ib \to \C$ be a point-preserving homotopy that
  rigidly rotates the unit circle and let $p$ be any polynomial where
  $-1$ is a critical value.  By choosing an appropriate portion of
  circle rotation over time and lifting this to a path of polynomials
  in $\poly_d^{mt}(\circleint)$, we can see $[p] \in
  \poly_d^{mt}(\circleint)$ is a limit of polynomials in
  $\poly_d^{mt}(\opencircleint)$.  In particular,
  $\poly_d^{mt}(\opencircleint)$ is dense in
  $\poly_d^{mt}(\circleint)$.
\end{rem}

\begin{figure}
  \begin{tikzcd}[column sep=1em]
    \poly_d^{mt}(\R) \arrow[d,"\cong",<->] \arrow[r,hookrightarrow] &
    \poly_d^{mt}(\C) \arrow[d,"\cong",<->] \arrow[r,equal] &
    \poly_d^{mt}(\C) \arrow[d,"\cong",<->] \arrow[r,hookleftarrow] &
    \poly_d^{mt}(\C_\zer) \arrow[d,"\cong",<->] \arrow[r,hookleftarrow] &
    \poly_d^{mt}(\T)  \arrow[d,equal] 
    \\
    \poly_d^{mt}(\openint) \arrow[r,hookrightarrow] \arrow[d,"\cmpt",hookrightarrow] &
    \poly_d^{mt}(\opensquare) \arrow[d,"\cmpt",hookrightarrow] \arrow[r,<->,"\cong"] &
    \poly_d^{mt}(\opendisk) \arrow[d,"\cmpt",hookrightarrow] \arrow[r,hookleftarrow] &
    \poly_d^{mt}(\openannulus) \arrow[d,"\cmpt",hookrightarrow]
    \arrow[r,hookleftarrow] &
    \poly_d^{mt}(\circleint) \arrow[d,equal] &
    \\
    \poly_d^{mt}(\closedint) \arrow[r,hookrightarrow] &
    \poly_d^{mt}(\closedsquare) \arrow[r,"\cong",<->] &
    \poly_d^{mt}(\closeddisk) \arrow[r,hookleftarrow] &
    \poly_d^{mt}(\closedannulus) \arrow[r,hookleftarrow] &
    \poly_d^{mt}(\circleint) 
  \end{tikzcd}
  \caption{Homeomorphisms and compactifications of spaces of 
    polynomial over $\R$, $\C$, $\C_\zer$, $\T$, an interval, a
    rectangle, a disk, an annulus, a circle and their
    interiors.\label{fig:poly-homeo-compact}}
\end{figure}

\begin{exmp}[Homeomorphisms and compactifications]\label{ex:homeo-compact}
  Figure~\ref{fig:poly-homeo-compact} shows a variety of inclusions,
  homeomorphisms and compactifications of polynomial spaces over $\R$,
  $\C$, $\C_\zer$, $\T$, an open interval $\openint$, an open
  rectangle $\opensquare$, an open disk $\opendisk$, an open annulus
  $\openannulus$, a closed interval $\closedint$, a closed rectangle
  $\closedsquare$, a closed disk $\closeddisk$, a closed annulus
  $\closedannulus$ and a circle $\circleint$.  The inclusions are
  immediate: if $V \subset U$, then $\poly_d^{mt}(V) \subset
  \poly_d^{mt}(U)$ by definition.  The homeomorphisms follow from
  Proposition~\ref{prop:homeo} and the obvious point-preserving
  homotopies.  And the compactifications follow from
  Proposition~\ref{prop:compact}.
\end{exmp}

\begin{prop}[Special quotients]\label{prop:quotients}
  There is a surjective quotient map from $\poly_d^{mt}(\closedint)$
  to $\poly_d^{mt}(\circleint)$ where the only identifications are
  among the polynomials with a critical value at an endpoint of the
  interval.  Similarly, there is a surjective quotient map from
  $\poly_d^{mt}(\closedsquare)$ to $\poly_d^{mt}(\closedannulus)$ where
  the only identifications are among the polynomials with a critical
  value in the left or right side of the rectangle.
\end{prop}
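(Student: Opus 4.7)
The strategy is to realize the quotient via a nonsplitting point homotopy that wraps the rectangle into the annulus (respectively, the interval into the circle), then use the induced polynomial time map of Remark~\ref{rem:poly-time-maps}. I describe the rectangle-to-annulus case in detail; the interval-to-circle case is entirely analogous. Construct a nonsplitting point homotopy $H \colon \closedsquare \times \Ib \to \C$ in which $H_0$ is the identity inclusion of $\closedsquare$, each $H_t$ for $t \in [0,1)$ is a homeomorphism onto its image (bending the rectangle progressively around), and $H_1$ maps $\closedsquare$ onto a closed annulus $\closedannulus \subset \C$ by identifying the left side $\Lb$ isometrically with the right side $\Rb$ while remaining injective elsewhere. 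Since $H_t$ is injective for each $t < 1$, $H$ vacuously satisfies the nonsplitting condition of Definition~\ref{def:split-merge}, and Remark~\ref{rem:poly-time-maps} yields a continuous polynomial time map $(\widetilde h_n)_1 \colon \poly_d^{mt}(\closedsquare) \to \poly_d^{mt}(\closedannulus)$.

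Surjectivity follows from the existence half of Theorem~\ref{thm:ll-map-unique}. Given $[q] \in \poly_d^{mt}(\closedannulus)$, the time-reversed multiset homotopy traces a path in multiset space starting at $\cvl(q)$ and ending in $\mult_n(\closedsquare)$; lifting this path through the $\LL$ map to a polynomial path starting at $[q]$ produces an endpoint $[p] \in \poly_d^{mt}(\closedsquare)$. The forward polynomial path from $[p]$ is then the time reversal of this lift by uniqueness of lifts of the forward (weakly increasing) multiset path, so $(\widetilde h_n)_1([p]) = [q]$.

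The heart of the proof is the claim that a polynomial $[p]$ with no critical values in $\Lb \cup \Rb$ is the unique preimage of $(\widetilde h_n)_1([p])$. Suppose $[p'] \in \poly_d^{mt}(\closedsquare)$ also satisfies $(\widetilde h_n)_1([p']) = [q] := (\widetilde h_n)_1([p])$. Then $H_1(\cvl(p)) = \cvl(q) = H_1(\cvl(p'))$ as multisets. Since $H_1$ is injective on the complement of $\Lb \cup \Rb$ and $\cvl(p)$ avoids $\Lb \cup \Rb$, the multiset $\cvl(q)$ avoids the seam $\gamma := H_1(\Lb) = H_1(\Rb)$. Hence $\cvl(p')$ also avoids $\Lb \cup \Rb$, and $H_1$ being injective on $\cvl(p')$ forces $\cvl(p') = \cvl(p)$. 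Because both critical value multisets avoid the seam throughout the homotopy, the shared multiset path from $\cvl(p) = \cvl(p')$ to $\cvl(q)$ has constant shape, and its time reversal also has constant (in particular weakly increasing) shape. By Theorem~\ref{thm:ll-map-unique}, the polynomial lift of this backward path starting at $[q]$ is unique, so its endpoint is simultaneously $[p]$ and $[p']$, forcing $[p] = [p']$.

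Finally, $(\widetilde h_n)_1$ is a quotient map because $\poly_d^{mt}(\closedsquare)$ is compact (Lemma~\ref{lem:closed-bounded}) and $\poly_d^{mt}(\closedannulus)$ is Hausdorff, so the continuous surjection is automatically closed. The main obstacle is the fiber analysis in the third paragraph: one must carefully control the shape of the multiset path during the homotopy so that the uniqueness clause of Theorem~\ref{thm:ll-map-unique} becomes available, which in turn forces the two candidate preimages to coincide. Once this is in hand, the rest of the proposition is formal.
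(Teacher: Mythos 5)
Your proof is correct, and while it follows the paper's overall strategy---realize the quotient as the time-$1$ map of the polynomial homotopy induced by a nonsplitting point homotopy that wraps $\closedsquare$ onto $\closedannulus$ (resp.\ $\closedint$ onto $\circleint$)---you justify the two key sub-claims by a genuinely different route. For surjectivity the paper argues by density and compactness: the restricted point-preserving homotopy gives a homeomorphism $\poly_d^{mt}(\openint)\cong\poly_d^{mt}(\opencircleint)$ via Lemma~\ref{lem:nested-cpt}, the image of the compact source is closed, and $\poly_d^{mt}(\opencircleint)$ is dense in $\poly_d^{mt}(\circleint)$ by Remark~\ref{rem:dense}; you instead lift the time-reversed multiset path through the $\LL$ map and check that the forward lift of its reversal returns to the given polynomial, using the uniqueness clause of Theorem~\ref{thm:ll-map-unique}. (The only imprecision is that the ``time-reversed multiset path starting at $\cvl(q)$'' requires first choosing a preimage multiset $M_0\subset\closedsquare$ with $(h_n)_1(M_0)=\cvl(q)$ and then reversing the forward path of $M_0$; any such choice works, so this is cosmetic.) For the fiber description the paper again invokes the interior homeomorphism, whereas you observe directly that a critical value multiset avoiding $\Lb\cup\Rb$ traces a constant-shape path, so the backward lift from the image polynomial is unique and the fiber is a singleton---this is arguably the cleaner argument, and it pinpoints exactly where non-injectivity can occur. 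You also supply the compact-to-Hausdorff closed-map argument that makes the word ``quotient'' in the statement rigorous, a point the paper's proof leaves implicit. Both approaches rest on the same machinery (Theorem~\ref{thm:ll-map-unique} and the stratified covering structure of the $\LL$ map); yours trades the topological density argument for explicit path-lifting bookkeeping.
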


\begin{proof}
  A closed interval in $\C$ can be stretched around and moved in a
  point-preserving way until identifying its endpoints at time $t=1$
  to form an embedded circle.  For example, if $U = \closedint$ is
  embedded in $\C$ as the left half of the unit circle $\T$ and
  parameterized as $U = \{e(s) \mid s \in [-\frac14,\frac14] \}$ using
  Definition~\ref{def:pts-subsets}, then the map $H \colon U \times
  \Ib \to \T$ that sends $(e(s),t)$ to $e((1+t)\cdot s)$ is a
  nonsplitting point homotopy with these properties.  This map is
  point-preserving except that at time $t=1$, its length has doubled,
  covering all of $\T$ with the endpoints overlapping at $e(\frac12) =
  e(-\frac12) = -1$.  In particular, if $H'$ is $H$ restricted to $V
  \times \Ib$ where $V$ is $U$ with both endpoints removed, then $H'$ is
  point-preserving.  The time $t=1$ map of the polynomial homotopy
  $\widetilde{H}_n$ is a map $q$ that sends $\poly_d^{mt}(\closedint)$ to
  $\poly_d^{mt}(\circleint)$ (Lemma~\ref{rem:poly-time-maps}), and the
  restricted version $\widetilde{H_n'}$ is a homeomorphism between
  $\poly_d^{mt}(\openint)$ and $\poly_d^{mt}(\opencircleint)$
  (Lemma~\ref{lem:nested-cpt}). 
  Since $\poly_d^{mt}(\closedint)$ is
  the closure of $\poly_d^{mt}(\openint)$
  (Lemma~\ref{lem:closed-bounded}) and $\poly_d^{mt}(\opencircleint)$
  is dense in $\poly_d^{mt}(\circleint)$ (Remark~\ref{rem:dense}), the
  map $q$ is surjective.  And the homeomorphic embedding of the
  polynomials with all critical values in the interior of the
  interval means that the only identifications are among those
  polynomials with critical values at the endpoints.  The argument in
  the second case is nearly identical except that the unit circle is expanded
  to include a closed interval of possible positive magnitudes.
\end{proof}

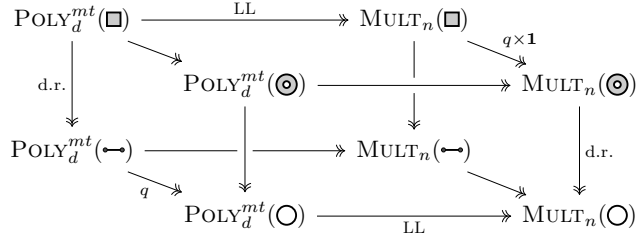
\begin{figure}
  \adjustbox{scale=.9,center}{
    \begin{tikzcd}[column sep=small, row sep=small]
      \poly_d^{mt}(\closedsquare)
      \arrow[rr,twoheadrightarrow,"\LL"]
      \arrow[dr,twoheadrightarrow]
      \arrow[dd,twoheadrightarrow,"\textrm{d.r.}"']
      &&
      \mult_n(\closedsquare) 
      \arrow[dr,twoheadrightarrow,"q \times \one"]
      \arrow[dd,twoheadrightarrow]
      \\
      &
      \poly_d^{mt}(\closedannulus)
      \arrow[rr,twoheadrightarrow,crossing over]
      &&
      \mult_n(\closedannulus) 
      \arrow[dd,twoheadrightarrow,"\textrm{d.r.}"]
      \\
      \poly_d^{mt}(\closedint)
      \arrow[rr,twoheadrightarrow]
      \arrow[dr,twoheadrightarrow,"q"']
      &&
      \mult_n(\closedint) 
      \arrow[dr,twoheadrightarrow]
      \\
      &
      \poly_d^{mt}(\circleint)
      \arrow[rr, twoheadrightarrow,"\LL"']
      \arrow[from=uu,twoheadrightarrow,crossing over]
      &&
      \mult_n(\circleint)
    \end{tikzcd}
  }
  \caption{The left-to-right $\LL$ maps, back-to-front quotient maps, and the top-to-bottom 
  deformation retractions for the $2n$-dimensional cell complexes associated with the four
    key shapes.\label{fig:poly-mult}}
\end{figure}

The special quotient from $\poly_d^{mt}(\closedsquare)$ to
$\poly_d^{mt}(\closedannulus)$ is best understood through an example.  
See Example~\ref{ex:rectangle-to-annulus}. 

\begin{defn}[Deformation retracting homotopies]
  Recall that a \emph{deformation retraction} from a topological space
  $\Ub$ to a subspace $\Vb$ is a map $H \colon \Ub \times \Ib \to \Ub$ such that
  $h_0(u)=u$, $h_t(v) = v$ and $h_1(u) \in \Vb$ for all $u\in \Ub$, $v\in
  \Vb$ and $t\in \Ib$.  Let $\Vb \subset \Ub$ be subspaces of $\C$ and let $H
  \colon \Ub \times \Ib \to \C$ be a nonsplitting point homotopy.  We say
  that $H$ is a \emph{nonsplitting deformation retracting point
  homotopy from $\Ub$ to $\Vb$} if the image of $H$ lies in $\Ub$ and $H$,
  with the range restricted to $\Ub$, is a deformation retraction from
  $\Ub$ to $\Vb$.
\end{defn}

\begin{prop}[Deformation retractions]\label{prop:def-retracts}
  If $H$ is a nonsplitting deformation retracting point homotopy from
  $\Ub$ to $\Vb$ inside $\C$, then there is a $\widetilde{H}_n$-induced
  deformation retraction from $\poly_d^{mt}(\Ub)$ to $\poly_d^{mt}(\Vb)$
  inside $\poly_d^{mt}(\C)$.
\end{prop}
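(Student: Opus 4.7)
The plan is to construct the required deformation retraction by directly applying Definition~\ref{def:poly-homotopy} to the point homotopy $H$, and then verify the four defining properties of a deformation retraction one at a time using unique path lifting (Theorem~\ref{thm:ll-map-unique}).

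First, since $H$ is nonsplitting, the polynomial homotopy $\widetilde{H}_n \colon \poly_d^{mt}(\Ub) \times \Ib \to \poly_d^{mt}(\C)$ is well-defined and continuous by Definition~\ref{def:poly-homotopy}, so the only issue is whether its image lies inside $\poly_d^{mt}(\Ub)$. For this, observe that since the image of $H$ lies in $\Ub$, the induced multiset homotopy $H_n$ sends $\mult_n(\Ub) \times \Ib$ into $\mult_n(\Ub)$. For any $[p] \in \poly_d^{mt}(\Ub)$ with $M = \cvl(p) \in \mult_n(\Ub)$, the entire multiset path $(h_n)^M$ stays in $\mult_n(\Ub)$, so its unique lift $\beta \colon \Ib \to \poly_d^{mt}(\C)$ starting at $[p]$ satisfies $\cvl(\beta(t)) = (h_n)^M(t) \in \mult_n(\Ub)$ for all $t$. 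Hence $\beta(t) \in \poly_d^{mt}(\Ub)$, and $\widetilde{H}_n$ restricts to a map $\poly_d^{mt}(\Ub) \times \Ib \to \poly_d^{mt}(\Ub)$.

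Next I would check the initial condition $\widetilde{h}_{n,0}([p]) = [p]$. At $t=0$, the time map $h_0$ is the identity on $\Ub$, so the multiset path $(h_n)^M$ starts at $M$. Since the constant path at $[p]$ also has weakly increasing (in fact constant) shape and projects to the constant path at $M$, the uniqueness portion of Theorem~\ref{thm:ll-map-unique} forces $\beta(0) = [p]$. For the endpoint condition, note that $h_1(\Ub) \subset \Vb$ implies $(h_n)_1(\mult_n(\Ub)) \subset \mult_n(\Vb)$, so $\cvl(\widetilde{h}_{n,1}([p])) \in \mult_n(\Vb)$ and $\widetilde{h}_{n,1}([p]) \in \poly_d^{mt}(\Vb)$.

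The step I expect to be the most delicate, but which still yields to the same tool, is verifying that $\widetilde{H}_n$ fixes $\poly_d^{mt}(\Vb)$ pointwise throughout the homotopy. If $[p] \in \poly_d^{mt}(\Vb)$, then $M = \cvl(p) \in \mult_n(\Vb)$ and every element of the underlying set of $M$ is fixed by $h_t$ for all $t$ (since $H$ is a deformation retraction fixing $\Vb$). Therefore the multiset path $(h_n)^M$ is constant at $M$, so the constant polynomial path $t \mapsto [p]$ is a lift through the $\LL$ map. Because this constant multiset path trivially has weakly increasing (constant) shape, Theorem~\ref{thm:ll-map-unique} guarantees that it is the unique lift starting at $[p]$, forcing $\widetilde{h}_{n,t}([p]) = [p]$ for every $t \in \Ib$. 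Combining the four verifications shows $\widetilde{H}_n$ is a deformation retraction from $\poly_d^{mt}(\Ub)$ onto $\poly_d^{mt}(\Vb)$, completing the proof.
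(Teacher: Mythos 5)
Your proposal is correct and follows essentially the same route as the paper's proof: both arguments verify the deformation retraction axioms one at a time, with the key step in each being that a multiset supported in $\Vb$ yields a constant multiset path whose unique lift through the $\LL$ map (Theorem~\ref{thm:ll-map-unique}) must be the constant polynomial path, forcing $\poly_d^{mt}(\Vb)$ to be fixed pointwise. Your write-up is somewhat more detailed on the $t=0$ and image-containment checks, but the underlying argument is identical.
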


\begin{proof}
  At time $t=0$, $H$ is the identity map on $\Ub$ and $\widetilde{H}_n$ is the identity
  map on $\poly_d^{mt}(\Ub)$, and since the range of $H$ remains in
  $\Ub$, the range of $\widetilde{H}_n$ remains in $\poly_d^{mt}(\Ub)$.  Next, a
  multiset in $\Vb$ remains fixed under $H_n$ and the unique lift of this constant 
  multiset path is a constant polynomial path.  Thus, $\widetilde{H}_n$ restricts the identity
  map on $\poly_d^{mt}(V)$ at each time $t$.  Finally, at time $t=1$
  the range of $H$ is in $\Vb$, so at time $t=1$ the range of $\widetilde{H}_n$
  is in $\poly_d^{mt}(\Vb)$.
\end{proof}

We now establish the last of our main tools.

\begin{thm}[Theorem~\ref{mainthm:deformations}]
    \label{thm:main-deformations}
    A deformation retraction of $\closedsquare$ onto any embedded
    arc $\closedint$ induces a deformation
    retraction from $\poly_d^{mt}(\closedsquare)$ to 
    $\poly_d^{mt}(\closedint)$. Similarly, the deformation retraction 
    of $\closedannulus$ onto any core curve $\circleint$ induces a 
    deformation retraction from $\poly_d^{mt}(\closedannulus)$ to
  $\poly_d^{mt}(\circleint)$. 
\end{thm}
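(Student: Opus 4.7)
The plan is to reduce both statements to a direct application of Proposition~\ref{prop:def-retracts}, whose hypotheses require a nonsplitting deformation retracting point homotopy in $\C$. So the main task is to produce such a homotopy in each of the two geometric situations: from a closed rectangle onto an embedded arc, and from a closed annulus onto a core curve. Given those, the polynomial homotopy $\widetilde{H}_n$ of Definition~\ref{def:poly-homotopy} exists and automatically restricts to the identity on $\poly_d^{mt}(\Vb)$ whenever the base homotopy fixes $\Vb$, which is precisely what a deformation retraction on the polynomial side requires.

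For the first part, I would proceed as follows. Given an embedded arc $\closedint \subset \closedsquare$, I would first use the fact that any two proper embeddings of a closed interval in a closed disk are ambient-isotopic (after possibly shrinking to the interior) to reduce to a standard model: a rectangle $\Qb = \Ib \times \Jb$ with the arc sitting as the horizontal midline $\Ib \times \{0\}$. On this model the vertical squish $h_t(x,y) = (x,(1-t)y)$ is an explicit deformation retraction from $\Qb$ onto the midline. Checking nonsplitting is immediate: for $t<1$ the time map $h_t$ is injective, so no distinct points can share an image and then separate later; at $t=1$ all identifications occur simultaneously and persist. Thus $h_t$ is a nonsplitting deformation retracting point homotopy, and Proposition~\ref{prop:def-retracts} hands us an induced deformation retraction from $\poly_d^{mt}(\closedsquare)$ onto $\poly_d^{mt}(\closedint)$. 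Transporting back via the chosen homeomorphism between the standard rectangle and the original $\closedsquare$ gives the statement.

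For the second part the strategy is analogous, but in an annulus: one picks a homeomorphism of $\closedannulus$ onto a standard annulus $\{ z \mid r_0 \leq |z| \leq r_1\}$ carrying the core curve $\circleint$ to the centerline $\{|z| = \rho\}$, and then uses the radial homotopy $h_t(re^{i\theta}) = ((1-t)r + t\rho)e^{i\theta}$, which collapses each radial segment to a single point on the core circle. Again $h_t$ is a homeomorphism of the annulus onto its image for $t<1$, hence nonsplitting, and the identity on the core curve for every $t$, so Proposition~\ref{prop:def-retracts} yields a deformation retraction from $\poly_d^{mt}(\closedannulus)$ onto $\poly_d^{mt}(\circleint)$.

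The step I anticipate needing the most care is the verification that the constructed point homotopies are genuinely nonsplitting as point homotopies of $\C$ in the sense of Definition~\ref{def:split-merge}, and in particular that a pair of points $(u,v)$ satisfying $u_s = v_s$ for some $s<1$ cannot exist; the cleanest way to handle this is the injectivity observation above for each $h_t$ with $t<1$. A secondary subtlety is the phrase ``any embedded arc'' and ``any core curve'' in the theorem statement: here I would invoke the classical ambient isotopy extension theorem for $1$-submanifolds of surfaces to reduce to the standard models, which allows the explicit squish and radial constructions to apply in full generality. Once these reductions are in place, both halves of the theorem follow immediately from Proposition~\ref{prop:def-retracts}.
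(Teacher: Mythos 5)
Your proposal is correct and follows essentially the same route as the paper, which simply cites Proposition~\ref{prop:def-retracts} and treats the required nonsplitting deformation retracting point homotopies (vertical squish of the rectangle, radial collapse of the annulus) as obvious. Your explicit constructions and the injectivity-of-$h_t$-for-$t<1$ check for the nonsplitting condition just fill in the details the paper elides.
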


\begin{proof}
    This follows immediately from Proposition~\ref{prop:def-retracts}.
\end{proof}

The results from this section establish the maps between the polynomial space
on the lefthand side of Figure~\ref{fig:poly-mult}.  The two back-to-front
  quotient maps are the special quotients of
  Proposition~\ref{prop:quotients}.  The two top-to-bottom deformation
  retractions follow from the obvious nonsplitting deformation
  retracting point homotopies from a closed rectangle to a horizontal
  line segment and from a closed annulus to a circle.  Finally, if we
  choose quotients and deformation retractions that
  commute at the level of subspaces of $\C$, the corresponding
  polynomial versions also commute.  The maps between the multiset
  spaces on the right are induced by the same set of point homotopies,
  and the $\LL$ map sends the four polynomial spaces on left to the
  four multiset spaces on the right.

\newpage
\part{Geometric Combinatorics}\label{part:geo-comb}

The goal of Part~\ref{part:geo-comb} is to prove Theorems~\ref{mainthm:intervals}, 
\ref{mainthm:circles}, \ref{mainthm:rectangles}, and~\ref{mainthm:annuli}, which are assertions 
about metric cell structures for the polynomial spaces $\poly_d^{mt}(\Xb)$ when $\Xb$ is $\closedint$, 
$\circleint$, $\closedsquare$, or $\closedannulus$, respectively.  
Part~\ref{part:geo-comb} is structured as follows.  In Section~\ref{sec:monodromy}, we recall 
the monodromy action of a planar branched cover and discuss its relationship with the side 
permutations defined in Section~\ref{sec:geo-comb}.
In Section~\ref{sec:intervals-thmA} we prove Theorem~\ref{mainthm:intervals},
in Section~\ref{sec:circle-thmB} we proof Theorem~\ref{mainthm:circles},
in Section~\ref{sec:rect-thmC} we prove Theorem~\ref{mainthm:rectangles},
and in Section~\ref{sec:annulus-thmD} we prove Theorem~\ref{mainthm:annuli}.

\section{Monodromy and Side Permutations}\label{sec:monodromy}

There is a direct connection between the monodromy action of a branched cover
and the side permutations defined in Section~\ref{sec:geo-comb}.  We begin with the general 
results (\ref{subsec:monodromy}) and then connect them to the 
combinatorics of our complexes (\ref{subsec:side-permutations}).

\subsection{Monodromy}\label{subsec:monodromy}
The monodromy action of a branched cover is a classical notion.  Here we recall the 
basic definitions and record a useful characterization of polynomials (Proposition~\ref{prop:monodromy-cvl}).
For a different perspective on some of the monodromy, see \cite[\S9]{DoMc22}. 

\begin{defn}[Monodromy]\label{def:monodromy}
  Let $p \colon \C \to \C$ be a planar $d$-branched cover 
  and let $z$ be a regular point in the range.  Regular paths based at $z$ in the range lift to $d$ paths 
  that permute the $d$ preimages of $z$ in the domain.  This \emph{monodromy action} can be encoded in a 
  group homomorphism $\pi_1(\C \setminus \cvl(p), z) \to \sym_d$.  Both the topology of the connected 
  $d$-sheeted cover and the monodromy action can be reconstructed from this group homomorphism.  See \cite{hatcher02-algtop} 
  for details.  
\end{defn}

\begin{defn}[Constellations]\label{def:constellation}
  When $p$ has $\ell$ distinct critical values, the fundamental group $\pi_1(\C \setminus \cvl(p), z)$ 
  is a free group $\F_\ell$ of rank $\ell$, and the map $\F_\ell \to \sym_d$ can be described 
  by the image of an ordered basis, i.e. by an ordered $\ell$-tuple of permutations 
  $\mathbf{g} = [ g_1\ g_2\ \cdots\ g_\ell] \in (\sym_d)^\ell$. One way of producing 
  $\ell$ loops that represent an ordered basis of $\F_\ell$ comes from drawing a star
  graph with $\ell$ arcs from $z$ to the $\ell$ critical values with disjoint interiors.  
  The $\ell$ loops are those that travel along one of these arcs, stopping just short of
  a critical value, going clockwise around the critical value, and returning along the
  same arc to $z$.  These loops define a basis for the free group $\F_\ell$. When the basis elements
  and the corresponding permutations are linearly ordered according to the clockwise order 
  that the arcs leave $z$, the product of these $\ell$ permutations in this order is a $d$-cycle. 
  In \cite{LaZv04} these are called \emph{constellations}. In the generic case where there are 
  $n$ critical values of multiplicity $1$, the $g_i$ are transpositions and the $n$-tuple $\mathbf{g}$ 
  is a minimum length factorization of a fixed $d$-cycle into $n = d-1$ transpositions.
\end{defn}

A continuous motion of the critical values that keeps them distinct but returns them setwise 
to their original positions alters the monodromy action in predictable ways.  In the generic
polynomial case, there is a $\braid_n$ action on $n$-tuples of transpositions.  

\begin{defn}[Hurwitz action]\label{def:hurwitz-action}
  Let $G$ be a group and denote each $n$-tuple $\mathbf{g}$ in $G^n$ by a row vector of the form
  $\mathbf{g} = [g_1\ g_2\ \cdots\ g_n]$, and let  $\{\beta_1,\ldots,\beta_{n-1}\}$ be the standard
  generating set for the $n$-strand braid group $\braid_n$. The \emph{Hurwitz action} of $\braid_n$
  on $G^n$ is defined by setting $\beta_i \cdot \mathbf{g}$ equal to
  \[
  \beta_i \cdot [g_1\ \cdots\ g_{i-1}\ \underline{g_i\ g_{i+1}}\ g_{i+2}\ \cdots\ g_n]
  = [g_1\ \cdots\ g_{i-1}\ \underline{g_{i+1}\ g_i^{g_{i+1}}}\ g_{i+2}\ \cdots\ g_n]
  \]
  where the altered portion has been underlined and $x^y$ denotes the conjugate $y^{-1}xy$. 
  This is called an \emph{elementary Hurwitz move}. One easily verifies that this action satisfies the 
  relations in the standard presentation of $\braid_n$.  Under the Hurwitz action,
  the product of the entries in this order remains constant.
\end{defn}    

Hurwitz defined this action in his 1891 paper \cite{hurwitz91} well before 
Artin's formal introduction of braid groups as an object of study \cite{artin25,artin47}.
In the generic case, the action is transitive.

\begin{rem}[Hurwitz transitivity]\label{rem:hurwitz-transitive}
  It is well-known that there are exactly $d^{d-2}$ ways to factor the $d$-cycle 
  $\delta = (1\ 2\ \cdots d)$ into $n=d-1$ transpositions, which correspond to the 
  $d^{d-2}$ maximal chains in $\ncperm_d = \ncpart_d$, and to the $d^{d-2}$ $n$-dimensional 
  simplices in $\size{\ncperm_d}_\Delta = \size{\ncpart_d}_\Delta$. And the Hurwitz action 
  is transitive on these sets \cite{BDSW14}.  In fact, for any permutation $\pi \in \sym_d$
  of absolute length $k$ (Definition~\ref{def:abs-order}), the $\braid_k$ action on factorizations 
  of $\pi$ into a product of $k$ transpositions is also transitive.  This is clear since the factorizations 
  and the transitivity take place cycle by disjoint cycle.  This more general property is known as 
  \emph{local Hurwitz transitivity}.
\end{rem}    

This action can be used to distinguish the $d^{d-2}$ generic monic centered polynomials with the same set of 
$n$ distinct critical values.\footnote{Recall that monic centered polynomials are the distinguished 
representatives of equivalence classes of monic polynomials up to precomposition with a translation 
(Remark~\ref{rem:center-trans}).} More generally, monic centered polynomials are distinguished by their 
critical values and their monodromy actions (Proposition~\ref{prop:monodromy-cvl}).  This elementary result
is not usually stated in these terms, so we include a proof.  We begin with a special case.

\begin{lem}[Monodromy and critical values]\label{lem:monodromy-cvl}
  Monic centered polynomials with the same generic set of critical values and the 
  same monodromy action are equal.  
\end{lem}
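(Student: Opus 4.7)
The plan is to combine three ingredients: the classical correspondence between branched covers of $\C$ and their monodromy, the rigidity of holomorphic maps, and the normalization supplied by ``monic centered.'' Let $p, q$ be two monic centered polynomials of degree $d$ with common generic critical value set $V = \cvl(p) = \cvl(q)$ and common monodromy homomorphism $\rho \colon \pi_1(\C \setminus V, z) \to \sym_d$ at a fixed regular basepoint $z$.

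First, I would invoke the classical classification of finite covers of $\C \setminus V$: a connected degree-$d$ branched cover of $\C$ with branch set $V$ is determined up to equivalence by its monodromy homomorphism (with respect to a chosen labeling of the fiber over $z$). Applied to $p$ and $q$, this produces a homeomorphism $h \colon \C \to \C$ with $p = q \circ h$ whose restriction $p^{-1}(z) \to q^{-1}(z)$ is the bijection of labelings implicit in the phrase ``same monodromy action.''

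Second, since $p$ and $q$ are holomorphic, on the regular locus $h$ agrees locally with a branch of $q^{-1} \circ p$ and is therefore holomorphic; the finitely many points of $\cpt(p)$ are removable singularities by Riemann's theorem. So $h$ is a biholomorphism of $\C$, hence affine: $h(z) = az + b$ with $a \neq 0$. Expanding $p(z) = q(az+b)$ and comparing leading coefficients gives $a^d = 1$ (because $p$ and $q$ are monic), while comparing coefficients of $z^{d-1}$ gives $d a^{d-1} b = 0$, hence $b = 0$ (because $p$ and $q$ are centered). Thus $h(z) = az$ with $a$ a $d$-th root of unity.

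Finally, I would use the phrase ``same monodromy action'' to pin down $a = 1$. Since the two actions coincide on the common fiber (equivalently, under the chosen labelings $h$ intertwines two identical $\sym_d$-actions on $[d]$), the bijection $h\colon p^{-1}(z) \to q^{-1}(z) = p^{-1}(z)$ commutes with a transitive $\sym_d$-action and fixes the labeling, so $h$ is the identity on $p^{-1}(z)$. Combined with $h(y)=ay$ and the genericity assumption (which guarantees at least one nonzero preimage of $z$), this forces $a = 1$, so $h$ is the identity of $\C$ and $p = q$. The main delicacy, which I expect to be the only nontrivial issue, is to articulate precisely what ``same monodromy'' means as labeled data: as Remark~\ref{rem:ll-extremes} and Remark~\ref{rem:hurwitz-transitive} emphasize, generic fibers of the $\LL$ map have $d^{d-2}$ points and distinguishing them is exactly the role of the monodromy datum, so once the labeling convention is fixed (e.g.\ via the asymptotic behavior of $p$ and $q$ at infinity, where the product of generators in Definition~\ref{def:constellation} is the canonical $d$-cycle $\delta$), the remaining verifications are routine.
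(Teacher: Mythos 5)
Your route is genuinely different from the paper's. The paper proves this lemma by counting: Theorem~\ref{thm:ll-classic} gives exactly $d^{d-2}$ monic centered polynomials over a generic critical value set, Remark~\ref{rem:hurwitz-transitive} gives exactly $d^{d-2}$ admissible monodromy data (transposition factorizations of $\delta$), and realizing every Hurwitz move by a braided motion of the critical values shows that the map from polynomials to monodromies is onto, hence injective by finiteness. You instead argue via covering-space rigidity: reconstruct a homeomorphism $h$ with $p = q \circ h$ from the common monodromy, upgrade $h$ to a biholomorphism of $\C$ (hence affine), and use monic plus centered to force $h(w) = aw$ with $a^d = 1$. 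Up to that point your argument is correct --- it is essentially the uniqueness half of Remark~\ref{rem:poly-branch} from [LaZv04, Ch.~1] --- and it has the advantage of not needing genericity, which is why the paper must separately prove Proposition~\ref{prop:monodromy-cvl}.

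The final step, however, contains an error. You assert $q^{-1}(z) = p^{-1}(z)$ and conclude that $h$ fixes a nonzero preimage of $z$, forcing $a = 1$. But $p = q \circ h$ only gives $q^{-1}(z) = h(p^{-1}(z)) = a \cdot p^{-1}(z)$: the two fibers are rotations of one another and need not coincide (e.g.\ $p(w) = w^4 + w^2$, $q(w) = w^4 - w^2$, $h(w) = iw$), so there is no fixed point to exploit and ``$h$ is the identity on $p^{-1}(z)$'' is not meaningful as written. What actually pins down $a = 1$ is exactly the point you defer as routine: the label of a point in the fiber is defined by the asymptotic direction at infinity of the lifted ray through it (Definition~\ref{def:monic-poly-labels}), multiplication by $a = e(k/d)$ shifts these asymptotic directions and hence shifts every label by $k$, and the label-preservation built into the covering isomorphism then forces $k \equiv 0 \pmod{d}$. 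Note that the monodromy at infinity alone cannot do this, since every rotation by a $d$-th root of unity commutes with the $d$-cycle $\delta$; it is the choice of labeling convention that breaks the symmetry. With that substitution for your last paragraph the proof goes through.
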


\begin{proof}
  Let $p$ and $q$ be generic monic centered polynomials of degree $d$ with 
  $\cvl(p) = \cvl(q) = M$ with $\shape(M)=1^n$ as their common (multi)set of critical values.
  We know that there are exactly $d^{d-2}$ polynomials in $\poly_d^{mc}(\C)$ 
  with $\cvl(p) = M$ (Theorem~\ref{thm:ll-classic}).  Next, any braided motion of the $n$ critical 
  points extends to a point-preserving homotopy of $\C$, which induces an automorphism 
  of $\poly_d^{mc}(\C)$ (Lemma~\ref{lem:nested-cpt}).  As a consequence there exists a polynomial in $\poly_d^{mc}(\C)$ 
  with $\cvl = M$ for each of the $d^{d-2}$ possible monodromy actions in the orbit of the Hurwitz action
  (Remark~\ref{rem:hurwitz-transitive}).  In other words, the map from the $d^{d-2}$ polynomials in $\poly_d^{mc}(\C)$
  with $\cvl = M$ to the $d^{d-2}$ possible monodromies is onto, and therefore also 
  injective. 
\end{proof}

\begin{prop}[Monodromy and critical values]\label{prop:monodromy-cvl}
  Monic centered polynomials with the same multiset of critical values and the 
  same monodromy action are equal.  
\end{prop}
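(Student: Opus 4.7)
The plan is to reduce Proposition~\ref{prop:monodromy-cvl} to the generic case already handled by Lemma~\ref{lem:monodromy-cvl}, using the path-lifting machinery of Section~\ref{sec:path-lift}. First, I would fix a path $\alpha\colon [0,1] \to \mult_n(\C)$ of weakly decreasing shape with $\alpha(0) = M$ and $\alpha(1) = M'$ discrete and generic, obtained by splitting each critical value $w$ of $p$ of multiplicity $n_w > 1$ into $n_w$ distinct nearby points. By Corollary~\ref{cor:ll-map-finite}, $\alpha$ has finitely many lifts through $\LL$ starting at $p$, and similarly starting at $q$. The time reversal $\alpha^{\rev}$ has weakly increasing shape, so by Theorem~\ref{thm:ll-map-unique} every lift of $\alpha^{\rev}$ through $\LL$ is uniquely determined by its starting point above $M'$.

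Next I would show that one can choose a lift $\widetilde{p}$ of $\alpha$ starting at $p$ and a lift $\widetilde{q}$ of $\alpha$ starting at $q$ so that the endpoints $p' = \widetilde{p}(1)$ and $q' = \widetilde{q}(1)$ in $\LL^{-1}(M')$ have the same monodromy. The different lifts of $\alpha$ correspond, at each multi-point $w$ of $M$, to different ways of factoring the local monodromy permutation $\sigma_w := \rho_p(\gamma_w) \in \sym_d$ (with $\gamma_w$ a small clockwise loop around $w$) into $n_w$ transpositions, one around each new critical value split off from $w$ along $\alpha$. Local Hurwitz transitivity (Remark~\ref{rem:hurwitz-transitive}) ensures that every such factorization of $\sigma_w$ is realized by some lift. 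Since $\rho_p = \rho_q$, the permutation $\sigma_w$ is the same whether derived from $p$ or from $q$, so making the same choice of factorization at each $w$ yields lifts whose endpoints have identical monodromy at $M'$.

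With $M'$ generic and $p'$, $q'$ sharing the same monodromy, Lemma~\ref{lem:monodromy-cvl} gives $p' = q'$. Applying Theorem~\ref{thm:ll-map-unique} to $\alpha^{\rev}$, the unique lift starting at the common point $p' = q'$ is simultaneously the time reversal of $\widetilde{p}$ and of $\widetilde{q}$, so it ends at both $p$ and $q$; therefore $p = q$.

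The main obstacle I expect is the bookkeeping in the second paragraph: justifying that the choices of lift of $\alpha$ starting at $p$ are in bijection with tuples of local transposition factorizations, one per multi-point, and that this bijection depends only on $\alpha$ and the monodromy of $p$ (not on $p$ itself). This is essentially a local model for the stratified cover $\LL$ near a non-generic point: a small neighborhood of $p \in \LL^{-1}(M)$ should decompose as a fibered product of generic sheets indexed, cycle by cycle, by the factorizations of each $\sigma_w$. This matches the point-preimage count of Remark~\ref{rem:pt-preimages} and the stratified-cover structure from Theorem~\ref{thm:poly-strata-cover}, so it should go through, but the local analysis near critical values with accidental equalities (Definition~\ref{def:accidents}) is the place where care is required.
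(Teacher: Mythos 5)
Your overall strategy is the same as the paper's: perturb to the generic stratum, use local Hurwitz transitivity to reconcile the local transposition factorizations, invoke the generic case (Lemma~\ref{lem:monodromy-cvl}), and pull the conclusion back to $p$ and $q$. The difference is in the last step, and that is where the gap sits. Your final argument requires $\widetilde p$ and $\widetilde q$ to be lifts of the \emph{same} path $\alpha$ with the \emph{same} endpoint, and you justify the existence of such a pair by asserting that the lifts of $\alpha$ starting at $p$ surject onto the tuples of minimal factorizations of the local permutations $\sigma_w$. Local Hurwitz transitivity does not give this. What it gives is that any two tuples are connected by a braided motion $\beta$ of the split-off points inside small disks; lifting $\beta$ from the endpoint $q'$ of a lift of $\alpha$ produces the endpoint of a lift of the \emph{concatenation} $\alpha\ast\beta$, not of $\alpha$ itself. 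There is no reason, with the tools available, why the unique lift of $\alpha^{\rev}$ (as opposed to $(\alpha\ast\beta)^{\rev}$) starting at that new endpoint should return to $q$. Worse, proving your surjectivity claim directly runs into circularity: to show that the polynomial obtained by collapsing a prescribed tuple back along $\alpha^{\rev}$ is $p$, you would compare its critical values and monodromy with those of $p$ --- which is exactly the proposition being proved. The alternative route, an exact local degree count for $\LL$ at a non-generic point, is only gestured at in Remark~\ref{rem:pt-preimages} and is not established in the paper.

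The repair is to give up on forcing both lifts to be lifts of the same $\alpha$ and instead argue metrically, which is what the paper does. After matching the local factorizations via a lifted braided motion, you obtain a path from $q$ to $p'$ and a path from $p$ to $p'$, both lying over multisets supported in a small neighborhood $\Ub$ of $\set(M)$; since the $\LL$ map is a local isometry on strata, both paths are short when $\Ub$ is small, so $d(p,q)$ is bounded by a quantity that tends to $0$ as $\Ub$ shrinks, forcing $p=q$. With that substitution your argument matches the paper's proof; as written, the ``every factorization is realized by some lift of $\alpha$'' step is the genuine gap, and you correctly identified it as the point of danger.
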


\begin{proof}
  Let $p$ and $q$ be monic centered polynomials of degree $d$ with $\cvl(p)=\cvl(q) = M$, 
  $S = \set(M)$ and $\ell = \size{S} = \size{M}_\set$.  Let $\Ub \subset \C$ be a small closed neighborhood of $S$ 
  and note that $\mult_n(U)$ 
  is a small open neighborhood of $M$ in $\mult_n(\C)$.  Let $p_1$ and $q_1$ be generic polynomials in $\mult_n(U)$ 
  near $p$ and $q$ (Lemma~\ref{lem:local-onto}).   By dragging their critical values around inside $\Ub$ we may assume 
  that $\cvl(p_1) = \cvl(q_1)=M_1$ of shape $1^n$.  By Remark~\ref{rem:hurwitz-transitive} and 
  Lemma~\ref{lem:monodromy-cvl} we know that there is a braided motion of $M_1$ which transforms 
  $p_1$ into $q_1$, but more is true.  Let $\pi^i$ be the monodromy permutation
  corresponding to the unique critical value of $M$ in the component $\Ub_i$, (coming from an arc 
  from $z$ to $\partial \Ub_i$, clockwise around the boundary of $\Ub_i$ and then back to $z$) and 
  note that $\pi^i$ is the same permutation for both $p$ and $q$ since their monodromy actions agree, and 
  for their perturbations $p_1$ and $q_1$ since the loop remains regular and the permutation unchanged as 
  $p$ and $q$ are perturbed.
  For an appropriate choice of little loops, $\pi^i$ can be factored into transpositions coming 
  from the elements of $M_1$ in $\Ub_i$.  In particular, both $p_1$ and $q_1$ produce local transposition 
  factorizations of $\pi^i$.  Local Hurwitz transitivity means that the transposition factorization 
  of $\pi^i$ coming from $p_1$ can be transformed into the transposition factorization 
  of $\pi^i$ coming from $q_1$ by only moving the critical values of $M_1$ inside $\Ub_i$.
  Once all of these local modifications are made in each component of $\Ub_i$, the transformed $p_1$ has the same monodromy as 
  $q_1$ and they are equal by Lemma~\ref{lem:monodromy-cvl}.  By picking $\Ub$ to be an arbitrarily small neighborhood 
  of $S$, one can show that the distance from $p$ to $p_1$ to $q_1$ to $q$ in $\poly_d^{mt}$ is also arbitrarily small.
  Thus $p=q$.
\end{proof}

\subsection{Side permutations}\label{subsec:side-permutations}
We now connect the monodromy action to the side permutations defined in Section~\ref{sec:geo-comb}.
Let $p \in \poly_d^{mt}(\closedsquare)$ be a polynomial with $\cvl(p) \subset \closedsquare$ and
let $\Qb_p$ be its regular value complex. The basepoint $z = (x_\ell,y_b) \in \Qb_p$ is regular 
by construction and its $d$ indexed preimages $\{z_1,\ldots,z_d\}$ are arranged 
in counterclockwise order in the boundary of $\Pb_p$ (Definition~\ref{def:monic-poly-labels}). 
The connection between the \emph{monodromy action} of $\pi_1(\Qb_p - \cvl(p),z)$
on the preimages of $z$ and $\ncperm_d$ is straightforward up to a choice of conventions.  

\begin{defn}[Monodromy and $\Qb_p$]\label{def:monodromy-Qp}
  Permutations typically compose as functions from right-to-left, while paths are often
  concatenated from left-to-right.  To reconcile this difference, the monodromy 
  permutation is defined with a change of direction.  If $\gamma$ is an oriented path based at $z$
  that represents an element of  $\pi_1(\Qb_p - \cvl(p),z)$, the \emph{monodromy permutation}
  of $\gamma$ is the permutation whose disjoint cycles list the indices of the preimages of $z$
  as they occur when the lifted paths are traced in the \emph{opposite} direction.  This converts the
  right monodromy action defined by oriented paths to a left action by monodromy permutations.  The change of direction 
  ensures that left-to-right path concatenation matches right-to-left permutation composition.
  For example, if $\Vb$ is a closed disk in $\Qb_p$ with $z \in \partial \Vb$ and $\Ub = p^{-1}(\Vb)$ 
  as its preimage in $\Pb_p$, the \emph{clockwise} loop around $\partial \Vb$ lifts to clockwise arcs 
  in $\partial \Ub$, but the monodromy permutation is the \emph{counterclockwise} order in which the 
  indexed preimages of $z$ occur in boundary cycles of the components of $\Ub$.  Compare this with 
  Example~\ref{ex:factors}.  
\end{defn}    

As an illustration, consider side surfaces and the associated side permutations.

\begin{lem}\label{lem:factoring-delta}
  For each polynomial $p \in \poly_d^{mt}(\closedsquare)$ the $d$-cycle $\delta = (1\ 2\ \cdots\ d)$ can
  be factored as $\delta = \pi_i^L \cdot \pi_i^R$ for each $i \in [k]$, and as $\delta = \pi_j^T 
  \cdot \pi_j^B$ for each $j\in [l]$.
\end{lem}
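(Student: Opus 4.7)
The plan is to reduce this to Proposition~\ref{prop:factors}, which gives the factorization $\pi^L \cdot \pi^R = \delta$ (resp.\ $\pi^T \cdot \pi^B = \delta$) for the left/right (resp.\ top/bottom) permutations arising from \emph{any} top-bottom matching (resp.\ left-right matching) on an abstract standard $d$-branched rectangle. The only thing to check is that the data we extract from a polynomial $p$ legitimately fits into that abstract framework.

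First I would unpack the geometric setup near the vertical arc $\alpha_i \subset \Qb_p$. Because $p$ is monic, Definition~\ref{def:monic-poly-labels} identifies $\Pb_p$ with a standard $d$-branched rectangle with its canonical labeling of top, left, bottom and right sides by $[d]$. The regular arc $\alpha_i$ runs from $b_i \in \Bb$ to $t_i \in \Tb$ and its full preimage $\widetilde\alpha_i = p^{-1}(\alpha_i)$, being the preimage of a regular arc between a bottom and a top representative, is a multiarc of $d$ pairwise disjoint arcs in $\Pb_p$, each connecting a top side to a bottom side. By Definition~\ref{def:side-part-match} this multiarc \emph{is} the topological top-bottom matching $[\mu_i]^{TB} \in \ncmatch_{2d}^{TB}$, and the two complementary regions are exactly the subsurfaces $\Ub_i^L$ and $\Ub_i^R$ of Definition~\ref{def:subsurfaces}.

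Next I would check that the left and right partitions $[\lambda_i]^L = \ncpart^L(\Ub_i^L)$ and $[\lambda_i]^R = \ncpart^R(\Ub_i^R)$ are precisely the left and right noncrossing partitions that Proposition~\ref{prop:match-part} associates to the matching $[\mu_i]^{TB}$: each block of $[\lambda_i]^L$ records which left sides lie in a common component of $\Ub_i^L$, which is exactly how the bijection $\ncmatch_{2d}^{TB} \leftrightarrow \ncpart_d^L$ is defined (and analogously for the right). Consequently the side permutations $\pi_i^L = \perm([\lambda_i]^L)$ and $\pi_i^R = \perm([\lambda_i]^R)$ coincide with the left and right permutations associated to $[\mu_i]^{TB}$ in the sense of Definition~\ref{def:ncperm-ncmatch}. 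Proposition~\ref{prop:factors} then yields $\pi_i^L \cdot \pi_i^R = \delta$.

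The second identity $\pi_j^T \cdot \pi_j^B = \delta$ follows by the same argument with the roles of horizontal and vertical exchanged: the regular horizontal arc $\beta_j$ lifts to a multiarc $\widetilde\beta_j$ that is the left-right matching $[\mu_j]^{LR}$, its two complementary subsurfaces $\Ub_j^T$ and $\Ub_j^B$ yield the top and bottom partitions of Proposition~\ref{prop:match-part}, and Proposition~\ref{prop:factors} supplies the desired factorization. There is no real obstacle here; the only thing one has to be careful about is the orientation/indexing bookkeeping underlying Definition~\ref{def:monodromy-Qp}, namely that the counterclockwise cycle-ordering convention we adopted for side permutations is the one that makes the product read $\pi^L \cdot \pi^R$ (rather than its inverse) equal to $\delta = (1\ 2\ \cdots\ d)$, but this was already settled once and for all in Example~\ref{ex:factors} and Remark~\ref{rem:broken}.
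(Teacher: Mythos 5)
Your argument is correct, and it is a legitimately different route from the one the paper takes. You reduce the statement to the purely combinatorial Proposition~\ref{prop:factors}: you observe that $\widetilde\alpha_i=p^{-1}(\alpha_i)$ is a top-bottom matching $[\mu_i]^{TB}$ whose complementary regions in $\Pb_p$ are exactly $\Ub_i^L\sqcup\Ub_i^R$ (since $\Qb_p=\Vb_i^L\cup\Vb_i^R$ meets along $\alpha_i$), so the partitions $[\lambda_i]^L$, $[\lambda_i]^R$ read off from those subsurfaces are precisely the ones the bijection of Proposition~\ref{prop:match-part} attaches to $[\mu_i]^{TB}$, and Proposition~\ref{prop:factors} then hands you $\pi_i^L\cdot\pi_i^R=\delta$. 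The paper instead argues via monodromy: it decomposes the clockwise boundary loop $\gamma$ of $\Qb_p$ based at $z$ (whose monodromy permutation is $\delta$) as a concatenation $\gamma^L.\gamma^R$ of based loops around $\partial\Vb_i^L$ and $\partial\Vb_i^R$, identifies the monodromy permutations of these loops with $\pi_i^L$ and $\pi_i^R$, and concludes by the homomorphism property of monodromy. Both proofs ultimately rest on the sign/ordering computation of Example~\ref{ex:factors}, and your reduction is arguably cleaner for this particular statement since everything needed is already packaged in Propositions~\ref{prop:match-part} and~\ref{prop:factors}. What the monodromy framing buys the paper is uniformity with what comes next: Lemma~\ref{lem:factor-side-perm} factors permutations of nested interval subsurfaces that are \emph{not} complementary regions of a single multiarc, so the matching-bijection argument does not extend there, whereas the loop-concatenation argument does verbatim. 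Your one caveat about orientation conventions is well placed, and your appeal to Example~\ref{ex:factors} and Remark~\ref{rem:broken} to discharge it matches how the paper itself settles the issue.
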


\begin{proof}
  Let $\Vb_i^L$ and $\Vb_i^R$ the be left and right side surfaces of $\Qb$ determined by the arc $\alpha_i$.
  Let $\gamma$ be the clockwise path around $\Qb_p$ based at $z$, let $\gamma^L$ be the clockwise path around 
  $\partial \Vb_i$ based at $z$, and let $\gamma^R$ be the path from $z$ to $b_i$ in $\Bb$, clockwise 
  around $\partial \Vb_i^R$, and from $b_i$ back to $z$ in $\Bb$.  The concatenation $\gamma^L.\gamma^R$ is
  homotopic to $\gamma$ inside $\C \setminus \cvl(p)$.  The monodromy permutations of $\gamma^L$ and $\gamma^R$
  are the side permutations $\pi_i^L$ and $\pi_i^R$, and the monodromy permutation of $\gamma$ is
  $\delta = (1 2 \cdots d)$.  Finally the composition $(\pi_i^L) (\pi_i^R)$ is $\delta$, by the argument
  in Example~\ref{ex:factors}.  The top-bottom case is nearly identical.
\end{proof}  

The fact that permutation composition comes from concatenating clockwise boundary cycles based at $z$ 
explains the need to multiply top then bottom and left then right.  Lemma~\ref{lem:factoring-delta} 
extends to more general subsurfaces.

\begin{defn}[Interval permutations]\label{def:interval-perms}
  For each subinterval $[x_{i_1},x_{i_2}] \subset \Ib_p$, the \emph{interval 
  subsurface} $\Vb_{i_1,i_2}^I  = \Vb_{i_1}^R \cap \Vb_{i_2}^L$ is the subrectangle
  $[x_{i_1},x_{i_2}] \times \Jb_p \subset \Qb_p$.  For each subinterval
  $[y_{j_1},y_{j_2}] \subset \Jb_p$, the \emph{interval subsurface} 
  $\Vb_{j_2,j_1}^J = \Vb_j^T \cap \Vb_{j'}^B$ is the rectangle $\Ib_p \times 
  [y_{j_1},y_{j_2}] \subset \Qb_p$. Note that the order of the subscripts $j_1 < j_2$ 
  have been switched in the notation $\Vb_{j_2,j_1}^J$ to reflect the counterclockwise order 
  that $\ell_{j_2}$ and $\ell_{j_1}$ occur in the boundary cycle $\partial \Qb_p$.  This swap 
  simplifies the statement of Lemma~\ref{lem:factor-side-perm} below. 
  The preimage subsurfaces are denoted $\Ub^I_{i_1,i_2} 
  = p^{-1}(\Vb^I_{i_1,i_2})$ and $\Ub^J_{j_2,j_1} = p^{-1}(\Vb^J_{j_2,j_1})$.  Since
  many of these subsurfaces do not contain the basepoint $z$, we connect $\Vb_{i_1,i_2}^I$ to $z$
  with the portion of $\Bb$ from $z$ to $b_{i_1}$, and we connect $\Vb_{j_2,j_1}^J$ to $z$
  with the portion of $\Lb$ from $z$ to $\ell_{j_1}$.  In particular, the clockwise loop around 
  $\partial \Vb_{i_1,i_2}^I$ \emph{based at $z$ via $\Bb$} is the concatenation of the path from $z$ to 
  $b_{i_1}$ in $\Bb$, the clockwise loop around $\partial \Vb_{i_1,i_2}^I$, and the return path 
  from $b_{i_1}$ to $z$ in $\Bb$.  The monodromy permutation for this path is the \emph{interval permutation} 
  $\pi_{i_1,i_2}^I = \perm([\lambda_{i_1,i_2}^I])$ of the \emph{interval partition} $[\lambda_{i_1,i_2}]^I = \ncpart^B(\Ub^I_{i_1,i_2})$.
  Similarly, the clockwise loop around $\Vb_{j_2,j_1}^J$ \emph{based at $z$ via $\Lb$} is the 
  concatenation of the path from $z$ to $\ell_{j_1}$ in $\Lb$, the clockwise loop around 
  $\partial \Vb_{j_2,j_1}^J$, and the return path from $\ell_{j_1}$ to $z$ in $\Lb$.  
  And the monodromy permutation for this path is the \emph{interval permutation} $\pi_{j_2,j_1}^J = 
  \perm([\lambda_{j_2,j_1}^J])$ of the \emph{interval partition} $[\lambda_{j_2,j_1}]^J = \ncpart^L(\Ub^J_{j_2,j_1})$.
\end{defn}

\begin{rem}[Sides and intervals]\label{rem:side-interval}
  Side surfaces are examples of subinterval surfaces, meaning 
  side partitions are examples of subinterval partitions and side permutations 
  are examples of subinterval permutations.  For example, $\Qb_p = \Vb^{I}_{\ell,r} = 
  \Vb_{t,b}^J$, so $\delta = \pi^I_{\ell,r} = \pi^J_{b,t}$.  Similarly, 
  $\Vb_i^L =\Vb_{\ell,i}^I$, $\Vb_i^R = \Vb_{i,r}^I$, $\Vb_j^T = \Vb_{t,j}^J$ and 
  $\Vb_j^B = \Vb_{j,b}^J$, so $\pi_i^L = \pi_{\ell,i}^I$, $\pi_i^R = \pi_{i,r}^I$, 
  $\pi_j^T = \pi_{t,j}^T$ and $\pi_j^B = \pi_{j,b}^J$. 
\end{rem}

We have the following generalization of Lemma~\ref{lem:factoring-delta}.

\begin{lem}[Factoring side permutations]\label{lem:factor-side-perm}
  For each triple $i_1 < i_2 < i_3$ in $[k]$, there is a horizontal factorization $\pi_{i_1,i_3}^I = 
  (\pi_{i_1,i_2}^I) (\pi_{i_2,i_3}^I)$, 
  and for each triple $j_1 < j_2 < j_3$ in $[l]$, there is a vertical factorization $\pi_{j_3,j_1}^J = 
  (\pi_{j_3,j_2}^J) (\pi_{j_2,j_1}^J)$.
\end{lem}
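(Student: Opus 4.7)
My plan is to prove the horizontal factorization by showing that the loop representing $\pi_{i_1,i_3}^I$ in $\pi_1(\Qb_p \setminus \cvl(p), z)$ is homotopic to the concatenation of the loops representing $\pi_{i_1,i_2}^I$ and $\pi_{i_2,i_3}^I$, and then apply the monodromy homomorphism (Definition~\ref{def:monodromy-Qp}), where by our composition convention loop concatenation corresponds to permutation composition in the stated order. The vertical factorization is then the analogous argument with $\Lb$ in place of $\Bb$, where the subscript reversal $(j_3,j_1)$ encodes the fact that, as one traverses $\Lb$ upward from $z$, the points $\ell_{j_1},\ell_{j_2},\ell_{j_3}$ occur in counterclockwise boundary order.

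The key construction in the horizontal case: let $\eta_i$ be the segment of $\Bb$ from $z$ to $b_i$ and let $\gamma_{i_1,i_2}^I$ denote the clockwise boundary loop of $\Vb_{i_1,i_2}^I = [x_{i_1},x_{i_2}] \times \Jb_p$ based at $b_{i_1}$. By Definition~\ref{def:interval-perms}, $\pi_{i_1,i_2}^I$ is the monodromy of $\eta_{i_1} \gamma_{i_1,i_2}^I \eta_{i_1}^{-1}$, and similarly for $(i_2,i_3)$ and $(i_1,i_3)$. The rectangle $\Vb_{i_1,i_3}^I$ is the union of $\Vb_{i_1,i_2}^I$ and $\Vb_{i_2,i_3}^I$ along the common vertical arc $\alpha_{i_2}$, and this arc is regular by the definition of $\alpha_{i_2}$ (Definition~\ref{def:arcs}). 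Likewise, the bottom side $\Bb$ is entirely regular, since $\closedsquare = \Qb'_p$ lies in the interior of $\Qb_p$ (Definition~\ref{def:reg-val-cplx}).

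The central step is the explicit homotopy
\[
\gamma_{i_1,i_3}^I \;\simeq\; \gamma_{i_1,i_2}^I \cdot \theta_{i_1,i_2} \cdot \gamma_{i_2,i_3}^I \cdot \theta_{i_2,i_1},
\]
where $\theta_{i_1,i_2}$ is the segment of $\Bb$ from $b_{i_1}$ to $b_{i_2}$ and $\theta_{i_2,i_1}$ its reverse. Tracing the right-hand side around, the back-and-forth $b_{i_1}\to b_{i_2}\to b_{i_1}$ along the regular bottom $\Bb$ is null-homotopic, and the back-and-forth up and down the shared regular arc $\alpha_{i_2}$ between $b_{i_2}$ and $t_{i_2}$ also cancels; what remains traces out exactly the clockwise boundary of $\Vb_{i_1,i_3}^I$. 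Prepending $\eta_{i_1}$ and appending $\eta_{i_1}^{-1}$, and using $\eta_{i_1}^{-1}\eta_{i_2} \simeq \theta_{i_1,i_2}$ (both lie in the regular bottom side), gives the homotopy $\eta_{i_1}\gamma_{i_1,i_3}^I\eta_{i_1}^{-1} \simeq (\eta_{i_1}\gamma_{i_1,i_2}^I\eta_{i_1}^{-1})\cdot(\eta_{i_2}\gamma_{i_2,i_3}^I\eta_{i_2}^{-1})$, which under the monodromy representation yields $\pi_{i_1,i_3}^I = \pi_{i_1,i_2}^I \cdot \pi_{i_2,i_3}^I$.

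There is no serious obstacle; the argument is essentially a boundary-decomposition of a rectangle split by a regular vertical arc, generalizing the factorization $\delta = \pi_i^L\cdot\pi_i^R$ from Lemma~\ref{lem:factoring-delta} (which is the special case $(i_1,i_2,i_3)=(\ell,i,r)$, cf.\ Remark~\ref{rem:side-interval}). The one thing to keep track of carefully is the ordering convention: the direction-reversal built into Definition~\ref{def:monodromy-Qp} is exactly what ensures the factors appear in the order written, and the vertical analog requires transcribing this with $\Lb$ replacing $\Bb$, which reverses the subscript order because the counterclockwise cycle of $\partial\Qb_p$ meets $\ell_{j_3}$ before $\ell_{j_1}$ when leaving the basepoint.
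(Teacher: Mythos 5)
Your proof is correct and follows essentially the same route as the paper's: decompose the clockwise loop around $\Vb_{i_1,i_3}^I$ based at $z$ via $\Bb$ as the concatenation of the loops around $\Vb_{i_1,i_2}^I$ and $\Vb_{i_2,i_3}^I$, using that the two rectangles overlap on the regular arc $\alpha_{i_2}$, and then apply the monodromy homomorphism with its built-in ordering convention. You simply spell out the cancellation along $\Bb$ and $\alpha_{i_2}$ more explicitly than the paper does.
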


\begin{proof}
  The rectangles $\Vb_{i_1,i_2}^I$ and $\Vb_{i_2,i_3}^I$ overlap on the 
  vertical arc $\alpha_{i_2}$, so a clockwise loop around $\Vb_{i_1,i_2}^I$ based at $z$ via $\Bb$,
  followed by a clockwise loop around $\Vb_{i_2,i_3}^I$ based at $z$ via $\Bb$, is homotopy equivalent 
  in $\C \setminus \cvl(p)$ to a clockwise loop around $\Vb_{i_1,i_3}$ based at $z$ via $\Bb$. The 
  corresponding permutation factorization follows by Definition~\ref{def:monodromy}.  Similarly the rectangles 
  $\Vb_{j_3,j_2}^J$ and $\Vb_{j_2,j_1}^J$ overlap on the horizontal arc $\beta_{j_2}$, and a clockwise 
  loop around $\Vb_{j_3,j_2}^J$ based at $z$ via $\Lb$, followed by a clockwise loop around $\Vb_{j_2,j_1}^J$ 
  based at $z$ via $\Lb$, is homotopy equivalent in $\C \setminus \cvl(p)$ to a clockwise loop around 
  $\Vb_{j_3,j_1}^J$ based at $z$ via $\Lb$.
\end{proof}

Let $\sigma_i = \pi_{i,i+1}^I$ be the permutation from a single edge subinterval of $\Ib$, 
and let $\tau_j = \pi_{j+1,j}^J$ be the permutation from a single edge subinterval of $\Jb$.  We call these 
\emph{basic horizontal side permutations} and \emph{basic vertical side permutations} respectively.  In this 
language we have the following corollary where we have factored horizontally and vertically as much as possible.

\begin{cor}[Side constellations]\label{cor:side-constellations}
  For each polynomial $p \in \poly_d^{mt}(\closedsquare)$ the $d$-cycle $\delta$ can
  be factored into basic horizontal side permutations 
  $\delta = \pi_1^L \cdot \sigma_1 \cdot \sigma_2 \cdots \sigma_{k-1} \cdot \pi_k^R$ 
  with corresponding horizontal side constellation 
  $[\pi_1^L\ \sigma_1\ \sigma_2\ \cdots\ \sigma_{k-1}\ \pi_k^R],$
  and into basic vertical side permutations 
  $\delta = \pi_l^T \cdot \tau_{l-1} \cdots \tau_2 \cdot \tau_1 \cdot \pi_1^B$
  with corresponding vertical side constellation 
  $[\pi_l^T\ \tau_{l-1}\ \cdots\ \tau_2\ \tau_1\ \pi_1^B].$
\end{cor}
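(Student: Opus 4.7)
The plan is to derive both factorizations as iterated applications of Lemma~\ref{lem:factor-side-perm}, using Remark~\ref{rem:side-interval} to identify the ``extreme'' interval permutations with the appropriate side permutations.

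For the horizontal side constellation, I would start from the observation in Remark~\ref{rem:side-interval} that $\Qb_p = \Vb_{\ell, r}^I$ and hence $\delta = \pi_{\ell, r}^I$. Applying the horizontal factorization in Lemma~\ref{lem:factor-side-perm} to the triple $\ell < 1 < r$ gives $\pi_{\ell, r}^I = \pi_{\ell, 1}^I \cdot \pi_{1, r}^I$; applying it again to $1 < 2 < r$ inside the second factor yields $\pi_{1, r}^I = \pi_{1, 2}^I \cdot \pi_{2, r}^I$, and so on, peeling off one basic subinterval at each step. After $k-1$ such steps we reach
\[
\delta = \pi_{\ell, 1}^I \cdot \pi_{1, 2}^I \cdot \pi_{2, 3}^I \cdots \pi_{k-1, k}^I \cdot \pi_{k, r}^I.
\]
Substituting $\pi_{\ell, 1}^I = \pi_1^L$, $\pi_{k, r}^I = \pi_k^R$ and $\pi_{i, i+1}^I = \sigma_i$ from Remark~\ref{rem:side-interval} and the definition of the $\sigma_i$ produces the claimed horizontal factorization, whose ordered tuple of entries is the horizontal side constellation.

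For the vertical side constellation I would proceed in the analogous way, but being careful with the swapped subscript convention in Definition~\ref{def:interval-perms}. Starting from $\delta = \pi_{t, b}^J$ (again via Remark~\ref{rem:side-interval}), the vertical factorization in Lemma~\ref{lem:factor-side-perm} applied to $b < l < t$ gives $\pi_{t, b}^J = \pi_{t, l}^J \cdot \pi_{l, b}^J$; applied next to $b < l-1 < l$ it yields $\pi_{l, b}^J = \pi_{l, l-1}^J \cdot \pi_{l-1, b}^J$, and iterating produces
\[
\delta = \pi_{t, l}^J \cdot \pi_{l, l-1}^J \cdot \pi_{l-1, l-2}^J \cdots \pi_{2, 1}^J \cdot \pi_{1, b}^J.
\]
Identifying $\pi_{t, l}^J = \pi_l^T$, $\pi_{1, b}^J = \pi_1^B$ and $\pi_{j+1, j}^J = \tau_j$ gives the desired vertical factorization.

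Both factorizations are therefore essentially immediate corollaries of Lemma~\ref{lem:factor-side-perm}, and the main (minor) obstacle is purely bookkeeping: keeping the subscripts in the correct order so that at each step of the inductive peeling the right-hand factor covers all remaining subintervals, and, in the vertical case, respecting the convention that $\Vb_{j_2, j_1}^J$ has $j_2 > j_1$ so that the counterclockwise boundary ordering matches the order of composition dictated by Definition~\ref{def:monodromy-Qp}. Once these conventions are tracked carefully, the factorization into basic side permutations is straightforward, and the resulting ordered tuples are exactly the horizontal and vertical side constellations in the statement.
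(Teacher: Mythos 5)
Your proposal is correct and follows essentially the same route as the paper: both start from $\delta = \pi_{\ell,r}^I$ (resp.\ $\delta = \pi_{t,b}^J$) via Remark~\ref{rem:side-interval}, iterate the factorization of Lemma~\ref{lem:factor-side-perm} across the edges of $\Ib_p$ (resp.\ $\Jb_p$), and then rename the factors using Remark~\ref{rem:side-interval} and the definitions of $\sigma_i$ and $\tau_j$. Your version just makes the inductive peeling and the subscript bookkeeping explicit where the paper compresses it into a single displayed equation.
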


\begin{proof}
  By Lemma~\ref{lem:factor-side-perm} we have $\delta = \pi_{\ell,r}^I =(\pi_{\ell,1}^I) (\pi_{1,2}^I) 
  (\pi_{2,3}^I) \cdots (\pi_{k-1,k}^I) (\pi_{k,r}^I)$ from the edges of $\Ib_p$, and $\delta = 
  \pi_{t,b}^J = (\pi_{l,1}^J) (\pi_{l,l-1}^J) \cdots (\pi_{3,2}^J) (\pi_{2,1}^J) (\pi_{1,b}^I)$ 
  from the edges of $\Jb_p$.  The statement simply uses simpler names for the factors 
  (Remark~\ref{rem:side-interval}).
\end{proof}

When the critical values of a permutation have distinct real or distinct imaginary parts, one
side chain determines the monodromy.

\begin{rem}[Side constellations and monodromy]\label{rem:side-monodromy}
  Let $p \in \poly_d^{mt}(\closedsquare)$ be a polynomial.  If the critical values in $\cvl(p)$ have
  distinct real parts, then the left side chain determines the monodromy of $p$. In fact, the loops based at $z$ 
  around the single edge subsurfaces $\Vb_{i,i+1}^I$ which contain a (necessarily unique) critical value are a basis for 
  the free group $\pi_1(\Qb_p \setminus \cvl(p),z)$.  In particular, the $(k+2)$-tuple 
  $[\pi_1^L\ \sigma_1\ \sigma_2\ \cdots\ \sigma_k\ \pi_k^R]$ is almost a constellation of permutations encoding 
  the monodromy map (Definition~\ref{def:constellation}).
  When the left side of $\closedsquare$ is regular, the first column of $\Qb_p$ is regular, the first entry 
  $\pi_1^L$ is the identity permutation and it is not needed.  When the right side of $\closedsquare$ is regular, 
  the last column of $\Qb_p$ is regular, the last entry $\pi_1^R$ is the identity permutation and it is not needed.  
  Once any initial and/or final identity permutations are removed, what remains is the tuple of permutations that 
  encode the monodromy action.  Similarly, if the critical values in $\cvl(p)$ have distinct imaginary parts, 
  then the bottom side chain determines the monodromy of $p$.  The argument is analogous.
\end{rem}

\section{Intervals and Theorem~\ref{mainthm:intervals}}\label{sec:intervals-thmA}

This section introduces and analyzes a metric cell structure on $\poly_d^{mt}(\closedint)$, 
thereby proving Theorem~\ref{mainthm:intervals}.  Let $\closedint = \Ib = [x_\ell, x_r]$ be a 
closed interval in $\R$, so that $(\closedint)^n = \Ib^n$ is an 
$n$-cube.  Recall that the space $\mult_n(\closedint) = \mult_n(\Ib)$ is a standard $n$-orthoscheme 
(Definition~\ref{def:orthoschemes}).  We begin with an example illustrating how to go from a multiset 
in $\closedint$ to a point in a face of this simplex.

\begin{figure}
    \centering
    \includestandalone[scale=.7]{fig-spine-orthoscheme}
    \caption{The multiset $M = x_\ell^3 x_1^4 x_2^1 x_3^2 x_r^1 \in 
    \mult_{11}(\Ib)$ labels a point in a $3$-dimensional face of the 
    $11$-dimensional simplex $\mult_{11}(\Ib)$.  The $3$-dimensional face
    is determined by the linear composition $\bm = \comp(M) = [3\ 4\ 1\ 2\ 1]$ and the 
    four vertex labels $[3\ 8]$,  $[7\ 4]$, $[8\ 3]$ and $[10\ 1]$ below $\bm$ in the linear composition 
    order.  The exact point in this open $3$-simplex is determined by location
    of the $3$-element set $C = \{x_1,x_2,x_3\}$ in $I$.  The vertices below the linear composition are shown
    superimposed on the $4$ edges of the $3$-subdivided interval $\Ib_C$.}
    \label{fig:spine-orthoscheme}
\end{figure}

\begin{example}\label{rem:spine-orthoscheme}
  Let $\Ib = [x_\ell,x_r]$ be an interval, and let $M = x_\ell^3 x_1^4 x_2^1 x_3^2 x_r^1$ be 
  an $11$-element multiset in $\Ib$. The multiset $M$ labels a point in the
  $11$-dimensional orthoscheme $\mult_{11}(\Ib)$, and we can separate out the metric and combinatorial 
  information that $M$ contains.  There are $3$ points $C =\{x_1,x_2,x_3\}$ in the interior 
  of $\Ib$, and a $5$-tuple $\bm = [3\ 4\ 1\ 2\ 1]$ that records the multiplicities of the 
  $5$ vertices of the $3$-subdivided interval $\Ib_C$.  The open $3$-simplex 
  containing the point labeled by $M$ is determined by the $5$-tuple $\bm$ and the exact point in this 
  open $3$-simplex is determined by location of the $3$-element set $C$ in the interior of $\Ib$.  See 
  Figure~\ref{fig:spine-orthoscheme}.
\end{example}

\begin{defn}[Multisets in an interval]\label{def:multiset-int}
  An arbitrary element $M \in \mult_n(\closedint)$ with $\closedint = \Ib = [x_\ell,x_r]$
  can be written  in the form $M = x_\ell^{m_\ell}x_1^{m_1} \cdots x_k^{m_k} x_r^{m_r}$.  It 
  can also be split into two pieces of information.  Adding the elements of $M$ as vertices of $\Ib$
  creates a subdivided interval $\Ib_C$, where $C = \{x_1,\ldots,x_k\} = \set(M) \cap I$ 
  records the elements of $M$ in the interior of $\Ib$ indexed in the left-to-right order they occur 
  (Definition~\ref{def:int-subdivide}). There is also a $(k+2)$-tuple $\bm = [m_\ell\ m_1\ \cdots\ m_k\ m_r]$ 
  that records the multiplicities of the vertices of $\Ib_C$ in the multiset $M$, listed in the same 
  left-to-right order.  We say $\bm = \comp(M)$ is the \emph{linear composition} of $M$.  Since there 
  need not be elements of $M$ at either end of $\Ib = \closedint$, we have $m_\ell, m_r \geq 0$, but
  $m_i>0$ for $i \in [k]$.  The linear composition $\bm$ of length $k+2$ determines the open $k$-simplex 
  of the $n$-orthoscheme $\mult_n(\closedint)$ and the choice of a $k$-element subset $C \subset I$ 
  specifies a point in that open $k$-simplex.  Note that $M$ can be reconstructed from $C$ and $\bm$.
\end{defn}

Figure~\ref{fig:comp-3-orthoscheme} shows a standard $3$-dimensional orthoscheme $\mult_3(\Ib)$ with 
the linear compositions that label its faces.  The face poset of simplex $\mult_n(\closedint)$ is the 
finite poset of linear compositions of $n$.

\begin{defn}[Linear compositions]\label{def:linear-comp}
  A \emph{linear composition of $n$ with length $k+2$} is a row vector 
  $\bm = [m_\ell\ m_1\ \cdots\ m_k\ m_r]$ of sum $n$ with integers $m_\ell,m_r \geq 0$ and 
  $m_i > 0$ for $i \in [k]$.  For $k>0$, an \emph{elementary merge} of $\bm$ replaces 
  two adjacent entries with their sum.  This produces a new linear composition of $n$ of length 
  $(k-1)+2 = k+1$.
  Let $\comp_n(\closedint)$ denote the set of all linear compositions of $n$ together with the 
  partial order $\bm \geq \bm'$ if there is a sequence of elementary merges that starts at $\bm$ and ends at $\bm'$.
  The graded poset $\comp_n(\closedint)$ has a unique maximal element $[0\ 1\ 1\ \cdots\ 1\ 0]$ and $n+1$ 
  minimal elements $[m_\ell\ m_r]$ with $m_\ell,m_r \geq 0$ and $m_\ell+ m_r =n$.
\end{defn}

Multisets in an interval and the corresponding linear compositions
are explored with more detail in \cite{dm-cnc}.

\begin{figure}
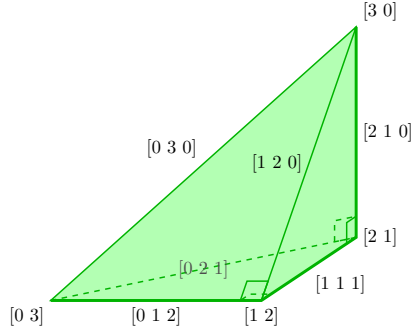

    \centering
    \includestandalone[scale=.7]{fig-comp-3-orthoscheme}
    \caption{The faces of the $3$-orthoscheme $\mult_3(\protect\closedint)$ have linear composition labels.
    The linear composition labels of its $0$-cells and $1$-cells are shown. The open $3$-cell has 
    label $[0\ 1\ 1\ 1\ 0]$ and the four $2$-cells have labels $[0\ 1\ 1\ 1]$ (bottom), $[0\ 1\ 2\ 0]$ (front), 
    $[0\ 2\ 1\ 0]$ (back), and $[1\ 1\ 1\ 0]$ (right).  Its spine is the thick path from $[0\ 3]$ to $[1\ 2]$ to
    $[2\ 1]$ to $[3\ 0]$.}
    \label{fig:comp-3-orthoscheme}
\end{figure}

\begin{rem}[Stratified cell structure]\label{rem:poly-strata-cells}
  Let $\comp \colon \mult_n(\closedint) \to \comp_n(\closedint)$ be the map that sends $M \mapsto \comp(M)$
  (Definition~\ref{def:multiset-int}).  The $\comp$ map determines the simplicial cell structure 
  on $\mult_n(\closedint)$ in the sense that $M_1$ and $M_2$ belong to the same open simplicial 
  face if and only if $\comp(M_1) = \comp(M_2)$.  Moreover, since $\shape(M)$ is the multiset of positive 
  entries in $\comp(M)$, the simplicial cell structure is a stratified cell structure in the sense of 
  Definition~\ref{def:stratified-cell-structures}. As a consequence there is an induced cell structure 
  on $\poly_d^{mt}(\closedint)$ and the $\LL$ map becomes a cellular map $\LL \colon \poly_d^{mt}(\closedint) 
  \to \mult_n(\closedint)$.  Since the generic degree is $d^{d-2}$ (Theorem~\ref{thm:ll-classic}), the 
  cell structure on $\poly_d^{mt}(\closedint)$ is built out of $d^{d-2}$ standard $n$-orthoschemes.
  There is also an induced simplicial cell structure on the $n$-cube $(\closedint)^n$ that turns the $\mult$ map 
  into a cellular map.  This is the typical subdivision of the $n$-cube into $n!$ standard $n$-orthoschemes that are permuted
  by the symmetric group action.  See Figure~\ref{fig:interval-cell-cplxes}.
\end{rem}

\begin{figure}
  \begin{tikzcd}
    \poly_d^{mt}(\closedint)
    \arrow[r,twoheadrightarrow,"d^{d-2}"] &
    \mult_n(\closedint)
    \arrow[r,twoheadleftarrow,"n!"] &
    (\closedint)^n
  \end{tikzcd}
  \caption{Three cell complexes related to a closed interval $\protect\closedint$, all with 
  simplicial orthoscheme metrics. The $\LL$ map on the left and the $\mult$ map on the right
  are cellular maps with respect to these cell structures.\label{fig:interval-cell-cplxes}}
\end{figure}

We are now ready to prove Theorem~\ref{mainthm:intervals}.

\begin{thm}[Theorem~\ref{mainthm:intervals}]\label{thm:main-intervals}
  The space $\poly_d^{mt}(\closedint)$ of polynomials with critical values in a 
  closed interval (with the stratified Euclidean metric) is isometric to the 
  order complex $|\ncpart_d|_\Delta$ (with the orthoscheme metric).  
\end{thm}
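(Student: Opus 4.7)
The plan is to construct an explicit isometry $\Phi \colon \poly_d^{mt}(\closedint) \to \size{\ncpart_d}_\simp$ by restricting the geometric combinatorics map of Definition~\ref{def:geom-comb} to the interval case. Every multiset in $\closedint$ has constant imaginary part, so the bottom chain of any polynomial over $\closedint$ is trivial and only the left chain contributes; for $p \in \poly_d^{mt}(\closedint)$ with left chain $[\lambda_1]^L < \cdots < [\lambda_k]^L$ and relative widths $\bary(\Ib'_p) = (\width_1, \ldots, \width_k)$, I would define $\Phi(p) = \sum_i \width_i \cdot [\lambda_i]^L$.

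The first step is to compare cell structures. By Remark~\ref{rem:poly-strata-cells}, the $\LL$ map pulls back the standard $n$-orthoscheme structure on $\mult_n(\closedint)$ to a cell structure on $\poly_d^{mt}(\closedint)$ whose top cells are $d^{d-2}$ copies of the standard $n$-orthoscheme. On the other side, $\size{\ncpart_d}_\simp$ with the orthoscheme metric has $d^{d-2}$ top-dimensional simplices, one per maximal chain of $\ncpart_d$, each also a standard $n$-orthoscheme. On each top cell, $\Phi$ factors as the isometric restriction of $\LL$ to $\mult_n(\closedint)$ followed by the barycentric identification of this orthoscheme with the top simplex of $\size{\ncpart_d}_\simp$ spanned by the constant left chain of that cell. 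Both identifications use spine-based barycentric coordinates with matching edge lengths, so $\Phi$ is an isometry on each top cell.

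Bijectivity should follow from a monodromy argument. A generic polynomial $p$ with $n$ distinct interior critical values of distinct real parts has a left chain which is a full maximal chain of $\ncpart_d$ by Lemma~\ref{lem:discrete-indiscrete}. By Remark~\ref{rem:side-monodromy} the left chain encodes the monodromy of $p$ as a transposition factorization of $\delta$, and together with $\cvl(p)$ this determines $p$ uniquely by Proposition~\ref{prop:monodromy-cvl}. Conversely, Hurwitz transitivity (Remark~\ref{rem:hurwitz-transitive}) shows that every one of the $d^{d-2}$ maximal chains of $\ncpart_d$ arises as the left chain of some polynomial. At the vertex level, a polynomial whose critical values all lie at the endpoints of $\closedint$ is determined by a single left partition in $\ncpart_d$, and every such partition is realized, giving the expected $C_d$ vertex-to-vertex bijection.

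The main technical obstacle is face compatibility: verifying that when critical values coalesce or reach an endpoint of $\closedint$, the resulting degeneration of the left chain inside $\poly_d^{mt}(\closedint)$ matches the simplicial face identifications in $\size{\ncpart_d}_\simp$. I would handle this stratum by stratum using Theorem~\ref{thm:poly-strata-cover}, noting that within a stratum $\poly_{\lambda \to \mu}^{mt}(\closedint)$ the left chain is constant and the interval composition $\comp(\cvl(p))$ records exactly the collapsing pattern of the chain: an elementary merge of adjacent parts corresponds to the coalescence of neighboring critical values and to the deletion of a vertex from the chain. The remaining subtlety, which I expect to be the crux of the argument, is showing that two top cells of $\poly_d^{mt}(\closedint)$ sharing a lower-dimensional face produce left chains that agree after the appropriate vertex deletions. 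This amounts to checking that the partition at rank $i$ obtained by coalescence is independent of the chosen extension to a maximal chain, and it should follow from the local uniqueness of $\LL$-lifts within each stratum via Theorem~\ref{thm:ll-map-unique}.
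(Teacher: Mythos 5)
Your overall route is the paper's: restrict the $\geocomb$ map of Definition~\ref{def:geom-comb} to $\poly_d^{mt}(\closedint)$, project to the left factor, match barycentric coordinates with relative widths, use Hurwitz transitivity to realize every maximal chain, and use monodromy together with critical values to pin down the polynomial. The place where you stop short is exactly the step you flag as the crux, and the point worth absorbing is that the stratum-by-stratum face-matching you propose is not needed. The map is defined intrinsically on each polynomial --- its own left chain and its own relative widths --- rather than by choosing a top cell containing it and degenerating, so there is no consistency condition to verify between two top cells that share a face. Your proposed fix via Theorem~\ref{thm:ll-map-unique} is also not the right tool: that theorem is about lifting paths of multisets, not about comparing the combinatorics of neighboring cells.

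What actually closes the argument is that both injectivity and surjectivity can be proved uniformly on all cells at once. For injectivity: every polynomial over $\closedint$ automatically has critical values with distinct real parts (distinct real numbers are distinct real parts), so Remark~\ref{rem:side-monodromy} applies to \emph{every} $p \in \poly_d^{mt}(\closedint)$, not just generic ones. If $\geocomb(p)=\geocomb(q)$ then $p$ and $q$ share a left chain, hence a monodromy; the absolute lengths of the basic side permutations (Corollary~\ref{cor:side-constellations}) recover the multiplicities of the critical values, the shared barycentric coordinates recover their positions, so $\cvl(p)=\cvl(q)$, and Proposition~\ref{prop:monodromy-cvl} gives $p=q$. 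For surjectivity on lower-dimensional cells: the image is compact, hence closed (Proposition~\ref{prop:compact}), it contains every open top-dimensional simplex by Remark~\ref{rem:hurwitz-transitive} together with cellularity, and every point of $\size{\ncpart_d}_\simp$ lies in the closure of an open top simplex because every chain in $\ncpart_d$ extends to a maximal chain. A continuous bijection from a compact space to a Hausdorff space is a homeomorphism, and the cell-by-cell identification of metrics upgrades this to an isometry. Replacing your final paragraph with these two observations completes the proof without ever analyzing which maximal chains extend a given face.
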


\begin{proof}
  Let $\closedsquare$ be a closed rectangle $\Qb' = \Ib' \times \Jb' = \closedsquare$,  let $\closedint$ be $\Ib' \times \{y_1\}$ 
  for some $y_1 \in \Jb'$, and let $\geocomb \colon \poly_d^{mt}(\closedint) \to \size{\ncperm_d^L}_\Delta$
  be the $\geocomb$ map of Definition~\ref{def:geom-comb} with the domain restricted to the subspace
  $\poly_d^{mt}(\closedint)$ and the range projected onto the first factor.  We show that this version 
  of the $\geocomb$ map is a bijective cellular homeomorphism.
  The cellular nature of the map is immediate from the way the cell structures were defined on the domain and 
  range.  In particular, different polynomials in the same cell differ only in their
  barycentric coordinates and these coordinates are used in all three spaces $\mult_n(\closedint)$, $\poly_d^{mt}(\closedint)$
  and $\size{\ncperm_d^L}_\Delta$. This also shows that the local metrics are preserved. 
  
  To see that $\geocomb$ is surjective, note that the image contains at least one point in 
  each of the $d^{d-2}$ open top-dimensional cells of $\size{\ncperm_d}_\Delta$ by 
  Remark~\ref{rem:hurwitz-transitive}, it contains all of these points because the map is 
  cellular, and it contains the union of the closed top-dimensional cells because the image 
  is compact and thus closed (Proposition~\ref{prop:compact}). Finally, every point 
  in $\size{\ncperm_d}_\Delta$ is in the image, because every point is in the boundary of a 
  top-dimensional cell.  This is the order complex version of the fact that every chain of 
  noncrossing permutations extends to a maximal chain.
  
  To see that $\geocomb$ is also injective, suppose $p, q \in \poly_d^{mt}(\closedint)$ have
  the same image $\geocomb(p)=\geocomb(q)$. This means that $p$ and $q$ lie in the same
  simplex of $\poly_d^{mt}(\closedint)$ and have the same left side chains,
  which means that $p$ and $q$ have the same monodromy (Remark~\ref{rem:side-monodromy}).
  Moreover, the multiplicity of the unique critical value in an edge of $\Ib'_p$ is 
  determined by the absolute length of the basic horizontal side permutations $\sigma_i$ 
  (Corollary~\ref{cor:side-constellations}), so $\cvl(p)$ and $\cvl(q)$ have the same 
  linear composition $\bm$.  Next, the fact that $p$ and $q$ are sent to the same point 
  in this simplex means that they have the same barycentric coordinates. The barycentric 
  coordinates encode relative widths (Definition~\ref{def:int-subdivide}) and we can use 
  these to reconstruct the locations of the critical values in $\Ib'$.  Since $p$ and $q$ 
  have the same set of critical values with the same multiplicities, we have $\cvl(p)=\cvl(q)$ 
  as multisets.  By Proposition~\ref{prop:monodromy-cvl}, $p=q$ and $\geocomb$ is injective.
  Finally, as a bijective map from a compact space to a Hausdorff space, it is a homeomorphism.  
\end{proof}

The metric simplicial complex $\poly_d^{mt}(\closedint) = \size{\ncpart_d}_\Delta$ is also 
the branched line complex $\br_d^m(\closedint)$ whose points are labeled by marked $d$-branched lines.  
 
\begin{defn}[Branched lines]\label{def:br-line-cplx}
  Let $\Ib' = [x'_\ell, x'_r]$ be an interval in $\R$ and let $\Ib = [x_\ell,x_r]$ be an interval that contains
  $\Ib'$ in its interior.  For each polynomial $p \in \poly_d^{mt}(\Ib')$, there is a preimage branched 
  line $p^{-1}(\Ib)$ which is a metric banyan (Example~\ref{ex:banyans}).  It is a single banyan rather than a banyan 
  grove because the interval $\Ib$ contains all of the critical values of $p$.  The endpoints of $\Ib$, being
  regular, have $d$ preimages each that can be marked / labeled as through they were representative points on the left and right
  sides of a $d$-branched rectangle $\Pb_p$.  Let $\br_d^m(\Ib')$ be the space of all such marked metric 
  banyans / marked branched lines.  The procedure just described gives a map $\poly_d^{mt}(\Ib')$ to $\br_d^m(\Ib')$.
  Next, from a marked metric banyan, it is possible to read off the left side partitions
  and the relative widths of the intervals between the critical values.  In other words, the marked metric banyan 
  contains the same information as the corresponding point in $\size{\ncpart_d}_\Delta$, and we have a map $\br_d^m(\Ib') \to 
  \size{\ncpart_d}_\Delta$.  Finally, Theorem~\ref{mainthm:intervals} provides a map back from $\size{\ncpart_d}_\Delta \to \poly_d^{mt}(\Ib')$.  One can trace through the definitions of these maps to see that they are consistent and thus all three are bijections.
  It is in this sense that the space of monic centered degree-$d$ polynomials with critical values in a fixed interval is the same as the 
  space of marked $d$-branched lines.
 \end{defn}

\section{Circles and Theorem~\ref{mainthm:circles}}\label{sec:circle-thmB}
We now derive Theorem~\ref{mainthm:circles} from Theorem~\ref{mainthm:intervals}.  First, 
recall the definition of the dual braid complex.

\begin{defn}[Dual braid complex]\label{def:dual-braid-cplx}
  The \emph{dual braid complex} $K_d$ is a quotient space of
  the order complex $\size{\ncperm_d}_\Delta$.  Recall that there is an ordered $k$-simplex in 
  $\size{\ncperm_d}_\Delta$ for each chain $\pi_0 < \pi_1 < \cdots < \pi_k$ of noncrossing permutations.  
  The \emph{edge labels} of this $k$-simplex are $\sigma_i = \pi_{i-1}^{-1}\pi_i$ and the 
  corresponding \emph{factorization} of  $\delta = (1\ 2\ \cdots\ d)$ has constellation 
  $[\pi^L\ \sigma_1\ \sigma_2\ \cdots\ \sigma_k\ \pi^R]$ where $\pi^L = \pi_0$ and $\pi^R = (\pi_k)^{-1} \delta$.  
  The possibly trivial permutation $\pi^L$ determines the vertex where the ordered $k$-simplex 
  starts in $\size{\ncperm_d}_\Delta$, and the possibly trivial permutation $\pi^R$ determines the vertex where it ends. 
  The edge labels are nontrivial. In the dual braid complex $K_d$, two simplices are identified if and only if they have the same sequence of edge labels.  
  The result is a $\Delta$-complex with one vertex and $d^{d-2}$ 
  top-dimensional cells that are standard $n$-orthoschemes.
\end{defn}

\begin{defn}[Standard representatives]\label{def:std-circle-reps}
  For every equivalence class of simplices in $\size{\ncperm_d}_\Delta$ that are identified to form one simplex 
  in the dual braid complex $K_d$, there is a standard representative where $\pi^R$ is the identity.  Concretely, 
  if  $[\pi^L\ \sigma_1\ \sigma_2\ \cdots\ \sigma_k\ \pi^R]$ is a constellation where 
  $\pi^R$ is nontrivial, then its standard representative is $[\pi^L_\textrm{new}\ \sigma_1\ \sigma_2\ \cdots\ \sigma_k\ \pi^R_\textrm{new}]$ where $\pi^R_\textrm{new}$ is the identity and $\pi^L_\textrm{new}$ is 
  $\delta\cdot \pi^R \cdot \delta^{-1} \cdot \pi^L$.
\end{defn}

\begin{thm}[Theorem~\ref{mainthm:circles}]\label{thm:main-circles}
  The space $\poly_d^{mt}(\circleint)$ of polynomials with critical
  values in a circle is homeomorphic to a quotient of the complex $\poly_d^{mt}(\closedint)$ 
  by face identifications. As a metric $\Delta$-complex, $\poly_d^{mt}(\circleint)$ 
  is the dual braid complex $K_d$ with the orthoscheme metric.
\end{thm}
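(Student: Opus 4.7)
The plan is to combine Proposition~\ref{prop:quotients}, which realizes $\poly_d^{mt}(\circleint)$ as a quotient of $\poly_d^{mt}(\closedint)$ by face identifications among polynomials with critical values at interval endpoints, with Theorem~\ref{mainthm:intervals}, identifying the domain with $|\ncpart_d|_\Delta$ under the orthoscheme metric, and then to verify that the induced face identifications are exactly those that define the dual braid complex $K_d$ in Definition~\ref{def:dual-braid-cplx}. A simplex of $|\ncpart_d|_\Delta$ labeled by a chain $\pi_0 < \pi_1 < \cdots < \pi_k$ parametrizes interval polynomials whose left-side permutation chain is this sequence; their interior critical values have monodromies given by the edge labels $\sigma_i = \pi_{i-1}^{-1}\pi_i$, while the endpoint monodromies are $\pi^L = \pi_0$ and $\pi^R = \pi_k^{-1}\delta$, nontrivial precisely when the corresponding endpoint carries a critical value.

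The central claim to verify is that two simplices of $|\ncpart_d|_\Delta$ are identified under the quotient $q \colon \poly_d^{mt}(\closedint) \to \poly_d^{mt}(\circleint)$ if and only if they have the same sequence of edge labels. For the forward direction, given chains $\pi_0 < \cdots < \pi_k$ and $\tau_0 < \cdots < \tau_k$ with identical edge labels $\sigma_1, \ldots, \sigma_k$ and a pair of interval polynomials in these simplices with matching barycentric coordinates, I would apply the stretching homotopy underlying Proposition~\ref{prop:quotients} to each. Their interior critical values move to identical positions on $\circleint$ (determined by the barycentric coordinates) while all endpoint critical values merge at the basepoint $\beta$ of the circle; the two resulting circle polynomials thus have identical critical value multisets and identical monodromies at the interior critical values. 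A direct computation shows that the two endpoint monodromies $\tau_0$ and $\pi^R$ differ by conjugation by the product $\sigma_1 \cdots \sigma_k$, precisely because the homotopy-derived loops around $\beta$ in the two cases approach from opposite sides of the interior critical values. Evaluated on a single standard loop around $\beta$, the two monodromy actions coincide, so Proposition~\ref{prop:monodromy-cvl} forces the two circle polynomials to be equal. For the converse, distinct edge label sequences produce polynomials with distinct monodromies at their interior critical values, hence distinct polynomials again by the same proposition. The vertices of $|\ncpart_d|_\Delta$ all collapse to a single vertex of $K_d$, since each $0$-simplex $[\pi_0]$ corresponds to interval polynomials with critical values only at the endpoints and total monodromy $\delta$, whose stretch produces the unique polynomial $[z^d+\beta]$ (Remark~\ref{rem:ll-extremes}). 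Because $q$ is cellular and all face identifications between orthoscheme simplices are isometries, the orthoscheme metric descends intact.

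The main obstacle is the conjugation computation at the heart of the central argument: one must follow the homotopy of the loop representing the endpoint monodromy as the interval stretches into a circle, show that loops whose interval endpoint was on the left deform continuously into loops around $\beta$ that wrap the circle in the opposite sense from loops whose endpoint was on the right, and then identify the two corresponding monodromy permutations as conjugates via $\sigma_1 \cdots \sigma_k$. The geometry is straightforward, but the precise bookkeeping requires a consistent basepoint convention and careful tracking of which side of the interior critical values the homotopy-derived loops pass through at each time during the stretch.
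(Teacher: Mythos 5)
Your proposal is correct and follows essentially the same route as the paper: realize $\poly_d^{mt}(\circleint)$ as the quotient of $\poly_d^{mt}(\closedint)$ from Proposition~\ref{prop:quotients}, transport the cell structure through Theorem~\ref{mainthm:intervals}, and check that the induced identifications are exactly the ``same edge labels'' identifications defining $K_d$. One small correction to your key computation: a general simplex on a chain $\pi_0 < \cdots < \pi_k$ can carry critical values at \emph{both} endpoints of the interval ($\pi^L = \pi_0 \neq 1$ and $\pi^R = \pi_k^{-1}\delta \neq 1$ simultaneously), and when the endpoints are glued the monodromy at the merged critical value is the product $\delta\cdot\pi^R\cdot\delta^{-1}\cdot\pi^L = \delta\cdot(\sigma_1\cdots\sigma_k)^{-1}$, where the $\delta$-conjugation records the change of basing path around the right-hand endpoint; your statement that $\tau_0$ and $\pi^R$ ``differ by conjugation by $\sigma_1\cdots\sigma_k$'' is the special case in which each chain has only one nontrivial endpoint permutation. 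Since the product formula visibly depends only on the edge labels, your conclusion---and the appeal to Proposition~\ref{prop:monodromy-cvl}---goes through unchanged.
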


\begin{proof}
  Let $\closedint$ be a closed horizontal interval in $\R$.  As $\closedint$ is dragged 
  around to form a circle $\circleint$ at time $t=1$, the polynomials in $\poly_d^{mt}(\closedint)$, 
  with their monodromy and critical value characterization, are dragged around to the 
  polynomials in $\poly_d^{mt}(\circleint)$.  This process is completely reversible except 
  for polynomials with critical values at both endpoints of $\closedint =[x_\ell,x_r]$.  When $p$ is 
  a polynomial of this type, its endpoint critical values are merged at time $t=1$ and the corresponding 
  monodromy permutations around these critical values, defined by appropriate paths based at $z$, are multiplied.  
  In particular, if $p$ has horizontal side constellation 
  $[\pi_1^L\ \sigma_1\ \sigma_2\ \cdots\ \sigma_k\ \pi^R_1]$ (Corollary~\ref{cor:side-constellations})
  then the final polynomial with one fewer critical value has a monodromy constellation of the 
  form $[\pi^L_\textrm{new} \ \sigma_1\ \sigma_2\ \cdots\ \sigma_k\ \pi^R_\textrm{new}]$
  where $\pi^R_\textrm{new}$ is the identity and 
 $\pi^L_\textrm{new}$ is $\delta\cdot \pi^R_1 \cdot \delta^{-1} \cdot \pi^L_1$.  The conjugation of $\pi_1^R$ 
 by $\delta$ represents a change of path surrounding the critical value at $x_r$ so that the clockwise loop around $x_r$ concatenated
 with the standard clockwise loop around $x_\ell$ is homotopic to a loop surrounding both which is dragged 
 to a clockwise loop around their merged critical value. 
 Finally, the constellations label the faces of $\poly_d^{mt}(\closedint)$ and each particular point is encoded in 
 the metric information of the relative widths of the edges in $\Ib'_p$.  Since the endpoint identification only 
 changes the monodromy and leaves the metric information unchanged, the induced identification
 on $\poly_d^{mt}(\closedint)$ is an isometric face identification.  It is also clear from the labels that this is the same identification as that used to create the dual braid complex 
 (Definition~\ref{def:std-circle-reps}).
\end{proof}

\begin{defn}[Branched circles]\label{def:br-circle-cplx}
  Let $\Ub$ be a closed disk that contains $\circleint$ in its interior.  We give $\Ub$ a minimal cell 
  structure so that $\circleint$ is a subcomplex.  It has two vertices, three edges and two $2$-cells.
  There is one vertex $v$ in $\circleint$ and another vertex $u$ in $\partial \Ub$.  The rest of $\circleint$
  is an open edge as is the rest of $\partial \Ub$.  The third edge $e$ connects $u$ and $v$.
  For any particular polynomial $p \in \poly_d^{mt}(\circleint)$, the preimage of $\Ub$ is a nonsingular disk 
  diagram and the preimages of $u$ can be marked and cyclically labeled by the set $[d]$.  The preimage of $\circleint$ is a metric cactus (Example~\ref{ex:cacti}).  We use the regular point $u \in \partial \Ub$
  as the basepoint for the monodromy action on the preimages of $u$ and we call cyclically marked 
  preimages of $u$ a \emph{marking}.  Let $\br_d^m(\circleint)$ be the space of all such marked metric 
  cacti / marked branched circles.  The procedure just described gives a map $\poly_d^{mt}(\circleint)$ to 
  $\br_d^m(\circleint)$,  and from each marked metric cactus it is possible to read off the monodromy and to recover the map from the metric branched circle to $\circleint$, including the location and multiplicity of the 
  critical values.  In particular, it is possible to recover the multiset $\cvl(p)$.  With the monodromy and the 
  critical value multiset we recover the polynomial $p$. Thus the map from polynomials to marked branched circles is 
  injective and it is not too hard to show that it is also onto. It is in this sense that the space of monic centered degree-$d$ polynomials with critical values in a fixed circle is the same as the space of marked $d$-branched circles.
\end{defn}

\section{Rectangles and Theorem~\ref{mainthm:rectangles}}\label{sec:rect-thmC}

This section introduces and analyzes a metric cell structure on $\poly_d^{mt}(\closedsquare)$, 
thereby proving Theorem~\ref{mainthm:rectangles}.  The cell structure and the method of 
proof are patterned after the linear case.

\begin{figure}
  \begin{tikzcd}
    \poly_d^{mt}(\Qb) \arrow[d,hookrightarrow,"1"] \arrow[r,twoheadrightarrow,"d^{d-2}"] &
    \mult_n(\Qb) \arrow[d,twoheadrightarrow,"n!"] \arrow[r,twoheadleftarrow,"n!"] &
    \Qb^n \arrow[d,equal,"1"] 
    \\
    \poly_d^{mt}(\Ib) \times \poly_d^{mt}(\Jb) 
    \arrow[r,twoheadrightarrow,"(d^{d-2})^2"] &
    \mult_n(\Ib) \times \mult_n(\Jb) 
    \arrow[r,twoheadleftarrow,"(n!)^2"] &
    \Ib^n\times \Jb^n  
  \end{tikzcd}
  \caption{Six complexes related to a rectangle $\protect\closedsquare = \Qb =\Ib \times \Jb$, all with 
  bisimplicial orthoscheme metrics. The horizontal maps are $\LL$ maps and $\mult$ maps and 
  the vertical maps come from deformation retracting onto the sides of $\protect\closedsquare$.
  The numbers on the arrows indicate the generic degree of these cellular maps.\label{fig:six-rectangle-spaces}}
\end{figure}

\begin{defn}[Bisimplicial cells]\label{def:cellular}
  Let $\closedsquare$ be a closed rectangle $\Qb = \Ib \times \Jb$ as in 
  Definition~\ref{def:coord-rectangle} where $\Ib$ and $\Jb$ are intervals 
  of length $s$ and $t$ respectively. By focusing on the real and imaginary 
  coordinates separately, the product space $(\closedsquare)^n = \Qb^n$ can 
  be viewed as $\Ib^n \times \Jb^n$, an $n$-cube of side length $s$ times an 
  $n$-cube of side length $t$.  If we quotient by the full action of $\sym_n 
  \times \sym_n$ with the first symmetric group acting on the real coordinates 
  and second acting on the imaginary coordinates, the quotient space is 
  $\mult_n(\Ib) \times \mult_n(\Jb)$, a direct product of a standard $n$-orthoscheme 
  of side length $s$ with a standard $n$-orthoscheme of side length $t$.  We call 
  this an oriented \emph{bisimplex} or an \emph{biorthoscheme} when we are viewing it as 
  a metric object. The space $\mult_n(\Qb)$ is an intermediate space where 
  we only quotient by the diagonal action of $\sym_n$ on the two factor $n$-cubes. The cells
  of $\mult_n(\Qb)$ are orbits of cells in $\Qb^n$.  In particular, in these $2n$-dimensional 
  spaces, $\Qb^n$ with its $(n!)^2$ top-dimensional cells which are the product of two $n$-orthoschemes.
  and $\mult_n(\Qb)$ has $n!$ top-dimensional biorthoschemes.  This complete our description of 
  the righthand square of Figure~\ref{fig:six-rectangle-spaces}.
\end{defn}

To clarify the cell structure we focus on the projections onto $\Ib$ and $\Jb$.

\begin{defn}[Multisets in a rectangle]\label{def:mult-rectangle}
  For each $n$-element multiset $M \in \mult_n(\Qb)$, let $C = \Re(M) \cap I$ 
  with $k = \size{C}$, let $D = \Im(M) \cap J$ with $l = \size{D}$, and let 
  $\Ib_C$, $\Jb_D$ and $\Qb_{C,D}$ be the corresponding subdivisions of $\Ib$, 
  $\Jb$ and $\Qb$.  All of the elements of $M$ are at the vertices of $\Qb_{C,D}$
  and as in the linear case, we can split $M$ into a combinatorial part and a metric 
  part.  The combinatorial aspect is a $(k+2) \times (l+2)$ grid is nonnegative integers 
  that record the multiplicities of the vertices of $\Qb_{C,D}$.  The metric part 
  are the numbers $\bary(\Ib_C)$ and $\bary(\Jb_D)$ which record the relative widths of the
  columns and rows of cells in $\Qb_{C,D}$ respectively.  The combinatorics determine the open 
  bisimplicial cell and the barycentric coordinates determine the point in the cell. The numbers 
  $\bary(\Ib_C)$ determine the point in the first simplex and the numbers $\bary(\Jb_D)$ 
  determine the point in the second simplex.
\end{defn}

\begin{rem}[Cellular maps]\label{rem:cellular-maps}
  Since varying the widths and heights of the rectangles does not change the shape of the multiset, 
  the cell structure described in Definition~\ref{def:mult-rectangle} is a stratified cell structure 
  on $\mult_n(\Qb)$, and so by Definition~\ref{def:stratified-cell-structures} it induces a
  stratified cell structure on $\poly_d^{mt}(\Qb)$ built out of biorthoschemes, turning the 
  horizontal $\LL$ map in the upper left part of Figure~\ref{fig:six-rectangle-spaces} into a cellular map.
  The induced map from $\poly_d^{mt}(\Qb)$ in the upper left to biorthoscheme $\mult_n(\Ib)\times \mult_n(\Jb)$
  is completely determined by the horizontal and vertical barycentric coordinates.  Next consider the 
  $\LL \times \LL$ map from $\poly_d^{mt}(\Ib) \times \poly_d^{mt}(\Jb)$ to $\mult_n(\Ib) \times \mult_n(\Jb)$.
  This is already known to be cellular by the results in Section~\ref{sec:intervals-thmA}.  Finally, the vertical
  map on the left is the $\geocomb$ map of Section~\ref{sec:geo-comb} with the range relabeled using 
  Theorem~\ref{mainthm:intervals}.  The definitions are consistent, this map is also cellular and the lefthand square
  commutes.
\end{rem}

To complete the proof of Theorem~\ref{mainthm:rectangles} we only need to show that the vertical 
$\geocomb$ map on the left-hand side is injective.

\begin{thm}[Theorem~\ref{mainthm:rectangles}]
    \label{thm:main-rectangles}
    The space $\poly_d^{mt}(\closedsquare)$ of polynomials with critical
    values in a closed rectangle (with the stratified Euclidean metric) 
    is isometric to a subcomplex of 
    $|\ncperm_d|_\Delta \times |\ncperm_d|_\Delta$
    (with the orthoscheme metric). 
\end{thm}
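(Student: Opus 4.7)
My plan is to show that the geometric combinatorics map $\geocomb$ of Definition~\ref{def:geom-comb}, viewed as a map from $\poly_d^{mt}(\closedsquare)$ to $|\ncperm_d|_\Delta\times|\ncperm_d|_\Delta$, is an isometric embedding whose image is a subcomplex. Remark~\ref{rem:cellular-maps} already establishes that $\geocomb$ is cellular with respect to the bisimplicial structures: on every open biorthoscheme cell of the domain, $\geocomb$ is realized by the pair $(\bary(\Ib'_p),\bary(\Jb'_p))$, which is precisely the pair of barycentric coordinates of the target bisimplex, and since both cells carry the biorthoscheme metric this is an isometric bijection onto an open bisimplex of $|\ncperm_d|_\Delta\times|\ncperm_d|_\Delta$. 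Each closed cell therefore maps isometrically onto a closed bisimplex, and once global injectivity is known the image will be a union of closed cells (hence a subcomplex in the compact Hausdorff target) and the cell-wise isometries will assemble into a global isometry.

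The statement thus reduces to showing $\geocomb$ is injective. Suppose $\geocomb(p)=\geocomb(q)$. Being in a common cell forces $p$ and $q$ to share both side chains, and agreement of barycentric coordinates forces $\Ib'_p=\Ib'_q$ and $\Jb'_p=\Jb'_q$, so $\cvl(p)$ and $\cvl(q)$ are supported on a common grid of vertices inside $\closedsquare$. By Corollary~\ref{cor:side-constellations} the two shared chains translate into shared horizontal and vertical side constellations. The remaining task is to argue that these two constellations jointly reconstruct both the full monodromy representation $\pi_1(\closedsquare\setminus\cvl(p),z)\to\sym_d$ and the multiset $\cvl(p)$ with exact positions and multiplicities at each grid point; with these in hand, Proposition~\ref{prop:monodromy-cvl} immediately yields $p=q$.

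The main obstacle is this reconstruction. The top-dimensional case is the transparent one: both chains are maximal, each $\sigma_i$ and each $\tau_j$ is a single transposition, and the transpositions in any minimal-length factorization $\delta=t_1\cdots t_n$ are pairwise distinct (their chord graph on $[d]$ is connected with $d-1$ edges on $d$ vertices, hence a tree). Matching equal transpositions across the ordered lists $(\sigma_1,\ldots,\sigma_n)$ and $(\tau_1,\ldots,\tau_n)$ produces a unique permutation $\rho\in\sym_n$ identifying each column's sole critical value with its row, pinpointing all critical values as points $(x'_i,y'_{\rho(i)})$, and the monodromy is then recovered from the left chain by Remark~\ref{rem:side-monodromy}. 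For lower-dimensional cells several critical values may share a column or row strip; I would extend the matching by decomposing each $\sigma_i$ into an ordered product of local monodromies indexed by the rows carrying critical values in column $i$ and each $\tau_j$ analogously by columns, forcing the two decompositions of each grid point's local monodromy to agree. An alternative route uses the double stratification of Definition~\ref{def:poly-strata}: by Theorem~\ref{thm:poly-strata-cover} the $\LL$ map is a covering within each stratum, and the extra monodromy data packaged in the chains precisely distinguishes its finitely many sheets, giving injectivity stratum by stratum.
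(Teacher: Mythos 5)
Your overall strategy---reduce to injectivity of $\geocomb$ and then invoke Proposition~\ref{prop:monodromy-cvl} after reconstructing the monodromy and the critical value multiset from the two side chains---is the same as the paper's, and the cellular/metric part of your argument is fine. The gap is in the reconstruction step, and it already appears in the top-dimensional case that you describe as transparent. The transposition $\sigma_i$ attached to a critical value $v$ by the horizontal constellation and the transposition $\tau_j$ attached to the \emph{same} $v$ by the vertical constellation are monodromies of loops around $v$ based at $z$ via two different access paths (along the bottom then up, versus up the left then across). These loops are only freely homotopic in $\C\setminus\cvl(p)$; they are based-homotopic exactly when the region swept between the two access paths contains no other critical value. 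So $\sigma_i$ and $\tau_j$ are in general distinct, conjugate transpositions, and ``matching equal transpositions'' neither exists nor determines the row--column correspondence. Concretely, take $d=3$, $\delta=(1\,2\,3)$, and two critical values at positions $(x'_1,y'_1)$ and $(x'_2,y'_2)$ with both coordinates increasing, with horizontal constellation $[(1\,3)\ (1\,2)]$. The access paths to the second critical value enclose the first, so its vertical transposition is $(1\,3)(1\,2)(1\,3)=(2\,3)$, and the vertical constellation is $[(2\,3)\ (1\,3)]$: the transposition $(1\,2)$ from the horizontal list matches nothing in the vertical list. Your proposed bijection $\rho$ therefore does not exist in this case, and the subsequent ``pinpointing'' of critical values collapses. (Your fallback via Theorem~\ref{thm:poly-strata-cover} restates the desired conclusion---that the chain data separates the sheets---rather than proving it.)

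The defect is repairable, but not by equality of letters: one would have to track the conjugators, e.g.\ show that $\tau_j$ equals the corresponding $\sigma_i$ conjugated by the product of the horizontal letters attached to critical values lying in the lower-left region between the two access paths, and then argue that this system of conjugacy constraints has a unique solution. The paper sidesteps this entirely by working topologically rather than with the constellations: the two chains of noncrossing partitions are converted into the two families of multiarcs $\widetilde\alpha_i$ and $\widetilde\beta_j$ in the standard $d$-branched rectangle, whose union reconstructs the full $1$-skeleton of the regular point complex $\Pb_p$; dualizing recovers $\Pb'_p$, the critical complex map $\Pb'_p\to\Qb'_p$, hence the positions and multiplicities of the critical values (combined with the barycentric coordinates) and the monodromy of the degree-$d$ cover of $1$-skeletons, at which point Proposition~\ref{prop:monodromy-cvl} applies. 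This also handles your lower-dimensional cells uniformly, with no separate ``local decomposition'' argument needed.
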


\begin{proof}
  Let $\closedsquare$ be a closed rectangle $\Qb' = \Ib' \times \Jb' = \closedsquare$ and note 
  that the $\geocomb$ map is cellular and metric-preserving on each open cell by Remark~\ref{rem:cellular-maps}.  
  Once we show it is injective, its image is a subcomplex and the proof is complete.
  To see injectivity, suppose we are given a point $\geocomb(p)$ in the interior of a product of two 
  orthoschemes in $\size{\ncperm_d^L}_\Delta \times \size{\ncperm_d^B}_\Delta$ that came from a polynomial 
  $p \in \poly_d^{mt}(\closedsquare)$.  This point can be expressed as a pair of side chains $\pi_1^L < \pi_2^L 
  < \cdots < \pi_k^L$ and $\pi_1^B < \pi_2^B < \cdots < \pi_l^B$ and the (ordered) barycentric 
  coordinates in each (ordered) factor simplex.  The left noncrossing permutations 
  $\pi_i^L$ can be converted first to left noncrossing partitions $[\lambda_i]^L$ 
  (Definition~\ref{def:ncperm}) and then to top-bottom matchings $[\mu_i]^{TB}$ 
  (Proposition~\ref{prop:match-part}), which we can draw as noncrossing multiarcs in a standard $d$-branched 
  rectangle $\Pb$, with the arcs of $[\mu_i]^{TB}$ connecting the $d$ points $t_{m,i}$ in the top 
  sides $\{T_m\}$ to the $d$ points $b_{m,i}$ in the bottom sides $\{B_m\}$.  This is the multiarc 
  $\widetilde \alpha_i$.  Similarly, the chain of noncrossing bottom permutations 
  $\pi_j^B$ allow us to draw the multiarc $\widetilde \beta_j$.  Together this reconstructs the full 
  $1$-skeleton of the regular point complex $\Pb_p$. By adding in the bounded regions as $2$-cells we 
  recover all of the regular point complex $\Pb_p$.  Next,  we reconstruct the critical point complex 
  $\Pb'_p$ as the cellular dual of $\Pb_p$, then the critical value complex $\Qb'_p$ and the critical 
  complex map $\Pb'_p \to \Qb'_p$ from $\Pb'_p$ (Definition~\ref{def:br-coord-cplx}).  From the critical complex map we 
  also know the multiplicity of each vertex in $\Qb'_p$. This is the combinatorial information contained in 
  the multiset $\cvl(p)$.  Together with the barycentric coordinates, we can reconstruct the critical 
  value multiset $M = \cvl(p)$.  Finally, the cellular critical complex map $\Pb'_p \to \Qb'_p$ 
  determines the branched cellular regular complex map $\Pb_p \to \Qb_p$. From the $d$-sheeted cover between 
  their $1$-skeletons one can read off the monodromy action. By Proposition~\ref{prop:monodromy-cvl} this 
  means that we can reconstruct $p$ from its image under the $\geocomb$ map, and the $\geocomb$ map is injective.
\end{proof}

From the generic degrees of the maps, it is clear that the subcomplex in the image of the $\geocomb$ map only 
contains $d^{d-2} \cdot n!$ of the $(d^{d-2})^2$ top-dimensional bisimplices.  Here is one natural way to characterize which 
vertices, and more generally which simplices are in this subcomplex.

\begin{figure}
  \begin{tabular}{ccc}
    \bt{c} \includegraphics[scale=.48]{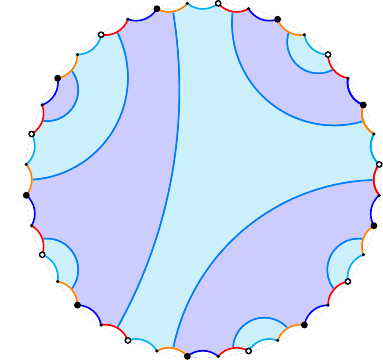} \et &
    \bt{c} \includegraphics[scale=.48]{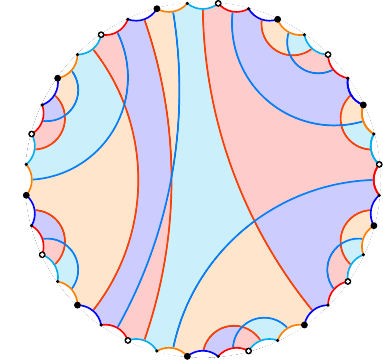} \et &
    \bt{c} \includegraphics[scale=.48]{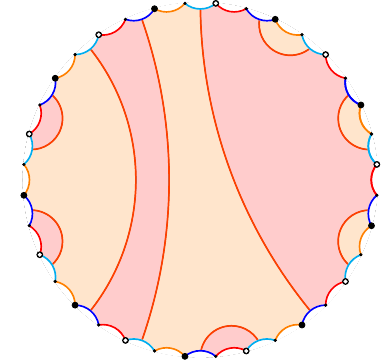} \et \\
    \bt{c} \includegraphics[scale=.5]{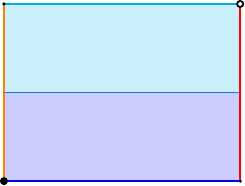} \et & 
    \bt{c} \includegraphics[scale=.5]{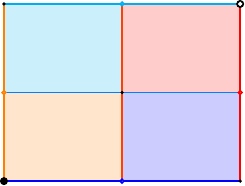} \et & 
    \bt{c} \includegraphics[scale=.5]{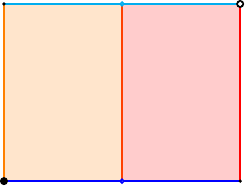} \et \\
  \end{tabular}
  \caption{The left-right matching shown in the upper left and the bottom-top matching 
  shown on the upper right combine to form the basketball in the middle. \label{fig:lr-tb-rect}}
\end{figure}

\begin{defn}[Basketballs]\label{def:basketballs}
  The vertices of $\poly_d^{mt}(\closedsquare)$ correspond to polynomials where the multiset $\cvl(p)$ is contained 
  in the four corners of $\closedsquare$.  The left and bottom chains become single noncrossing permutations
  $\pi_1^L$ and $\pi_1^B$, which encode a top-bottom matching $[\mu_1]^{TB}$ and a left-right matching $[\mu_1]^{LR}$, 
  or more topologically the multiarc $\widetilde \alpha_1$ and the multiarc $\widetilde \beta_1$, which are the 
  preimage under $p$ of a regular vertical arc $\alpha_1$ and a regular horizontal arc $\beta_1$.  The lift of this regular 
  ``plus sign'' in $\closedsquare$ must be $d$ ``plus signs'' in the $d$-branched $4d$-gon.  This is illustrated in 
  Figure~\ref{fig:lr-tb-rect}. The resulting combinatorial structure is what Martin, Savitt and Singer 
  call a \emph{basketball} \cite{martin-savitt-singer-07}.  See \cite{gersten-stallings-88,sjogren-15} for 
  more background and \cite{savitt-09} for more on basketballs. As described in \cite[Theorem~2.8]{martin-savitt-singer-07},
  basketballs are in one-to-one correspondence with the noncrossing partitions of $[4d]$ in which each block has size $4$.
  They also show that the number of basketballs is the Fuss--Catalan number $C_d^{(4)} = \frac{1}{3d+1} \binom{4d}{d}$, and each
  is obtained from a polynomial in the manner described here. Thus, the
  image of $\geocomb$ is a subcomplex of $|\ncperm_d^L|_\Delta \times |\ncperm_d^B|_\Delta$
  with $C_d^{(4)}$ vertices.    More generally, a chain of left side permutations and a chain of bottom side permutations
  describe a bisimplex in the image of the $\geocomb$ map if and only if for every choice of left permutation $\pi_i^L$ and 
  choice of bottom permutation $\pi^R_j$, the combination encodes a pair of multiarcs that form a basketball.
\end{defn}

\begin{defn}[Branched rectangles]\label{def:br-rect-cplx}
  Let $\closedsquare = \Qb' = \Ib' \times \Jb'$ be a closed rectangle contained in the 
  interior of a larger closed rectangle $\Qb = \Ib \times \Jb$ based at its bottom left corner $z$ and add an 
  edge from $z$ to the bottom left corner of $\Qb'$.  This gives $\Qb$ a cell structure.
  For each  polynomial $p\in \poly_d^{mt}$, the preimage disk $\Pb = p^{-1}(\Qb)$ receives a cell structure
  and it contains the metric critical value complex $\Pb'_p$ as a subcomplex built out of Euclidean rectangles. There
  is also the natural labeling of the preimages of $z$ (Definition~\ref{def:monic-poly-labels}). 
  We call this a \emph{marked $d$-branched rectangle}. Let $\br_d^m(\closedsquare)$ be the collection of all 
  marked $d$-branched rectangles.  As described in the proof of Theorem~\ref{mainthm:rectangles}, the 
  marked $d$-branched rectangle is sufficient information to reconstruct $p$, and it is also not too hard to 
  prove that every possible such marked $d$-branched rectangle arises from some polynomial.  It is in this sense 
  that the space of monic centered degree-$d$ polynomials with critical values in a fixed rectangle is the same 
  as the space of marked $d$-branched rectangles.
\end{defn}

\section{Annuli and Theorem~\ref{mainthm:annuli}}\label{sec:annulus-thmD}

In this section we prove Theorem~\ref{mainthm:annuli} by performing a gluing
of $\closedsquare$ into $\closedannulus$ similar to gluing of $\closedint$ into $\circleint$ 
in the proof of Theorem~\ref{mainthm:circles} in Section~\ref{sec:circle-thmB}.

\begin{figure}
  \begin{tabular}{ccc}
    \bt{c} \includegraphics[scale=.48]{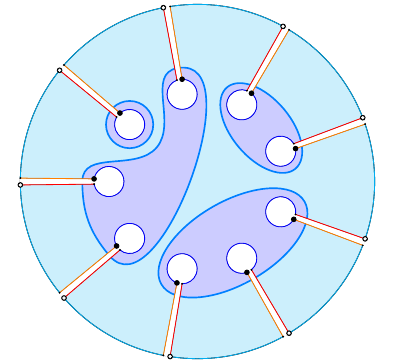} \et &
    \bt{c} \includegraphics[scale=.48]{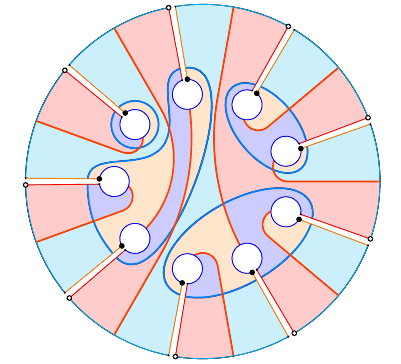} \et & 
    \bt{c} \includegraphics[scale=.48]{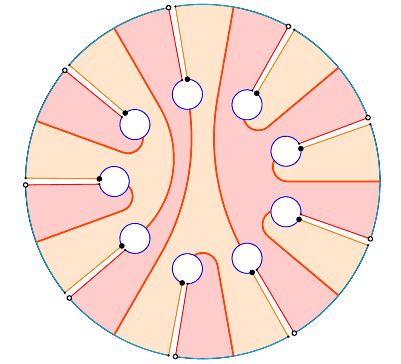} \et \\
    \bt{c} \includegraphics[scale=.5]{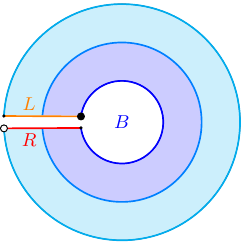} \et &
    \bt{c} \includegraphics[scale=.5]{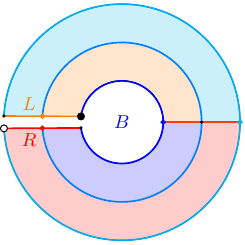} \et &
    \bt{c} \includegraphics[scale=.5]{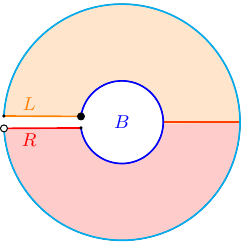} \et \\
  \end{tabular}
  \caption{A standard version of a left-right matching and a
    compatible bottom-top matching, when the rectangle in $\C$ has
    been moved by a point-preserving homotopy so that the left and
    right sides nearly touch and the bottom side nearly encloses a
    disk in the interior of the closed annulus about to be
    formed.
    \label{fig:lr-tb-ann}}
\end{figure}

\begin{thm}[Theorem~\ref{mainthm:annuli}]
    \label{thm:main-annuli}
    The space $\poly_d^{mt}(\closedannulus)$ of polynomials with
    critical values in a closed annulus is homeomorphic to 
    a quotient of $\poly_d^{mt}(\closedsquare)$ by face identifications.
\end{thm}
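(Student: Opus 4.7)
The plan is to follow the template of the proof of Theorem~\ref{mainthm:circles}. Proposition~\ref{prop:quotients} already gives a surjective quotient map $q \colon \poly_d^{mt}(\closedsquare) \to \poly_d^{mt}(\closedannulus)$ whose only nontrivial identifications occur among polynomials with critical values on the left or right side of $\closedsquare$. The remaining task is to see that these identifications are isometric face identifications in the bisimplicial cell structure provided by Theorem~\ref{mainthm:rectangles}, and that the resulting quotient is homeomorphic to $\poly_d^{mt}(\closedannulus)$ with its own cell structure.

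First I would fix a point-preserving homotopy $H \colon \closedsquare \times \Ib \to \C$ that bends the rectangle into an annular shape, stretching it until at time $t=1$ its left and right sides are identified along a single vertical arc, producing $\closedannulus$. By Theorem~\ref{mainthm:homeomorphisms} and Proposition~\ref{prop:compact}, the interior map is already a homeomorphism $\poly_d^{mt}(\opensquare) \cong \poly_d^{mt}(\openannulus)$, so only the boundary strata in $\poly_d^{mt}(\closedsquare)$ receive nontrivial identifications under $q$, and those strata consist of polynomials whose critical values land in the left or right side of $\closedsquare$.

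Next I would translate this into the bisimplicial framework. By Lemma~\ref{lem:discrete-indiscrete}, a polynomial $p \in \poly_d^{mt}(\closedsquare)$ has a critical value on the left side of $\closedsquare$ precisely when its first left side partition $[\lambda_1]^L$ is nondiscrete, and analogously on the right. These boundary conditions carve out subcomplexes of the bisimplicial cell structure on $\poly_d^{mt}(\closedsquare)$. Under the identification at time $t=1$, the extreme monodromy permutations $\pi_1^L$ and $\pi_k^R$ around the merged critical values are combined into a single monodromy permutation via the rule $\pi^L_\textrm{new} = \delta \cdot \pi_k^R \cdot \delta^{-1} \cdot \pi_1^L$, exactly as in the interval-to-circle case, where the conjugation by $\delta$ comes from re-routing loops around the glued seam (Definition~\ref{def:monodromy-Qp} and Corollary~\ref{cor:side-constellations}). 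Since this modification only changes the side constellation data and not the barycentric coordinates $\bary(\Ib'_p)$, $\bary(\Jb'_p)$ that parametrize the point within each bisimplex, the identifications are isometric on each face. The fact that the resulting quotient is $\poly_d^{mt}(\closedannulus)$ with the cell structure implicit in the definition of $\br_d^m(\closedannulus)$ (Definition~\ref{def:std-annulus-reps}) then follows from the same reconstruction argument used in the proof of Theorem~\ref{thm:main-rectangles}: the combinatorial and metric data recorded by the quotient constellation and the barycentric coordinates determine the marked branched annulus, and hence determine $p$ up to translation via Proposition~\ref{prop:monodromy-cvl}.

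The main obstacle will be verifying that the collapse in the horizontal direction is genuinely a face identification of the bisimplicial structure, given that the two factors $|\ncperm_d^L|_\Delta$ and $|\ncperm_d^B|_\Delta$ are only loosely coupled by the basketball compatibility condition (Definition~\ref{def:basketballs}). Concretely, when a left-side critical value is merged with a right-side critical value, the bottom chain must still record a coherent noncrossing permutation chain after the merge, so I would need to check that the basketball condition on the quotient matches the one arising from the lifted cell structure on $\poly_d^{mt}(\closedannulus)$. I expect this to reduce to a local check at the single horizontal level (if any) where a left-side and right-side critical value share an imaginary coordinate, since away from such levels the basic vertical side permutations $\tau_j$ of Corollary~\ref{cor:side-constellations} are untouched by the horizontal gluing. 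Once that compatibility is established, the induced quotient is automatically a metric $\Delta$-complex and the theorem follows.
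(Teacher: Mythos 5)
Your proposal follows essentially the same route as the paper's proof: deform $\closedsquare$ into $\closedannulus$ via a point-preserving homotopy that glues the left and right sides at time $t=1$, observe that the only identifications occur for polynomials with left- and right-side critical values at matching heights, note that the merged monodromy permutations are multiplied (with the same $\delta$-conjugation as in the interval-to-circle case), and conclude that since the identification is independent of the barycentric/metric data it is a face identification. The extra compatibility check you flag for the basketball condition is a reasonable point of caution but is not something the paper's own (terser) argument requires beyond the observation that the identifications respect open cells.
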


\begin{proof}
  Let $H \colon \closedsquare \times [0,1] \to \C$ be a nonsplitting homotopy that is point preserving except at 
  time $t=1$ when it identifies the left and right sides to create a closed annulus $\closedannulus$ with the 
  top side becoming the outer circle and the bottom side becoming the inner circle.  As described in 
  Definition~\ref{def:poly-homotopy} this induces to a polynomial homotopy $\widetilde H_n \colon 
  \poly_d^{mt}(\closedsquare) \times [0,1] \to \poly_d^{mt}(\C)$. For each polynomial $p \in 
  \poly_d^{mt}(\closedsquare)$, we have continuously deforming polynomials $p_t = (\widetilde h_n)_t(p) 
  \in \poly_d^{mt}(h_t(\closedsquare))$. By Proposition~\ref{prop:homeo}, this in turn 
  induces a quotient of the space $\poly_d^{mt}(\closedsquare)$ that becomes $\poly_d^{mt}(\closedannulus)$.  
  We can say something a bit more precise.  The shape of the multiset $\cvl(p_t)$ changes at time $t=1$ if and 
  only if there are critical values at a point in the left and right sides of $\closedsquare$ at the same height---so they become one point at time $t=1$.  When this happens, the corresponding monodromy permutations, from 
  appropriate loops based at a regular basepoint, are multiplied. Distinct points in the same open bisimplex of 
  $\poly_d^{mt}(\closedsquare)$ only differ in the relative metrics assigned to the horizontal and vertical 
  subdivisions.  The identification process is independent of these metrics which means that the quotient is by 
  face identifications.
\end{proof}    

\begin{example}\label{ex:rectangle-to-annulus}
  As described in the proof of Theorem~\ref{mainthm:annuli}, there is a continuous deformation 
  from $\closedsquare$ at time $t=0$ to
  $\closedannulus$ at time $t=1$.  This determines a continuous deformation from 
  a branched rectangle to a branched annulus. We illustrate the process for a vertex of $\poly_d^{mt}(\closedsquare)$.
  The left-right and bottom-top matchings obtained by pulling back a ``plus sign'' in 
  $\closedsquare$ to a $d$-branched $4d$-gon are shown in Figure~\ref{fig:lr-tb-rect},
  along with the basketball formed by the union of both matchings. The effect
  of the continuous deformation of $\closedsquare$ into $\closedannulus$
  on this basketball is illustrated in Figure~\ref{fig:lr-tb-ann}, and this represents a vertex of 
  $\poly_d(\closedannulus)$.
  This is a ``standardized'' version of what is happening topologically. For the 
  actual continuously deforming polynomial $p_t$ under the corresponding polynomial homotopy (Definition~\ref{def:poly-homotopy}), 
  there is a more concrete version which has this cell structure. It starts with a standard form like Figure~\ref{fig:lr-tb-rect},
  and ends near time $t=1$ as an embedded complex in $\C$ in which the left and right sides with matching
  subscripts are about to meet, as in Figure~\ref{fig:lr-tb-ann}.  
\end{example}

\begin{rem}[From rectangular to polar]\label{rem:rectangular-to-polar}
  The transition from focusing on the rectangular coordinates of the critical values to focusing on 
  their polar coordinates involves continuously varying the collection of polynomials under consideration
  using a polynomial homotopy.  In particular, maintaining the same geometric and combinatorial data
  means changing the polynomial under consideration.  Conversely, keeping the polynomial constant means 
  drastically changing the geometric combinatorics.  For example, polynomials might be in a lower 
  dimensional cell if there are coincidences in the coordinates of its critical values, but one can have 
  coincidences in one coordinate system without having them in the other.
\end{rem}

Just as we can view $\poly_d^{mt}(\closedsquare)$ as a subcomplex
of $|\ncpart_d^L| \times |\ncpart_d^B|$, the proof above shows that $\poly_d^{mt}(\closedannulus)$ is
isomorphic to a subcomplex of the direct product 
$K_d \times |\ncpart_d^B|$, where $K_d$ is the dual braid complex (Definition~\ref{def:dual-braid-cplx}).
We call this subcomplex the (marked) branched annulus complex $\br_d^m(\closedannulus)$.  Rather than
define it explicitly, we comment on standard names for its bisimplices and refer the reader to the first article
in this series \cite{DoMc22}.

\begin{defn}[Standard representatives]\label{def:std-annulus-reps}
  For every equivalence class of bisimplices in the branched rectangle complex $\br_d^m(\closedsquare)$ 
  that are identified to form one bisimplex in the branched annulus complex $\br_d^m(\closedannulus)$, 
  there is a standard representative where the right side of $\closedsquare$ is regular.  In particular, 
  the vertices of the branched annulus complex are labeled by polynomials $p$ where the multiset $\cvl(p)$ 
  lives in the endpoints of the left side $\Lb$ of $\closedsquare$.  By Theorem~\ref{mainthm:intervals} 
  applied to $\br_d^m(\Lb)$, these are indexed by noncrossing partitions and $\br_d^m(\closedannulus)$
  has exactly $C_d$ vertices, where $C_d$ is a Catalan number.  The process of standardizing the pair of 
  side chain labels of a bisimplex is straightforward on the left side permutation chain.  It is exactly as 
  described in Definition~\ref{def:std-circle-reps}. The impact on the bottom side permutation chain, however, 
  is more difficult to characterize cleanly.  See the discussion in \cite{DoMc22}.
\end{defn}

\appendix
\section{A Proof of Theorem~\ref{thm:poly-strata-cover}}\label{app:poly-strata-cover}

This appendix derives Theorem~\ref{thm:poly-strata-cover} from Theorem~B in \cite{DoMc-20}.
In that article all of the polynomials satisfy $p(0)=0$, but here we
use a slight generalization.

\begin{defn}[Base Pair]\label{def:base-pair}
  Let $\poly_d^{m,b\to c} =\{ p \in \poly_d^m \mid p(b)=c\}$ be the
  subset of $\poly_d^m$ sending $b$ to $c$.  We call $b \to c$ the
  point/value \emph{base pair} for this subspace.  The polynomials in
  $\poly_d^{m,b\to c}$ are indexed by their critical points.  For
  example, if $\cpt(p) = z_1^{m_1} \cdots z_k^{m_k}$, then $p'(z) =
  d\cdot (z-z_1)^{m_1} \cdots (z-z_k)^{m_k}$ and
  \[p(z) = d\cdot \left( \int_b^z (w-z_1)^{m_1} \cdots
  (w-z_k)^{m_k}\ dw \right) + c.\] 
\end{defn}

The space of monic centered polynomials is homemorphic to the space of
monic polynomials up to translation (Remark~\ref{rem:center-trans}),
and the space of monic polynomials with a fixed base pair $b \mapsto
c$ is a $d$-sheeted cover of these spaces, so long as we restrict our
attention to polynomials with critical values that avoid $c$.

\begin{rem}[Base Pairs and Covers]\label{rem:base-pair-cover}
  If $\cb = \{c\}$, then the restricted affine map $\aff\colon
  \poly_d^{m,b\to c}(\C_\cb) \to \poly_d^{mt}(\C_\cb)$ is a
  $d$-sheeted covering map.  The restriction to polynomials with
  critical values in $\C_\cb$ means that for these polynomials, $c$
  has $d$ distinct preimages.  For each preimage there is a unique
  representative of the translation equivalence class where this
  preimage is labeled $b$.  Also note that since the cover map
  preserves the shapes of the critical points and critical values, it
  restricts to $d$-sheeted covering maps $\aff\colon
  \poly_{\lambda\to\mu}^{m,b\to c}(\C_\cb) \to
  \poly_{\lambda\to\mu}^{mt}(\C_\cb)$.
\end{rem}

The following definitions are slight variations of those in
\cite{DoMc-20} and \cite{beardon02}.

\begin{defn}[$\cpt$ to $\cvl$]\label{def:cpt-to-cvl}
  Let $b\to c$ be a base pair, let $\bm = (m_1,\ldots,m_k)$ be a
  $k$-tuple of positive integers with $n=m_1+\cdots+m_k$, and let
  $\bz_\bm = (z_1,\ldots,z_k)$ be a point in $\C^k$ where the
  subscript $\bm$ reminds us that coordinates in $\bz_\bm$ come with
  assigned multiplicities.  Let \[p_{\bm}^{b\to c}(z) =
  p_{\bm,\bz_\bm}^{b\to c}(z) = d\cdot \left( \int_b^z
  (w-z_1)^{m_1} \cdots (w-z_k)^{m_k}\ dw \right) + c\] be the unique
  monic polynomial with $\cpt(p) = z_1^{m_1} \cdots z_k^{m_k}$ that
  sends $b$ to $c$.  The map $\theta_\bm^{b\to c} \colon \C^k \to
  \C^k$, defined by sending each $\bz_\bm$ to \[\theta_\bm^{b\mapsto
    c}(\bz_\bm) = (\theta_{\bm,1}^{b\mapsto
    c}(\bz_\bm),\ldots,\theta_{\bm,k}^{b\to c}(\bz_\bm)) =
  (p_\bm^{b\to c}(z_1),\ldots,p_\bm^{b\to c}(z_k))\] takes the
  critical points of $p_\bm^{b\to c}$ to the critical values of
  $p_\bm^{b\to c}$.
\end{defn}

Theorem~B of \cite{DoMc-20} describes a factorization of the
determinant of the $k \times k$ Jacobian matrix $\bj_\bm^{b\mapsto
  c}$, defined by $(\bj_\bm^{b\to c})_{ij} =
\frac{\partial}{\partial z_i} \theta_{\bm,j}^{b\to c}(\bz_\bm) =
\frac{\partial}{\partial z_i} p_\bm^{b\to c}(z_j)$.

\begin{lem}[Invertibility]\label{lem:invertible}
    Let $b \to c$ be a base pair, let $\bm = (m_1,\ldots,m_k)$ be
    a $k$-tuple of positive integers with $m_1+\cdots + m_k = n$, and
    let $\bz_\bm = (z_1,\ldots,z_k) \in \mathbb{C}^k$. The determinant
    of the Jacobian $\bj_\bm^{b\to c}$ of the map
    $\theta_\bm^{b\to c}\colon \C^k \to \C^k$ factors as follows:
    \[
    \det \bj_\bm^{b\to c} = \frac{d^k}{\binom{n}{m_1,\ldots,m_k}}
    \left(\prod_{j\in [k]} (b-z_j)^{m_j}\right)
    \left(\prod_{\substack{i,j\in [k]\\i\neq j}}
    (z_i-z_j)^{m_j}\right).
    \]
    Thus $\bj_\bm^{b\to c}$ is invertible if and only if
    $z_1,\ldots,z_k$ are distinct and not equal to $b$.  In
    particular, $\theta_{\bm}^{b \to c}\colon \conf_k(\C_\bb) \to
    \C^k$ is a local homeomorphism.
\end{lem}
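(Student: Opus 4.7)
The invertibility and local homeomorphism statements follow immediately from the determinant formula by inspection and the inverse function theorem, so the main content is the formula itself. Since $c$ enters $p = p_\bm^{b\to c}$ only as an additive constant, it drops out of every partial derivative with respect to the $z_i$, and we may set $c=0$ throughout. The plan is a ``factor and degree'' argument: identify enough factors of $\det \bj_\bm^{b\to 0}$ to fill its total degree, then pin down the overall scalar.

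First I would compute the Jacobian entries explicitly. Differentiating $\theta_j = p(z_j)$ with respect to $z_i$ has two sources: the dependence of $z_j$ itself (when $i = j$) and the dependence of $p$ on the parameter $z_i$. The first contribution vanishes because $p'(z_j) = 0$ at a critical point, and differentiating under the integral yields
\[
(\bj_\bm^{b\to 0})_{ij} \;=\; -\,d\,m_i \int_b^{z_j} (w-z_i)^{m_i-1}\prod_{\ell\neq i}(w-z_\ell)^{m_\ell}\,dw.
\]
In particular each entry is a polynomial of total degree $n$ in $(b, z_1, \ldots, z_k)$, so $\det \bj$ has total degree at most $kn$. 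The claimed product has total degree $n + (k-1)n = kn$, so once divisibility is established the quotient is forced to be a scalar.

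For divisibility by $(b-z_j)^{m_j}$ I would substitute $w = b + t(z_j-b)$ in the integral defining $\theta_j$; the factor $(w-z_j)^{m_j}$ in the integrand becomes $[-(1-t)(z_j-b)]^{m_j}$, and combined with $dw = (z_j-b)\,dt$ this shows $\theta_j$ is divisible by $(z_j-b)^{m_j+1}$. Every entry of column $j$ of $\bj$ therefore inherits a factor of $(z_j-b)^{m_j}$, giving $\prod_j (b-z_j)^{m_j}$ as a divisor of $\det \bj$. For divisibility by $(z_i-z_j)^{m_i+m_j}$ (which combines the two formula terms $(z_i-z_j)^{m_j}$ and $(z_j-z_i)^{m_i}$) I would change variables to $u = (z_i+z_j)/2$, $v = z_j-z_i$ and expand $\theta_i - \theta_j$ as a power series in $v$. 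Writing $p'(z) = d(z-z_i)^{m_i}(z-z_j)^{m_j}R(z)$ and expanding $(s+v/2)^{m_i}(s-v/2)^{m_j}$ in $s$, a bookkeeping computation shows that each intermediate derivative satisfies $p^{(\ell)}(u) = O(v^{m_i+m_j-\ell+1})$ as a power series in $v$ for $1 \leq \ell \leq m_i+m_j+1$. Inserting into the antisymmetric expansion
\[
\theta_i - \theta_j \;=\; -2\sum_{\ell\text{ odd},\,\ell\geq 1}\frac{(v/2)^\ell}{\ell!}\,p^{(\ell)}(u)
\]
shows the leading order is $v^{m_i+m_j+1}$. Replacing column $j$ by column $j$ minus column $i$ then produces a column divisible by $v^{m_i+m_j}$, establishing the divisibility.

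Finally I would identify the overall constant by specialization: extracting the coefficient of the highest power of $b$ on both sides reduces the problem to the leading term of the multinomial expansion of $\prod_\ell(w-z_\ell)^{m_\ell}$, producing the prefactor $d^k/\binom{n}{m_1,\ldots,m_k}$. The main obstacle is the divisibility by $(z_i-z_j)^{m_i+m_j}$: a naive column operation only exposes a single factor of $(z_i-z_j)$, and the $v$-dependence of the intermediate derivatives $p^{(\ell)}(u)$ must be tracked carefully to confirm that the leading order of $\theta_i - \theta_j$ is precisely $v^{m_i+m_j+1}$ (rather than higher, which would force $\det\bj$ to vanish identically, or lower, which would contradict the degree count). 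Once this step is secure, the rest is bookkeeping.
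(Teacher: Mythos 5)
Your proposal is a genuine from\hyp{}scratch proof, whereas the paper does not prove the determinant formula at all: its ``proof'' consists of citing Theorem~B of \cite{DoMc-20} and recording the two cosmetic adjustments needed here (the extra factor of $d$ in each entry, hence $d^k$ in the determinant, because $p$ has been normalized to be monic; and the appearance of $b$ in place of $0$ after shifting the base pair, with $c$ dropping out entirely --- exactly as you observe). So the comparison is really between your argument and the one in the cited reference. Your skeleton --- compute the entries, note each is homogeneous of degree $n$ so $\det\bj_\bm^{b\to c}$ has degree at most $kn$, extract the pairwise coprime factors $(b-z_j)^{m_j}$ and $(z_i-z_j)^{m_i+m_j}$, match degrees, and pin down the scalar --- is the standard and correct strategy for formulas of this type, and your two divisibility arguments are sound. (For the second one there is a shortcut you may prefer: $\theta_j-\theta_i=\int_{z_i}^{z_j}p'(w)\,dw$, and the substitution $w=z_i+t(z_j-z_i)$ immediately exhibits the factor $(z_j-z_i)^{m_i+m_j+1}$, since $(w-z_i)^{m_i}(w-z_j)^{m_j}\,dw$ contributes $m_i+m_j+1$ powers of $z_j-z_i$; the power\hyp{}series bookkeeping around the midpoint is then unnecessary.) You should also say explicitly that the factors for distinct unordered pairs, together with the $(b-z_j)^{m_j}$, are pairwise coprime irreducible powers, so the separate divisibilities combine, and that $\det\bj$ is not identically zero (a single evaluation, e.g.\ $k=1$, or generic finiteness of the $\LL$ map, suffices).

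The one step I would not sign off on as written is the determination of the constant. Every entry of $\bj_\bm^{b\to c}$ has $b$-degree exactly $n$ (the $b$-dependence comes from the lower limit of integration), and the $b$-dependent part of the matrix is the rank-one matrix $f\mathbf{1}^T$ with $f_i=dm_i\int_0^b(w-z_i)^{m_i-1}\prod_{\ell\neq i}(w-z_\ell)^{m_\ell}\,dw$. Expanding multilinearly, the coefficient of $b^n$ in $\det\bj$ is therefore a \emph{sum of $k$ determinants}, each obtained from the $b$-independent part by replacing one column with the vector $(dm_1/n,\ldots,dm_k/n)^T$ --- it is not a single leading term of a multinomial expansion, and evaluating that sum is essentially another instance of the original problem. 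So ``extract the coefficient of the highest power of $b$'' does not reduce the constant to routine bookkeeping as claimed; you need either a cleverer coefficient comparison, an induction on $k$, or a separate identity for those modified determinants. This is the one place where your outline has a real hole rather than a compressible computation.
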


\begin{proof}
  There have been two modifications compared to Theorem B in
  \cite{DoMc-20}.  First, the polynomial $p_\bm^{b\to c}$ has been
  multiplied by $d$ so that $p_\bm^{b\to c}$ is monic.  This adds
  a factor of $d$ to every entry of the $k$-by-$k$ matrix
  $\bj_\bm^{b\to c}$ and a factor of $d^k$ to the factored
  determinant.  Next, the shift from base pair $p(0)=0$ to base pair
  $p(b)=c$ introduces the constant $b$ into the factorization.  The
  constant $c$ plays no role.
\end{proof}

\begin{figure}
  \begin{tikzcd}
    \bz_\bm \arrow[d,"\cong",equal]
    \arrow[r,mapsto,"\theta_\bm^{b\to c}"] &
    \bw_\bm \arrow[d,"\cong",equal] &
    \conf_k(\C_\bb) \arrow[r,"\theta_\bm^{b\to c}"]
    \arrow[d,"\cong",equal] &
    \C^k \arrow[d,"\cong",equal]\\
    \bz \arrow[d,equal]
    \arrow[r,mapsto,"\theta_{[\lambda]}^{b\to c}"] &
    \bw \arrow[d,equal] &
    (\C_\bb)_{[\lambda]} \arrow[r,"\theta_{[\lambda]}^{b\to c}"]&
    \C^{[\lambda]}\\
    \bz \arrow[d,mapsto]
    \arrow[r,mapsto,"\theta_{[\lambda]}^{b\to c}"] &
    \bw \arrow[dd,mapsto] &
    Z \arrow[d,twoheadrightarrow] \arrow[u,hookrightarrow]
    \arrow[r,rightarrow,"\theta_{[\lambda]}^{b\to c}"] &
    W \arrow[dd,twoheadrightarrow,"\mult"]
    \arrow[u,hookrightarrow]\\
    p \arrow[d,mapsto] & & 
    \poly_{\lambda\to\mu}^{m,b\to c}(\C_\cb)
    \arrow[d,twoheadrightarrow] \\
    {[p]} \arrow[r,mapsto,"\LL"] \arrow[d,equal] &
    \cvl(p) \arrow[d,equal]&
    \poly_{\lambda\to\mu}^{mt}(\C_\cb) \arrow[r,"\LL"] \arrow[d,hookrightarrow]&
    \mult_\mu(\C_\cb) \arrow[d,hookrightarrow]\\
    {[p]} \arrow[r,mapsto,"\LL"] &
    \cvl(p) &
    \poly_{\lambda\to\mu}^{mt} \arrow[r,"\LL"] & \mult_\mu(\C)
  \end{tikzcd}
  \caption{The local behavior of the stratified $\LL$ map is related
    to the local behavior of the stratified $\cpt$-to-$\cvl$-map
    $\theta_\bm^{b\to c}$.  In the square between the second and 
    third rows $Z$ is the restriction of $(\C_\bb)_{[\lambda]}$ 
    to the preimage of $W =(\C_\cb)_{[\mu]}$ under 
    $\theta_{[\lambda]}^{b\to c}$.  The spaces $\conf_k(\C_\bb) 
    \cong (\C_\bb)_{[\lambda]}$ and $\C^k
    \cong \C^{[\lambda]}$ in the top two rows are $2k$-dimensional
    manifolds and the other $7$ spaces are $2\ell$-dimensional
    manifolds, where $k$ and $\ell$ are the number of distinct
    critical points and critical values, respectively.  Every arrow is
    an inclusion or a local homeomorphism.\label{fig:covering-maps}}
\end{figure}

The map $\theta_\bm^{b\to c}$ can also be reformulated as a map
between subspaces of $\C^n$.

\begin{rem}[Subspaces of $\C^n$]\label{rem:subspaces-of-c^n}
  Let $[\lambda] \vdash [n]$ be a set partition with $k$ blocks
  indexed by the order they occur in the standard shorthand
  (Definition~\ref{def:set-part}).  For $\bz \in \C^{[\lambda]}
  \subset \C^n$, let $z_i$ be the common value of the coordinates
  indexed by the $i^{th}$ block, let $\bz_\bm = (z_1,\ldots,z_k)$, and
  let $\bm = (m_1,\ldots,m_k)$ where $m_i$ is the size of the $i^{th}$
  block.  For $\bz \in \C^{[\lambda]}$ with $[\lambda]= 124|3|57|6$,
  we have $\bz = (z_1,z_1,z_2,z_1,z_3,z_4,z_3)$, $\bz_\bm =
  (z_1,z_2,z_3,z_4)$ and $\bm = (m_1,m_2,m_3,m_4) = (3,1,2,1)$.  The
  map from $\bz \mapsto \bz_\bm$ is an isomorphism $\C^{[\lambda]}
  \cong \C^k$ (Proposition~\ref{prop:subsp-metrics}).  To indicate
  that the domain and range are replaced with the subspace
  $\C^{[\lambda]} \subset \C^n$ where $[\lambda]$ is any set partition
  whose $i^{th}$ block has size $m_i$, we replace the subscript $\bm$ on
  $\theta_\bm^{b\to c}$ with $[\lambda]$.  Under this
  homeomoprhism, the subspace $\conf_k(\C_\bb) \subset \C^k$, where
  $\theta_\bm^{b\to c}$ is a local homeomorphism, becomes
  $(\C_\bb)_{[\lambda]} \subset \C^{[\lambda]}$, where
  $\theta_{[\lambda]}^{b\to c}$ is a local homeomorphism.  See the
  top two rows of Figure~\ref{fig:covering-maps}.
\end{rem}

Every polynomial $p$ is part of a commuting diagram as in
Figure~\ref{fig:covering-maps}.

\begin{rem}[Commuting Maps]\label{rem:commuting-maps}
  Let $p$ be a monic degree-$d$ polynomial with critrical point shape
  $\lambda = \lambda(p) \vdash n$ and critical value shape $\mu
  =\mu(p) \geq \lambda$.  The equivalence class $[p]$ lies in
  $\poly_{\lambda\to\mu}^{mt}$ and the $\LL$ map send $[p]$ to the
  mulitset $M =\cvl(p) \in \mult_\mu(\C)$.  For any $c \not \in
  \cvl(p)$ and any $b \in p^{-1}(c)$, $[p]$ is in
  $\poly_{\lambda\to\mu}^{mt}(\C_\cb)$ and $p$ is in
  $\poly_{\lambda\to\mu}^{m,b\to c}(\C_\cb)$.  Next, let $\bz$ be an
  $n$-tuple listing the $n$ critical points of $p$ with the
  appropriate multiplicities and let $[\lambda] = \spart(\bz) \vdash
  [n]$ be its set partition.  Note that $\bz \in (\C_\bb)_{[\lambda]}$
  by construction and its image, $\bw = \theta_{[\lambda]}^{b\mapsto
    c}(\bz)$ is in $W=(\C_\cb)_{[\mu]}$ for some set partition $[\mu]
  \geq [\lambda]$ whose shape is $\mu = \mu(p)$ and $\mult(\bw) =
  \cvl(p)$. We define $Z$ as the restriction of $(\C_\bb)_{[\lambda]}$
  to the preimage of $W =(\C_\cb)_{[\mu]}$ under
  $\theta_{[\lambda]}^{b\to c}$, and note that $p$ can be
  reconstruction as $p = p_\bm^{b\to c}$.  Writing $\bz_\bm$ and
  $\bw_\bm$ for the images of $\bz$ and $\bw$ under the isomorphism
  from $\C^{[\lambda]}$ to $\C^k$ completes the construction of the
  commuting diagram containing the polynomial $p$ as shown in
  Figure~\ref{fig:covering-maps}.
\end{rem}

\begin{thm}[Stratified covering map]
  For all integer partitions $\lambda, \mu \vdash n$ with $\lambda
  \to \mu$, the restricted map $\LL\colon \poly_{\lambda\to\mu}^{mt}
  \to \mult_\mu(\C)$ is a covering map.
\end{thm}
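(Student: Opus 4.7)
The plan is to establish the theorem by showing two properties of the restricted map $\LL\colon \poly_{\lambda\to\mu}^{mt} \to \mult_\mu(\C)$: (1) that it is a local homeomorphism, and (2) that it has finite point preimages of constant cardinality. Once both are in place, the standard fact that a local homeomorphism between Hausdorff spaces over a connected base with constant finite fiber cardinality is a covering map will finish the argument.

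To prove local homeomorphism, I would fix a polynomial $p \in \poly_{\lambda\to\mu}^{mt}$, set $M = \cvl(p)$, choose any $c \notin \set(M)$ together with some $b \in p^{-1}(c)$, and invoke the commutative diagram of Figure~\ref{fig:covering-maps}. By Lemma~\ref{lem:invertible}, the map $\theta_{[\lambda]}^{b\to c}$ is a local homeomorphism from $(\C_\bb)_{[\lambda]}$ into $\C^{[\lambda]}$, and this restricts to a local homeomorphism $\theta\colon Z \to W$ after cutting down to the subset $Z$ whose image lies in the stratum $W = (\C_\cb)_{[\mu]}$. The left column of the diagram is a composition of the $\size{\sym_\lambda}$-sheeted quotient $Z \twoheadrightarrow \poly_{\lambda\to\mu}^{m,b\to c}(\C_\cb)$ (reordering equal-size blocks) with the $d$-sheeted covering $\aff$ of Remark~\ref{rem:base-pair-cover}, while the right column is the covering $W \twoheadrightarrow \mult_\mu(\C_\cb)$ provided by Theorem~\ref{thm:mult-strata-cover}. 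A standard neighborhood chase — lift $[p]$ to $\bz \in Z$; shrink so that $Z$-neighborhoods are simultaneously single sheets over $\poly_{\lambda\to\mu}^{mt}(\C_\cb)$, $\theta$-homeomorphic to their images, and single sheets under $W \twoheadrightarrow \mult_\mu(\C_\cb)$ — then exhibits $\LL$ as a composition of three homeomorphisms near $[p]$. Since every $M_0 \in \mult_\mu(\C)$ lies in the open subspace $\mult_\mu(\C_\cb)$ for some $c$, this gives local homeomorphism everywhere on the stratum.

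The constant finite fiber cardinality follows from Remark~\ref{rem:pt-preimages} (Zvonkine's refinement of the classical $\LL$ degree computation). Combining these two properties, for $M_0 \in \mult_\mu(\C)$ with preimage $\{[p_1],\ldots,[p_r]\}$, I would choose pairwise disjoint neighborhoods $U_i$ of the $[p_i]$, each mapped homeomorphically by $\LL$ onto some neighborhood $V_i$ of $M_0$; shrinking, we may take $V_1 = \cdots = V_r = V$. Every $M' \in V$ then has at least one preimage in each $U_i$, giving $r$ preimages total, and the constant fiber cardinality $r$ forces $\LL^{-1}(V) = \bigsqcup_i U_i$, i.e.\ $V$ is evenly covered.

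The main obstacle I anticipate is bookkeeping across the stratified diagram: one must verify carefully that $\theta_{[\lambda]}^{b\to c}$ restricted to the relatively open set $Z \subset (\C_\bb)_{[\lambda]}$ really does land in the correct stratum $W$ corresponding to the arrow $\lambda \to \mu$, that the quotient maps in each column preserve the stratum labels so that the chase in fact lands in $\poly_{\lambda\to\mu}^{mt}(\C_\cb)$ rather than some larger or smaller stratum, and that passage between the pointed space $\poly_{\lambda\to\mu}^{m,b\to c}(\C_\cb)$ and the translation-equivalence space $\poly_{\lambda\to\mu}^{mt}(\C_\cb)$ does not lose the local homeomorphism. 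A secondary point is that invoking Remark~\ref{rem:pt-preimages} relies on external combinatorial input; if one wishes to avoid this, connectedness of $\mult_\mu(\C)$ together with the local-homeomorphism/openness of $\LL$ already forces local constancy of fiber cardinality, and the finite upper bound $d^{d-2}$ from Theorem~\ref{thm:ll-classic} then yields the global constant needed.
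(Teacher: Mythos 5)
Your proposal follows essentially the same route as the paper's own proof: the same commutative diagram, the same use of Lemma~\ref{lem:invertible} for the local homeomorphism $\theta_{[\lambda]}^{b\to c}$, the same stratified covers from Theorem~\ref{thm:mult-strata-cover} and Remark~\ref{rem:base-pair-cover}, and the same concluding step that a surjective local homeomorphism with constant finite fibers (Remark~\ref{rem:pt-preimages}) is a covering map. The only difference is that you spell out the final ``evenly covered'' argument and the anticipated bookkeeping more explicitly than the paper does, which is fine but not a different method.
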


\begin{proof}
  We first show that the restricted map $\LL\colon
  \poly_{\lambda\to\mu}^{mt} \to \mult_\mu(\C)$ is a local
  homeomorphism.  Let $p$ be a representative of $[p] \in
  \poly_{\lambda\to\mu}^{mt}$ and pick a base pair $b\to c$ and an
  ordering of the $n$ critical points of $p$ to construct the maps in
  Figure~\ref{fig:covering-maps} as described in
  Remark~\ref{rem:commuting-maps}.  Since $b \not \in \cpt(p)$ and $c
  \not \in \cvl(p)$, the restriction to $\LL\colon
  \poly_{\lambda\to\mu}^{mt}(\C_\cb) \to \mult_\mu(\C_\cb)$ in the
  penultimate row does not change the local neighborhood of $[p]$ in
  the domain or $\cvl(p)$ in the range.  Next note that the map
  $\mult\colon W = (\C_\cb)_{[\mu]} \to \mult_\mu(\C_\cb)$ is a
  covering map (Theorem~\ref{thm:mult-strata-cover}), and so is the
  map $\poly_{\lambda\to\mu}^{b \to c}(\C_\cb) \to
  \poly_{\lambda\to\mu}^{mt}$ (Remark~\ref{rem:base-pair-cover}).  Thus,
  if the two maps emerging from $Z$ are local homeomorphisms, so is
  the $\LL$ map in a neighborhood of $[p]$.  The space
  $W=(\C_\cb)_{[\mu]}$ is a $2\ell$-dimensional manifold inside the
  $2k$-dimensional manifold $\C^{[\lambda]}$, where $k$ and $\ell$ are
  the number of blocks in $[\lambda]$ and $[\mu]$, or the number of
  distinct critical points and distinct critical values, respectively.
  The space $Z$ is the preimage of the $2\ell$-dimensional manifold
  $W$ under the local homeomorphism $\theta_{[\lambda]}^{b\to c}$
  (Remark~\ref{rem:subspaces-of-c^n}), restricted to the open
  $2k$-dimensional submanifold $(\C_\cb)_{[\lambda]}$, which makes it
  a $2\ell$-dimensional manifold mapped to $W$ by a local
  homeomorphism.  Finally, the map from $Z$ to
  $\poly_{\lambda\to\mu}^{b\to c}(\C_\cb)$ merely erases the
  indexing from the critical points of $p$, so it is also a covering
  map.  This shows that the $\LL$ map in the bottom row is locally a
  homeomorphism.  To show that it is, in fact, a covering map, it
  suffices to note that it is a surjective local homeomorphism between
  Hausdorff spaces with constant finite size point preimages
  (Remark~\ref{rem:pt-preimages}).
\end{proof}

\bibliographystyle{amsalpha}
\bibliography{gcp}

\end{document}